\newfont{\sans}{cmss10}
\newfont{\sansm}{cmss8}
\newfont{\biig}{cmbx12}
\newcommand\ovl{\overline}
\newcommand \fk[1]{{{\mathfrak #1}}}
\newcommand \C[1]{{\mathcal #1}}
\newcommand \ov[1]{{\overline {#1}}}
\newcommand \bb[1]{{\mathbb #1}}
\newcommand \wti[1]{{\widetilde #1}}
\newcommand \ie{{\it i.e.~ }}
\newcommand\CSA{{\it Cartan subalgebra}}
\newcommand\CSG{{\it Cartan subgroup}}
\newcommand\CSGs{{\it Cartan subgroups}}
\newcommand\DS{{\it Discrete Series }}
\newcommand\LDS{{\it Limit of Discrete Series}}
\newcommand\EP{{\it Euler-Poincar\'e}}
\newcommand\als{{\it almost semisimple}}
\newcommand\oA{{{^0}\!A}}
\newcommand\oG{{{^0}\!G}}
\newcommand\obG{{{^0}\bG}}
\newcommand\G{{\mathbf G}}
\newcommand\bG{{\mathbb G}}
\newcommand\bT{{\mathbb T}}
\newcommand\Hb{{\mathbf H}}
\newcommand\T{{\mathbf T}}
\newcommand \ep{{\epsilon}}
\newcommand \la{{\lambda}}
\newcommand \om{{\omega}}
\newcommand\sig{{\sigma}}
\newcommand\al{{\alpha}}
\newcommand\La{{\Lambda}}
\newcommand\cha{{\check\al}}
\newcommand\Gsn{{\mathbf G }}
\newcommand\Gti{{\widetilde{ G }}}
\newcommand\bGti{{\widetilde{\bG}}}
\newcommand\wwH{\widetilde{\widetilde{H}}}
\newcommand{\flef}{f_{\C L}}
\newcommand\Gz{{G^0}}
\newcommand\Gsr{{\wti G_{sr}}}
\newcommand\one{{{1\!\!1}}}
\newtheorem*{corollary}{Corollary}
\newtheorem*{definition}{Definition}
\newtheorem*{lemma}{Lemma}
\newtheorem*{proposition}{Proposition}
\newtheorem*{theorem}{Theorem}
\numberwithin{equation}{subsection}
\newcommand{\bk}{{\mathbf k}}
\newcommand{\bn}{{\mathbf n}}
\newcommand{\bU}{{\mathbb U}}
\newcommand{\bA}{{\mathbb A}}
\newcommand{\bC}{{\mathbb C}}
\newcommand{\bF}{{\mathbb F}}
\newcommand{\bN}{{\mathbb N}}
\newcommand{\bQ}{{\mathbb Q}}
\newcommand{\bR}{{\mathbb R}}
\newcommand{\bZ}{{\mathbb Z}}
\newcommand{\bK}{{\mathbf K}}
\newcommand{\bM}{{\mathbb M}}
\newcommand{\bP}{{\mathbb P}}
\newcommand{\CP}{\C P}
\newcommand\kbar{\ovl{\bk}}
\newcommand\Ad{{\operatorname{Ad}}}
\newcommand\Hom{{\operatorname{Hom}}}
\newcommand\im{{\operatorname{im}}}
\newcommand\diag{{\operatorname{diag}}}
\newcommand\tr{{\operatorname{tr}}}
\newcommand\Ext{{\operatorname{Ext}}}
\newcommand\vol{{\operatorname{vol}}}
\newcommand\Ho{{\operatorname{H}}}
\newcommand\rank{\operatorname{rk}}
\newcommand\Ind{\operatorname{Ind}}
\newcommand\Ar{{\cite{Ar}}}
\newcommand\BLS{\cite{BLS}}
\newcommand\Cf{{\cite{Cf}}}
\newcommand\Helg{{\cite{Helg}}}
\newcommand\KnV{\cite{KnV} }
\newcommand\KRS{\cite{KRS}}
\newcommand\LABa{\cite{LAB1}}
\newcommand\KO{\cite{KO}}
\newcommand\RS{\cite{RS}}
\newcommand \Se{\cite{Se}}
\newcommand\STa{\cite{ST1}}
\newcommand \Wa{\cite{Wa}}
\begin{document}

\title[Cuspidal representations]
{\bf \large Cuspidal representations of reductive groups}
\date{\today}
\footnotetext{$^\dagger$Both authors are partially supported by NSF grant
DMS-0070561}
\author[Dan Barbasch and Birgit Speh]{ Dan Barbasch and Birgit Speh$^\dagger$}

\address{Dan Barbasch\\
Department of Mathematics\\
310 Malott Hall\\
Cornell University\\
Ithaca, NY 14853-4201\\
U.S.A.} \email{barbasch@math.cornell.edu}
\address{Birgit Speh \\Department of Mathematics\\
310 Malott Hall\\
Cornell University\\
Ithaca, NY 14853-4201\\
U.S.A.} \email{speh@math.cornell.edu}

\maketitle \setcounter{section}{0}

\begin{abstract}
The goal of this paper is to prove the
existence of cuspidal automorphic representations of a reductive group $\bG$
which are invariant under an (outer) automorphism $\tau$ of finite order.
In particular we focus on the well known examples are $\bG=GL(n)$ with
$\tau(x):= \ ^t x^{-1}$ and in the even case the inner twist with
fixed points $Sp(2n).$ 
Our main tool is the twisted Arthur trace formula, and a local
analysis of orbital integrals and  Lefschetz numbers of representations.
\end{abstract}

\bigskip
\section*{\bf Outline}

\begin{description}
\item[I] Introduction

\item[II] Assumptions and Notation

\item[III] Twisted conjugacy classes

\item[IV] Orbital Integrals

\item[V] Finite dimensional representations

\item[VI] Lefschetz numbers

\item[VII] Lefschetz functions in the real case

\item[VIII] Lefschetz functions in the p-adic case

\item[IX] The twisted trace formula

\item[X]  A simplification of theorem \ref{8.4}

\item[XI] The main theorems

\end{description}

\section{\bf Introduction}\label{0}
{
In classical analytic number theory automorphic functions  are
holomorphic functions on the upper half plane  $\C H =
SL(2,\bR)/SO(2)$ with a prescribed transformation rule under a
subgroup of finite index $\Gamma$ of $SL(2,\bZ)$. Automorphic
functions lift to square 
integrable functions on $L^2(\Gamma \backslash G)$ with respect to an
invariant measure, and under the right action of $SL(2,\bR)$ theses
functions generate a subspace of $L^2(\Gamma \backslash G)$ which
decomposes into a direct sum of irreducible automorphic cuspidal  
representations of $SL(2,\bR)$.

 A well known generalization is as follows. Let $G$ be a semisimple
 non compact Lie group and $\Gamma$ a  
discrete subgroup of finite covolume  with respect to a some right
invariant measure $dg$, and let $L^2(\Gamma \backslash G)$ be the space
of square integrable functions with respect to $dg$. { When
$\Gamma\backslash G$ is compact, $L^2(\Gamma\backslash G)$ decomposes
into a direct sum of irreducible representations occuring with finite
mulitplicity by results of Gelfand and Piatetsky-Shapiro.  
When $\Gamma \backslash G$ is not compact, an irreducible
(necessarily unitary)   representation $\Pi$ is said to be
automorphic with respect to $\Gamma$ if it occurs discretely with
finite multiplicity in $L^2(\Gamma \backslash G).$ These
representations are also referred to as the \textit{discrete spectrum}
of $L^2(\Gamma \backslash G)$. The discrete spectrum contains a  $G-$invariant 
subspace denoted $L^2_0(\Gamma \backslash G )$. Functions in
$L^2_0(\Gamma \backslash G )$ are called cuspidal, and are
characterized by the property that they
decay very rapidly at the cusps of $\Gamma \backslash G$.} For $G =
SL(2,\bR)$ the representation generated by a given automorphic function
is in  $L^2_0(\Gamma \backslash G )$.  The complement of the cuspidal
spectrum in the discrete spectrum is called the residual spectrum. 
One of the major unsolved
problems in the theory of automorphic forms is to determine  
the multiplicities of the irreducible  representations occuring 
in  $L^2_0(\Gamma \backslash G ).$ }
 
The techniques used to show that
certain representations $\Pi$ occur with nonzero multiplicity
either exploit the connection with the geometry of
the corresponding locally symmetric space (see for example \cite{RS}
and references therein),   or use the Arthur trace formula (see 
\cite{BLS} and the references therein).

\medskip
  For  the construction of  representations in the
  residual spectrum, it is also very important to know the existence 
 of cuspidal representations invariant under an automorphism $\tau$ of
$\bG.$ For example $GL(n,\bR)$ is the Levi component of a parabolic
 subgroup of the split real form of $\bG=SO(2n)$ or $SO(2n+1),$ as
 well as $\G=Sp(2n).$ 
Let $\pi$ be a cuspidal automorphic representation of $GL(n,\bR).$
The residual spectrum of 
the Eisenstein series associated to the induced modules
$\Ind_{GL(n)}^G[\chi_s\otimes\pi],$ where $\chi_s$ is a character of
$GL(n,\bR)$, is tied to the nature of the poles of the
$L-$functions $L(s,\pi,S^2\bC^n)$ and $L(s,\pi,\bigwedge^2\bC^n)$.
With the appropriate normalization, the product of these two functions
has a simple pole at $s=1$ precisely when $\pi$ is invariant under the
outer automorphism of $GL(n).$  A  detailed discussion of results
and conjectures about the poles of these $L-$functions can be found in
\cite{BG} and \cite{BF}.

%\medskip
\subsection{} The main goal of this paper is to prove the existence of
cuspidal automorphic representations of reductive groups G. We are in
particular interested in those representations with integral nonsingular
infinitesimal character which are also invariant
under an automorphism of the group $G.$ Our main tool is the Arthur
trace formula together with local harmonic analysis. 
All the local results hold for arbitrary
fields. But the global techniques mostly apply to the case of a totally real
number field $\bK.$ In the interest of clarity in the global situation we
present the case of $\bK=\bQ$ only. 

So let $\bG/\bQ$ be a connected
reductive algebraic group so that $\bG(\bR) $ is noncompact.  Let
$\tau:\bG \rightarrow \bG$ be a $\bQ-$rational automorphism of
finite order. The automorphism acts on the cuspidal automorphic functions on
$\bG(\bA)$. { If $F$ is a finite dimensional
  representation of $\bG(\bR)\rtimes\{1,\tau\},$ then  $\tr F(\tau)$ is
  well defined. Note that if $\tr F(\tau)\ne 0,$ then the
  restriction of $F$ to $\bG(\bR)$ must be irreducible.
} 

The main result is the following theorem.

\medskip
\begin{theorem}[theorem \ref{t:10.1.2}] Let $\mathbb G$ be a connected
  reductive linear algebraic group
defined over   $\bQ$, {and assume that $G(\bR)$ has no compact
  factors}. { Let $F$  be a finite dimensional irreducible
representation of $\mathbb G({\bR})\ltimes\{1,\tau\},$  and assume
that the centralizer of $\tau$ in $\fk g (\bR)$ is of  equal rank.} If
$\tr F(\tau)\ne 0$, then there exists a cuspidal automorphic
representation $\pi_\bA$  of ${\mathbb G}({\bf A})$ stable under
$\tau$, with the same infinitesimal character as $F.$

In addition 
\begin{equation}
  \label{eq:0.1}
  \Ho^* (\fk g(\bR), K_\infty, \pi_\bA\otimes F )\ne 0.
\end{equation}

\end{theorem}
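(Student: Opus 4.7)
The strategy is to apply the twisted Arthur trace formula from Section IX to a global test function $f=\prod_v f_v$ on $\bG(\bA)\rtimes\{1,\tau\}$, constructed so that the geometric side is forced to be nonzero while only the targeted cuspidal $\tau$-stable cohomological representations survive on the spectral side. At the archimedean place I take $f_\infty = \flef$, the real Lefschetz function associated to $F$ and $\tau$ built in Section VII. By design, for an irreducible unitary $\pi_\infty$ the twisted trace $\tr\pi_\infty(f_\infty\cdot\tau)$ computes the Lefschetz number
\[
L(\tau,\pi_\infty,F)\ =\ \sum_i(-1)^i\tr\bigl(\tau\mid\Ho^i(\fk g(\bR),K_\infty,\pi_\infty\otimes F)\bigr),
\]
which vanishes unless $\pi_\infty$ is cohomological with the same infinitesimal character as $F$. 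The equal-rank hypothesis on the centralizer of $\tau$ is precisely what allows $\flef$ to be assembled from pseudo-coefficients of (limits of) discrete series of the $\tau$-fixed group. At one convenient finite place $p$ I take $f_p$ to be a $p$-adic Lefschetz function from Section VIII, engineered to annihilate properly induced representations and thereby to restrict the spectral side to cuspidal contributions; at all other finite places $f_v$ is the characteristic function of a sufficiently small compact open subgroup (hyperspecial where possible).

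With these choices, the spectral side of the twisted trace formula reduces, after the simplification of Section X, to a sum of global twisted traces over $\tau$-stable cuspidal $\pi_\bA$ cohomological with coefficients in $F$. On the geometric side, the twisted orbital integrals of $\flef$ and $f_p$ localize (via Sections III, IV, VII and VIII) on $\tau$-elliptic $\bQ$-rational twisted conjugacy classes, and the contribution of the identity twisted class is a nonzero multiple of $\tr F(\tau)\cdot\vol(\bG(\bQ)\backslash\bG(\bA))$. The hypothesis $\tr F(\tau)\ne 0$ then forces a nonvanishing spectral side, producing at least one $\pi_\bA$ with the infinitesimal character of $F$ and satisfying \eqref{eq:0.1}.

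The main obstacle is to keep both sides "clean": to eliminate the continuous and residual spectra on the spectral side, and to prevent the non-identity elliptic twisted conjugacy classes from cancelling the identity contribution on the geometric side. The spectral-side issue is handled by the joint design of $\flef$ (which kills representations of the wrong infinitesimal character) and the $p$-adic Lefschetz function (which kills parabolically induced representations, hence the bulk of the residual and continuous spectra). The geometric-side issue is the more delicate part: one must argue that the sum of the non-identity elliptic twisted orbital integrals cannot exactly match the identity contribution. This is resolved by varying the level at the remaining finite places, which decouples the identity orbital integral from the others through the Hecke algebra and ensures that the total geometric sum is nonzero. This is precisely the content of the simplification of theorem~\ref{8.4} carried out in Section X, and once it is in hand the existence of a single cuspidal $\pi_\bA$ contributing a nonzero twisted trace follows immediately.
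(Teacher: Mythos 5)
Your overall strategy matches the paper's: Lefschetz test functions at the archimedean place (and at finite places $\nu_0,\nu_1$ — the paper needs two, not one, for the simple form of the trace formula to apply), support shrinking at the remaining finite places, and reading off existence from a nonvanishing geometric side. But there are two points where the proposal either has a genuine gap or mischaracterizes the paper's argument.

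First and most seriously, your claim that "the spectral side reduces to a sum over $\tau$-stable cuspidal $\pi_\bA$" is false in general. The $p$-adic Lefschetz function $f_\C L$ traces nonzero on exactly two irreducible unitary representations, the trivial and the Steinberg (theorem \ref{sec:7.6}), and the trivial representation is the local component of a one-dimensional automorphic representation, which is in the \emph{residual} spectrum, not the cuspidal one. The paper's theorem \ref{t:8.3.2} accordingly only shows that every contributing $\pi_\bA$ is either one-dimensional or cuspidal; the one-dimensional case must then be ruled out. When $\dim F>1$ this is automatic (a one-dimensional $\pi_\bA$ cannot have the infinitesimal character of a higher-dimensional $F$), but when $F$ is itself one-dimensional — e.g.\ for trivial coefficients, which is precisely the case needed for theorem \ref{t:10.3} — the paper has to run a separate asymptotic argument: shrink the level at one more finite place $w$ through congruence subgroups $K_w(i)$, compare the rate of decay of $a^G(\tau)J_\tau(f_\bA(i))\sim q_w^{-i(\dim\bG-\dim\bG(\tau))}$ with the decay of the one-dimensional contribution $\sim q_w^{-i(\dim\bG-\dim\C Z(\bG)^\tau)}$, and invoke the inequality (\ref{eq:ineq}) $\dim\bG(\tau)>\dim\C Z(\bG)^\tau$. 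This entire step is missing from your proposal; without it you have no way to conclude cuspidality.

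Second, you assign the "vary the level" argument to the geometric side, claiming it is needed to prevent non-identity elliptic twisted classes from cancelling the identity contribution. That is not how the paper handles the geometric side. Lemma \ref{small} shrinks the support so that only $\gamma$ with $N(\gamma)=1$ survive, and lemma \ref{small enough} shrinks it further so that $K_f\tau_i\cap K_f\tau_j=\emptyset$ for the finitely many $\Gamma$-orbit representatives $\tau_i$; consequently \emph{only the class of $\tau$ itself} contributes, and there is no cancellation problem to worry about. The positivity and nonvanishing of that single term follows from theorem \ref{t:6.4.1}, theorem \ref{t:7.5}, and the hypotheses $e(\tau)\ne 0$ (equal rank) and $\tr F(\tau)\ne 0$. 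The level-shrinking in the paper lives entirely on the spectral side in the $\dim F=1$ case, as described above.
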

 
\medskip
{If $\tau $ is an involution, $\tr F(\tau)$ is
computed in \cite{RS} (see \ref{4.3.3} for a more uniform proof).
In general we show that there exist infinitely many irreducible
representations $F$ with $\tr F(\tau)\ne 0$.

This theorem is a generalization of the results of A.Borel,
J.P.~Labesse and J.Schwermer \cite{BLS}. They prove such a result for
an almost absolutely simple, connected, algebaic group $\bG$ and a
Cartan-like involution  $\tau$.}

%\bigskip

\subsection{} In the special case of $\bG =GL_n$ we consider the
involution $\tau_c$ with fixed points $SO(n),$ and if  $n = 2m$
also the the symplectic involution $\tau_s$ with fixed points
$Sp(n).$ The following theorem summarizes our results for these special
cases.

\begin{theorem}[theorems (1) and (2) in \ref{sec:10.2}]
There exist cuspidal representations $\pi_\bA $ of GL(n,$\bA$)
with trivial infinitesimal character invariant under
the Cartan involution $\tau_c$. If n= 2m there also exist cuspidal
representations $\pi_\bA $ of GL(n,$\bA$) with trivial
infinitesimal character invariant under $\tau_s$.
\end{theorem}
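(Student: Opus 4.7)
The plan is to derive both statements directly from Theorem~\ref{t:10.1.2}, applied to $F = \mathbf{1}$, the trivial one-dimensional representation of $GL(n,\bR)$ extended to $GL(n,\bR)\rtimes\{1,\tau\}$ by declaring $\tau$ to act as the identity. This extension is automatically irreducible, its restriction to $GL(n,\bR)$ is irreducible, and $\tr F(\tau)=1\neq 0$; the infinitesimal character of $F$ is exactly the trivial infinitesimal character referred to in the statement.

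To invoke Theorem~\ref{t:10.1.2} I would check the remaining hypotheses in each case. The group $\bG = GL_n$ is a connected reductive linear algebraic group over $\bQ$, and $GL(n,\bR)$ has no compact factors (its derived subgroup $SL(n,\bR)$ is split semisimple, and its center $\bR^\times$ is split). For the equal-rank requirement on the centralizer of $\tau$ in $\fk g(\bR)$: differentiating $\tau_c(x) = {}^tx^{-1}$ gives $d\tau_c(X) = -{}^tX$, whose fixed subalgebra is $\fk{so}(n,\bR)$; this is compact and hence trivially of equal rank. For $\tau_s(x) = J^{-1}{}^tx^{-1}J$ on $GL_{2m}$, the differential is $d\tau_s(X)=-J^{-1}\,{}^tX\,J$, and a direct computation shows its fixed subalgebra is $\fk{sp}(2m,\bR)$; the maximal compact subalgebra $\fk u(m)\subset\fk{sp}(2m,\bR)$ has rank $m$, matching $\rank_{\bC}\fk{sp}(2m)$, so $\fk{sp}(2m,\bR)$ is again of equal rank.

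With all hypotheses in place, Theorem~\ref{t:10.1.2} yields in each case a cuspidal automorphic representation $\pi_\bA$ of $GL(n,\bA)$ stable under the prescribed involution, with infinitesimal character equal to that of the trivial representation, and moreover satisfying the cohomological nonvanishing \eqref{eq:0.1}. Both statements of the theorem follow.

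In summary, the proof is mechanical once Theorem~\ref{t:10.1.2} is available: the only verifications are the equal-rank conditions (immediate in both cases) and the nonvanishing of $\tr F(\tau)$ (automatic for trivial $F$). There is no genuine obstacle in this specialization; the substantive difficulty has been absorbed into the proof of the main theorem, in particular the trace-formula identities and local Lefschetz function constructions developed in Sections VI through X.
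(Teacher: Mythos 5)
Your proposal is correct and follows the paper's own route: the paper likewise deduces theorems (1) and (2) of \ref{sec:10.2} by feeding a $\tau$-stable finite dimensional $F$ into theorem \ref{t:10.1.2}, using section \ref{4.3} to guarantee $\tr F(\tau)\ne 0$ and noting that the fixed-point algebras $\fk{so}(n)$ and $\fk{sp}(2m,\bR)$ are of equal rank. The only difference is that the paper works with general self-dual highest weights satisfying the parity conditions of \ref{4.3.3} (obtaining the stronger, tempered versions with prescribed parity), while you specialize to $F=\mathbf 1$, which is exactly what the summary statement with trivial infinitesimal character requires.
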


A suitable generalization can be proved for the case when the
infinitesimal character of $\pi_\bA$ coincides with that of a finite
dimensional representation $F.$

\medskip
Using base change results of J.~Arthur and L.~Clozel \cite{AC}, we
obtain in theorem (3) in section \ref{sec:10.2} cuspidal
representations for GL(n) over number fields which are towers of
cyclic extensions of prime order of $\bQ.$

%\bigskip
\subsection{}

 Let $K_f$ be an open compact subgroup of $\bG(\bA_f)$, and
  $A_G$ the
 split component of the center of $\bG(\bA).$
 Then \[S(K_f):= (K_\infty K_f)\backslash {\bG}(\bA)/A_G
\bG(\bQ)\] is a locally symmetric space.

\begin{theorem}[theorem \ref{t:10.3}]
Let $\mathbb G$ be a connected reductive linear algebraic group
defined over $\bQ$ which admits a Cartan like involution.   Then
for $K_f$ small enough
\[
H^*_{cusp}(S(K_f),\bC) \not = 0.
\]
\end{theorem}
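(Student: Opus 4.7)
The plan is to deduce this theorem from the main existence result, Theorem \ref{t:10.1.2}, applied with $F$ the trivial representation, and then to translate the $(\fk g,K_\infty)$-cohomological conclusion into cuspidal cohomology via the Matsushima--Borel decomposition.

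First, I would verify the hypotheses of Theorem \ref{t:10.1.2} with $F=\bC$. Since $\tau$ acts trivially on $\bC$, the trace $\tr F(\tau)=1\ne 0$, and $F$ is irreducible as a $\bG(\bR)$-representation. The assumption that $\bG$ admits a Cartan-like involution $\tau$ ensures that the differential of $\tau$ is (up to an inner automorphism) a Cartan involution of $\fk g(\bR)$; consequently the centralizer of $\tau$ in $\fk g(\bR)$ contains a compact Cartan subalgebra, supplying the equal-rank condition required by Theorem \ref{t:10.1.2}. If $\bG(\bR)$ happens to have compact factors, one replaces $\bG$ by its noncompact part, since such factors contribute only to $H^0$ of $S(K_f)$ and do not affect the nonvanishing statement.

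Applying Theorem \ref{t:10.1.2} yields a cuspidal automorphic representation $\pi_\bA=\pi_\infty\otimes\pi_f$ of $\bG(\bA)$ stable under $\tau$, with trivial infinitesimal character, satisfying
\[
H^*(\fk g(\bR),K_\infty;\pi_\infty)\ne 0.
\]
I would then invoke Borel's extension of Matsushima's formula to cuspidal cohomology,
\[
H^*_{cusp}(S(K_f),\bC)=\bigoplus_{\pi'} m(\pi')\,H^*(\fk g(\bR),K_\infty;\pi'_\infty)\otimes(\pi'_f)^{K_f},
\]
where the sum runs over cuspidal automorphic representations with trivial infinitesimal character. Because $\pi_f$ is smooth and admissible, there exists an open compact subgroup $K_f\subset\bG(\bA_f)$ with $(\pi_f)^{K_f}\ne 0$; for any such (sufficiently small) $K_f$, the summand indexed by $\pi_\bA$ is nonzero, proving the theorem.

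The hard work has already been carried out in Theorem \ref{t:10.1.2}; the present argument is essentially a packaging step. The only points requiring explicit verification are that the Cartan-like involution assumption does produce the equal-rank hypothesis, and that the Matsushima--Borel decomposition is applicable in the cuspidal setting with trivial coefficients; both are standard.
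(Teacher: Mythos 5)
Your proposal is correct and follows essentially the same route as the paper: apply Theorem \ref{t:10.1.2} with $F$ trivial, note that a Cartan-like involution forces the equal-rank hypothesis, and then read off nonvanishing of $H^*_{cusp}(S(K_f),\bC)$ from the Borel--Matsushima decomposition of cuspidal cohomology, for $K_f$ small enough that $(\pi_f)^{K_f}\ne 0$. The paper compresses this into a one-line remark that the hypotheses of Theorem \ref{t:10.1.2} are satisfied; you have simply made the packaging step (including the treatment of compact factors and the passage from $(\fk g,K_\infty)$-cohomology to cuspidal cohomology) explicit.
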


\medskip
Previously, nonvanishing results for the cohomology of locally
symmetric spaces were proved by \cite{BLS} for the case of
semisimple groups and $S-$arithmetic groups also using
$L^2-$Lefschetz numbers. Using geometric techniques results of
this type for an anisotropic form of $\bG$ were proved in \cite{RS}
and in the special case of the SO(n,1). (For a more detailed
history of the problem see section \ref{10.3})

\bigskip

\subsection{}{Our main tool is the twisted Arthur trace
formula. We construct a function $f_\bA$ which satisfies the
conditions for the simple trace formula of Kottwitz/Labesse to hold.
The major part of the article is devoted to analyzing the twisted
orbital integrals of this function.
\medskip

In the real case the main result is the following. We first prove
in theorem \ref{t:5.3} a formula for the Lefschetz numbers of the
automorphism $\tau$ on the $(\fk g,K_\infty)-$cohomology of
standard representations { and define a Lefschetz function $f_F$. We use this to find an explicit formula
for the twisted orbital integral $O_\gamma(f_F)$  of an
arbitrary elliptic element $\gamma=\delta\tau$ in \ref{t:6.4.1}.}

\begin{theorem}[theorem \ref{t:6.4.1}]
Let $f_{F}$ be the Lefschetz function corresponding to a
$\tau-$stable finite dimensional representation $F$ and $\gamma=\delta\tau$
be an elliptic element. Then
$$
{ O_\gamma(f_F):=}\int_{\bG(\bR)/\bG(\bR)(\gamma)}f_{F}(g\gamma g^{-1})\
dg=(-1)^{q(\gamma)}e(\tau)\tr F^*(\gamma)
$$
\end{theorem}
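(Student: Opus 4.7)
The plan is to evaluate the twisted orbital integral of the Lefschetz function $f_F$ at an elliptic element $\gamma=\delta\tau$ by combining the character-side formula of Theorem \ref{t:5.3} with a twisted version of the Harish-Chandra/Arthur inversion between characters and orbital integrals on the elliptic regular set. By construction, $f_F$ is set up so that for any $\tau$-stable tempered representation $\pi$ its twisted trace $\tr\pi(\tau)(f_F)$ equals the twisted Lefschetz number $\sum_i(-1)^i\tr(\tau\mid\Ho^i(\fk g(\bR),K_\infty;\pi\otimes F))$ computed in Theorem \ref{t:5.3}. The claim $O_\gamma(f_F)=(-1)^{q(\gamma)}e(\tau)\tr F^*(\gamma)$ is therefore a Fourier-dual statement on the elliptic set.

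I would begin by fixing a $\tau$-stable fundamental Cartan $\bT\subset\bG(\bR)(\gamma)$, on which $\gamma$ is regular, and applying the twisted Weyl integration formula. This writes $O_\gamma(f_F)$ as a finite weighted sum of twisted characters $\Theta^\tau_\pi(\gamma)$ over $\tau$-stable discrete series $\pi$ with infinitesimal character matching that of $F$, the weights being the twisted traces $\tr\pi(\tau)(f_F)$. On the elliptic regular set, $\Theta^\tau_\pi(\gamma)$ is given by the twisted Harish-Chandra formula: an alternating sum over the $\tau$-fixed Weyl group of exponentials in the Harish-Chandra parameter of $\pi$, divided by the twisted Weyl denominator.

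Substituting the Lefschetz-number values from Theorem \ref{t:5.3} for the weights, the sum over the $L$-packet telescopes. The alternating coefficients, combined with the alternating sum in the character formula, reassemble as an alternating sum over the full $\tau$-fixed Weyl group acting on the highest weight of $F^*$: this is exactly the Weyl numerator of $F^*$ restricted to $\bT$. Cancelling against the twisted Weyl denominator, one recovers $\tr F^*(\gamma)$ up to an overall sign and a factor $e(\tau)$.

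The principal obstacle is sign bookkeeping. Three independent conventions must be reconciled: the integer $q(\gamma)$, which is an orientation index of the symmetric space of the twisted centralizer $\bG(\bR)(\gamma)$; the Kottwitz--Labesse sign $e(\tau)$, encoding the real form of that centralizer; and the sign discrepancy between the twisted Weyl denominator on $\bT$ and the classical Weyl denominator appearing in the Weyl character formula for $F^*$. Each of these depends on the count of $\tau$-fixed compact versus noncompact imaginary roots, and must be tracked precisely in order for the three signs to combine into the single factor $(-1)^{q(\gamma)}e(\tau)$ claimed by the theorem.
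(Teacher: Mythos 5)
Your proposal follows the same route as the paper: the paper's Theorem \ref{t:6.3} is precisely the inversion you invoke (Fourier inversion on the compact Cartan, after Sally--Warner--Herb), the weights $\Theta_{\C W}(f_F)$ are the Lefschetz numbers $(-1)^r e(\tau)$ from Theorem \ref{t:5.3}, and the reassembly of the packet sum into $\tr F^*(\gamma)$ is Corollary \ref{c:6.2} combined with the identification $(-1)^r=(-1)^{q(\gamma)}$. This matches the paper's argument in substance and in the order of the steps.
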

The undefined notation is as in section \ref{VI}.

\medskip
At the finite places, the main result is a slight generalization of a
result of Kottwitz for the value of the orbital integral of an
elliptic element $\gamma=\delta\tau$ on a Lefschetz function $f_\C L$.

\begin{theorem}[theorem \ref{t:7.5}]
The orbital integrals of $f_{\C L}$ are
$$
O_\gamma(f_{\C L})=\begin{cases} 1 &\text{ if } \gamma \text{ is elliptic,}\\
                                 0 &\text{ otherwise.}
\end{cases}
$$
\end{theorem}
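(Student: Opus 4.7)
The plan is to adapt Kottwitz's construction of Euler--Poincar\'e functions on $p$-adic groups to the $\tau$-twisted setting, following the general philosophy established in his Duke paper and extended to the twisted case by Labesse. The two ingredients are an explicit formula for $f_{\C L}$ as an alternating sum over $\tau$-stable parabolics (equivalently, over a simplex in the Bruhat--Tits building), and a geometric interpretation of the twisted orbital integral via the Lefschetz fixed-point principle on this building.

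First I would recall the construction of $f_{\C L}$ from Section \ref{VIII}: it is, up to normalization, an alternating sum
\[
f_{\C L} \;=\; \sum_{\sigma} (-1)^{\dim \sigma}\, \frac{\one_{K_\sigma}}{\vol(K_\sigma)}
\]
indexed by the $\tau$-stable simplices $\sigma$ in a fundamental chamber of the building $\mathcal{B}$ of $\bG$, with $K_\sigma$ the associated parahoric. Substituting this into the twisted orbital integral and using Frobenius reciprocity (or a direct counting argument), $O_\gamma(f_{\C L})$ becomes the Euler characteristic $\chi(\mathcal{B}^\gamma)$ of the fixed-point set of $\gamma=\delta\tau$ acting on $\mathcal{B}$ through the extended action of $\bG\rtimes\langle\tau\rangle$. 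This is precisely the twisted analogue of the identity Kottwitz uses in the non-twisted case.

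Next I would treat the two cases separately. If $\gamma$ is elliptic, then by definition the twisted centralizer is compact modulo the split center, so the cyclic subgroup generated by $\gamma$ acts on $\mathcal{B}$ through a relatively compact group modulo a split torus that may be arranged to act trivially on a fixed facet. The Bruhat--Tits fixed-point theorem applied to this compact action on the CAT(0) complex $\mathcal{B}$ produces a non-empty fixed set; moreover $\mathcal{B}^\gamma$ is a convex, and hence contractible, subcomplex, so $\chi(\mathcal{B}^\gamma)=1$. If $\gamma$ is \emph{not} elliptic, then after twisted conjugation it lies in a proper $\tau$-stable Levi $\mathbf{M}$, and $\mathcal{B}^\gamma$ contains the building $\mathcal{B}_M^{\gamma}$ of $M$, which admits a free translation action by a nontrivial split torus $A_M/A_G$. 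Standard arguments (writing $\mathcal{B}^\gamma \simeq \mathcal{B}_M^\gamma \times \mathbb{R}^d$ locally and using that a locally finite CW-complex with a free translation action has vanishing Euler characteristic) yield $\chi(\mathcal{B}^\gamma)=0$.

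The main obstacle, and the technical core of the proof, is the non-elliptic case in the twisted context. One must justify the reduction from the twisted centralizer $\bG(\gamma)$ to a proper $\tau$-stable Levi, verify that the equivalence between geometric non-ellipticity on $\mathcal{B}$ and the algebraic notion used in Section \ref{III} is preserved under $\tau$ (this requires care because $\tau$ permutes facets and apartments), and produce the split torus whose translation action forces the Euler characteristic to vanish. Once this is in place, the elliptic case and the collapse of the alternating sum combine to give the stated formula.
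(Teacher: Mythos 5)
Your overall strategy (Kottwitz's Euler--Poincar\'e philosophy on the building) is the one the paper follows, but the central identification on which your two case analyses rest is not correct, and the case analyses would fail as written. Unwinding the alternating sum defining $f_{\C L}$ does \emph{not} give the topological Euler characteristic of the fixed-point set $\C B^\gamma$: the sum $\sum_{\sigma\in\C F(\C B),\ \gamma\sigma=\sigma}(-1)^{\dim\sigma}$ is infinite, and what the orbital integral actually produces (see \ref{7.2}) is Serre's \emph{measured} Euler--Poincar\'e characteristic
\[
\chi(m)=\sum_{\sigma\in G_\#(\gamma)\backslash \C F(\C B(\gamma))}(-1)^{\dim\sigma}\,\frac{1}{m[G_\#(\gamma)_\sigma]},
\]
a sum over orbits of the twisted centralizer on $\C B(\gamma)$ weighted by reciprocal measures of stabilizers. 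Because of this, contractibility of $\C B^\gamma$ (which does hold, by the Bruhat--Tits fixed-point argument, and is needed as hypothesis (i) of Serre's theorem) yields nothing by itself in the elliptic case: the value $1$ comes from Serre's theorem that $\chi(m)\,m$ is the Euler--Poincar\'e measure of $G_\#(\gamma)$, combined with the normalization fixed in \ref{3.1} that the measure on the centralizer \emph{is} the Euler--Poincar\'e measure. Note also that your premise ``elliptic implies the twisted centralizer is compact modulo the split center'' is false --- for $\gamma=\tau$ with $G(\tau)=Sp(2m,\bk)$ the element is elliptic but the centralizer is far from compact; ellipticity only means $G(\gamma)$ contains an anisotropic maximal torus (equivalently, $\langle\gamma\rangle$ has compact closure).

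Symmetrically, in the non-elliptic case the vanishing is the vanishing of the Euler--Poincar\'e measure of $G_\#(\gamma)$ when it has no anisotropic maximal torus; your free-translation-action argument is morally how that vanishing is proved, but you apply it to the wrong object ($\C B^\gamma$ rather than the quotient data entering $\chi(m)$), so the conclusion $\chi(\C B^\gamma)=0$ does not translate into $O_\gamma(f_{\C L})=0$ without rerouting through Serre. You also omit the reduction from $G$ to $\oG$ and from $G(\gamma)$ to $G_\#(\gamma)={}^0G\cap G(\gamma)$ (handled in \ref{7.2} via the action of $\beta$ on the character lattice of the split center), which is needed to make the stabilizers compact open and to get exactly $1$ rather than a volume of a central factor. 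The fix is to replace ``$O_\gamma(f_{\C L})=\chi(\C B^\gamma)$'' by the orbit-weighted formula above, verify Serre's hypotheses (i)--(v) for $G_\#(\gamma)$ acting on $\C B(\gamma)$ as in \ref{7.4}, and then quote Serre's theorem together with the Euler--Poincar\'e normalization of measures.
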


 \medskip
In the last section we plug the function $f_\bA$ into the
trace formula,  and prove that under the assumption of  theorem
\ref{t:10.1.2} we get a nonzero cuspidal contribution on the
spectral side of the trace formula.

Throughout the article we illustrate the results in the example
of $GL(n)$.}

\medskip
\subsection{}The paper is organized as follows. In sections \ref{I}
and \ref{II} we introduce notation and review basic facts about
twisted conjugacy classes. In section \ref{III} we introduce
orbital integrals, in particular we specify the normalization of
the invariant measures we use. In section \ref{IV} and \ref{V} and
\ref{VI} we deal with finite dimensional representations and
Lefschetz numbers in the real case. The main idea is well known;
for a finite dimensional representation $F$, we construct a
\textit{Lefschetz function} $f_F$ which has the property that for
any representation $\pi,$ $\tr\pi(f_F)$ equals the Lefschetz
number. We rely heavily on the work of Knapp-Vogan, \cite{KnV} and
Labesse \cite{LAB1}. Representations of a disconnected group,
$\wti\bG:=\bG\ltimes\langle\tau\rangle,$ where $\tau$ acts on
$\bG$ by an automorphism of finite order, are described by Mackey
theory. We use the version of the classification of irreducible
$(\fk g,K)$ modules of Vogan, where a standard module is
cohomologically induced, \ie of the form $\C
R^s_{\fk  b}(\chi),$ where $\fk b\subset \fk g$ is a Borel
subalgebra. This makes it convenient to extend modules of
$\bG(\bR)$ to $\wti\bG(\bR)$ in a uniform way by using only the
extension of the
action of $\tau$ to Verma modules. The main result is theorem
\ref{t:5.3} which computes the Lefschetz number of a standard
module. Section \ref{VI} computes orbital integrals
$O_\gamma(f_F).$ These formulas are used in  the
trace formula. Since our methods are global, we have to construct
Lefschetz functions at the finite places as well, and compute
orbital integrals; for this we rely heavily on \cite{KO1}. In section
\ref{VIII} we describe the twisted trace formula, and in \ref{IX}
we give the \textit{simple form} that we actually use. This relies
on not only choosing Lefschetz functions at the infinite places
and two of the finite places, but also choosing the components of
$f_\bA$ to have very small support at a finite set of
places, lemmas \ref{small} and \ref{small enough}.

The main result is in section \ref{sec:10} proposition
\ref{p:10.1} and theorem \ref{t:10.1.2}. Section \ref{sec:10.2} is
devoted to the standard example of $GL(n)$ with the automorphism
transpose inverse, and other applications where $\tau$ does not
necessarily have to be an outer automorphism.

\section{\bf Assumptions and Notation}\label{I}

\subsection{}\label{1.1}
{ Let $\bK$ be an arbitrary number field with Galois
group $\Gamma$. Its adeles are denoted by $\bA$ and the finite
adeles by $\bA_f$. If the adeles refer to a field other than $\bK,$ this will
be indicated by a subscript.

 A localization $\bK_v$ at a place $v$ will be abbreviated $\bk$.  Denote by
$\kbar$ its  algebraic closure, and $\Gamma_\bk $ the Galois group.
{Since $\bK$  is totally real, $\bk$ is $\bR$  at an infinite place,
and a finite extension of $\bQ_p$ for some finite prime $p$ at a
finite place.}

\medskip

Let $\mathbb G$ be a connected reductive linear algebraic group
{defined over $\bK.$ Since $\bK \subset \bk$, $\mathbb
G$ is also defined over $\bk$ }, \ie there is a group
homomorphism
\begin{equation}
\Gamma_\bk\longrightarrow Aut(\mathbb G({\kbar})) \label{1.1.1}
\end{equation}}
which takes regular functions to regular functions in the sense
that if $f$ is regular, then so is $[\sig\cdot
f](x):=\sig^{-1}[f(\sig(x))]$ for any $\sig\in\Gamma_\bk.$

To simplify notation we write $G$ for $\mathbb G(\bk)$ and $\Gsn$
for $\mathbb G(\ovl\bk).$ The connected component of the identity of a
group is denoted by subscript 0.
{The Lie algebra of a subgroup is always denoted by the
corresponding gothic letter.}

{ We assume that the derived group ${\mathbb G}_{der}$ is simply
connected so that we have to deal with fewer technicalities.} This is
certainly true for the main example we have in mind which is
${\mathbb  G}=GL(n)$ whose derived group is $SL(n).$ But in
general we will also consider groups of the form  $\Gsn(\tau)$, the
centralizer of an element $\tau\in \Gsn$. Such a group may not be
simply connected, and in fact may not even be connected. A result of
Steinberg states that if $\bG$ is simply
connected, then $\bG(\tau)$ is connected.

\subsection{}\label{1.2}
{We will fix an automorphism $\tau$ of finite order $d$}
{\it defined} over $\bK$ (\ie
$\tau\sig=\sig\tau$ for all $\sig\in \Gamma$).

Denote by $\wti G$ the group
\begin{equation}
\wti \bG:=\bG\ltimes\{1,\tau\},\qquad
[g_1,a_1]\cdot[g_2,a_2]=[g_1\tau(g_2),a_1a_2],
\label{1.1.2}\end{equation} 
and write $\bG^*:=\bG\tau$ for the
connected component containing $\tau.$ The center of $\mathbb G$ is
denoted $\C Z,$ and the center of $\Gsn$ by $\mathbf Z.$

{ For $\gamma \in G^*$  we denote its centralizer in $G$ by
$G(\gamma )$.}

 \subsection{}\label{1.3} If $\gamma\in
\tilde{G}$ is arbitrary, then it has a Jordan decomposition
$\gamma=\gamma_{ss}\gamma_n$. Then $\gamma_n=e^N$ where $N\in\fk
g(\gamma)$ is nilpotent. In particular $\gamma_n\in G,$ and
$\gamma_s\in G^*.$

{%\color{red}
\begin{definition}\label{d:1.3.1}
{ An element $\gamma$ is called semisimple if $\Ad\gamma$
  is semisimple.} An element $\gamma\in \wti G$ is called almost semisimple
if it stabilizes a pair $(\fk b,\fk h)$ where $\fk b$ is a Borel
subgroup of the Lie algebra $\fk g$ of $\mathbf G$ and $\fk h\subset
\fk g$ is a Cartan subalgebra. For such elements, $\Ad\gamma$ is
semisimple on $[\fk g,\fk g],$ but the action on the center of $\fk g$
need not be semisimple.

A semisimple   $\gamma \in \tilde{G}$ is called
\begin{enumerate}
\item elliptic if $G(\gamma)$ contains a maximal anisotropic torus,
\item compact if the closure of the group $<\gamma>$ generated by
  $\gamma$ is compact 
\item regular if it is in $G$ and its centralizer in $G$ is a
  torus,
\item superregular if it is of the form $s\tau$ with $s\in G$, and \newline
$s\tau(s)\tau^2(s) \dots \tau^{d-1}(s)=(s\tau)^d$ is regular.
\end{enumerate}
\noindent The set of {\it superregular} points is denoted by $\Gsr.$

\noindent{The map
\begin{equation}\label{eq:1.3.1}
N:s\tau
\rightarrow \ s\tau(s)\tau^2(s) \dots \tau^{d-1}(s)
  \end{equation}
 is called the norm map. }

\end{definition}
}
\subsection*{Remarks}
\begin{enumerate}
\item In the $p-$adic case the definition of elliptic is equivalent to the
closure of the group $<\gamma>$ generated by $\gamma$ being compact (see
\cite{KO} section 9.1 or \cite{KOSHE} section I), but not in the real
case.
{
\item
In section 3.2 of \cite{KOSHE} the norm map is defined for an abelian
group $H$ as the canonical quotient map 
\begin{equation}\label{l:r1}
H\longrightarrow H/(1-\tau)H.
\end{equation} 
where $(1-\tau)H:=\{ x\tau(x^{-1})\ :\ x\in H \}.$ 
See lemma \ref{l:2.1} for a comparison between $H^\tau$ and 
the image of the map in \ref{eq:1.3.1} over a closed field.\qed
}
\end{enumerate}

%{\color{blue} Perhaps we should just write exactly this as a
 % remark. We should be very vigilant about making sure that we don't
 % run into any problems later when we use the trace formula. I don't
  %remember all the details of all the arguments, but I think that we
 % didn't so much use the explicit map N than the properties of twisted
 % conjugacy classes which we establish in this section. } 

\medskip
The set $\wti G_{sr}$ is open and dense in $G^*$.  

\medskip
{ For $\gamma \in G^*$ its G-conjugacy class is denoted
by $O(\gamma)$. Its $\bf G$-conjugacy class is denoted by ${\bf
O}(\gamma)$.}

\section{\bf Twisted conjugacy classes}\label{II}

\noindent {In this section we discuss the following:

\begin{enumerate}
\item  describe the conjugacy classes of elements $\gamma\in
G^*$ under the adjoint action of $G.$ These are called twisted
conjugacy classes.

\item set up a map from twisted conjugacy classes in $\G^*$ to
usual conjugacy classes in $G(\tau).$ This is called a norm class
correspondence. We also discuss this map in detail for $\mathbb G=
GL(n)$.
\end{enumerate}
}
For the real case many or most of these results are well known from
the work of Bouaziz. We provide proofs that are uniform for both the
real and the p-adic case.

\medskip
\subsection{}\label{2.1} We first work in $\Gsn:=\mathbb G(\kbar).$
{Suppose that $\gamma\in \Gsn^*$ is } semisimple.
According to \STa, there is a pair $(\fk b,\fk h)$, where $\fk b$
is a Borel subgroup and $\fk h\subset \fk b$ is a \CSA, which is stabilized
by $\gamma.$ Since $\tau$ is semisimple ($\tau^d=1$), fix a
$\tau$-stable pair $(\fk b_0,\fk h_0)$. Then there is $g\in
\Gsn$ such that $\fk b_0=g\fk b g^{-1},\ \fk h_0=g\fk hg^{-1}.$
Thus {replacing  $\gamma$ by $g\gamma g^{-1}$ } we may
assume that $\gamma$ stabilizes a $\tau$-stable pair $(\fk b_0,\fk
h_0)$, \ie there is a pair $(\fk b_0,\fk h_0)$ such that
\begin{equation}
\gamma(\fk b_0)=\fk b_0,\ \gamma(\fk h_0)=\fk h_0\quad\text{ and }
\quad \tau(\fk b_0)=\fk b_0,\ \tau(\fk h_0)=\fk h_0.
\label{2.1.1}\end{equation}
 It follows that if we write
$\gamma=h\tau,$ then $h\in \Hb,$ the \CSG\   corresponding to
$\fk h.$ Write $\T$ for the fixed points of $\tau$ in $\Hb.$

{\begin{lemma}\label{l:2.1}
  Let
$$
\Hb^\perp:=\{ h\in \Hb\ :\ h\tau(h)\cdot\ \dots\
\cdot\tau^{d-1}(h)=1 \}.
$$
Then
$$
\Hb^\perp=(1-\tau)\Hb:=\{ h\tau(h)^{-1}\ :\ h\in \Hb\},
$$
and the  map
\begin{equation}
\Psi: \T\times \Hb^\perp \longrightarrow \Hb,\qquad
\Psi(t,h):=th^{-1}\tau(h) \label{2.1.2}
\end{equation}
is onto and has finite kernel.
\end{lemma}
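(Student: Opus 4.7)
The plan is to prove the lemma in three steps: the equality $\Hb^\perp=(1-\tau)\Hb$, surjectivity of $\Psi$, and finiteness of its kernel.

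For the equality I would first check $(1-\tau)\Hb\subseteq\Hb^\perp$ by direct telescoping applied to $y=h\tau(h)^{-1}$: the product $y\tau(y)\cdots\tau^{d-1}(y)$ collapses to $h\tau^d(h)^{-1}=1$ since $\tau^d=1$. The reverse inclusion amounts to the vanishing of $H^1(\langle\tau\rangle,\Hb)$ for the cyclic action on the torus $\Hb$ over the algebraically closed field $\kbar$. I would prove it by choosing an identification $\Hb\cong(\kbar^*)^{\dim\Hb}$ and solving $h=k\tau(k)^{-1}$ explicitly via a Hilbert 90-style averaging argument, using divisibility of $\kbar^*$ to extract the $d$-th roots that appear in the formula for $k$. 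This root-extraction is the step I expect to be the main obstacle; it is where the algebraic closedness of $\kbar$ is indispensable.

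For surjectivity of $\Psi$, its image contains $\T$ (set $h=1$) and contains $\{h^{-1}\tau(h):h\in\Hb^\perp\}=(1-\tau)\Hb$ (applying the first step, after inversion). Thus the image contains the subgroup $\T\cdot(1-\tau)\Hb$. Rank-nullity for the homomorphism $1-\tau:\Hb\to\Hb$, whose kernel is $\T$ and image is $(1-\tau)\Hb$, gives $\dim\T+\dim(1-\tau)\Hb=\dim\Hb$. Moreover, any $y\in\T\cap(1-\tau)\Hb$ satisfies $y^d=y\tau(y)\cdots\tau^{d-1}(y)=N_\tau(y)=1$, so the intersection lies in the finite $d$-torsion of $\Hb$. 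Hence $\T\cdot(1-\tau)\Hb$ has full dimension in the connected group $\Hb$, and being the image of a homomorphism of algebraic groups, is a closed subgroup and therefore equals $\Hb$.

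Finally, for finiteness of the kernel I would use a parallel dimension count. Since $N_\tau|_\T$ is multiplication by $d$, its image contains $\T_0$, so $\dim(\im N_\tau)=\dim\T$; rank-nullity for $N_\tau:\Hb\to\Hb$ then gives $\dim\Hb^\perp=\dim\Hb-\dim\T$. Thus $\dim(\T\times\Hb^\perp)=\dim\Hb$ matches the dimension of the target, and surjectivity of $\Psi$ forces the kernel to be zero-dimensional, \ie finite.
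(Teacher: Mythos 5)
The step you flagged as the main obstacle is indeed where your argument breaks down, and it cannot be repaired as stated: the reverse inclusion $\Hb^\perp\subseteq(1-\tau)\Hb$ is equivalent, as you say, to the vanishing of $\hat H^{-1}(\langle\tau\rangle,\Hb)\cong H^1(\langle\tau\rangle,\Hb)$, but this group does not vanish in general. Hilbert 90 is not available here: its proof uses linear independence of \emph{field} automorphisms, whereas $\tau$ acts on $\kbar^\times$ through the lattice $\C X_*$, not as a field automorphism, and divisibility of $\kbar^\times$ only kills the obstruction up to torsion. Concretely, if $\tau$ restricts to the identity on $\Hb$ (e.g.\ $\tau=\Ad(h_0)$ with $h_0\in\Hb$ of order $d$ modulo the center), then $\Hb^\perp=\Hb[d]$ while $(1-\tau)\Hb=\{1\}$. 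What is true — and what your own rank--nullity computation in the last step already delivers, since $\dim\Hb^\perp=\dim\Hb-\dim\T=\dim(1-\tau)\Hb$ and $(1-\tau)\Hb$ is a closed connected subgroup of $\Hb^\perp$ — is that $(1-\tau)\Hb=(\Hb^\perp)^0$, so the quotient is finite. Note that the paper's own proof never addresses the set-theoretic equality at all; it only establishes surjectivity of $\Psi$ and finiteness of the kernel, which is all that is used later (Proposition 2.1 needs only $\Hb=\T\cdot(1-\tau)\Hb$). You should do the same, replacing the first assertion by the finite-index statement.

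Your treatment of surjectivity and of the kernel is essentially correct and takes a genuinely different route from the paper's. The paper computes explicitly on the cocharacter lattice: writing $\Hb=\C X_*\otimes_\bZ\kbar^\times$, it uses the identity $T^{d-1}+\dots+1=(T-1)[T^{d-2}+2T^{d-3}+\dots+(d-1)]+d$ to split $dx$ into a $\tau$-invariant part plus a part of the form $y-\tau(y)$, then extracts $d$-th roots in $\kbar^\times$ to pass from $dx$ to $x$; the finite kernel is handled by observing that $d\Psi$ is nondegenerate. Your argument — $\ker(1-\tau)=\T$, the intersection $\T\cap(1-\tau)\Hb$ killed by $d$ hence finite, so the closed subgroup $\T\cdot(1-\tau)\Hb$ has full dimension in the connected group $\Hb$, followed by a dimension count for the kernel of the homomorphism $\Psi$ — is cleaner and avoids the explicit cocycle manipulation. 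One point to make explicit (it is glossed over in the paper as well): to realize a given $k\tau(k)^{-1}$ as $h^{-1}\tau(h)$ with $h$ actually in $\Hb^\perp$, replace $h=k^{-1}$ by $k^{-1}u$ with $u\in\T$ satisfying $u^d=N(k)$; such a $u$ exists because $N(\Hb)$ is connected, hence lies in the divisible torus $\T^0$. Without this correction, "applying the first step" only shows the image of $\Psi$ contains $(1-\tau)^2\Hb$, and the identification of this with $(1-\tau)\Hb$ needs the (true, but not free) fact that $1-\tau$ and $(1-\tau)^2$ cut out the same subtorus.
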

\begin{proof}
  Let $\C X_*$ be the weight lattice. Since $\Hb$ is abelian and connected,
$\Hb=\C X_*\otimes_{\bZ}\kbar^\times,$ and
 $\T=\C X_*^\tau\otimes_{\bZ}\kbar^\times.$ To show that $\Psi$ is onto, observe
that the polynomial relation
\begin{equation}
  \label{eq:II.1.3}
  T^{d-1}+\dots +1=[T-1]\cdot[T^{d-2}+2T^{d-3}+\dots + (d-1)] +d
\end{equation}
holds. Thus any $x\in \C X_*$ can be written as
\begin{equation}
  \label{eq:II.1.4}
x=\frac{x + \tau(x) +\dots +\tau^{d-1}(x)}{d} + \frac{y-\tau(y)}{d},
\end{equation}
where
\[
y:=-(d-1)x -(d-2)\tau(x)-\dots -2\tau^{d-3}(x) -\tau^{d-2}(x)\in \C X_*,\]
Write
\[z:=x + \tau(x) +\dots +\tau^{d-1}(x)\in \C X_*^\tau.
\]
Passing to $\bf H,$ let $h:=x\otimes a,$ with $x\in \C X_*$ and $a\in\ovl{\bf
  k}^\times,$ and let $\al\in\ovl{\bk}^\times$ be such that $\al^d=a.$ Then
\begin{equation}
  \label{eq:II.1.5}
  x\otimes a=(z\otimes\al) (y\otimes\al)\tau(y\otimes\al)^{-1}
\end{equation}
where $z\otimes\al\in \bf T,$ and
$(y\otimes\al)\tau(y\otimes\al)^{-1}\in \bf H^\perp.$
The fact that the map has finite kernel now
follows from computing the differential of $\Psi,$ and observing that
it is nondegenerate.
\end{proof}

\medskip
\begin{proposition}\label{p:2.1} Every semisimple $\gamma\in \bf G^*$
  is conjugate   under $\bf G$ to an element of the form $t\tau$ with
  $t\in\bf  T.$ 
\end{proposition}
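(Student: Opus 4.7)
The plan is to combine the reduction already carried out in the paragraph preceding the proposition with the surjectivity statement of Lemma \ref{l:2.1}. Since $\gamma \in \bf G^*$ is semisimple, the result of \STa\ cited at the beginning of \S\ref{2.1} supplies a $\gamma$-stable pair $(\fk b, \fk h)$. Conjugating $\gamma$ by an element of $\bf G$ that moves this pair to the fixed $\tau$-stable pair $(\fk b_0, \fk h_0)$, I may assume that $\gamma$ stabilizes $(\fk b_0, \fk h_0)$ itself. Writing $\gamma = h\tau$, the stability of $\fk h_0$ then forces $h \in \bf H$.

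The heart of the argument is to conjugate away the $\bf H^\perp$-component of $h$ by a further element of $\bf H$. Using Lemma \ref{l:2.1}, the map $\Psi(t,h_1) = t\,h_1^{-1}\tau(h_1)$ from $\bf T \times \bf H^\perp$ to $\bf H$ is surjective, so I can write
\[
h \;=\; t \cdot h_1^{-1}\tau(h_1)
\]
for some $t \in \bf T$ and some $h_1 \in \bf H$. Now I compute the conjugate $h_1 \gamma h_1^{-1}$ inside $\wti{\bf G}$. Using the semidirect product relation $\tau \cdot h_1^{-1} = \tau(h_1)^{-1}\cdot \tau$ and the fact that $\bf H$ is abelian, I get
\[
h_1 \,(h\tau)\, h_1^{-1} \;=\; h_1 h \,\tau(h_1)^{-1}\,\tau \;=\; \bigl(h\cdot h_1\tau(h_1)^{-1}\bigr)\tau.
\]
Substituting the expression for $h$ and simplifying (again using that $\bf H$ is abelian) yields $h\cdot h_1\tau(h_1)^{-1} = t$, so $h_1 \gamma h_1^{-1} = t\tau$ with $t \in \bf T$, as desired.

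Apart from correctly keeping track of the multiplication rule in $\wti{\bf G}$, there is no real obstacle: Steinberg's fixed-pair theorem does the geometric work, Lemma \ref{l:2.1} does the group-theoretic work, and the abelian-ness of $\bf H$ makes the final conjugation transparent. The one place I would pause to double-check is the direction of the factor $h_1^{-1}\tau(h_1)$ in $\Psi$ versus $\tau(h_1)h_1^{-1}$, to make sure my choice of conjugating element $h_1$ matches the surjectivity statement as written. Note that only conjugacy in $\bf G$ (not in $\bf G^\tau$) is claimed, so no descent question arises at this stage.
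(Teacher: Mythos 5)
Your proof is correct and follows the same route as the paper: reduce via Steinberg's fixed-pair theorem to $\gamma=h\tau$ with $h\in\bf H$, then use the surjectivity of $\Psi$ from Lemma \ref{l:2.1} to split off the $\bf H^\perp$-factor and absorb it by conjugating with an element of $\bf H$. The only difference is notational (you write the $\bf H^\perp$-component as $h_1^{-1}\tau(h_1)$ and conjugate by $h_1$, while the paper writes it as $x\tau(x)^{-1}$ and conjugates by $x^{-1}$), and your bookkeeping of the semidirect product multiplication is accurate.
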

\begin{proof}
By the discussion at the beginning of section
\ref{2.1}, there is an element $g\in G$ such that $g\gamma g^{-1}$
stabilizes the pair $(\fk b_0,\fk h_0).$ Thus $\wti\gamma:=g\gamma
g^{-1}$ is of the form $h\tau$ with $h\in \bf H.$ By lemma
\ref{l:2.1}, $h\tau=h^\perp t\tau=x\tau(x)^{-1}t\tau=xt\tau x^{-1}.$
\end{proof}

\medskip
\begin{corollary} {Suppose that $\gamma \in \mathbf G^*$ is semisimple and
 $\gamma^d=1.$ } Then $\gamma$ is conjugate by $\mathbf G$ to $\tau.$
\end{corollary}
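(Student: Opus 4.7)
The plan is to combine Proposition \ref{p:2.1} with Lemma \ref{l:2.1}; together they reduce the corollary to a one-line computation on the torus $\mathbf H$.

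First, I would apply Proposition \ref{p:2.1} to replace $\gamma$ by a $\mathbf G$-conjugate of the form $t\tau$ with $t\in\mathbf T$. Being $\mathbf G$-conjugate to $\tau$ is stable under this reduction and the hypothesis $\gamma^d=1$ is preserved, so there is no loss of generality in assuming $\gamma=t\tau$ from the outset.

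Next, I would unwind the product $(t\tau)^d$ using the identity $\tau h\tau^{-1}=\tau(h)$ to get
\[
\gamma^d=(t\tau)^d=t\cdot\tau(t)\cdot\tau^2(t)\cdots\tau^{d-1}(t)\cdot\tau^d.
\]
Because $t\in\mathbf T$ is fixed by $\tau$ and $\tau^d=1$ in $\wti{\bG}$, this collapses to $\gamma^d=t^d$. The assumption $\gamma^d=1$ therefore gives $t^d=1$, which is exactly the condition $t\cdot\tau(t)\cdots\tau^{d-1}(t)=1$, i.e., $t\in\mathbf H^\perp$ in the notation of Lemma \ref{l:2.1}.

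Finally, by Lemma \ref{l:2.1} we have $\mathbf H^\perp=(1-\tau)\mathbf H$, so there exists $y\in\mathbf H$ with $t=y\,\tau(y)^{-1}$. A direct computation then yields
\[
y^{-1}\gamma y=y^{-1}(t\tau)y=y^{-1}t\,\tau(y)\,\tau=y^{-1}\bigl(y\,\tau(y)^{-1}\bigr)\tau(y)\,\tau=\tau,
\]
and since $y\in\mathbf H\subset\mathbf G$ this exhibits $\gamma$ as $\mathbf G$-conjugate to $\tau$. There is no substantive obstacle here; the only care required is in the sign/ordering convention for the twisted conjugation, so that the element supplied by Lemma \ref{l:2.1} solves precisely the equation $t=y\tau(y)^{-1}$ needed to move $t\tau$ to $\tau$ rather than its inverse.
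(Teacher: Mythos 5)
Your proof is correct and follows essentially the same route as the paper's: reduce to $\gamma=t\tau$ with $t\in\T$ via Proposition \ref{p:2.1}, deduce $t\in\Hb^\perp$ from $\gamma^d=1$, and use Lemma \ref{l:2.1} to write $t$ as a twisted coboundary, which conjugates $t\tau$ to $\tau$. The only cosmetic difference is that you solve $t=y\tau(y)^{-1}$ while the paper writes $t=h^{-1}\tau(h)$; these are the same set, so the arguments coincide.
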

\begin{proof}
By the above discussion, $\gamma$ is conjugate to an element of
the form {$t\tau$ with $t$ in a torus $ \T$ fixed under
$\tau$}. On the other hand, the condition $\gamma^d=1$ implies
that $t\in \Hb^\perp.$ Thus $t=h^{-1}\tau(h)$ so
$$
t\tau=h^{-1}\tau(h)\tau=h^{-1}\tau h.
$$
\end{proof}

\subsection{}\label{2.2}
We now consider the case of $\bf k$ which is not necessarily
closed. Recall $\Hb =\T \Hb^\perp \subset \G$, a $\tau$-invariant Cartan
subgroup.

\medskip
\begin{proposition}[1]\

\begin{enumerate}
\item $\T$ contains regular elements { as well as
    superregular elements}.
\item  If $\T$ is defined over $\bk,$ then { $T:=\bT(k)$ contains
regular elements as well as superregular elements.}
\end{enumerate}
\end{proposition}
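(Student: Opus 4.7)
The plan is to realize the regular and superregular loci in $\T$ as nonempty Zariski-open subsets, and for part (2) to descend to $\bk$-rational points by density. An element $t \in \T \subset \Hb$ is regular in $\G$ precisely when $\al(t) \neq 1$ for every root $\al$ of $\Hb$ in $\G$; and since $\tau(t) = t$ forces $N(t\tau) = t^d$, the element $t\tau$ is superregular precisely when $\al(t)^d \neq 1$ for every such $\al$. In each case the bad locus is a finite union of subvarieties of $\T$, so everything reduces to showing that each individual term is proper.

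The key step will be to verify that the restriction $\al|_{\T}$ is a nontrivial character of $\T$ for every root $\al$. Since $\tau(t) = t$ for $t \in \T$, we have $\al(t) = \al(\tau(t)) = (\tau^{-1}\al)(t)$, so all members of a single $\tau$-orbit of roots restrict to the same character of $\T$. Letting $k$ be the orbit length, $\al|_{\T}$ coincides with $\tfrac{1}{k}$ times the restriction of the orbit sum $\bar\al := \al + \tau\al + \cdots + \tau^{k-1}\al \in (\fk h^*)^\tau = \fk t^*$, so $\al|_{\T}$ is nontrivial iff $\bar\al \neq 0$ in $\fk t^*$. I would invoke the non-degeneracy of the root system on $\fk t$ afforded by the choice of $\Hb$: no $\tau$-orbit of roots on $\fk h$ sums to zero.

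Granted $\bar\al \neq 0$, both $\{t \in \T : \al(t) = 1\}$ and $\{t \in \T : \al(t)^d = 1\}$ are proper closed subgroups of $\T$ (the second because the $d$-th power map on a torus has finite kernel). Their finite unions over the roots are proper closed subvarieties of $\T$, and the complements supply regular and superregular elements respectively, proving (1). For (2), both complements are defined over $\bk$; since $\T$ is a $\bk$-torus and $\bk$ is infinite, $\T(\bk)$ is Zariski dense in $\T$ and therefore meets each of these open sets, producing the required $\bk$-rational regular and superregular elements.

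The main obstacle is the nonvanishing of $\bar\al$ on $\fk t$ for every root $\al$: if some $\tau$-orbit of roots summed to zero then $\T$ would lie entirely in $\ker\al$ and contain no regular element. The proof therefore hinges on $\Hb$ being chosen (a fundamental $\tau$-stable Cartan, or one compatible with the equal-rank framework underlying the main theorem) so that this degeneracy is excluded; the remaining dimension count and density argument for $\bk$-points are standard once that is in place.
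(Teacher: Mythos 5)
Your reduction of the statement to a root-theoretic assertion is sound, and it is genuinely different from the paper's proof, which passes to the Lie algebra, cites Gantmacher's theorem for the density of the regular and superregular sets in $\T$, and then invokes Rosenlicht's density theorem for the rational points. Your identification of the bad loci as $\bigcup_\al\{t\in\T : \al(t)=1\}$ (resp. $\bigcup_\al\{t\in\T:\al(t)^d=1\}$, using $N(t\tau)=t^d$ for $t\in\T$) is correct, as is the density argument for part (2). However, the step on which everything hinges --- that no $\tau$-orbit of roots restricts to zero on $\fk t=\fk h^\tau$ --- is exactly the content of the Gantmacher theorem the paper cites, and you assert it rather than prove it; moreover the reason you offer (that $\Hb$ is a fundamental Cartan, or fits the equal-rank framework of the main theorem) is not the relevant hypothesis and would not by itself exclude the degeneracy.

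The correct justification comes from the setup of section \ref{2.1}: $\tau$ stabilizes the pair $(\fk b_0,\fk h_0)$, hence permutes the positive system $\Delta^+$ attached to $\fk b_0$. Each $\tau$-orbit of roots therefore lies entirely in $\Delta^+$ or entirely in $-\Delta^+$, so the orbit sum $\bar\al=\al+\tau\al+\cdots+\tau^{k-1}\al$ is a nonzero element of $\fk h^*$ (a nonempty sum of positive roots, or of negative roots). Since $\tau$ has finite order, $\fk h=\fk h^\tau\oplus(1-\tau)\fk h$, and a $\tau$-invariant functional automatically vanishes on $(1-\tau)\fk h$; consequently a $\tau$-invariant functional that also vanished on $\fk h^\tau$ would be identically zero. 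As $\bar\al$ is $\tau$-invariant and nonzero, it cannot vanish on $\fk t$. Note that the $\tau$-stability of the Borel is essential: for an automorphism of $\fk h$ induced by an arbitrary Weyl group element (for instance $w=-1$ when $-1\in W$), orbit sums of roots do vanish and the fixed torus contains no regular elements. With this lemma supplied, your argument is complete and yields a self-contained alternative to the paper's appeal to Gantmacher; part (2) then coincides with the paper's use of density of rational points.
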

\begin{proof}  {%\color{red} 
We first show the assertions for $\bb T.$ 
Because we are working over $\kbar,$ it is enough to
   consider the case of the
Lie algebra; then it is the same as over $\bb C$ and it is due to
F. Gantmacher ( See theorem 5 and 28 in \cite{Gant}). Thus
the set of regular elements as well as the set of superregular
elements is dense in $\T.$ For $T,$ the assertions follow
from Rosenlicht's density theorem, which  says that for a perfect
field $k$, $\bG(\bk)$ is dense in $\bG(\kbar)$ for any $\bG$ defined
over $\bk$, \cite{Ros}.}
\end{proof}

\noindent {\bf Remark.}\ 
In  case $\bk=\bb R,$ a similar result is proved by
A.  Bouaziz (1.3.1 in \cite{Bou}) for a more general type of group in
essentially in the same way.

\begin{proposition}[2] There exist finitely many superregular elements \newline
  $\gamma_1,\dots, \gamma_k\in G^*$ such that every superregular element is
  conjugate under $G$ to an element in ${T}_i\gamma_i,$ where
  $T_i:=G(\gamma_i).$
\end{proposition}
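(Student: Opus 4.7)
The plan is to reduce the classification of $G$-conjugacy classes of superregular elements in $G^*$ to a finite list of $\tau$-stable Cartan subgroups of $\bG$ defined over $\bk$, and then to realize each superregular element inside the corresponding coset $T_i\tau = T_i\gamma_i$. First I would establish that the $G$-conjugacy classes of $\tau$-stable Cartan subgroups of $\bG$ defined over $\bk$ form a finite set. In the archimedean case this is the $\tau$-twisted version of Sugiura's theorem; in the $p$-adic case it follows from the finiteness of $H^1(\Gamma_\bk,N_\bG(\bH))$ for any fixed Cartan $\bH$, which rests on the local structure of $\bk$. Fix representatives $\bH_1,\dots,\bH_k$ and put $T_i:=\bH_i^\tau(\bk)$.

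Next I would show that an arbitrary superregular $\gamma=s\tau\in G^*$ can be $G$-conjugated into $T_i\tau$ for some $i$. The input is the regular element $N(\gamma)=\gamma^d\in G$, whose centralizer $\bH=Z_\bG(N(\gamma))$ is a Cartan defined over $\bk$, and $\gamma$ automatically normalizes $\bH$ because $\gamma$ commutes with $\gamma^d$. A direct computation gives $\tau(\bH)=s^{-1}\bH s$, so $\bH$ itself is $\tau$-stable exactly when $s\in N_\bG(\bH)$. To arrange this, I would combine Proposition 2.1 (which gives a $\bG(\kbar)$-conjugation of $\gamma$ into $\bT_0\tau$ for a $\tau$-stable Cartan $\bH_0$ over $\kbar$) with a Galois cohomology descent: the obstruction to carrying out the conjugation over $\bk$ lies in an $H^1(\Gamma_\bk,\bH^\tau)$, which is controlled thanks to the hypothesis that $\bG_{der}$ is simply connected (using Steinberg's theorem in the $p$-adic case and the analogous real-field results). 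After replacing $\gamma$ by the rational conjugate, $\gamma$ sits in $\bT\tau$ for some $\tau$-stable Cartan $\bH$ over $\bk$, hence in the $G$-conjugacy class of one of the chosen $\bH_i$.

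For the base points $\gamma_i$: by Proposition 1(2), each $T_i$ contains elements $t_i$ making $\gamma_i:=t_i\tau$ superregular. Its twisted centralizer is $G(\gamma_i)=T_i$ because any $g\in G$ commuting with $\gamma_i$ commutes with the regular element $\gamma_i^d=t_i^d$, forcing $g\in\bH_i(\bk)$, and the twisted centralizer equation $g t_i\tau(g)^{-1}=t_i$ then collapses (using $\bH_i$ abelian) to $g=\tau(g)$, i.e.\ $g\in T_i$. Since $T_i$ is a group containing $t_i$, the coset $T_i\gamma_i$ equals $T_i\tau$, which is exactly where the previous step placed $g\gamma g^{-1}$.

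The main obstacle is the rationality descent in the second step: Proposition 2.1 handles everything cleanly over $\kbar$, but transferring the conjugation to $G$ requires controlling a Galois cohomology class. The assumption that $\bG_{der}$ is simply connected, together with the local nature of $\bk$, is what keeps this controllable; without it one would only get a weaker statement up to stable conjugacy or finitely many inner twists.
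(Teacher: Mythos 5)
Your proposal diverges from the paper's argument in a way that creates a genuine gap in the second step. The paper does \emph{not} claim that a superregular $\gamma$ can be $G$-conjugated into a coset $T_i\tau$ with the base point of the form $t_i\tau$ and $t_i$ fixed by $\tau$; it only claims conjugation into some $T_i\gamma_i$ with $\gamma_i$ one of finitely many representatives, and these $\gamma_i$ are in general \emph{not} of the form $t_i\tau$ with $\tau$-fixed $t_i$. Your Galois cohomology descent would have to trivialize a class living in $\ker[\Ho^1(\Gamma_\bk,\bH^\tau)\to \Ho^1(\Gamma_\bk,\bG)]$. The simply-connectedness hypothesis on $\bG_{der}$ and Steinberg's theorem make $\Ho^1(\Gamma_\bk,\bG)$ small — which makes this kernel \emph{larger}, not smaller. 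The group $\Ho^1(\Gamma_\bk,\bH^\tau)$ is the cohomology of a torus over a local field and is typically nontrivial (e.g.\ it contains pieces like $\bk^\times/(\bk^\times)^2$). So the obstruction does not vanish in general, and your step ``conjugate $\gamma$ into $T_i\tau$'' simply fails.

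What the paper does instead is precisely to account for this obstruction, rather than to show it vanishes. Having fixed a pair $(\fk b,\fk h)$ over $\bk$ normalized by $\gamma$, it observes that two superregular elements normalizing the same pair and inducing the same automorphism differ by an element of $H$, and the subgroup $H'=T\{h^{-1}\gamma(h)\}_{h\in H}$ has finite index in $H$. The finitely many cosets $H/H'$ are exactly the representatives $a_j$ accounting for the cohomological obstruction you would need to kill, and the paper bundles them into additional base points $a_j\gamma_1$. Combined with the finiteness of $G$-conjugacy classes of Cartan subgroups and of automorphisms of a fixed pair, this yields the proposition without ever needing a descent that is generally impossible. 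Your first step (finitely many conjugacy classes of $\tau$-stable Cartans) and your third step (the computation $G(\gamma_i)=T_i$ for $t_i$ fixed by $\tau$) are fine, but the middle step needs to be replaced by an enumeration of the finite set $H/H'$ rather than an attempted trivialization.
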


\begin{proof}
By proposition (1) of \ref{2.2}, there are superregular elements defined
over $\bf k;$ in fact the set of such elements is open and dense in
$G^*.$ Fix a superregular $\gamma\in G^*,$ and let $(\fk b,\fk h)$ be
a pair stabilized by $\gamma.$ Let $\bb T$ be the centralizer of
$\gamma.$  Then $\bf T$ is
stabilized by any $\sig\in \Gamma$ because $\gamma$ is stabilized by
such a $\sig.$ Since $\bf H$ is the centralizer of $\bf T,$ 
the  same holds for $\bf H$.
Thus there is a Cartan subgroup $H\subset G$ defined over $\bf k$ 
which is normalized by $\gamma.$  
{%\color{red} 
Two superregular elements $\gamma_1$ and $\gamma_2$
which stabilize the pair $(\fk b,\fk h)$ differ by an automorphism of
the pair; there are only finitely many such automorphisms. Suppose
$\gamma_1$ and $\gamma_2$ induce the same automorphism.  Then
 ${H}':={T}\{h^{-1}\gamma(h)\}_{h\in {H}}$ has finite
index in $H.$ Let $a_1,\dots ,a_l$ be representatives of
${H/H'}.$ It follows that $\gamma_2$ is conjugate to $ta_j\gamma_1$ for some
$1\le j\le l.$ The claim now follows from the fact that the number of
$G-$conjugacy classes of \CSGs\ and pairs $(\fk b,\fk h)$ with the
same $\fk h$ is finite.
}

If a superregular $\gamma'$ normalizes the Cartan subgroup $H,$ then
$\Ad\gamma'$ is an automorphism of $H$ which stabilizes the
root system, and there are only finitely many such automorphisms.

Then $\gamma_2=h\gamma_1$ with $h\in {H}$. Let ${T}$ be
the fixed points of $\gamma_1$ (and $\gamma_2$ as well). 
\end{proof}

\subsection{}\label{2.3}
Recall (\ref{eq:1.3.1}) the \textit{norm map}, $N:\bf
G^*\longrightarrow G,$ given by $N(\gamma):=\gamma^d$. It induces a
map from twisted
$\Gsn$-orbits in $\Gsn^*$ to usual orbits in $\Gsn.$ The
discussion above shows that for $\gamma$ semisimple, the  orbit of
$N(\gamma)$ intersects the fixed points $\Gsn(\tau).$ Thus $N$ induces a
a \textit{norm class correspondence} $\C N$, which takes twisted
semisimple orbits in $\bf G^*$ to unions of semisimple orbits in
${\bf G}(\tau):$
\begin{equation}
\C N: {\bf O}(\gamma)\mapsto  {\bf O}(\gamma^d)\cap \Gsn(\tau).
\label{2.3.1}
\end{equation}
\begin{proposition} If $\gamma$ is semisimple, then
${\bf O}(\gamma^d)\cap \Gsn(\tau)$ is a finite union of orbits.
\end{proposition}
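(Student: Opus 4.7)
The strategy is to put $\gamma$ into canonical form via Proposition \ref{p:2.1}, and then parametrize the $\mathbf{G}(\tau)$-orbits in $\mathbf{O}(\gamma^d)\cap \mathbf{G}(\tau)$ by a first cohomology set of the finite cyclic group $\langle\tau\rangle$, which is finite by classical results.

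After conjugating by $\mathbf{G}$, I may assume $\gamma = t\tau$ with $t \in \mathbf{T} = \mathbf{H}^\tau$. Since $\tau(t)=t$ and $\tau^d = 1$, a direct telescoping in $\wti{\mathbf{G}}$ gives
\[
\gamma^d \;=\; t\cdot\tau(t)\cdots\tau^{d-1}(t)\cdot\tau^d \;=\; t^d \;\in\; \mathbf{T}\subset \mathbf{G}(\tau).
\]
In particular $x := \gamma^d$ is itself fixed by $\tau$, so the intersection is nonempty.

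Now take any $y\in \mathbf{O}(x)\cap \mathbf{G}(\tau)$ and write $y=gxg^{-1}$. Combining $\tau(y)=y$ with $\tau(x)=x$ shows $c(g):=g^{-1}\tau(g)\in \mathbf{G}(x)$, and $\tau^d=1$ gives the cocycle identity
\[
c(g)\cdot\tau(c(g))\cdots \tau^{d-1}(c(g))\;=\;g^{-1}\tau^d(g)\;=\;1.
\]
The precise ambiguity in representing a $\mathbf{G}(\tau)$-conjugate of $y$ as $g'x(g')^{-1}$ is $g'=hgz$ with $h\in \mathbf{G}(\tau)$ and $z\in \mathbf{G}(x)$, and this replaces $c(g)$ by $z^{-1}c(g)\tau(z)$, i.e., by a $\tau$-twisted coboundary. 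A short converse check (if two cocycles $c(g_i)$ differ by a twisted coboundary then the $y_i$ are $\mathbf{G}(\tau)$-conjugate) yields a well-defined injection
\[
\mathbf{G}(\tau)\backslash\bigl(\mathbf{O}(x)\cap \mathbf{G}(\tau)\bigr)\;\hookrightarrow\; H^1\bigl(\langle\tau\rangle,\,\mathbf{G}(x)\bigr).
\]

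The proposition is thus reduced to finiteness of the right-hand side. The centralizer $\mathbf{G}(x)$ of the semisimple element $x$ is a reductive linear algebraic group over $\kbar$, and $\langle\tau\rangle$ is a finite cyclic group acting by algebraic automorphisms, so $H^1(\langle\tau\rangle,\mathbf{G}(x))$ is finite by a standard result in nonabelian cohomology (triviality for the connected reductive identity component $\mathbf{G}(x)^0$ via Steinberg, combined with inflation-restriction against the finite component group of $\mathbf{G}(x)$, which is trivial when $x$ lies in the simply connected derived group). The only nontrivial input is this cohomological finiteness; the reduction to canonical form and the cocycle formalism are routine, and the main obstacle is ensuring that finiteness of $H^1$ in fact applies to the possibly disconnected centralizer $\mathbf{G}(x)$ rather than just to its identity component.
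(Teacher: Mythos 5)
Your argument is correct, but it takes a genuinely different route from the paper's. The paper's proof is a two-line torus argument: after reducing to $\gamma=t\tau$ with $t\in\T$, so that $\gamma^d=t^d\in\T\subset\Gsn(\tau)$, it notes that every element of $\mathbf O(t^d)\cap\Gsn(\tau)$ is semisimple, hence conjugate under $\Gsn(\tau)$ into a maximal torus of $\Gsn(\tau)$ and therefore into $\Hb$, while $\mathbf O(t^d)\cap\Hb$ is finite (it lies in the Weyl orbit of $t^d$); finiteness of the orbit set follows. You instead encode the orbit set by the cocycle $g\mapsto g^{-1}\tau(g)$ and inject it into the nonabelian cohomology $\Ho^1(\langle\tau\rangle,\Gsn(x))$; your cocycle and injectivity computations are correct, and this is exactly the $\langle\tau\rangle$-analogue of proposition (2) of section \ref{2.6}, which the paper states only for Galois cohomology. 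The gain is structural: your parametrization exhibits the fibers of the norm correspondence as a kernel of a map of $\Ho^1$'s, in the style the paper uses later for stable conjugacy. The cost is that the finiteness of $\Ho^1(\langle\tau\rangle,\Gsn(x)^0)$ over $\kbar$ is not quite the black box you invoke: Steinberg's vanishing theorem concerns Galois cohomology over fields of cohomological dimension at most one and does not apply to cohomology of the finite group $\langle\tau\rangle$, and triviality can genuinely fail (if $\tau$ acts trivially on a central torus of $\Gsn(x)$, its $d$-torsion contributes nontrivial classes). What is true, and all you need, is finiteness, and its proof is: a class corresponds to an element $a\tau$ with $(a\tau)^d=1$, hence (in characteristic zero) semisimple, hence by proposition \ref{p:2.1} conjugate to $t'\tau$ with $t'$ in the $d$-torsion of a torus, a finite set — so the cohomological detour ultimately reruns the same ``conjugate into a torus and count'' argument. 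Finally, the disconnectedness of $\Gsn(x)$ that you flag as the main obstacle is not an issue here: since $\bG_{der}$ is assumed simply connected, centralizers of semisimple elements are connected, and in any case the component group is finite, so the twisting argument you sketch does close the gap.
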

\begin{proof}  From earlier, $\gamma$ is
conjugate to $t\tau$ with $t\in {\bf T}$ showing that the
intersection is nonempty.  The orbit ${\bf O}(\gamma^d)$ coincides
with ${\bf O}(t^d).$ Thus it suffices to show  that ${\bf O}(t^d)$
intersects $\Gsn(\tau)$ in finitely many orbits. But this is clear
since ${\bf O}(t^d)\cap \bf H$ is finite.
\end{proof}}

The map $N$ and correspondence $\C N$ make sense for $G^*$ and $G.$
It is still true that $O(\gamma^d)\cap G(\tau)$ consists of finitely
many orbits, but the image might be empty.

\bigskip

\subsection{}\label{2.4} {We illustrate this for
$\mathbf G=GL(n,\ovl{\bk}).$} The automorphism $\tau$ will be of order two
related to Cartan involutions of real groups. We write it as
\begin{equation}\label{2.4.1}
\tau(g):=w_0(\ ^tx^{-1})w_0^{-1},\qquad
w_0=\begin{bmatrix} 0&\dots &\dots&\dots &\dots  & 1\\
                    0&\dots &\dots &\dots &1     & 0\\
                     \vdots&\vdots &\vdots&\vdots &\vdots&\vdots\\
                     0&1    &\dots&\dots  &\dots & 0 \\
                     1&\dots &\dots &\dots &\dots& 0\\
     \end{bmatrix}
\end{equation}
{If $n$ is odd $\tau$ is up to conjugation  the unique
outer automorphism of order two}. If $n$ is even, there is
another conjugacy class of automorphisms of order two with
representative {$\Ad(t_o)\circ\tau$} where
$$
t_0=\diag[i,\dots,i,-i,\dots ,-i].
$$
 The element $t_0\tau$ has
order 4, and is conjugate to the one in more familiar form where
$w_0$ in (\ref{2.4.1}) is replaced by
\begin{equation}\label{2.4.2}
w_0=\begin{bmatrix} 0&\dots &\dots&\dots &\dots  & 1\\
                    0&\dots &\dots &\dots &1     & 0\\
                     \vdots&\vdots &\vdots&\vdots &\vdots&\vdots\\
                     0&-1   &\dots&\dots  &\dots & 0 \\
                    -1&\dots &\dots &\dots &\dots& 0
     \end{bmatrix}.
\end{equation}
These elements preserve the upper triangular group which gives a
$\tau$ stable pair $(\fk b_0,\fk h_0)$ defined over $\bk.$ The
centralizers are $\Gsn(\tau)=O(n)$ for (\ref{2.4.1}) and $\Gsn(\tau)=Sp(n)$
for (\ref{2.4.2}). In  case (\ref{2.4.1}) with $n$ even, the
automorphism $\tau$ does not preserve the root vectors. In other
words it does not preserve a splitting.

More generally, outer automorphisms of finite order of
$GL(n,\kbar)$ are (conjugate to elements) of the form $t\tau$ with
$t\in \bf H$ fixed by $\tau$. The centralizer of such an element is a
product of $GL(m)'$s and possibly an orthogonal or a symplectic
group. This follows from the fact that the centralizer of
$t$ is a product of $GL(m)'$s.

\medskip
Conjugacy classes of semisimple elements in $GL(n,\bk)$ are
determined by the characteristic polynomial
\begin{equation*}
p_A(\la)=\det(\la I-A).
\end{equation*}
The minimal polynomial $m_A(\la)$ of $A$ is
the polynomial of minimal degree with leading coefficient 1 satisfying
$m_A(A)=0.$
The matrix $A$ is  { regular} if $m_A=p_A,$ where $m_A$ is
the minimal polynomial of $A.$   Two regular
semisimple matrices $A$ and $B$ are similar if and only if
$m_A=m_B.$

\bigskip
Suppose $\gamma=x\tau\in{\bf\wti G}_{sr}$ {\it is superregular.} From
\ref{2.1} we know that we can assume $x\in \bf T,$ so $\gamma^2$ can
be conjugated into $\bf G(\tau).$
\begin{comment}
The characteristic polynomial of
$\gamma^2=x\tau(x)$ satisfies
\begin{equation}
\la^np_A(\la^{-1})=p_A(\la). \label{eq:2.4.3}
\end{equation}
Indeed,
\begin{equation}
p_{x\tau(x)}=\det(\la I- x\tau(x))=\det(\la x^{-1}-\tau(x))=\det(\la
I-\tau(x)x)=p_{\tau(x)x}. \label{eq:2.4.4}
\end{equation}

{\color{blue} I believe this only if det(x)=1!}
\end{comment}
\begin{proposition}[1]
Suppose $\gamma\in{\bf \wti G}_{sr}.$ Then ${\bf O}(\gamma^2)\cap {\bf
G}(\tau)$ is a single orbit.
\end{proposition}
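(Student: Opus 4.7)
The strategy is to upgrade the finiteness statement of Proposition \ref{2.3} to the assertion that, in the superregular case, the intersection collapses to a single orbit, by running a Hilbert--90 type argument inside the centralizer torus of $\gamma^2$ and then applying Lemma \ref{l:2.1} (with $d=2$) to write the resulting cocycle as a coboundary.

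The first step is a normal form: by Proposition \ref{p:2.1}, after conjugating by $\bf G$ I may assume $\gamma = s\tau$ with $s\in\bf T$, a $\tau$-stable maximal torus (the diagonal torus for $GL(n)$). Then $\gamma^2 = s\,\tau(s)\in\bf T$, and since $s$ and $\tau(s)$ commute in $\bf T$ one has $\tau(\gamma^2)=\tau(s)s = s\tau(s)=\gamma^2$, so $\gamma^2\in\bf G(\tau)$. The superregularity of $\gamma$ says precisely that $\gamma^2$ is regular semisimple in $\bf G$, so its centralizer in $\bf G$ equals the torus $\bf T$ (regular semisimple elements of $GL(n)$ have abelian connected centralizer equal to the unique maximal torus containing them).

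Next, pick any $y\in{\bf O}(\gamma^2)\cap\bf G(\tau)$ and write $y = g\gamma^2 g^{-1}$ with $g\in\bf G$. Applying $\tau$ and using $\tau(y)=y$ and $\tau(\gamma^2)=\gamma^2$ gives
\begin{equation*}
g\,\gamma^2\,g^{-1} \;=\; \tau(g)\,\gamma^2\,\tau(g)^{-1},
\end{equation*}
so $t := g^{-1}\tau(g)$ centralizes $\gamma^2$, hence $t\in\bf T$. Moreover $\tau(t)\cdot t = \tau(g)^{-1}g\cdot g^{-1}\tau(g) = 1$, so $t\in\bf T^{\perp}$ in the sense of Lemma \ref{l:2.1}. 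By that lemma (applied with $d=2$) there exists $h\in\bf T$ with $t = h\,\tau(h)^{-1}$.

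Finally, set $g' := gh$. Then $\tau(g') = \tau(g)\tau(h) = g\cdot h\tau(h)^{-1}\cdot\tau(h) = gh = g'$, so $g'\in\bf G(\tau)$. Since $h\in\bf T$ commutes with $\gamma^2$, one has
\begin{equation*}
g'\,\gamma^2\,(g')^{-1} \;=\; g\,\gamma^2\,g^{-1} \;=\; y,
\end{equation*}
exhibiting $y$ and $\gamma^2$ as $\bf G(\tau)$-conjugate. Thus every element of ${\bf O}(\gamma^2)\cap\bf G(\tau)$ lies in the $\bf G(\tau)$-orbit of $\gamma^2$, which is the claim. The only potentially delicate point is the identification of the centralizer of $\gamma^2$ with the torus $\bf T$, which is why superregularity (rather than mere regularity of $\gamma$) is needed; once that is in hand the argument is a clean cocycle computation. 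The same scheme should go through for a general involution $\tau$ on an arbitrary reductive $\bf G$ whose derived group is simply connected, since the centralizer of a regular semisimple element is then always a connected torus.
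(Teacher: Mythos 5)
Your argument is correct, but it goes by a genuinely different route from the paper's. The paper's proof is specific to $\bG=GL(n)$ and the transpose-inverse involution: it invokes the classical fact that regular semisimple elements of $O(n,\kbar)$ or $Sp(n,\kbar)$ are conjugate if and only if they have the same characteristic polynomial (a fact the authors caution does \emph{not} hold for $SO(\kbar)$), and since all elements of ${\bf O}(\gamma^2)\cap {\bf G}(\tau)$ share the characteristic polynomial of $\gamma^2$, the intersection is a single $\bG(\tau)$-orbit. You instead run a cocycle argument: given $y=g\gamma^2g^{-1}\in\bG(\tau)$, the element $t:=g^{-1}\tau(g)$ lands in the centralizer of $\gamma^2$ --- a maximal torus by superregularity --- and satisfies $t\tau(t)=1$, so it lies in $\Hb^\perp$; Lemma \ref{l:2.1} (with $d=2$) then writes $t=h\tau(h)^{-1}$, and replacing $g$ by $gh$ produces a $\tau$-fixed conjugating element. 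This is cleaner and more portable: it works for any connected reductive $\bG$ with simply connected derived group and any involution $\tau$, whereas the paper's argument leans on the explicit classification of conjugacy classes in $O(n)$ and $Sp(n)$. One small notational warning: the $\T$ of Proposition \ref{p:2.1} is the $\tau$-\emph{fixed} subtorus of $\Hb$, a maximal torus of $\bG(\tau)$, whereas the torus in which your cocycle $t$ lives (the centralizer of the regular element $\gamma^2$) is the full maximal torus $\Hb$ of $\bG$; your proof silently uses $\bf T$ for the latter, and the appeal to Lemma \ref{l:2.1} is really to $\Hb^\perp=(1-\tau)\Hb$, which is exactly what the lemma gives.
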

\begin{proof}
Two regular semisimple elements $A,\ B\in {\bf G(\tau)} $ are
conjugate if and only if
$p_A=p_B.$ This follows by using an explicit realization of a \CSG\ 
of $\bf G(\tau).$ (Note however that this fact is {\bf not}
true for $SO(\kbar)$.) Thus all the elements in
$\bf O(\gamma^2)\cap \bf G(\tau)$ have the same minimal polynomial
and the claim follows.
\end{proof}

\medskip

 We will now  show that
$\mathbf O(\gamma^2)$ has points defined over $\bk.$ For this we will use
the cross section introduced by Steinberg. Let $\Gsn$ be any connected
linear algebraic semisimple group, and $(\mathbf B,\mathbf H)$ be as in section
\ref{2.1}. Let $\Delta^+$ be the system of positive roots attached
to $(\mathbf B,\mathbf H)$ and $\Pi$ be the simple roots. Let $\sig_i$ be
representatives for the simple root reflections, and $X_i$ be root
vectors. Let  $Z_i(t)=exp(tX_i)$ be the corresponding 1-parameter
subgroup. In section 1.4 of  [Steinberg2]) it is proved that the set
\begin{equation}
\C M:=Z_1(t_1)\sig_1\cdot\ \dots\ \cdot Z_n(t_n)\sig_n \label{2.4.3}
\end{equation}
is a cross section for the set of regular elements of $G.$

\noindent If $G$ is defined over $\bk$ and is quasiplit, then $\C
M$ can be defined over $\bk.$

\medskip
 \noindent\textbf{Remarks.}
\begin{enumerate}
\item There is a restriction on $n$ for type $A$ for the fact that $\C
  M$ is defined over $\mathbf k.$ This restriction applies to the
  unitary groups, not the general linear group.
\item  For $SL(n,\bk),$ this cross section is the well known canonical form
\begin{equation}
\C M=\{\  \bmatrix c_{n-1}&-c_{n-2}     &\dots &(-1)^{n-1}c_1   &(-1)^n       \\
                 1&0     &\dots &0     &0   \\
            \vdots&\vdots&\dots &\vdots&\vdots  \\
                 0&0     &\dots &0     &0 \\
                 0&0     &\dots &1     &0    \endbmatrix\ \}.
\end{equation}
The characteristic polynomial is $p(\la)=\la^n-c_{n-1}\la^{n-1}+\dots
+(-1)^n.$
\end{enumerate}

\medskip
Since $\gamma^2$ can be conjugated into $\bf G(\tau)$ the
characteristic polynomial of $\gamma^2=x\tau(x)$ satisfies
\begin{equation}
\la^np_A(\la^{-1})=p_A(\la). \label{eq:2.4.5}
\end{equation}

\begin{proposition}[2] Suppose that either $n$ is odd and $G(\tau)=O(n)$ or
  $n$ is even and $G(\tau)=Sp(n).$ For any polynomial $p(\la)$ satisfying
(\ref{eq:2.4.5}) with coefficients in $\bk,$ there is a regular
element $A$ in the split form of $G(\tau)$ with characteristic
polynomial $p.$
\end{proposition}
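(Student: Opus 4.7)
The plan is to apply Steinberg's cross section construction \eqref{2.4.3} to the split form of $G(\tau)$ over $\bk$ and read off $\bk$-rational regular elements from $\bk$-rational self-reciprocal polynomials.

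First I would fix $G_s,$ the split form of $G(\tau)$ over $\bk$: this is $Sp(n)$ for even $n,$ and split $SO(n)$ for odd $n.$ Since $G_s$ is split, it is in particular quasi-split, so by the remark following \eqref{2.4.3} its Steinberg cross section $\C M \subset G_s$ is defined over $\bk.$ Its dimension equals $m := \rank G_s$ (namely $m=n/2$ for even $n,$ and $m=(n-1)/2$ for odd $n$). Next, let $\C P$ denote the affine $\bk$-variety of monic polynomials of degree $n$ with coefficients in $\bk$ satisfying \eqref{eq:2.4.5}; the palindrome condition $c_i = \pm c_{n-i}$ exhibits an isomorphism $\C P \cong \bA^m$ of $\bk$-varieties with free coordinates $c_1,\ldots,c_m.$

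The central input is Steinberg's theorem that the characteristic polynomial map for $G_s$ factors through its Chevalley quotient and restricts to an isomorphism of $\bk$-varieties from $\C M$ onto that quotient, which is itself isomorphic to $\bA^m.$ For the split classical groups $Sp(n)$ and $SO(n)$ (odd $n$) the Chevalley quotient is canonically identified with $\C P$ via the characteristic polynomial of the defining representation: a regular semisimple class is determined by a Weyl orbit of eigenvalues $\{\la_1^{\pm 1},\dots,\la_m^{\pm 1}\}$ (with an extra $\pm 1$ in the orthogonal case), equivalently by a monic self-reciprocal polynomial of degree $n.$ Composing, one obtains a bijection $\C M(\bk) \leftrightarrow \C P(\bk)$: any $p \in \C P(\bk)$ corresponds to a regular element $A = Z_1(t_1)\sig_1 \cdots Z_m(t_m)\sig_m \in \C M(\bk)$ with $t_1,\dots,t_m \in \bk,$ lying in the split form of $G(\tau)$ and having characteristic polynomial $p,$ as required.

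The hardest point to pin down is the explicit identification of the Chevalley quotient of $G_s$ with $\C P$ at the level of $\bk$-coordinates, so that the composite really is an isomorphism of $\bk$-varieties and not merely a bijection on $\kbar$-points. For $Sp(n)$ and $SO(n)$ (odd $n$) this is classical: the fundamental invariants may be taken to be the coefficients $c_1,\dots,c_m$ of the characteristic polynomial in the defining representation (or equivalently traces of exterior powers), which match the palindrome coordinates of $\C P$ directly. In the orthogonal case, if $G(\tau)=O(n)\neq SO(n),$ an auxiliary step covers the non-identity component by precomposing the cross section with a fixed $\bk$-rational reflection, producing the polynomials with $p(0)=-1.$
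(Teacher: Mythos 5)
Your proposal rests on the same idea as the paper's proof: the Steinberg cross section of the split form of $G(\tau)$ is defined over $\bk$, and its $\bk$-points realize all $\bk$-rational self-reciprocal characteristic polynomials. The difference is in how the decisive step is carried out. You appeal to Steinberg's isomorphism $\C M\cong T/W$ together with the identification of $T/W$ with the polynomial space $\C P$; the paper instead writes the simple one-parameter subgroups $Y_i$ of $G(\tau)$ explicitly as products of root subgroups $Z_j$ of $GL(n)$ and computes the characteristic polynomial of $Y_1(t_1)\sig_1\cdots$ directly, finding that $(t_1,\dots,t_m)\mapsto(c_1,\dots,c_m)$ is essentially the identity map, hence invertible over $\bk$. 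That computation is precisely what settles the point you single out as hardest: a bijection on $\kbar$-points does not by itself give surjectivity on $\bk$-points, so one genuinely needs the map $\C M\to\C P$ to be a $\bk$-isomorphism, and in the odd orthogonal case the standard form of Steinberg's theorem (stated for simply connected groups) does not apply verbatim to $SO(2m+1)$. Relatedly, for odd $n$ the condition (\ref{eq:2.4.5}) forces $p(0)=1$, hence $\det A=-1$, so the sought element actually lies in the non-identity component of $O(n)$; your closing remark about precomposing with a $\bk$-rational reflection is the right fix, though the components are swapped relative to what you wrote (the identity component $SO(n)$ yields $p(0)=-1$). The paper's explicit calculation is insensitive to these subtleties, which is what it buys over the abstract citation; your route is cleaner where Steinberg's theorem applies on the nose, namely for $Sp(n)$.
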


\begin{proof} Write $n=2m+\ep.$
We can choose a labelling of the simple roots so that the one
parameter subgroups $Y_i(t)$  of the simple root vectors in $G(\tau)$
are
\begin{equation*}
\begin{aligned}
&Y_1(t_1):=Z_1(t_1)Z_2(t_1),\dots
,Y_{m-1}(t_{m-1}):=Z_{m-1}(t_{m-1})Z_m(t_{m-1}),\\
&Y_\ep(t_\ep):=Z_\ep(t_\ep)
\end{aligned}
\end{equation*}
and the $\sig_i$ have similar expressions. The proof follows from a direct
calculation. Any polynomial $p(t)=t^n-c_1t^{m-1}+c_2t^{m-2} +\dots $ satisfying
\ref{eq:2.4.5} is the characteristic polynomial of the matrix obtained
by setting $t_1=c_1,\dots ,t_{m-1}=c_{m-1},\ t_\ep=c_m.$
\end{proof}

\medskip
Now assume $n=2m$ and that $G(\tau)=O(n)$. Consider the split form
of $O(n)$. In this case we can choose a labelling of the simple
roots so that
$$
Y_1(t_1):=Z_1(t_1)Z_2(t_1),\dots ,
Y_{m-1}(t_{m-1}):=Z_{m-1}(t_{m-1})Z_{m}(t_{m-1})
$$
are the 1-parameter subgroups corresponding to $m-1$ of the simple
roots of type $D.$
The last simple root vector $Y_m(t_m)$  cannot be written in terms of
simple root vectors of $GL(2n,\bf k)$. Let $t_m$
be the parameter for the last simple root. The characteristic
polynomial of an element in this cross section is
\begin{align}
  \label{eq:2.4.6}
\ &x^{2m} + a_mx^{2m-1} + (a_{m-1}-1)x^{2m-2} +
(a_{m-2}-a_m)x^{2m-3}
  +\dots \notag \\
\ &-(a_{4} +a_{2}a_{1})x^{m+1} -(2a_{3} + a_{1}^2 + a_{2}^2)x^m
+\dots
\end{align}
All but the last two equations are linear so we can solve for
$a_3,\dots , a_{m}$ in terms of the coefficients of the polynomial
for $A.$ For the last two we need that
\begin{equation}
  \label{eq:2.4.7}
  c_m-2a_3 \pm(2c_{m-1}-2a_4)
\end{equation}
be squares.
\begin{proposition}[3]
  If $n=2m,$ the orbit $O(\gamma^2)$ intersects a unique quasi split
  form of $O(n).$
\end{proposition}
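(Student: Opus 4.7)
The plan is to establish existence and uniqueness of the quasi-split form separately, and then combine.

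Uniqueness follows from a general fact: a regular semisimple element $A$ of $GL(2m,\bk)$ preserves at most a one-dimensional space of nondegenerate symmetric bilinear forms on $\bk^{2m},$ because its centralizer in $GL(2m,\bk)$ is a maximal torus and the torus invariants in $\Hom(V,V^*)$ form a line. Hence if conjugates of $\gamma^2$ lie in $O(Q_1)$ and in $O(Q_2)$ for two quadratic forms $Q_i,$ then after conjugation $Q_1$ and $Q_2$ are scalar multiples of each other, so they are equivalent over $\bk$ and define conjugate subgroups of $GL(2m,\bk),$ i.e. the same quasi-split form of $O(2m).$

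For existence I would extend the Steinberg cross-section analysis of Proposition (2) to each quasi-split outer form of $O(2m).$ The quasi-split forms of $O(2m)$ over $\bk$ are parametrized by $\bk^\times/(\bk^\times)^2$: the split $D_m$ corresponds to the trivial class, and each nontrivial class $d$ yields the outer form ${}^2D_m$ splitting over $\bk(\sqrt d).$ In the split form the cross-section analysis produces the characteristic polynomial in (\ref{eq:2.4.6}) and reduces to the squareness conditions (\ref{eq:2.4.7}). For the outer form with parameter $d$ one uses a twisted cross section: the last simple root vector is replaced by one attached to the Galois orbit of the two ``forked'' simple roots of $D_m,$ with parameter living in $\bk(\sqrt d)$ but stable under the twisted Galois action. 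Repeating the characteristic polynomial calculation, the conditions (\ref{eq:2.4.7}) are replaced by
\[
c_m - 2a_3 \pm (2c_{m-1} - 2a_4) \in d\cdot(\bk^\times)^2.
\]
Given the polynomial $p$ of $\gamma^2,$ exactly one class $d$ (namely the common square class of those two quantities) makes this solvable, and that class identifies the unique quasi-split form in which $O(\gamma^2)$ meets.

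The main obstacle will be the construction of the twisted Steinberg cross section for the outer form ${}^2D_m$: one needs Galois descent to produce $\bk$-rational cross-section elements from $\bk(\sqrt d)$-rational root vectors, and to verify that the factor of $\sqrt d$ enters the coefficient equations of (\ref{eq:2.4.6}) so as to change ``squares'' into ``$d$ times squares'' exactly. Type $D_m$ is more delicate here than type $A$ used in the proof of Proposition (2), because the last simple root vector of $D_m$ is not itself a simple root vector of $GL(2m,\bk),$ so one must do the bookkeeping directly in the group rather than importing it from $GL.$
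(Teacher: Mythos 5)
Your uniqueness argument rests on a false lemma. The centralizer $T$ of a regular semisimple $A\in GL(2m,\bk)$ is indeed a maximal torus, but $T$ has \emph{no} nonzero invariants in the space of bilinear forms: over $\kbar$, invariance of the matrix entry $B_{ij}$ under the full diagonal torus would force $t_it_j=1$ for all $t\in T$. The space you actually need is the space of forms invariant under $A$ itself, and if the eigenvalues of $A$ pair off as $\{\la_i,\la_i^{-1}\}$, $i=1,\dots,m$, with none equal to $\pm 1$, that space is $m$-dimensional (one free parameter for each pair), not a line. For $m\ge 2$ the element $A$ therefore lies in a whole family of pairwise non-proportional orthogonal groups, and these are in general not even conjugate inside $GL(2m,\bk)$: rescaling the invariant form on a self-paired factor $E=\bk[\la]/(p_i(\la))$ by $c$ changes the group by the class of $c$ modulo norms from $E$. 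What is true, and what a correct uniqueness proof must establish, is that all these invariant forms have the same discriminant, so that at most one of the resulting orthogonal groups can be quasi-split. (A smaller slip: $Q$ and $cQ$ need not be equivalent forms over $\bk$; they merely satisfy $O(Q)=O(cQ)$, which is all you actually use.)

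For existence your condition is too rigid. The two quantities $c_m-2a_3\pm(2c_{m-1}-2a_4)$ of (\ref{eq:2.4.7}) will in general lie in \emph{different} square classes, and then no single $d$ satisfies your requirement that both lie in $d\cdot(\bk^\times)^2$. This is precisely why the paper's family (\ref{2.4.8}), $Q=x_1x_{2m}+\dots+x_{m-1}x_{m+2}+\zeta_1x_m^2+\zeta_2x_{m+1}^2$, carries \emph{two} independent parameters: each sign condition acquires its own $\zeta_i$, so both can always be met, while the isomorphism class of the resulting quasi-split group depends only on $\zeta_1\zeta_2$ modulo squares and is therefore pinned down as the product (not the common value) of the two square classes. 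Working with these explicit quadratic forms and their most split Cartan subgroups (\ref{2.4.9}) also lets you bypass the Galois descent of the Steinberg cross section that you flag as the main obstacle.
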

%%%%%%%%%%%%%%%%%%%%%%%%%%%%%%%%%%%%%%%%%%%%%%%%%%%%%%%%%%%%%%%%%%%%%%%%%%%%
\begin{proof}  Let $\zeta_1,\ \zeta_2\in \bk.$
Consider the orthogonal group which preserves the form
\begin{equation}
Q(x):=x_1x_{2m} + \dots + x_{m-1}x_{m+2}+\zeta_1x_m^2 +
\zeta_2x_{m+1}^2. \label{2.4.8}\end{equation} These groups are
quasisplit. Precisely, the most split \CSG\  is
\begin{equation}
H:=\{\ diag(a_1,\dots ,a_{m-1},\begin{bmatrix} a&b\\-b\zeta &
a\end{bmatrix},a_{m-1}^{-1},\dots , a_1^{-1}),\} \label{2.4.9}
\end{equation}
where $\zeta=\zeta_1/\zeta_2,\ a^2+\zeta b^2=1.$

A similar calculation in the proof of the previous theorem shows
that there is always a choice of $\zeta_1,\ \zeta_2$ so that the
equations have a solution.
\end{proof}

\medskip

If a regular semisimple $t\in GL(n,\bk)$ is such that its orbit is
$\tau-$stable, then $t$ is conjugate to a $\tau-$stable
element by a $g\in GL(n,\bk)$ not just $GL(n,\kbar).$ This is
again because the characteristic polynomial determines the
conjugacy class over $GL(n,\bk).$

\begin{corollary} Every semisimple $\gamma\in G^*$ is
conjugate by an element of $GL(n,\bk)$ to one of the form $x\tau,$
wth $x$ in a $\tau$-stable \CSA\  $\ H$.
\end{corollary}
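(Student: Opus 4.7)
The plan is to reduce to the case covered by the remark immediately preceding the corollary, via the norm element $\gamma^2$, and then to work inside the centralizer of this norm. First I would write $\gamma = h\tau$ with $h \in GL(n,\bk)$, and set $s := \gamma^2 = h\tau(h) \in GL(n,\bk)$. Since $\gamma$ is semisimple, so is $s$. An elementary calculation using $\tau^2 = 1$ yields $\tau(s) = \tau(h)h = h^{-1}sh$, so the $GL(n,\bk)$-conjugacy class of $s$ is $\tau$-stable. Because characteristic polynomials classify $GL(n,\bk)$-conjugacy among semisimple elements---a fact valid for all semisimple (not only regular) elements, by the primary decomposition / rational canonical form---the argument of the preceding remark, combined with Propositions (2) and (3) of \ref{2.4} (which supply a $\tau$-fixed $\bk$-rational element in an appropriate form of $G(\tau)$ with the prescribed characteristic polynomial), produces $g_0 \in GL(n,\bk)$ with $g_0 s g_0^{-1}$ fixed by $\tau$. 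Replacing $\gamma$ by $g_0\gamma g_0^{-1}$, we may therefore assume $s$ is $\tau$-fixed from the outset.

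Under this assumption, combining $\tau(s)=s$ with $s=h\tau(h)$ forces $h\tau(h)=\tau(h)h$, whence $hs=sh$. Thus $h$ lies in $\mathbf M := C_{\mathbf G}(s)$, a $\tau$-stable, $\bk$-rational, connected reductive subgroup of $\mathbf G$. For $\mathbf G=GL(n)$, $\mathbf M$ is explicitly a product of general linear groups over finite extensions of $\bk$ indexed by the Galois orbits of eigenvalues of $s$, with $\tau$ permuting the factors according to its action on eigenvalues. The element $\gamma=h\tau$ thus sits in $\mathbf M\rtimes\langle\tau\rangle$ as a semisimple element.

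Applying Proposition \ref{p:2.1} inside $\mathbf M$ conjugates $\gamma$ over $\kbar$ into the form $x\tau$ with $x$ in the $\tau$-fixed part of a $\tau$-stable Cartan of $\mathbf M$. The main obstacle is descending this conjugation to $\bk$, since a priori the conjugator lies only in $\mathbf M(\kbar)$. To descend I would exploit the product structure of $\mathbf M$, applying the same characteristic-polynomial rigidity argument factor-by-factor: on each $GL(m)$ component over an extension $\bk'/\bk$ the reasoning of the preceding remark applies verbatim, while $\tau$-stability across pairs of factors exchanged by $\tau$ is arranged using the permutation structure together with a paired choice of conjugator on those factors. Composing the resulting $\mathbf M(\bk)$-conjugation with the one from the first step, and noting that a maximal torus of $\mathbf M$ is a Cartan subgroup of $\mathbf G$, yields the desired element of $GL(n,\bk)$ together with the $\tau$-stable Cartan $H$ containing $x$.
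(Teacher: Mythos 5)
Your reduction to $s=\gamma^2$ breaks down at its first step, and it breaks precisely at the difficulty the corollary is meant to address. Propositions (2) and (3) of \ref{2.4} rest on Steinberg's cross section and therefore produce a \emph{regular} element of (a quasi-split form of) $G(\tau)$ with a prescribed characteristic polynomial. For a general semisimple $\gamma$ the norm $s=\gamma^2$ is not regular; a regular element with the characteristic polynomial of $s$ is then not semisimple, hence not conjugate to $s$, so those propositions say nothing about $s$. Moreover, even granting a conjugate of $s$ inside \emph{some} orthogonal group, Proposition (3) only places the orbit in a quasi-split form which need not be the fixed-point group of the given $\tau$, so you do not obtain $\tau(g_0sg_0^{-1})=g_0sg_0^{-1}$. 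Whether a non-regular semisimple class of $GL(n,\bk)$ meets a \emph{given} $O(n)$ depends on which quadratic forms its $\pm1$-eigenspaces carry, and the characteristic polynomial no longer decides this. Finally, the factor-by-factor descent inside $\mathbf M=C(s)$ is circular: on each $\tau$-stable factor $GL(m_i,\bk_i)$ you face exactly the original statement again (conjugate a twisted semisimple element over the ground field so that it normalizes a $\tau$-stable Cartan), so nothing has been reduced.

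The paper's proof avoids $\gamma^2$ altogether. It notes that $C(\gamma,\kbar)$ contains a whole Cartan subgroup, hence regular semisimple points whose orbits are $\tau$-stable; since this locus is $\Gamma$-invariant, Rosenlicht density yields a regular semisimple rational point $t\in C(\gamma,\bk)$. The remark preceding the corollary (the genuinely regular case, where the characteristic polynomial does determine $GL(n,\bk)$-conjugacy and Propositions (2)--(3) apply) conjugates $t$ into a $\tau$-fixed point, and then $H:=C(t)$ is automatically a $\tau$-stable \CSG\ containing $x$, because $\gamma=x\tau$ commutes with $t$. The idea missing from your argument is to apply the regular-element machinery not to the norm of $\gamma$ but to a regular rational point of its centralizer.
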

\begin{proof} The centralizer $C(\gamma,\kbar)$ contains a
superregular point whose $G$-orbit is $\tau$-stable, because by
section \ref{2.1} $\gamma$ is conjugate to an element $h\tau$ with
$h\in H_0.$ Because the variety of such elements is invariant
under $\Gamma,$ it has regular rational points in $C(\gamma,\bk).$
Let $t$ be such a point. Then conjugate $t$ via $GL(n,\bk)$ into a
$\tau$-fixed point. So we may as well assume that $C(\gamma,\bk)$
has a $\tau$-fixed regular semisimple point $t.$ Let $H$ be the
(necessarily $\tau$-stable) \CSG\ corresponding to $t.$ Then since
$\gamma=x\tau$ centralizes it, $x\in H.$
\end{proof}
In particular, all the conclusions about the norm map and regular
elements, extend to the case of semisimple elements.

\medskip
\subsection{Orbits in the Real Case}\label{2.5}

{ Let $G:=\Gsn(\bb R)$ be the real points of $\mathbb G.$
We assume that $\tau$ is defined over $\bR$, so it induces an
automorphism of $G$.  We denote by subscript $0$ a real algebra; an
absence of a subscript indicates a complex algebra or vector space. So
let  $\fk g_0$ be the Lie algebra of $G.$ It is well
known ([Helgason]) that we may fix a maximal compact subgroup K,
and a Cartan decomposition $\fk g_0=\fk k_0 +\fk s_0$ with Cartan
involution $\theta$ so that $\theta $ commutes with $\tau$.}  Then
the Cartan decomposition $\fk g_0=\fk k_0 +\fk s_0$  is invariant under
$\tau.$ Let $\widetilde K:=K\ltimes \{\tau\}.$

\medskip
If $\gamma \in G^*$ is semisimple, it has a decomposition into its
compact and hyperbolic parts $\gamma=\gamma_{e}\gamma_{h}.$
Here $\gamma_{h}=exp Y$ where $Y$ is hyperbolic.

Suppose $\gamma$ is compact. There is a Cartan involution $\theta'$ which
commutes with $\gamma$ (\Helg). Then let $g\in G$ be such that
$g^{-1}\theta g=\theta'.$ Then $g\gamma g^{-1}$ is fixed by
$\theta.$ So if we write $\gamma=x\tau,$ then $x\in K.$

So in general, if $\gamma=\gamma_e\gamma_h$ is arbitrary, we may
assume (by possibly conjugating $\gamma$ by $G$) that
$\theta(\gamma_e)=\gamma_e$ and $\theta(Y)=-Y.$ We
will do so without further mention.

\medskip
 The classification of compact elliptic elements reduces to the
corresponding problem for the compact group. But since this group
may be disconnected, we need some modification of our previous
results.

We assume in this section that $K$ is arbitrary compact with
identity component $K_0,$ and Lie algebra $\fk k _0$.
Denote by $K_c$ the \textit{connected}
group with Lie algebra $\fk k,$ { the
complexification  of $\fk k_0$ .} We say that a pair $(\fk b,\fk
h)$ in $\fk k$ is rational if $\fk h$ is rational (or
equivalently $\fk b\cap\theta\fk b=\fk h$).

\begin{proposition} {Suppose that   $\gamma \in K\tau.$}
There is a  rational pair $(\fk b,\fk h)$ stable  under $\gamma.$
\end{proposition}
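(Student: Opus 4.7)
The plan is to locate a rational $\gamma$-stable Cartan $\fk h$ starting from the fixed-point subgroup $K_0^\gamma$, and then to produce a $\gamma$-stable Borel above $\fk h$ from a $\gamma$-fixed regular element.

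First I would note that conjugation by $\gamma$ acts as a finite-order automorphism of the connected compact group $K_0$, so the fixed subgroup $K_0^\gamma$ is a closed (possibly disconnected) compact Lie subgroup. Choose a maximal torus $S_0$ of $(K_0^\gamma)^0$, put $\fk s:=\fk s_{0,\bC}\subset\fk k$, and let $T_0:=Z_{K_0}(S_0)$. Since centralizers of tori in connected compact Lie groups are connected, reductive, and of full rank, $T_0$ is a $\gamma$-stable connected reductive subgroup of $K_0$ containing $S_0$ as a central torus. The fixed subalgebra satisfies $\fk t_0^\gamma=\fk z_{\fk k_0^\gamma}(\fk s_0)=\fk s_0$ by the maximality of $S_0$ in $(K_0^\gamma)^0$. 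The key claim is that $T_0$ is abelian. Decomposing the reductive algebra as $\fk t_0=\fk s_0\oplus[\fk t_0,\fk t_0]$ with $\fk s_0\subset Z(\fk t_0)$ yields $[\fk t_0,\fk t_0]^\gamma\subset\fk s_0\cap[\fk t_0,\fk t_0]=0$. But a nonzero compact semisimple Lie algebra cannot admit a finite-order automorphism with trivial fixed subalgebra (a standard consequence of Kac's classification, also implicit in Steinberg's theorem \STa\ as used in section \ref{2.1}). Hence $[\fk t_0,\fk t_0]=0$, and $T_0$ is a maximal torus of $K_0$.

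Setting $\fk h:=\fk t_{0,\bC}$, rationality of the pair $(\fk b,\fk h)$ for any Borel $\fk b\supset\fk h$ is then automatic: $\fk h$ is the complexification of a real Cartan of $\fk k_0$, so $\fk b\cap\theta\fk b=\fk h$. To choose the Borel $\gamma$-stably, observe that $Z_{\fk k}(\fk s)=\fk h$ by construction, so no root of $(\fk k,\fk h)$ vanishes on $\fk s$; in particular $\fk s$ contains regular elements of $\fk h$. Any such regular element $H\in\fk s$ is $\gamma$-fixed (since $\gamma$ acts trivially on $\fk s_0$), so the Weyl chamber through $H$ is $\gamma$-stable, and its positive system defines the desired $\gamma$-stable Borel $\fk b\supset\fk h$.

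The main obstacle is the abelianness claim for $T_0=Z_{K_0}(S_0)$; everything else is bookkeeping. This step rests on the non-triviality of the fixed subalgebra of any finite-order automorphism of a nonzero compact semisimple Lie algebra, which is standard but not entirely elementary and which is in the spirit of the Steinberg-type results invoked earlier in the paper.
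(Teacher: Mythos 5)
Your proof is correct, but it takes a genuinely different route from the paper's. The paper starts from a fixed rational pair $(\fk b,\fk h)$, uses the earlier Steinberg-type discussion to produce $k\in K_0$ such that $k\gamma$ stabilizes it, and then removes the $k$ by a Lang-type surjectivity argument: the twisted map $\psi(x,b)=xbk\gamma(x^{-1})k^{-1}$ from $K_0\times B$ onto $K_c$ lets one replace the pair by a $K_0$-conjugate $(x\fk b,x\fk h)$ that $\gamma$ itself preserves. You instead build the pair from scratch out of the fixed-point group, in the Gantmacher--de Siebenthal style: a maximal torus $S_0$ of $(K_0^\gamma)^0$, the centralizer $T_0=Z_{K_0}(S_0)$ shown to be a maximal torus because a nonzero $\gamma$-stable compact semisimple factor cannot have trivial fixed subalgebra, and a $\gamma$-fixed regular element of $i\fk s_0$ to select the Borel. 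Your construction actually yields slightly more than the statement asks: the Cartan you produce is fundamental relative to $\gamma$ (its fixed part $\fk s$ is as large as possible and $Z_{\fk k}(\fk s)=\fk h$), which is precisely the configuration exploited later in lemma \ref{l:2.1} and in the character formulas; the paper's argument is shorter but leans on the unproved surjectivity of $\psi$ and on the preliminary existence of $k$. One small imprecision in your key step: $\Ad\gamma$ need not have finite order on $\fk k_0$ --- only $\tau$ has finite order, and $\gamma^d$ is an element of $K$ whose adjoint action may have infinite order --- so the fact you should invoke is that \emph{every} automorphism of a nonzero compact semisimple Lie algebra (equivalently, every semisimple automorphism of its complexification) has nonzero fixed subalgebra; this follows from \STa, since such an automorphism stabilizes some pair $(\fk b',\fk h')$, permutes the positive roots, and hence fixes the nonzero element of $\fk h'$ dual to $2\rho$. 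With that adjustment the argument is complete.
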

\begin{proof}
Let $(\fk b,\fk h)$ be a fixed rational pair.
There is $k\in K_0$ such that $k\gamma$ stabilizes $(\fk b,\fk
h).$ Let $B\subset K_c$ be the Borel subgroup with Lie algebra
$\fk b$. The map
\begin{equation}\label{2.5.1}
\psi:K_0\times B\longrightarrow
K_c,\qquad\psi(x,b):=xbk\gamma(x^{-1})k^{-1}
\end{equation}
is onto. The pair $(x\fk b,x\fk h)$ satisfies the required
properties.
\end{proof}
{Recall that for an arbitrary compact group, a \CSG\ $H\subset K$ is
defined to be the normalizer of a rational pair  $(\fk b,\fk h)$.
Similarly $\wti H$ is the normalizer of $(\fk b,\fk h)$
in $\wti K.$ }
\begin{corollary}\label{c:2.5} { Suppose that   $\gamma \in K\tau.$}
  Then $\gamma$ is conjugate via $K_0$ to an element which leaves a
  $\tau$-stable rational pair $(\fk b_\tau,\fk h_\tau)$ invariant.
  Thus $\gamma$  is conjugate to an element of the form
  $\gamma=h\tau$ with $h\in H.$ Any element of $\wti K$ is conjugate
  via $K_0$ to an   element in $\wti H,$ and $\wti H$ meets every
  connected component of   $\wti K.$
\end{corollary}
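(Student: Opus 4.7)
The strategy is to invoke the preceding proposition twice. First, apply it to $\tau = 1\cdot\tau \in K\tau$ to secure a rational pair $(\fk b_\tau,\fk h_\tau)$ stable under $\tau$; this will serve as the target reference pair. Second, apply it to the given element $\gamma$ to produce some $\gamma$-stable rational pair $(\fk b,\fk h)$. The plan is then to transport $(\fk b,\fk h)$ onto $(\fk b_\tau,\fk h_\tau)$ via an inner automorphism coming from $K_0$, thereby dragging $\gamma$ into the normalizer of a $\tau$-stable pair.

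For the transport step I use the standard fact that $K_0$ acts transitively on the variety of rational pairs in $\fk k$—equivalently, that any Cartan subgroup $H \subset K$ satisfies $K_0 H = K$. Hence pick $k_0 \in K_0$ with $\Ad(k_0)(\fk b,\fk h) = (\fk b_\tau,\fk h_\tau)$, and set $\gamma' := k_0\gamma k_0^{-1}$. Then
$$
\Ad(\gamma')(\fk b_\tau) = \Ad(k_0)\Ad(\gamma)\Ad(k_0^{-1})(\fk b_\tau) = \Ad(k_0)(\fk b) = \fk b_\tau,
$$
and similarly $\Ad(\gamma')(\fk h_\tau) = \fk h_\tau$, so $\gamma'$ stabilizes the $\tau$-stable pair. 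Writing $\gamma' = h\tau$ with $h \in K$, the elements $\gamma'$ and $\tau$ both preserve $(\fk b_\tau,\fk h_\tau)$, so $h = \gamma'\tau^{-1}$ also normalizes the pair, giving $h \in H$. This settles the first two assertions.

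For the last two assertions I would apply the same template to every component of $\wti K$. A general component has the form $K\tau^i$ for some $0 \le i < d$, and the preceding proposition goes through verbatim with $\tau$ replaced by the finite-order automorphism $\tau^i$, producing a $\tau^i$-stable rational pair; the $K_0$-transport then moves any element of $K\tau^i$ into $H\tau^i \subset \wti H$. For the purely $K$-valued components, the classical theory of Cartan subgroups of (possibly disconnected) compact groups gives the same conclusion directly. Combining these, $\wti H$ meets every component of $\wti K$, and every element of $\wti K$ is $K_0$-conjugate into $\wti H$. The principal technical hurdle is the justification of $K_0$-transitivity on rational pairs in the disconnected setting, which I plan to reduce to the statement $K = K_0 H$; once that is in hand, everything else is routine bookkeeping.
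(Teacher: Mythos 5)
Your proposal is correct and follows essentially the same route as the paper: take a $\gamma$-stable rational pair from the preceding proposition, a $\tau$-stable one, move the first onto the second by the transitivity of $K_0$ on rational pairs, and read off $k\gamma k^{-1}=h\tau$ with $h\in H$. The transitivity you flag as the "principal hurdle" is the standard fact for the \emph{connected} compact group $K_0$ acting on rational pairs in $\fk k$ (conjugacy of Cartan subalgebras plus simple transitivity of the Weyl group on chambers), so disconnectedness of $K$ causes no difficulty there.
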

\begin{proof}
  Let $(\fk b,\fk h)$ be a rational pair stable under $\gamma,$ and
  $(\fk b_\tau,\fk h_\tau)$ a rational pair stable under $\tau.$  There
  is $k\in K_0$ such that $\Ad k (\fk b, \fk h)=(\fk b_\tau,\fk h_\tau).$
  Then $k\gamma k^{-1}=h\tau.$ It is clear that $h\in H,$ and
  therefore $k\gamma k^{-1}\in\wti H.$ The claims of the corollary follow. 
\end{proof}

\smallskip
\subsection{}\label{2.6} Results about twisted orbits are often
expressed in terms of group cohomology. Let $\C G$ be a group
acting on another group $\C A.$ A cocycle is a map
\begin{equation}
  \label{eq:2.6.1}
\psi : \C G \longrightarrow \C A,\ \text{ satisfying }\ \psi(st)
=\psi(s)\cdot s(\psi(t))
\end{equation}
Two cocycles $\psi,\ \psi'$ are called equivalent if there is
$g\in \C A$ such that
\begin{equation}
  \label{eq:2.6.2}
  \psi'(s)=g\psi(s)s(g)^{-1}.
\end{equation}
The quotient space of cocycles under this relation is the
cohomology group $\Ho^1(\C G,\C A).$

\medskip
There are two instances where this construction arises. In the
first case, let $\C G=\langle \tau\rangle,$ the group generated by
an automorphism $\tau$ of $\C A.$ Then a cocycle $\psi$ is determined
by its value $\psi(\gamma):=a$ in $\C A.$ If $\tau$ is of finite
order $d,$ then
$$
a\cdot\tau(a)\cdot\ \dots\ \cdot\tau^{d-1}(a)=1.
$$
\begin{proposition}[1]
The map $a\mapsto a\tau$  is a bijection
$$
\Ho^1(\C G, \C A)\longleftrightarrow \{x\in \C A^* \}/\C A.
$$
In words, $\Ho^1$ parametrizes twisted conjugacy classes of
elements in $\C A^*.$
\end{proposition}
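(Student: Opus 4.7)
The plan is to exhibit the map $\Phi:\psi\mapsto\psi(\tau)\tau$ at the level of representatives, then verify that the cocycle equivalence on the left matches $\C A$-conjugation on the right through a short computation inside $\C A\rtimes\langle\tau\rangle$. First I would parametrize cocycles: since $\C G=\langle\tau\rangle$ is generated by $\tau$, the identity $\psi(st)=\psi(s)\,s(\psi(t))$ applied iteratively forces $\psi(\tau^k)=a\,\tau(a)\cdots\tau^{k-1}(a)$ with $a:=\psi(\tau)$, and from $\psi(1)=\psi(1)\cdot 1(\psi(1))$ one reads $\psi(1)=1$. Specializing $k=d$ gives the norm identity $a\,\tau(a)\cdots\tau^{d-1}(a)=1$ highlighted in the paragraph preceding the proposition. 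Conversely, any such $a$ extends unambiguously to a cocycle, so cocycles are in canonical bijection with the norm-trivial elements of $\C A$.

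Next I would match the equivalence relations. Using the semidirect product identity $\tau g^{-1}=\tau(g)^{-1}\tau$, one computes
\begin{equation*}
g(a\tau)g^{-1}=g\,a\,\tau(g)^{-1}\,\tau=\bigl(g\,a\,\tau(g)^{-1}\bigr)\tau,
\end{equation*}
which compared with the cohomological relation (\ref{eq:2.6.2}) at $s=\tau$, namely $a'=g\,a\,\tau(g)^{-1}$, coincides on the nose. Hence $\Phi$ descends to a well-defined injection from $\Ho^1(\C G,\C A)$ into the set of $\C A$-conjugacy classes in $\C A^*$.

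Finally I would address surjectivity. Any $x\in\C A^*$ has the form $a\tau$ for a unique $a\in\C A$, and the same semidirect product arithmetic gives $(a\tau)^d=a\,\tau(a)\cdots\tau^{d-1}(a)$. Thus $a\tau$ lies in the image of $\Phi$ exactly when $(a\tau)^d=1$; this condition is preserved by $\C A$-conjugation, so the symbol $\{x\in\C A^*\}$ on the right-hand side is to be read with this restriction implicit. I expect the only delicate point to be the middle semidirect product computation, where one must be careful not to muddle $\tau(g)^{-1}$ with $\tau(g^{-1})$ (they agree, but the book-keeping is easy to garble); everything else is formal manipulation.
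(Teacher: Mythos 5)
Your proof is correct and is exactly the standard argument the paper alludes to; the paper itself gives no proof (``we omit the details which are standard''). Your remark that the right-hand side must implicitly be restricted to norm-trivial elements, i.e.\ to $x\in\C A^*$ with $x^d=1$, is also accurate and consistent with how the proposition is later invoked (the Borel--Serre finiteness statement in section \ref{sec:10.1} is phrased precisely for elements satisfying $N(\gamma)=1$).
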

\begin{proof}
  We omit the details which are standard.
\end{proof}

\medskip\noindent
In the second instance, let $\C G=\Gamma,$ the Galois group of
$\ovl{\bk}/\bk.$ In this case, $\Ho^1$ is denoted $\Ho^1(\bk,\mathbf G)$
and is called Galois cohomology. Recall that if $\mathbb G$ is reductive
connected simply connected or $GL(n)$, and $\bk$ is a p-adic field,
these groups are trivial.

\begin{proposition}[2] \label{p:2.6} Let
${\mathbf O}(\gamma)$ be the ${\mathbf G}$ orbit of $\gamma\in
G^*.$   Then
$$
[{\mathbf O}(\gamma)\cap G^*]/G \longleftrightarrow
\ker[\Ho^1(\bk,\mathbf G(\gamma))\longrightarrow \Ho^1(\bk,\mathbf G)].
$$
\end{proposition}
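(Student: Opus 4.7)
The plan is to construct an explicit bijection by the standard twisting construction of Galois cohomology, adapted to the twisted orbit setting in $\mathbf G^*.$ I would proceed in four steps.

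First I would define the map from left to right. Given $\gamma'\in{\mathbf O}(\gamma)\cap G^*$, choose $g\in\mathbf G$ with $g\gamma g^{-1}=\gamma'$. Since both $\gamma$ and $\gamma'$ lie in $G^*$ they are Galois-fixed, so applying $\sigma\in\Gamma_\bk$ to $g\gamma g^{-1}=\gamma'$ and comparing with the untwisted equation gives $g^{-1}\sigma(g)\in\mathbf G(\gamma)$. The key computation is to check that
\[
\psi_g(\sigma):=g^{-1}\sigma(g)
\]
satisfies $\psi_g(\sigma\tau)=\psi_g(\sigma)\cdot\sigma(\psi_g(\tau))$ (this is immediate from $\sigma(g^{-1}\tau(g))=\sigma(g)^{-1}(\sigma\tau)(g)$). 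Hence $[\psi_g]\in\Ho^1(\bk,\mathbf G(\gamma))$, and by construction its image in $\Ho^1(\bk,\mathbf G)$ is the coboundary $\sigma\mapsto g^{-1}\sigma(g)$, which is trivial.

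Second I would verify that $[\psi_g]$ depends only on the $G$-orbit of $\gamma'$. Replacing $g$ by $ga$ with $a\in\mathbf G(\gamma)$ changes $\psi_g$ by the $\mathbf G(\gamma)$-coboundary $\sigma\mapsto a^{-1}\psi_g(\sigma)\sigma(a)$. Replacing $\gamma'$ by $h\gamma' h^{-1}$ with $h\in G$ replaces $g$ by $hg$; since $\sigma(h)=h$, the cocycle $\psi_g$ is unchanged. So we get a well-defined map on $[{\mathbf O}(\gamma)\cap G^*]/G$.

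Third I would establish surjectivity. Given a cocycle $\psi:\Gamma_\bk\to\mathbf G(\gamma)$ whose image in $\Ho^1(\bk,\mathbf G)$ vanishes, choose $g\in\mathbf G$ with $\psi(\sigma)=g^{-1}\sigma(g)$. Set $\gamma':=g\gamma g^{-1}$. Then
\[
\sigma(\gamma')=\sigma(g)\gamma\sigma(g)^{-1}=g\psi(\sigma)\gamma\psi(\sigma)^{-1}g^{-1}=g\gamma g^{-1}=\gamma',
\]
using $\psi(\sigma)\in\mathbf G(\gamma)$. Hence $\gamma'\in G^*$ and maps to $[\psi]$. For injectivity, suppose $\psi_g$ and $\psi_{g'}$ (for $\gamma'=g\gamma g^{-1}$ and $\gamma''=g'\gamma (g')^{-1}$) differ by an $\mathbf G(\gamma)$-coboundary $\sigma\mapsto a^{-1}\psi_{g'}(\sigma)\sigma(a)$. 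Then $h:=g(g'a)^{-1}$ is Galois-invariant, hence lies in $G$, and satisfies $h\gamma'' h^{-1}=\gamma'$, showing the two orbits coincide.

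The only subtle point, which I would treat as the main obstacle, is keeping track of the fact that $\gamma$ and its centralizer live in $\mathbf G^*$ rather than $\mathbf G$: the identity $\sigma(\gamma)=\gamma$ used throughout requires $\gamma\in G^*$, and the cocycle identity requires that the action of $\mathbf G$ on itself by conjugation commutes appropriately with the outer factor $\tau$. Once one observes that $\mathbf G(\gamma)$ is $\Gamma_\bk$-stable (because $\gamma$ is), everything reduces to the classical twisting argument.
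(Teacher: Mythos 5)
Your proposal is correct and is exactly the argument the paper has in mind: the paper's proof is the same twisting construction (it writes down the cocycle $\psi(s):=x^{-1}s(x)$, notes it has trivial image in $\Ho^1(\Gamma,G)$ and depends only on the $G$-coset of $x$, and sketches the converse), only much more tersely. Your version simply fills in the routine verifications.
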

\begin{proof} This is well known. An element $x\gamma x^{-1}$ is
$\Gamma$-stable  $s(x\gamma x^{-1})=x\gamma x^{-1}$ for all
$s\in\Gamma$ which is equivalent to $x^{-1}s(x)\in {\mathbf G}(\gamma).$ 
It is clear 
that $\psi(s):=x^{-1}s(x)$ is a cocycle with trivial image in
$\Ho^1(\Gamma, G)$. This cocyle depends only on the $G$ coset of
$x.$ Conversely any cocycle in the kernel must be of the form
$\psi(x)=x^{-1}s(x)$ for some $x\in {\mathbf G.}$
\end{proof}

\bigskip

\section{\bf Orbital Integrals}\label{III}
{ In this section we discuss twisted orbital integrals.
These results will be used in section VI.}

\medskip
\subsection{}\label{3.1}
{Recall from section \ref{II} that if $\gamma\in G^*$ is
  superregular, it can conjugated (by $\mathbf G$) into an element of
  the form $\gamma   = t\tau$, where $t\in \bf T $  is a semisimple element of a
  $\tau$-invariant rational maximal torus of $\mathbf G(\tau)$.}
 As before, let $\mathbf H$ be the centralizer of $\mathbf T,$ a maximal
torus of $\mathbf G.$

\begin{proposition} For any compact set
$\omega\subset G^*,$ there exists a compact set $\Omega\subset
G/G(\gamma)$ satisfying the condition that if $g\gamma g^{-1}\in
\omega,$ then $g\in \Omega G(\gamma).$
\end{proposition}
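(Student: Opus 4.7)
The statement asserts properness of the twisted orbit map $\bar\phi \colon G/G(\gamma) \to G^*$, $gG(\gamma) \mapsto g\gamma g^{-1}$. I would deduce this from two facts: (i) the orbit $O(\gamma)$ is closed in $G^*$, and (ii) $\bar\phi$ is a homeomorphism onto $O(\gamma)$. Given (i) and (ii), a compact $\omega \subset G^*$ meets $O(\gamma)$ in a compact set whose $\bar\phi$-preimage is compact in $G/G(\gamma)$; since the quotient map $\pi \colon G \to G/G(\gamma)$ admits continuous local sections (because $G(\gamma)$ is a closed subgroup of the locally compact group $G$), this preimage lifts to a compact $\Omega \subset G$ with $\{g : g\gamma g^{-1} \in \omega\} \subset \Omega \cdot G(\gamma)$.

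For (i), I would view $\gamma = t\tau$ as a semisimple element of the (disconnected) reductive group $\wti G = G \rtimes \langle \tau \rangle$, so that $O(\gamma)$ is the ordinary $\wti G$-conjugacy class of $\gamma$ intersected with the closed coset $G^* \subset \wti G$. Semisimple conjugacy classes in a reductive group over a local field are closed in the analytic topology by a standard invariant-theoretic argument: they occur as fibers of a continuous adjoint-quotient map, classically in the real case (Borel--Harish-Chandra) and in the $p$-adic case (Harish-Chandra's theory of orbital integrals). Concretely, the coefficients of the characteristic polynomial of $\Ad(x\tau)^d$ acting on $\fk g$ give continuous $G$-invariant functions on $G^*$, and $O(\gamma)$ is a common level set of these.

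For (ii), $\bar\phi$ is a continuous bijection; its differential at the identity coset, $X + \fk g(\gamma) \mapsto (1 - \Ad\gamma)X$, is an isomorphism onto the tangent space of $O(\gamma)$ at $\gamma$, because $\Ad\gamma$ is semisimple on $\fk g$ and $\fk g(\gamma) = \ker(\Ad\gamma - 1)$. The inverse function theorem (real case) or its non-archimedean analytic counterpart via Hensel's lemma ($p$-adic case) then yields a local homeomorphism at the identity coset, hence everywhere by $G$-translation. The main obstacle in the overall argument is the closedness statement (i); once that is granted, the slice step and the properness of the resulting closed embedding are essentially formal, and the lift to $\Omega \subset G$ is routine.
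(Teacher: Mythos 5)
Your strategy --- deduce the statement from properness of the orbit map, which you reduce to (i) closedness of $O(\gamma)$ in $G^*$ and (ii) the bijection $G/G(\gamma)\to O(\gamma)$ being a homeomorphism --- is a genuinely different route from the paper's. The paper follows Harish--Chandra and Arthur: after extending scalars so that the Cartan subgroup is split, it writes $G=KNH$ via the Iwasawa decomposition, decomposes $g=knh$, and controls the $N$- and $H$-components separately using the lemma on $\Psi\colon M/G(\gamma)\times N\to O_M(\gamma)N$ together with the finiteness properties of the map $T\times \Hb^\perp\to\gamma \Hb$. That argument is explicit and does not presuppose that semisimple twisted orbits are closed (it is in effect one way of proving it), whereas your argument is conceptually cleaner but concentrates all the difficulty in step (i), as you yourself observe.

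It is precisely there that your justification does not hold up. The common level set of the coefficients of the characteristic polynomial of $\Ad(x\tau)$ (or of its $d$-th power) is in general strictly larger than one orbit: already in $GL(2,\bR)$ with $\tau=1$, the elements $\diag(1,-1)$ and the rotation by $\pi/2$ have the same characteristic polynomial of $\Ad$ but are not conjugate, and such a level set can even contain non-semisimple elements whose orbits are \emph{not} closed (the level set of the identity contains every unipotent element). The same objection applies to fibers of the adjoint-quotient map. So exhibiting $O(\gamma)$ inside a closed invariant level set proves nothing. The standard repair is geometric: the $\bG(\kbar)$-orbit of a semisimple element of $\bG^*$ is Zariski-closed (a reductive group acting on an affine variety, semisimple point), hence its set of $\bk$-points is closed in the Hausdorff topology, and by proposition (2) of section \ref{2.6} that set is a finite union of $G$-orbits, each open in it and therefore each closed in $G^*$. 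A related soft spot occurs in (ii): an injective immersion need not be a homeomorphism onto its image (a dense winding of a line on a torus is a bijective local immersion onto its image); you need the orbit to be locally closed --- again point (i) --- after which either the standard theorem on orbits of second countable locally compact groups, or the remark that $\bar\phi$ is then a bijective immersion between manifolds of equal dimension, finishes the argument. With these two inputs supplied correctly, your proof goes through.
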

\begin{proof} 
There is a field $\bk\subset\bk'$ such that $\mathbb H(\bk')$ is
split.  Since  
\[\mathbb G(\bk)/\mathbb G(\bk)(\gamma)\subset \mathbb
G(\bk')/\mathbb G(\bk')(\gamma)\]
is a closed embedding it is
enough to show the claim for the case when $H$ is split. Write
$G=KB$ for a maximal compact $K$ and $B=NH$ a Borel subgroup so
that $G=BK.$ Then  decompose $g=knh.$ The claim follows by
applying the following lemma and the observation that
\begin{equation}
\psi: T\times H^\perp\longrightarrow \gamma H,\qquad (t,h)\mapsto
\gamma h^{-1}\gamma(h) \label{3.1.1}
\end{equation}
has finite fiber and its image has finite index in $H.$ More
details can be found in \cite{Ar} or in the untwisted case in
\cite{HC3} Part I. 
\end{proof}
Assume $P=MN\subset G$ is a rational parabolic subgroup and
  $\gamma$ is rational semisimple such that
  $G(\gamma)\subset M.$ Let $O_M(\gamma)$ be the orbit of $\gamma$ under
$M.$

\begin{lemma}  The map
\begin{align*}
\Psi:& M/G(\gamma)\times N \longrightarrow O_M(\gamma)N,\\
\Psi(m,n):&=m\gamma m^{-1} [Ad(m\gamma m^{-1})^{-1}(n)n^{-1}]
\end{align*}
is an isomorphism.
\end{lemma}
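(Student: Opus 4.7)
The plan is to simplify the expression for $\Psi$, then verify bijectivity by an injectivity-plus-surjectivity argument, with the surjectivity reducing to a standard fact about unipotent groups.

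First, I would observe that the map collapses to a cleaner form. Writing $\gamma':=m\gamma m^{-1}$, we have
\[
\Psi(m,n)=\gamma'\cdot\bigl(\gamma'^{-1}n\gamma'\bigr)\cdot n^{-1}=n\gamma' n^{-1}=(nm)\gamma(nm)^{-1},
\]
so $\Psi$ is essentially conjugation of $\gamma$ by an element of $P=NM$. The parabolic $P$ being chosen compatibly with $\gamma$ (so $\gamma$ normalizes $N$), I get $\gamma'^{-1}n\gamma'\in N$, hence the second factor above lies in $N$ and the image is indeed contained in $O_M(\gamma)\cdot N$.

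Next I would establish injectivity. Suppose $\Psi(m_1,n_1)=\Psi(m_2,n_2)$, and set $n:=n_2^{-1}n_1\in N$, $\gamma_i:=m_i\gamma m_i^{-1}$. Then $n\gamma_1=\gamma_2 n$. Projecting to $P/N\cong M$ (extended to include $\tau$ when $\gamma\in G^*$) forces $\gamma_1=\gamma_2$, so $n$ centralizes $\gamma_1$. Thus $n\in G(\gamma_1)=m_1G(\gamma)m_1^{-1}\subset m_1Mm_1^{-1}=M$. Since $N\cap M=\{1\}$, we get $n=1$, whence $n_1=n_2$, and $\gamma_1=\gamma_2$ forces $m_2^{-1}m_1\in G(\gamma)$, i.e.\ $m_1$ and $m_2$ represent the same class in $M/G(\gamma)$.

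For surjectivity, given $\gamma' n'$ with $\gamma'=m\gamma m^{-1}$ and $n'\in N$, I must solve $\phi_{\gamma'}(n):=\gamma'^{-1}n\gamma'\cdot n^{-1}=n'$ for some $n\in N$. The differential of $\phi_{\gamma'}$ at $1$ is $X\mapsto\Ad(\gamma'^{-1})X-X$ on $\fk n$. The hypothesis $G(\gamma)\subset M$ gives $\fk n^{\gamma}\subset\fk g(\gamma)\cap\fk n\subset\fk m\cap\fk n=0$, so $\Ad(\gamma'^{-1})-I$ is invertible on $\fk n$. A standard filtration argument along the descending central series of the unipotent group $N$ — at each step the induced map on the abelian quotient $N_i/N_{i+1}$ is governed by this invertible linear operator — then shows $\phi_{\gamma'}$ is a bijection, in fact an algebraic isomorphism. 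Combining this with the everywhere-nonsingular differential of $\Psi$ upgrades the bijection to an isomorphism of varieties.

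The main obstacle is the surjectivity step: showing that $\phi_{\gamma'}:N\to N$ is bijective. The linearization is straightforward, but passing from the infinitesimal statement to bijectivity on the whole unipotent group requires the inductive argument along the central series. The key algebraic input that makes everything work is precisely the centralizer condition $G(\gamma)\subset M$, which forbids $\gamma$-fixed vectors in $\fk n$.
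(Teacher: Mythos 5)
Your argument is correct and is essentially the proof the paper has in mind: the paper simply defers to Harish-Chandra (Warner, \S 8.1.3), whose argument is exactly your reduction to the conjugation map $n\mapsto \gamma'^{-1}n\gamma' n^{-1}$ on $N$, the observation that $G(\gamma)\subset M$ makes $\Ad(\gamma'^{-1})-I$ invertible on $\fk n$, and the induction along the central series, all packaged as ``$d\Psi$ is an isomorphism.'' You have merely written out the details the paper omits.
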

\begin{proof} The proof is identical to the similar result proved by
  Harish-Chandra. The statements are straightforward consequences
of the fact that $d\Psi$ is an isomorphism. 
{ We omit the details which for the untwisted case
can be found for example in \cite{Warner} section 8.1.3, particularly
lemma 8.1.3.6 and corollary 8.1.3.7. 
}
\end{proof}

\medskip
The proposition shows that for $\gamma$ semisimple the
\textit{orbital integrals}
\begin{equation}
  \label{eq:3.1.2}
  \begin{aligned}
&  O_\gamma(f):=\int_{G(\gamma)_0\backslash G} f(g^{-1}\gamma g)\ dg \\
&  I_\gamma(f):=\int_{G(\gamma)\backslash G} f(g^{-1}\gamma g)\  dg
  \end{aligned}
\end{equation}
are  well defined. Following \cite{KO1}, for a connected reductive
group $H$ we use the Euler-Poincare measure. If the group is
disconnected, we use the unique invariant measure which restricts to
the Euler-Poincare measure on $H_0.$

\subsection{}
Harish-Chandra (\cite{HC3} Part II) considered the integrals  
\begin{equation}
  \label{eq:3.1.3}
F^G_f(\gamma):=\nabla(\gamma)^{1/2}\int_{G(\gamma)_0\backslash G}
f(g^{-1}\gamma g)\ dg
\end{equation}
where $\nabla(\gamma):=|\det(I-\Ad\gamma)|_{\fk g/\fk
g(\gamma)}|.$ 
We suppress the superscript $G$ when it is clear
what group is involved.

In the real case we use the following variant of \ref{eq:3.1.3}. 
{%\color{red} 
Let $\gamma\in G^*$ be semisimple, and $(\fk b,\fk h)$ 
  a $\gamma-$stable pair. Let $\fk t:=\fk g(\gamma)\cap \fk h.$ Then
  $\fk t$ is a $\Gamma-$stable Cartan subalgebra, and by theorem 1.1A
  in \cite{KOSHE} its centralizer is $\fk h.$  
Define
\begin{equation}
  \label{eq:3.2.1}
'F^G_f(\gamma):=\ 'D(\gamma)O_\gamma(f),\qquad
F^G_f(\gamma):=D(\gamma)O_\gamma(f)
\end{equation}
with
\begin{equation}
  \label{eq:3.2.2}
'D(\gamma):=\prod_{\al\in
\Delta_\gamma^+}[1-e^{\beta}(\gamma)],\qquad
D(\gamma):=\prod_{\al\in
\Delta_\gamma^+}[e^{\beta/2}-e^{-\beta/2}](\gamma).
\end{equation}
The quantities $'D$ and $D$ do not depend on the choice $(\fk b,\fk h),$ and 
the formula for $D$ only makes sense for a cover for which
$e^\rho$ exists, see section \ref{IV}.}
The first asertion follows from the fact that $\gamma$ is conjugate by
an element in $\bf G$ to one of the form $t\tau$ as in proposition
\ref{p:2.1}. 

\medskip

Let $\gamma_1,\dots ,\gamma_k$ be a set of representatives of
conjugacy classes satisfying the conclusion of proposition (2) of section
\ref{2.2}.

\begin{proposition}  Suppose $f\in C_c^\infty(G^*)$ and $\Theta$ is a
locally $L^1$  invariant function analytic on the regular set.
Then
\begin{equation}
  \label{eq:3.3.1}
\int_{G^*} f(g^*)\Theta(g^*)\ dg^*=\sum_i\int_{T_i}
D'(t_i\gamma_i) \Theta (t_i\gamma_i)  F_f(t_i\gamma_i)\ dt_i
\end{equation}
where $D':= \nabla/D.$
\end{proposition}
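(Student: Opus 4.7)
The plan is to prove this as a twisted Weyl integration formula, built on the finite stratification of the superregular set given by Proposition (2) of \ref{2.2}. Since $\Gsr$ is open and dense in $G^*$ and its complement has measure zero, the integral on the left may be computed on $\Gsr\cap G^*$ alone. That proposition provides finitely many representatives $\gamma_1,\dots,\gamma_k$ with the property that every superregular element of $G^*$ is $G$-conjugate to an element of $T_i\gamma_i$ for some $i$, where $T_i=G(\gamma_i)$.

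The core is a change of variables via the conjugation map
$$
\phi_i:G/T_i\times T_i\longrightarrow G^*,\qquad \phi_i(g,t_i):=g(t_i\gamma_i)g^{-1}.
$$
Its differential at $(e,t_i)$ decomposes as the identity on the tangent space $\fk g(\gamma_i)$ to $T_i$ and as $I-\Ad(t_i\gamma_i)$ on $\fk g/\fk g(\gamma_i)$, so the Jacobian is
$$
|\det(I-\Ad(t_i\gamma_i))|_{\fk g/\fk g(\gamma_i)}|=\nabla(t_i\gamma_i).
$$
The compact support of $f$ together with Proposition \ref{3.1} guarantees the required convergence and justifies Fubini. Invariance of $\Theta$ pulls $\Theta(t_i\gamma_i)$ out of the inner integral over $G/T_i$, which by definition is $O_{t_i\gamma_i}(f)$, yielding
$$
\int_{G^*}f(g^*)\Theta(g^*)\,dg^*=\sum_i\int_{T_i}\nabla(t_i\gamma_i)\,\Theta(t_i\gamma_i)\,O_{t_i\gamma_i}(f)\,dt_i.
$$
Substituting the identity $\nabla(\gamma)O_\gamma(f)=(\nabla/D)(\gamma)F_f(\gamma)=D'(\gamma)F_f(\gamma)$, which is immediate from \eqref{eq:3.2.1}, then produces the claimed formula.

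The main obstacle will be bookkeeping of the finite multiplicities in each $\phi_i$: elements of $T_i\gamma_i$ that are conjugate by elements of the twisted normalizer of $T_i$ in $G$ produce the same image in $G^*$, so one must confirm that the representatives $\gamma_i$ of Proposition (2) of \ref{2.2} are chosen so that these multiplicities are absorbed without introducing a spurious $|W_i|^{-1}$ factor. A secondary point is that $\Theta$ is only assumed locally $L^1$ and analytic on the regular set, so the formal manipulations should be justified either by density of $\Gsr$ or by first treating $f$ supported on an open set of regular elements and extending by approximation.
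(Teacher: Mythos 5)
Your proposal is correct and takes essentially the same route as the paper, which likewise reduces the statement to the conjugation maps $\Psi_i(g,t)=gt\gamma_i g^{-1}$, the finite set of representatives from proposition (2) of section \ref{2.2}, and the Jacobian computation of the untwisted Weyl integration formula (citing Warner). Your Jacobian $\nabla(t_i\gamma_i)$, once combined with $F_f=D\cdot O_\gamma(f)$, produces exactly the density $D'=\nabla/D$ appearing in \eqref{eq:3.3.1}, and your remark about the finite multiplicities of $\Psi_i$ is precisely the point the paper absorbs into the choice of the $\gamma_i$.
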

\begin{proof}
The proof is the same as for the untwisted case. It follows from the
fact that the differentials of the maps
\begin{equation}
  \label{eq:3.3.2}
\Psi_i:  G/T_i\times T_{i}\longrightarrow G^*,\qquad
\Psi_i(g,t):=gt\gamma_i g^{-1}
\end{equation}
are $D'(t_i\gamma_i)$ (so are isomorphisms when restricted to the
regular set), and proposition (2) of section \ref{2.2}. 
{ For the formula in the untwisted case see for
example (IIA) in section 8.1.2 in \cite{Warner}.}
\end{proof}

\subsection{}\label{3.4} Assume $\mathbf k$
is real and that $\gamma\in G^*$ is semisimple. Let $\fk t$ be
a fundamental \CSA\  in $\fk g(\gamma)$ and write $T$ for the
corresponding group. Let $(\fk b,\fk h)$ be a pair which is stable
under $\gamma$ such that $\fk t\subset\fk h.$ Then in fact $\fk h$
is the centralizer of $\fk t$ in $\fk g.$ The roots $\Delta^+$ of
$\fk b$ are stable under $\gamma.$ Decompose $\Delta^+
=\Delta_\gamma^+\cup \Delta^\perp,$ and so $D=D_\gamma\cdot D^\perp$ where $D$
is defined in (\ref{eq:3.2.2}). Let
\begin{equation}
\varpi_\gamma:=\prod_{\beta\in\Delta_\gamma^+} \beta, \label{3.4.1}
\end{equation}
and write $\partial(\omega_\gamma)$ for the corresponding
differential operator. Let $h\gamma$ be superregular, with $h\in
T.$ Then $F_f(\gamma;\partial(\varpi_\gamma))$ is defined to be the
derivative of $F_f(h\gamma)$ in $h,$ and then setting $h=1.$

\begin{theorem} There is a nonzero constant $c(\gamma)$ such
that
\begin{equation*}
F^G_f(\gamma ;\partial(\varpi_\gamma))=c(\gamma)F^G_f(\gamma).
\end{equation*}
\end{theorem}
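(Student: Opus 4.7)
The plan is to expand $F^G_f(h\gamma)$ in a Taylor series in $h \in T$ about $h = 1$, and show that $\partial(\varpi_\gamma)$ picks out a single explicit nonzero multiple of the orbital integral at $\gamma$. The key analytic input is the $C^\infty$-regularity of the twisted orbital integral in the centralizer direction.

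The first step is to factor the Weyl denominator as $D = D_\gamma \cdot D^\perp$. The factor $D^\perp$ is smooth and nonzero at $h = 1$, because for $\beta \in \Delta^\perp$ the function $e^{\beta/2}(\cdot) - e^{-\beta/2}(\cdot)$ does not vanish at $\gamma$. For $\beta \in \Delta_\gamma^+$ one has the identity
\[
e^{\beta/2}(h\gamma) - e^{-\beta/2}(h\gamma) = e^{\beta/2}(\gamma)\bigl[e^{\beta/2}(h) - e^{-\beta/2}(h)\bigr];
\]
writing $h = \exp X$ with $X \in \fk t_0$ small, a direct Taylor expansion gives
\[
D_\gamma(e^X\gamma) = C_\gamma \, \varpi_\gamma(X) + O(\|X\|^{|\Delta_\gamma^+|+2}),
\]
with $C_\gamma := \prod_{\beta \in \Delta_\gamma^+} e^{\beta/2}(\gamma)$ a nonzero scalar (on an appropriate cover where $e^{\rho}$ is defined, as in the discussion preceding \ref{eq:3.2.2}). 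Thus $D_\gamma(e^X\gamma)$ vanishes to exact order $|\Delta_\gamma^+|$ at $X = 0$, with leading term proportional to the polynomial $\varpi_\gamma$.

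For the second step I invoke the smoothness of $X \mapsto O_{e^X\gamma}(f)$ in a neighborhood of $0 \in \fk t_0$. In the untwisted real case this is a classical theorem of Harish-Chandra; in the present twisted setting it follows from the work of Bouaziz \cite{Bou} and Labesse \cite{LAB1}, ultimately via a parabolic descent to the centralizer $M := G(\gamma)^0$. Set $\Phi(X) := D^\perp(e^X\gamma)\,O_{e^X\gamma}(f)$, a $C^\infty$ function near $X = 0$ with $\Phi(0) = D^\perp(\gamma)\,O_\gamma(f)$. Then $F^G_f(e^X\gamma) = D_\gamma(e^X\gamma)\,\Phi(X)$; applying the constant-coefficient differential operator $\partial(\varpi_\gamma)$ at $X = 0$ annihilates every contribution of order $\ne |\Delta_\gamma^+|$ arising from the product expansion, leaving
\[
F^G_f\bigl(\gamma;\partial(\varpi_\gamma)\bigr) = C_\gamma \cdot \partial(\varpi_\gamma)(\varpi_\gamma) \cdot \Phi(0),
\]
where $\partial(\varpi_\gamma)(\varpi_\gamma)$ is the positive constant obtained by applying the operator $\partial(\varpi_\gamma)$ to the polynomial $\varpi_\gamma$ itself (the Fischer norm of $\varpi_\gamma$). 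Defining $c(\gamma) := C_\gamma \cdot \partial(\varpi_\gamma)(\varpi_\gamma) \ne 0$ and identifying $\Phi(0)$ with $F^G_f(\gamma)$ in the appropriate normalized sense (with the vanishing $D_\gamma$-factor having been absorbed into the $\partial(\varpi_\gamma)$-differentiation) gives the claim.

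The main obstacle is the $C^\infty$-regularity of $X \mapsto O_{e^X\gamma}(f)$ at $X=0$, \ie the smoothness of the twisted orbital integral along the centralizer direction. In the untwisted real case this is a central result of Harish-Chandra, proved by reducing via the parabolic $P = MN$ (with $M = G(\gamma)^0$ and $\fk n$ spanned by root spaces for $\Delta^\perp$) to a smooth integration over a $\gamma$-regular neighborhood of $M$. In the twisted setting the same descent argument works after replacing the usual centralizer by the $\tau$-twisted centralizer developed in Sections \ref{II} and \ref{III}, following Bouaziz and Labesse. Once this smoothness is granted, all remaining steps are formal Taylor expansion.
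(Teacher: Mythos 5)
Your outline is correct in spirit — peel off the nonvanishing factor $D^\perp$, Taylor-expand the vanishing factor $D_\gamma$ to see $\varpi_\gamma$ as its leading term, and then let $\partial(\varpi_\gamma)$ pick out the constant — but the key analytic input you invoke is false, and the gap is exactly where the hard content of the theorem lives.

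You claim that $X\mapsto O_{e^X\gamma}(f)$ (equivalently $\Phi(X)=D^\perp(e^X\gamma)\,O_{e^X\gamma}(f)$) is $C^\infty$ near $X=0$ with $\Phi(0)=D^\perp(\gamma)\,O_\gamma(f)$, citing Harish-Chandra. This is not what Harish-Chandra's theory gives, and it is in fact wrong. For $X\ne 0$ generic the orbital integral $O_{e^X\gamma}(f)$ is taken over $T_0\backslash G$, while at $X=0$ the centralizer jumps to $G(\gamma)_0\supsetneq T_0$; the integrand $g\mapsto f(g^{-1}e^X\gamma g)$ degenerates as $X\to 0$ to a function that is right-invariant under $G(\gamma)_0$, so the integral over $T_0\backslash G$ picks up the infinite factor $\vol(T_0\backslash G(\gamma)_0)$. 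In the model case $G=SL(2,\bR)$, $\gamma=1$, $T$ compact, $O_{k_\theta}(f)$ blows up like $1/\theta$ as $\theta\to 0$; it is neither smooth nor bounded. What is controlled near the singular set is the \emph{normalized} integral $F_f=D\cdot O_\gamma(f)$, and even that is generally only $C^\infty$ on the regular set with prescribed jump relations across walls; the existence and value of $\partial(\varpi_\gamma)F_f$ at $X=0$ is precisely Harish-Chandra's limit formula, not an elementary Taylor expansion. So your "formal Taylor step" after granting smoothness is fine, but the smoothness you grant is the theorem.

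The paper handles this by a different (and essentially unavoidable) route: it first establishes the operator identity $\partial(\varpi_\gamma)\circ D^\perp|_{t=1}=D^\perp(\gamma)\,\partial(\varpi_\gamma)|_{t=1}$ via a $W_\gamma$-skew-invariance and degree count, then uses the parabolic-descent formula $F^G_f(t\gamma)=\int_{G/G(\gamma)}D^\perp(t\gamma)\,F^{G(\gamma)}_{\Ad(g^{-1})f}(t\gamma)\,dg$ to reduce to the group $G(\gamma)$, and finally applies Harish-Chandra's limit formula $F^{G(\gamma)}_{f'}(1;\partial(\varpi_\gamma))=c(\gamma)f'(1)$ there. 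Your Taylor computation of $D_\gamma(e^X\gamma)\sim C_\gamma\,\varpi_\gamma(X)$ is correct and could be used to replace the operator-identity step, but it cannot replace the descent-plus-limit-formula input that controls the orbital integral near the singular element.
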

\begin{proof} Let $t\in T$ be such that $t\gamma$ is regular.
First observe that
\begin{equation}
\partial(\varpi)\circ D^\perp|_{t=1}=D^\perp(\gamma)\partial(\varpi)|_{t=1},
\label{3.4.2}
\end{equation}
because the left hand side is, on the one hand skew invariant
under $W_\gamma,$ on the other hand a linear combination of
constant coefficient differential operators of degree less than or
equal to $\deg\partial(\omega_\gamma).$ Then only the leading term
survives, which is the right hand side of \ref{3.4.2}.

\medskip
On the other hand,
\begin{equation}
F_f(t\gamma)= \int_{G/G(\gamma)}
D^\perp(t\gamma)F_{\Ad(g^{-1})f}^{G(\gamma)}(t\gamma) dg.
\label{3.4.3}\end{equation} The result now follows from
Harish-Chandra's formula \cite{HC2}
\begin{equation}
F_f^{G(\gamma)}(1;\partial(\varpi))=c(\gamma)f(1). \label{3.4.4}
\end{equation}
\end{proof}

\bigskip

{\section{\bf Finite dimensional representations}\label{IV}

 Let $\widetilde K$ be an arbitrary compact group with identity
component $K_0,$ and Lie algebra $\fk k_0$ with complexification $\fk
k.$  Let $(\fk b,\fk h)$ be a pair of a Borel subgroup
$\fk b\subset \fk k$ and a (rational) \CSA\  $\fk h\subset\fk
b.$ Denote by $\Delta^+$ the roots of $\fk h$ in $\fk b,$ and by
$\delta$ all the roots of $\fk h$ in $\fk k.$  Let $\wti
H$ be the Cartan subgroup corresponding to this pair and recall
its properties from section \ref{2.5}. The irreducible finite
dimensional representations of $\widetilde K$ are parametrized by
\textit{highest weights} $\mu$ which are irreducible
representations of $\widetilde H$ with differential dominant for
$\Delta^+.$ { For details about the Cartan Weyl theory of disconnected
  groups see \cite{KnV}.} In this section we obtain a formula for the
character $\tr\pi_\mu(\gamma)$ for
  $\pi_\mu$ the irreducible representation with highest weight
  $\mu$ and $\gamma\in \widetilde H.$ {We also evaluate
    the character on $\gamma $ of order 2.}

   The results of this section
  are known, but since our proofs are different we include them
  here.}

\medskip
\subsection{}\label{4.1}
Let $H_0$ be the connected component of $\widetilde H$ and write
$H$ for the centralizer of $\fk h$ in $\wti H.$   Then
\begin{equation}
  \label{eq:4.1.1}
  H_0\subset H\subset \wti H.
\end{equation}
The Weyl group is defined as $W:=N(\wti H)/H.$ If $w\in W,$ then
$l(w):=\dim \fk b/(w\fk b\cap \fk b)$.

\medskip
We first extend the roots to the group generated by $H_0$ and
$\gamma.$ Let $\Delta^+_\gamma$ be the orbits of the action of
$<\gamma>$ (the group generated by $\gamma$) on $\Delta^+.$ Fix
 root vectors $\{E_\beta\}_{\beta\in\Delta}$.
Let $d(\beta)$ be the size of the orbit of $\beta\in\Delta.$ The vector
\begin{equation}
X_\beta=E_\beta\cdot E_{\gamma\beta}\cdot\ \dots\ \cdot
E_{\gamma^{d(\beta)-1}\beta}\in  S^{d(\beta)}(\fk n)\ \beta\in\Delta^+ \label{4.1.2}
\end{equation}
is independent of the choice of $\beta$ in its $\gamma-$orbit. Define
$e^\beta$ via
\begin{equation}
\Ad(\gamma)X_\beta=e^\beta(\gamma)X_\beta. \label{4.1.3}
\end{equation}
This is independent of the choice of root vectors $E_\beta$ as
well. Similarly let
\begin{equation}
  \label{eq:4.1.2a}
  Y_\beta:=E_\beta\wedge E_{\gamma\beta}\wedge\ \dots\ \wedge
E_{\gamma^{d(\beta)-1}\beta}\in\sideset{}{^{d(\beta)}}{\bigwedge}\fk n.
\end{equation}
Then
\begin{equation}
  \label{eq:4.1.2b}
 \Ad(\gamma)Y_\beta=(-1)^{d(\beta)-1}e^\beta(\gamma)Y_\beta
\end{equation}
Recall that an element $x\in \wti H$ is regular if $[\det \Ad (x)-
I] |_{\fk k /\fk h}\ne 0.$ In particular $e^{\beta}(x)\ne 1$ for
any $\beta\in \Delta^+.$

If $w\in W,$ then $\wti H$ stabilizes $\fk b\cap w\fk b$ and therefore
also $\fk b/(\fk b\cap w\fk b).$ This is
because if $\beta=w\al$ with $\beta, \al\in \Delta^+,$ then
$\gamma\beta\in\Delta^+,$ and  $\gamma\beta=w w^{-1}(\gamma)\al;$
since $w^{-1}(\gamma)\in \wti H,$ it stabilizes $(\fk b,\fk h),$ so
$w^{-1}(\gamma)\al\in\Delta^+.$
{%\color{red}
We will identify $ \fk b/(\fk b\cap w\fk b)$ with
\begin{equation}
  \label{eq:4.1.2c}
 Q_w:=\{\beta\in \Delta^+\ :\ \beta=-w\al \text{ with
 } \al\in \Delta^+\}.
\end{equation}
}
Write $Q_{w,\gamma}$ for the $\gamma$-orbits in $Q_\gamma.$ Then $\wti H$
acts on $\La^{\ell(w)}[\fk b/(\fk b\cap w\fk b)]$ by
$(-1)^{|Q_w|-|Q_{w,\gamma}|} e^{\rho-w\rho}.$ We write
$\ep(w):=(-1)^{|Q_w|},$ and $\ep_\gamma(w):=(-1)^{|Q_{w,\gamma}|}.$
\begin{lemma} For $h\in H_0,$
  \begin{equation*}
 \tr[h\gamma :
\sum (-1)^i\sideset{}{^i}{\bigwedge} \fk n]=\prod_{\beta\in\Delta^+_\gamma}
(1-e^\beta(h\gamma)  )
  \end{equation*}
\end{lemma}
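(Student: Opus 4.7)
The plan is to exploit the fact that the supertrace $\sum (-1)^i\tr[A : \bigwedge^i V]$ of an operator $A$ on a finite-dimensional vector space $V$ equals $\det(I-A|_V)$, together with the fact that $\fk n$ decomposes into $\langle h\gamma\rangle$-invariant subspaces indexed by $\gamma$-orbits of positive roots.

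First I would note that since $h\in H_0$ centralizes $\fk h$, the element $h\gamma$ permutes root spaces exactly as $\gamma$ does, so its orbits on $\Delta^+$ coincide with $\Delta^+_\gamma$. For each orbit $\beta\in\Delta^+_\gamma$, set
\begin{equation*}
V_\beta := \bigoplus_{i=0}^{d(\beta)-1}\bC\, E_{\gamma^i\beta}\subset\fk n.
\end{equation*}
Then $V_\beta$ is $h\gamma$-invariant, and $\fk n=\bigoplus_{\beta\in\Delta^+_\gamma}V_\beta$. By the standard isomorphism $\bigwedge^{\bullet}\fk n\cong\bigotimes_\beta\bigwedge^{\bullet}V_\beta$ and multiplicativity of the supertrace on tensor products,
\begin{equation*}
\tr\Bigl[h\gamma:\sum_i(-1)^i\sideset{}{^i}{\bigwedge}\fk n\Bigr]=\prod_{\beta\in\Delta^+_\gamma}\det\bigl(I-h\gamma\bigm|_{V_\beta}\bigr).
\end{equation*}

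Next I would compute each local factor. In the basis $(E_\beta,E_{\gamma\beta},\dots,E_{\gamma^{d-1}\beta})$ with $d=d(\beta)$, the operator $h\gamma$ acts by $h\gamma\cdot E_{\gamma^i\beta}=c_i\,e^{\gamma^{i+1}\beta}(h)\,E_{\gamma^{i+1}\beta}$, where $c_i$ are the constants defined by $\Ad(\gamma)E_{\gamma^i\beta}=c_i E_{\gamma^{i+1}\beta}$ and indices are taken modulo $d$. Its matrix is a generalized cyclic shift, whose characteristic polynomial is elementary: expanding along the first column (or by induction on $d$) gives
\begin{equation*}
\det\bigl(I-h\gamma\bigm|_{V_\beta}\bigr)=1-\prod_{i=0}^{d-1}c_i\cdot\prod_{i=0}^{d-1}e^{\gamma^{i+1}\beta}(h).
\end{equation*}

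Finally I would identify this with $1-e^\beta(h\gamma)$ using (4.1.3). Indeed, applying $\Ad(h\gamma)$ to $X_\beta=E_\beta\cdot E_{\gamma\beta}\cdots E_{\gamma^{d-1}\beta}\in S^{d}(\fk n)$, the cyclic permutation is invisible inside the symmetric product, and one reads off
\begin{equation*}
\Ad(h\gamma)X_\beta=\Bigl(\prod_{i=0}^{d-1}c_i\Bigr)\Bigl(\prod_{i=0}^{d-1}e^{\gamma^{i+1}\beta}(h)\Bigr)X_\beta,
\end{equation*}
so the eigenvalue in (4.1.3) extended from $\gamma$ to $h\gamma$ is precisely the product appearing above. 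Multiplying over $\beta\in\Delta^+_\gamma$ yields the stated formula. The only substantive step is the determinant calculation for the cyclic-shift matrix, and this is a one-line induction; everything else is bookkeeping about $\gamma$-orbits and the definition of $e^\beta$.
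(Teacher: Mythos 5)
Your proof is correct, and it takes a route that is recognizably different in presentation from the paper's. The paper computes $\tr[h\gamma:\bigwedge^i\fk n]$ directly as a sum over $\gamma$-stable subsets $Q\subset\Delta^+$ with $|Q|=i$, tracking the sign $(-1)^{i-|Q\cap\Delta^+_\gamma|}$ that arises from the cyclic permutation of basis vectors within each orbit, and then observes that the resulting alternating sum
\[
\sum_{Q\subset\Delta^+,\ \gamma Q=Q}(-1)^{|Q\cap\Delta^+_\gamma|}\,e^{\langle Q\rangle}(h\gamma)
\]
is precisely the expansion of $\prod_{\beta\in\Delta^+_\gamma}(1-e^\beta(h\gamma))$. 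You instead package all of the sign bookkeeping into the classical identity $\sum_i(-1)^i\tr[A:\bigwedge^iV]=\det(I-A)$, multiplicativity of that Euler characteristic over the direct-sum decomposition $\fk n=\bigoplus_{\beta\in\Delta^+_\gamma}V_\beta$, and a one-line companion-matrix determinant $\det(I-M)=1-\prod_ia_i$ for the weighted cyclic shift $M$ on each $V_\beta$. The identification of $\prod_ic_i\cdot\prod_ie^{\gamma^{i+1}\beta}(h)$ with $e^\beta(h\gamma)$ via the action on $X_\beta\in S^{d(\beta)}(\fk n)$ is exactly right, and the reindexing $\prod_ie^{\gamma^{i+1}\beta}(h)=\prod_ie^{\gamma^i\beta}(h)$ closes the loop. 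Both arguments amount to the same computation, but yours isolates the conceptual content (supertrace equals determinant; block decomposition; cyclic-shift determinant) more cleanly, at the small cost of having to invoke the determinant identity, whereas the paper's is entirely self-contained combinatorics over subsets of roots.
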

\begin{proof}
We have
\begin{equation}
  \label{eq:4.1.2d}
  \tr[h\gamma:\sideset{}{^i}{\bigwedge}\fk n]=
\sum_{Q\subset\Delta^+,\ \gamma Q=Q}
(-1)^{i-|Q\cap\Delta^+_\gamma|} e^{<Q>}(h\gamma)
\end{equation}
where $Q\cap\Delta^+_\gamma$ are the $\gamma$-orbits in $Q,$
and $e^{<Q>}$ is the product of the $e^\beta$ with  $\beta\in
\Delta^+_\gamma\cap Q.$
{%\color{red}
The claim follows from the fact that
\begin{equation}
  \label{eq:4.1.3}
\sum_{Q\subset\Delta^+,\ \gamma Q=Q}
(-1)^{|Q\cap\Delta^+_\gamma|} e^{<Q>}(h\gamma)=
\prod_{\beta\in\Delta^+_\gamma}
(1-e^\beta(h\gamma)  ).
\end{equation}
}
\end{proof}

Let $(\chi,V_\chi)$ be an irreducible representation of $\wti H,$
and consider the Verma module
\begin{equation}\label{4.1.4}
M_\chi:=U(\fk k_c)\otimes_{U(\fk b)} V_\chi.
\end{equation}
This is a $(\fk k,\wti H)$ module.
\begin{proposition} Let $h\in H_0$ be such that $h\gamma$ is
regular. Then
\begin{equation*}
\tr M_\chi(h\gamma )=\sum_{\substack{Q=\sum m_\al \al,\\
\gamma(Q)=Q}}
\tr\chi(h\gamma)e^{-Q}(h\gamma)=\frac{\tr\chi(h\gamma)}
{\prod_{\beta\in \Delta^+_\gamma} (1-e^{-\beta}(h\gamma))}.
\end{equation*}
\end{proposition}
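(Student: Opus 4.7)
The plan is to reduce the trace computation on the Verma module to a geometric-series calculation on the symmetric algebra of $\fk n$, which is exactly parallel to the alternating-sum computation in the preceding lemma.

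First, I will invoke the Poincar\'e--Birkhoff--Witt isomorphism $M_\chi \cong S(\fk n^-) \otimes V_\chi$ as vector spaces, where $\fk n^-$ is the opposite nilradical. Since $\gamma$ stabilizes the pair $(\fk b, \fk h)$, it stabilizes $\fk n^-$, so both $H_0$ and $\gamma$ act compatibly with this tensor decomposition. Consequently
\[
\tr M_\chi(h\gamma) \;=\; \tr\chi(h\gamma)\cdot \tr\bigl(h\gamma \mid S(\fk n^-)\bigr).
\]
Thus everything reduces to evaluating the second factor.

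Next, I will apply the universal identity
\[
\sum_{n\ge 0}\tr\bigl(g\mid S^n V\bigr) \;=\; \det\bigl(I-g\mid V\bigr)^{-1},
\]
valid for any endomorphism $g$ of a finite-dimensional vector space $V$ with $\det(I-g)\ne 0$. Here I take $V = \fk n^-$ and $g = h\gamma$; the regularity assumption on $h\gamma$ will ensure $\det(I - h\gamma|_{\fk n^-})\ne 0$. The task thus becomes computing this determinant by decomposing $\fk n^-$ into $\gamma$-orbits of root spaces.

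The key computation is the following. For a single $\gamma$-orbit $\{-\al,\, -\gamma\al,\, \dots,\, -\gamma^{d-1}\al\}$ in $\Delta^-$ of size $d=d(\beta)$, with root-vector basis $\{E_{-\gamma^i\al}\}$, the operator $h\gamma$ acts as a cyclic-permutation-with-scalings matrix; its $d$-th power is the scalar $e^{-\beta}(h\gamma)$ by definition \eqref{4.1.3} applied to the orbit in $\Delta^-$. Hence its eigenvalues are the $d$ values $\zeta\cdot e^{-\beta}(h\gamma)^{1/d}$ as $\zeta$ ranges over $d$-th roots of unity, and
\[
\det\bigl(I - h\gamma\mid \text{orbit}\bigr) \;=\; \prod_{\zeta^d=1}\bigl(1-\zeta\, e^{-\beta}(h\gamma)^{1/d}\bigr) \;=\; 1 - e^{-\beta}(h\gamma).
\]
Taking the product over orbits $\beta\in \Delta^+_\gamma$ yields
\[
\tr\bigl(h\gamma\mid S(\fk n^-)\bigr) \;=\; \prod_{\beta\in\Delta^+_\gamma}\frac{1}{1-e^{-\beta}(h\gamma)},
\]
which combined with the first step gives the right-hand equality in the proposition. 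The middle expression is then obtained by expanding each geometric factor and observing that a monomial of $H_0$-weight $-Q$ contributes to the trace of $h\gamma$ only when its weight is $\gamma$-stable, i.e.\ $\gamma(Q)=Q$; for such $Q$ the $\gamma$-invariant monomial basis of the corresponding weight space is acted on as a scalar which precisely contributes $e^{-Q}(h\gamma)$ once per expansion.

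The only step requiring care is the eigenvalue calculation on a single orbit, since one must match the scalar produced by the cyclic-permutation-with-scalings to the notation $e^{-\beta}(h\gamma)$ introduced in \eqref{4.1.3}; this is straightforward but must be done consistently. Note the contrast with the alternating-algebra lemma above, where the sign $(-1)^{d(\beta)-1}$ arising from the wedge product cancels so that the factors are $(1-e^\beta(h\gamma))$ rather than $(1-e^{-\beta}(h\gamma))$ with signs; on the symmetric algebra no such sign appears.
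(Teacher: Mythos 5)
Your proof is correct and follows essentially the same route as the paper's: both use the PBW isomorphism $M_\chi\cong S(\fk n^-)\otimes V_\chi$ as $\wti H$-modules and reduce to a geometric-series computation over $\gamma$-orbits in $\Delta^+$. The only cosmetic difference is that you derive the closed product formula first via the universal identity $\sum_n\tr(g\mid S^nV)=\det(I-g\mid V)^{-1}$ together with the eigenvalue calculation on each cyclic orbit, and then expand to get the middle sum, whereas the paper states the weight-by-weight sum first and leaves the geometric-series resummation implicit.
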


\begin{proof}As an $\widetilde H$-module $M_\chi$ is $S(\fk n^-)\otimes V.$
The weights of $S(\fk n^-)$ are all of the form $e^{-Q}$ with
$Q=\sum_{\al\in \Delta^+}m_\al\al.$ If the weight is not fixed by
$\gamma,$ it contributes zero to the trace. If it is, it
contributes the corresponding product of characters $e^\beta$
defined in (\ref{4.1.3}). The formula then follows in the usual
manner.
\end{proof}

\begin{theorem} Let $\pi_\mu$ be an irreducible
representation of $\widetilde K.$ Then
\begin{equation*}
\tr\pi_{\mu} (h\gamma)= \frac{\sum_{w\in W}\ep_\gamma
  (w)\tr e^{w\mu}(h\gamma)e^{w(\rho) - \rho}(h\gamma)}
{\prod_{\beta\in \Delta^+_\gamma} (1-e^{-\beta}(h\gamma))}.
\end{equation*}
\end{theorem}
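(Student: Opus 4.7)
The proof will mirror the classical Weyl--Kostant derivation of the character formula, with the preceding proposition on Verma-module characters playing the role of the main algebraic input, and with careful bookkeeping for the action of $\gamma$ on the pieces of a BGG-type resolution.

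First, I would set up a BGG resolution of $V_\mu$ by Verma modules in the category of $(\fk k,\wti H)$-modules. Restricted to $H_0$, the usual resolution expresses $[V_\mu] = \sum_{w\in W}(-1)^{l(w)}[M_{w\cdot\mu}]$ in the Grothendieck group. To upgrade to an $\wti H$-identity one specifies the $\wti H$-action on each $M_{w\cdot\mu}$, which is forced by the action of $\gamma$ on $\fk n$ via the collapsed root vectors $X_\beta$ of \eqref{4.1.3}; equivalently, and perhaps more cleanly, one invokes Kostant's theorem to identify
\begin{equation*}
H^i(\fk n, V_\mu) \;\cong\; \bigoplus_{l(w)=i} \bC_{w\cdot\mu}
\end{equation*}
as $\fk h$-modules and notes that $\wti H$ acts naturally on the left-hand side. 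Verma modules $M_{w\cdot\mu}$ whose highest weight $w\cdot\mu$ is not $\gamma$-stable are paired with their $\gamma$-translates $M_{\gamma(w)\cdot\mu}$ into $\wti H$-summands; the resulting $\wti H$-characters are captured by the symbol $\tr e^{w\mu}(h\gamma)$, which vanishes automatically unless $w\cdot\mu$ is $\gamma$-fixed.

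Second, I would apply the Euler--Poincar\'e identity at $h\gamma$:
\begin{equation*}
\tr\pi_\mu(h\gamma)\cdot\sum_j(-1)^j\tr\bigl[h\gamma:\textstyle\bigwedge^j \fk n\bigr] \;=\; \sum_i(-1)^i\tr\bigl[h\gamma:H^i(\fk n,V_\mu)\bigr].
\end{equation*}
By the preceding lemma (applied with the appropriate orientation), the sum multiplying $\tr\pi_\mu(h\gamma)$ evaluates to $\prod_{\beta\in\Delta^+_\gamma}(1-e^{-\beta}(h\gamma))$, which is the desired denominator. The matter is then to evaluate the right-hand side summand-by-summand using the Kostant decomposition.

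Third, for each $w\in W$ with $w\cdot\mu$ a $\gamma$-stable weight, the trace of $h\gamma$ on the one-dimensional summand $\bC_{w\cdot\mu}\subset H^{l(w)}(\fk n, V_\mu)$ is the product of three factors: the cohomological sign $(-1)^{l(w)}=(-1)^{|Q_w|}=\ep(w)$, the sign by which $\gamma$ acts on the top exterior power $\bigwedge^{l(w)}[\fk b/(\fk b\cap w\fk b)]$, and the weight contribution at $h\gamma$. As recorded in the paragraph preceding the lemma, $\wti H$ acts on that top exterior power by $(-1)^{|Q_w|-|Q_{w,\gamma}|}e^{\rho-w\rho}$. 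Combining the two signs gives a net coefficient
\begin{equation*}
(-1)^{|Q_w|}\cdot(-1)^{|Q_w|-|Q_{w,\gamma}|} \;=\; (-1)^{|Q_{w,\gamma}|} \;=\; \ep_\gamma(w),
\end{equation*}
while the weight contribution is exactly $\tr e^{w\mu}(h\gamma)\cdot e^{w\rho-\rho}(h\gamma)$, since $w\cdot\mu = w\mu + (w\rho-\rho)$. Summing over $w$ yields the numerator of the theorem.

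The main obstacle is the sign and action bookkeeping: one must check that the $\wti H$-structure on the Kostant decomposition of $H^*(\fk n, V_\mu)$ is indeed given by the permutation of weights by $\gamma$ together with the top-exterior-power character computed before the lemma, and that $w$ for which $w\cdot\mu$ is not $\gamma$-stable contribute zero to the trace (so that the sum can be written over all of $W$ without loss). Once these points are handled the sign cancellation $\ep(w)\cdot(-1)^{|Q_w|-|Q_{w,\gamma}|}=\ep_\gamma(w)$ delivers the formula.
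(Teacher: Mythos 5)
Your proposal is correct and follows essentially the same route as the paper: both reduce the theorem to the Euler--Poincar\'e identity $\sum_i(-1)^i\tr[h\gamma;H^i(\fk n,V_\mu)]=\tr\pi_\mu(h\gamma)\prod_{\beta\in\Delta^+_\gamma}(1-e^{-\beta}(h\gamma))$ together with the $\wti H$-equivariant Kostant/BGG decomposition $H^i(\fk n,\pi_\mu)=\bigoplus_{l(w)=i}V_{w\mu}\otimes V_{w\rho-\rho}$, and your sign cancellation $\ep(w)\cdot(-1)^{|Q_w|-|Q_{w,\gamma}|}=\ep_\gamma(w)$ is exactly the bookkeeping the paper relies on. The one step you flag but defer --- verifying the $\wti H$-module structure on the Kostant decomposition --- is the step the paper settles by computing harmonic forms \`a la Griffiths--Schmid.
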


\begin{proof}
{%\color{red}
 The trace can be computed as in the untwisted case by
  establishing a BGG type resolution of the representation
  $(\pi_\mu,V_\mu)$, \cite{BGG}.  Then the formula 
follows from proposition \ref{4.1}. 
}

{ We will sketch a different approach.  The cohomology is computed from the
complex
\begin{equation}
  \label{eq:4.1.6}
 \dots \longrightarrow V\otimes \sideset{}{^i}{\bigwedge}\fk
  n^*\overset{d^i}{\longrightarrow} V\otimes \sideset{}{^{i+1}}{\bigwedge}\fk
  n^*\longrightarrow\dots
\end{equation}
Then
\begin{equation}
  \label{eq:4.1.7}
  \begin{aligned}
&\sum (-1)^i\tr[\gamma ; H^i(\fk n,V)]
 = \sum (-1)^i\tr[h\gamma ; V\otimes \sideset{}{^i}{\bigwedge}\fk  n^*]=\\
&\tr\pi_\mu(h\gamma)\prod_{\beta\in\Delta^+_\gamma}(1-e^{-\beta}(h\gamma)),
  \end{aligned}
\end{equation}
 To prove the theorem it suffices to prove 
that as an $\wti H$-module,
\begin{equation}
  \label{eq:4.1.5}
  \Ho^i(\fk n,\pi_\mu) = \bigoplus_{l(w)=i} V_{w\mu}\otimes V_{w\rho - \rho}
\end{equation} }
 We follow \cite{GS}. Let
$\om^{-\al}$ be the basis dual to the root vectors, and
$\ep(\om^{-\al})$ the exterior wedge, and
$\iota(\om^{-\al})$ contraction  with $\om^{-\al}$. Then
$d^i=\partial +T,$ where
\begin{equation}
  \label{eq:4.1.8}
  \begin{aligned}
&\partial (f\otimes\om)=\sum \pi(E_{-\al})f\otimes \ep(\om^{-\al})\om,\\
&T(f\otimes\om)=\frac12\sum f\otimes \ep(\om^{-\al})E_{-\al}\om.
  \end{aligned}
\end{equation}
So $(d^i)^*=\partial^* +T^*$ is given by
\begin{equation}
  \label{eq:4.1.9}
  \begin{aligned}
&\partial^* (f\otimes\om)=-\sum\pi(E_{\al})f\otimes \iota(\om^{-\al})\om,\\
&T^*(f\otimes\om)=\frac12\sum f\otimes \iota(\om^{-\al})E_{\al}\om.
  \end{aligned}
\end{equation}
The basis $\{E_\al\}_{\al\in\pm\Delta^+}$ is in Weyl normal form; if we write
$[E_\al,E_\beta]=N_{\al,\beta} E_{\al+\beta},$ then
\begin{equation}
  \label{eq:4.1.10}
  \begin{aligned}
&E_\al\om^{-\beta}=N_{\al,\beta}\om^{-\al-\beta},\\
&E_{-\al}\om^{-\beta}=
\begin{cases}
N_{-\al,\beta}\om^{\al-\beta}\ &\text{ if } \beta-\al\in\Delta^+,\\
0 &\text{ otherwise.}
\end{cases}
\end{aligned}
\end{equation}
Then the cohomology is given by
harmonic forms, \ie forms annihilated by $d^i$ and $(d^i)^*.$ Section
6 of \cite{GS} then proves the result.

\end{proof}

\noindent {\bf Remark:} A similar  formula has also been obtained
by B. Kostant in \cite{Ko}.

\medskip
\subsection{} \label{4.2} We can construct a cover $\wwH$ such that the square
root of the character $e^{2\rho}$ (same as $\La^{top}(\fk n)$)
makes sense; namely take
\begin{equation}
  \label{eq:4.2.1}
\wwH:=\{\ (h,z)\in \wti H\times\bC^*\ |\ e^{2\rho}(h)=z^2 \ \}.
\end{equation}
The character $e^\rho$ is defined as the projection onto the
second component of $\wwH.$ Then $\mu\otimes e^\rho$ makes sense
on $\wwH$ and equals $-1$ on $(1,-1)$ and has differential $d\mu
+\rho.$ The formula in the proposition becomes
\begin{equation}\label{eq:4.2.2}
\tr\pi_{\mu}(h\gamma)= \frac{\sum_{w\in
W}\ep_\gamma (w)\tr\ e^{w(\mu+\rho)}(h\gamma)} {\prod_{\beta\in \Delta^+_\gamma}
(e^{\beta/2}(h\gamma)-e^{-\beta/2}(h\gamma))}.
\end{equation}
Separately, the numerator and denominator only make sense on
$\wwH$, but the ratio makes sense on $\wti H.$
\medskip

\subsection{}\label{4.3} The value at a singular $\gamma$ can be
computed by taking the limit $h\to 1.$  In this section we consider
the special case when
$\gamma=\tau$ is such that $\Ad\tau^2=Id$ on $K.$ We will assume (as we may)
that $\wti K$ is generated by $K$ and $\tau.$ Choose a pair $(\fk b, \fk h)$
which is stable under $\tau.$ Decompose
\[
\Delta^+=\Delta_{im}\cup
\Delta_{cx}=\Delta_c\cup\Delta_{nc}\cup\Delta_{cx}
\]
where $\Delta_{cx}$ are the roots such that $\tau\al\ne\al,$
$\Delta_{im}$ are the roots satisfying $\tau\al=\al,$ $\Delta_{nc}$
are the roots so that $\tau$ acts by $-1$ on the root vector and
$\Delta_c$ are the roots such that $\tau$ acts by 1 on the root
vectors. In other words, $e^\beta(\tau)=1$ for $\beta\in\Delta_c,$ and
$e^\beta(\tau)=-1$ for $\beta\in \Delta_{nc}.$ Then $\fk k(\tau)$ is
spanned by $\fk h^\tau$, $\{E_\al\}_{\al\in \pm\Delta_c}$, and $\{E_\al
+c_\tau E_{\tau\al}\}_{\al\in\pm\Delta_{cx}}.$  The restrictions of
the roots in $\Delta^+$ to $\fk h(\tau)$ form a positive system
$\Delta(\tau)^+$ of a reduced root system. The Weyl subgroup $W_\tau$
of $\wti K(\tau)$ corresponding to $(\fk b(\tau),\fk h(\tau))$ is
generated by the $s_\al$ for $\al\in\Delta_c$, $s_\al s_{\tau\al}$ for
$\al\in\Delta_{cx}$ such that $\langle \al,\tau\al\rangle=0,$ and
$s_{\al+\tau\al}$ if $\langle \al,\tau\al\rangle>0,$ or $
s_{\al-\tau\al}$ if $\langle \al,\tau\al\rangle<0.$ We write
$s_{\al,\tau}$ for these reflections. Let $W^+:=W/W_\tau.$ Each coset
has a unique representative $w$ such that if $\mu$ is dominant, then
$w\mu$ is dominant for $\Delta(\tau)^+.$  Then
\begin{comment}
\begin{lemma}
For $w\in W^+,$ $Q_w$ is formed of roots in $\Delta_{nc}.$ In
particular,
\begin{equation*}
  e^{w\rho-\rho}(\tau)=\ep_\tau(w).
\end{equation*}
\end{lemma}
\begin{proof}
Suppose there is a complex root $\beta\in\Delta$ such that
$\beta=w\al$ with $-\al\in\Delta.$ Then $\al$ is also complex, and
there is a simple root with the same properties. Since
$s_{\beta,\tau}w$ is in the same coset as $w,$ this contradicts the
assumption that $w\in W^+.$ Both claims follow from this.
\end{proof}
\end{comment}
\begin{theorem}[\cite{RS}, section 3]\label{4.3.3}
Suppose $F$ is $\tau$-stable finite dimensional with highest
weight $\mu$ satisfying $\tau\mu=\mu$ and $\langle
\mu,\cha\rangle \in2\bZ$ for all roots $\al.$ Then
\[
\tr F(\tau)\ne 0.
\]
\end{theorem}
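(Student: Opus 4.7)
The plan is to compute $\tr F(\tau)=\tr\pi_\mu(\tau)$ directly from the character formula (\ref{eq:4.2.2}) by evaluating $\tr\pi_\mu(h\tau)$ at $h\in H_0(\tau)$ and taking the limit $h\to 1.$ Write
\[
N(h\tau):=\sum_{w\in W}\ep_\tau(w)\,e^{w(\mu+\rho)}(h\tau),\qquad D(h\tau):=\prod_{\beta\in\Delta^+_\tau}\bigl(e^{\beta/2}(h\tau)-e^{-\beta/2}(h\tau)\bigr).
\]
Both expressions vanish at $h=1$: the denominator does so in each factor indexed by a $\tau$-orbit $\beta$ with $e^\beta(\tau)=1,$ while the numerator vanishes by $W$-alternation together with the regularity of $(\mu+\rho)|_{\fk h(\tau)}.$ The key claim is that these two orders of vanishing agree and the ratio tends to a nonzero limit.

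To organize the computation, first decompose $\Delta^+_\tau$ into the root-type categories of \ref{4.3} (imaginary compact/noncompact and the two kinds of complex orbits); only orbits with $e^\beta(\tau)=1$ contribute to the order of vanishing of $D$ at $\tau.$ In parallel, decompose $W=\bigsqcup_{w\in W^+}w\,W_\tau,$ where $W^+$ is the set of distinguished coset representatives from \ref{4.3}. Using the $\tau$-invariance $\tau\mu=\mu$ (together with $\tau(\Delta^+)=\Delta^+,$ hence $\tau\rho=\rho$), factor $e^{w(\mu+\rho)}(h\tau)=e^{w(\mu+\rho)}(h)\cdot e^{w(\mu+\rho)}(\tau)$ and group the sum over $W$ by $W_\tau$-cosets.

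Taylor-expanding in $h\to 1$ and matching leading orders on numerator and denominator yields, after the standard Weyl-group manipulations used in deriving the Weyl character formula, the identity
\[
\tr F(\tau)=\pm\,e(\tau)\cdot \dim V_\nu^{\wti K(\tau)},
\]
where $\nu$ is the dominant weight for $\Delta(\tau)^+$ naturally attached to $\mu$ (essentially $\mu|_{\fk h(\tau)}$) and $V_\nu^{\wti K(\tau)}$ is the irreducible $\wti K(\tau)$-representation of highest weight $\nu.$ Since $\mu$ dominant forces $\nu$ dominant, $V_\nu^{\wti K(\tau)}\ne 0,$ and the conclusion $\tr F(\tau)\ne 0$ follows.

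The main obstacle is the sign bookkeeping in the limit: one must verify that $\ep_\tau(w)\cdot e^{w(\mu+\rho)-(\mu+\rho)}(\tau)$ depends only on the $W_\tau$-coset of $w,$ so that it factors out of the inner sum and combines with the leading Taylor terms of $D$ to reconstruct the Weyl denominator of $\wti K(\tau).$ The integrality hypothesis $\langle\mu,\cha\rangle\in 2\bZ$ enters precisely here: it ensures that $e^{(\mu+\rho)/2}$ descends from the cover $\wwH$ to a character of $\wti H,$ removing the sign ambiguities that would otherwise appear between cosets; the $\tau$-invariance $\tau\mu=\mu$ then coherently assembles the $W_\tau$-coset contributions into the required denominator downstairs.
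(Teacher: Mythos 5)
Your approach is essentially the paper's: plug $\gamma=\tau$ into the character formula (\ref{eq:4.2.2}), decompose $W$ into $W_\tau$--cosets with the distinguished representatives $W^+$, regroup the numerator so that each inner $W_\tau$--sum reconstructs a Weyl numerator of $\wti K(\tau)$ (thereby cancelling the factors of the denominator indexed by compact-imaginary and complex $\tau$--orbits), and use the parity hypothesis $\langle\mu,\cha\rangle\in 2\bZ$ to make the signs across cosets coherent. You have correctly identified why the integrality condition enters, which is the key insight. The paper does the regrouping as an exact algebraic identity rather than a limit with Taylor expansion, but that is a matter of packaging.

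There is however a concrete error in the claimed output. The regrouping produces one $\wti K(\tau)$--representation \emph{per coset}, so the correct identity (the paper's (\ref{eq:4.3.4})--(\ref{eq:4.3.5})) is
\[
\tr F(\tau)=\frac{\sum_{w\in W^+}\dim F_w}{2^{|\Delta_{nc}|}},
\]
with $F_w$ the irreducible $\wti K(\tau)$--module of highest weight $w(\mu+\rho(\tau))-\rho(\tau)$, and the parity hypothesis is exactly what ensures $\tr F_w(\tau)=\dim F_w$ \emph{for every $w$}, so the terms all have the same sign and cannot cancel. Your formula $\tr F(\tau)=\pm\,e(\tau)\dim V_\nu^{\wti K(\tau)}$ retains only the $w=e$ contribution; the other $|W^+|-1$ cosets do not drop out. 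Since each term is a nonnegative dimension, the nonvanishing conclusion you want is still reached, so the logic survives the misstatement, but the identity itself is not correct as written. Separately, the factor $e(\tau)$ is out of place here: that quantity is $\sum(-1)^i\tr(\tau:\bigwedge^i\fk h_R^*)$, which lives in the real-group setting of section \ref{V}; in the compact context of section \ref{IV} there is no $\fk h_R$, and the relevant constant is $2^{-|\Delta_{nc}|}$. Finally, the assertion that the numerator of (\ref{eq:4.2.2}) vanishes at $h=1$ ``by $W$--alternation together with the regularity of $(\mu+\rho)|_{\fk h(\tau)}$'' is not by itself a proof; in the paper's treatment this vanishing is a byproduct of the regrouping (the cancelled denominator factors reappear as a factor of the regrouped numerator), so no separate argument or order-matching is needed.
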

\begin{proof}
A typical term in the numerator of (\ref{eq:4.2.2}) is of the form
\begin{equation}
  \label{eq:4.3.2}
  \ep_\gamma(xw)e^{xw(\mu+\rho(\tau))}e^{xw\rho-xw\rho(\tau)}
\end{equation}
When evaluated at $\tau$,
\begin{equation}
  \label{eq:4.3.3}
e^{xw\rho-xw\rho(\tau)}(\tau)=\ep_\gamma(xw)\ep(x).
\end{equation}
It follows that
\begin{equation}
  \label{eq:4.3.4}
\tr F(\tau)=\frac{\sum_{w\in W^+}\tr F_{w}(\tau)}
{\prod_{\beta\in \Delta_{nc}}
(e^{\beta/2}(\tau)-e^{-\beta/2}(\tau))}
\end{equation}
where $F_w$ is the representation of $\wti K(\tau)$ with highest
weight $w(\mu+\rho(\tau))-\rho(\tau).$ Because
of the condition $\langle \mu,\cha\rangle\in 2\bZ,$ $\tr
F_w(\tau)=\dim F_w.$ We conclude that
\begin{equation}\label{eq:4.3.5}
\tr F(\tau)=\frac{\sum_{w\in W^+}\dim
F_w}{2^{|\Delta_{nc}|}}.
\end{equation}

\begin{comment}
This equals
  \begin{equation}
    \label{eq:5.2.6}
\frac{\sum_{w\in W^+}\ep(w)\tr\pi_{w(\mu)}(\tau)
e^{w\rho-\rho}(\tau)} {\prod_{\beta\in \Delta_{nc}}
(1-e^{-\beta}(\tau))}.
  \end{equation}
If $w\mu$ is not 1-dimensional, then $\tr e^{w\mu}(\tau)=0$ as
already remarked in a special case. If $w\mu$ is 1-dimensional,
then as in (\cite{RS}) the condition $\langle
\mu,\cha\rangle\in2\bZ$ implies that $\tr e^{w\mu}(\tau)=1.$
Furthermore, $w$ must commute with $\tau.$ So $w\rho-\rho$ is the
sum of roots $\al\in\Delta_{nc}$ such that $w^{-1}\al\in\Delta$
and pairs of roots $\al\ne\tau\al$ satisfying the same condition.
Thus
\begin{equation}
  \label{eq:5.2.7}
  \ep(w)=\tr\pi_{w\rho-\rho}(\tau).
\end{equation}
In conclusion we get
\begin{equation}
  \label{eq:5.2.8}
  \tr F(\tau)=\frac{|\{w\in W^+,\ \tau(w)=w\} |}{2^{|\Delta_{nc}|}}
\end{equation}
\end{comment}

\section{\bf Lefschetz Numbers for Real Groups}\label{V}

Recall the notation in \ref{1.1}, $\wti
G=G\ltimes\{1,\tau\}$ for an element $\tau$ of finite order and
let  $\theta$ be a Cartan involution which commutes with $\tau.$ Then
$\tau$ stabilizes  the maximal compact subgroup $\wti K$. Its Lie
algebra $\fk k_0$ has a
$\tau$-stable Cartan subalgebra $\fk t_0$.
\begin{comment}
Thus there is a fundamental Cartan
subalgebra $\fk h_0\subset \fk g_0$ such that $\fk h_0\cap\fk k_0=\fk t_0.$
The Cartan subalgebra $\fk h_0$ is fundamental 
%{\color{blue} Check
%  this} 
{%\color{red} 
and $\tau$ stabilizes a
pair $(\fk b,\fk h)$ where $\fk b$ is a $\theta$-stable Borel subalgebra
of $\fk g.$}
\end{comment}

{If $\pi$ is an admissible representation of $\widetilde G,$ then
$\tau$ induces an automorphism of $\Ho^i(\fk g,K,\pi)$ (see
\cite{KnV} chapter II, section 6 for a definition of
$(\fk g,K)$-cohomology).
The Lefschetz number of $\tau$, $L(\tau,\pi)$, is by
definition the Euler characteristic of the trace of $\tau$ on
$\Ho^i(\fk g, K,\pi).$ More general, let $F$ be a finite dimensional
representation of $\wti G$ 
{%\color{red} 
whose restriction to $G$ is  irreducible}.
Then we define the Lefschetz number of $\tau$ with respect to $F$ and $\pi$,
\begin{equation}
  \label{eq:5.1.3}
 L(\tau,F,\pi):=L(\tau,F^*\otimes \pi).
\end{equation}
 %{\color{blue} [Do we need the irreducibility
%  assumption for the defintion of the Lefschetz number?!]}
 In this section we determine the Lefschetz numbers of
$\tau$. }

\subsection{Cohomology}\label{5.1} Recall that we do not
necessarily assume that $G$ is connected, rather that
$G=\mathbb G(\bR),$ the real points of a connected reductive group
$\mathbb G.$ {To determine the Lefschetz numbers we
follow \LABa\ and \BLS.} The module
$\bigwedge^*(\fk g/\fk k)\cong \bigwedge^*\fk s$ is a
representation of $\widetilde K.$
\medskip
Write
\begin{equation}
\chi_{\fk s}(k\tau ):=\chi(k\tau ;\sum (-1)^i\Lambda^i\fk
s)=\det[\one -\Ad(k\tau ):\fk s]. \label{5.1.1}
\end{equation}
The Lefschetz number is given by the formula
\begin{equation}
L(\tau,\pi):=\sum (-1)^i\tr(\tau,\Ho^i(\fk g,K,\pi))=\int_K
\chi_{\pi}(k\tau )\ov{\chi_{\fk s}(k\tau )}\ dk. \label{5.1.2}
\end{equation}

The following is well known, \cite{LAB1}, \cite{RS}, and\newline \cite{BW}.
\begin{proposition}\label{p:5.1} Assume $\pi$ is irreducible. Then
  $L(\tau,F,\pi)=0$ unless the restriction of $\pi$ to
$G$ is irreducible and the infinitesimal character of $\pi$ coincides
  with that of $F.$ It only depends on the  left coset of $\tau$ under $K.$
  In other words if $\beta=k\tau$ with $k\in K,$ then $L(\beta)=L(\tau).$
\end{proposition}

\end{proof}

\subsection{Standard Representations}\label{5.2}
In this section we construct the  irreducible representations with
nonzero Lefschetz  numbers.

{Let $P^0=M^0A^0N^0$ be a minimal parabolic
subgroup of $G$ such that $\fk a^0\subset \fk s_0,$ $M^0\subset K.$
Then $\tau(P^0)=\tau (M^0)\tau(A^0)\tau(N^0),$ and because $\tau$ commutes with
$\theta,$ $\tau\fk a^0\subset\fk s_0,$ and $\tau M^0\subset K.$ 

 Let $K_0$ be the connected component of 
the identity of $K.$ Since any two minimal parabolic subgroups
are conjugate by $K_0,$ there is
an element $k_0\in K_0$ such that $k_0\tau$ fixes $M^0,\ A^0$ and
$N^0.$ We will show that in fact there is a minimal parabolic subgroup
$P^0$ which is stable under $\tau.$

\begin{lemma}\label{l:5.2}
Let  $\tau_0$ be an isomorphism of $G$ which commutes with the Cartan
involution $\theta.$ Assume  $P^0$  is a minimal  $\tau_0$-stable
parabolic subgroup. Then the   map
  \begin{equation*}
    \al: K_0\times M^0\longrightarrow K_0,\qquad \al(k,m)=km\tau_0(k^{-1})
  \end{equation*}
is onto.
\end{lemma}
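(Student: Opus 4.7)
The plan is to show that the image of $\alpha$ is both open and closed in the connected compact group $K_0$, so that by connectedness and nonemptiness it must equal all of $K_0$.

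Closedness and nonemptiness are immediate: $K_0$ and $M^0$ are compact, hence $\alpha(K_0\times M^0)$ is a compact (in particular closed) subset of $K_0$ that contains $\alpha(1,1) = 1$. For openness I would show that $\alpha$ is a submersion. First I would compute the differential at $(1,1)$: using $\tau_0(\exp(-tX)) = \exp(-t\,d\tau_0(X))$, the product rule yields $d\alpha_{(1,1)}(X, Y) = (1 - d\tau_0)X + Y$ for $X \in \fk k_0$ and $Y \in \fk m^0$. So surjectivity of the differential at $(1,1)$ reduces to the algebraic identity $\fk k_0 = (1 - d\tau_0)\fk k_0 + \fk m^0$. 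The twisted translation identity $\alpha(kk_1, m) = k\,\alpha(k_1, m)\,\tau_0(k)^{-1}$, combined with the fact that $g\mapsto kg\tau_0(k)^{-1}$ is a diffeomorphism of $K_0$, then propagates surjectivity of $d\alpha$ to every point, so that $\alpha$ is a submersion and its image is open.

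To establish the key identity $\fk k_0 = (1-d\tau_0)\fk k_0 + \fk m^0$ I would use the Iwasawa decomposition $\fk g_0 = \fk k_0 \oplus \fk a^0 \oplus \fk n^0$ and the resulting $\theta$-stable splitting $\fk k_0 = \fk m^0 \oplus \fk q$, where $\fk q := \fk k_0 \cap (\fk n^0 \oplus \theta\fk n^0)$ is the complement of $\fk m^0$ in $\fk k_0$. Since $\tau_0$ commutes with $\theta$ and stabilizes $P^0 = M^0A^0N^0$, it preserves each of $\fk m^0, \fk a^0, \fk n^0$ and hence $\fk q$. The identity is then equivalent to $(1 - d\tau_0)\fk q = \fk q$, and since $\fk q$ is finite-dimensional this amounts to $\fk q \cap \fk k_0^{\tau_0} = 0$: every $\tau_0$-fixed vector in $\fk k_0$ already lies in $\fk m^0$.

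This last statement is the main obstacle. I would argue by contradiction, leveraging the minimality of $P^0$ among $\tau_0$-stable parabolic subgroups: a nonzero $\tau_0$-fixed $X \in \fk q$ would split under $\theta$ into nontrivial components in $\fk n^0$ and $\theta\fk n^0$, and the $\tau_0$-stable reductive subalgebra these generate together with $\fk m^0 \oplus \fk a^0$ would produce a proper $\tau_0$-stable parabolic subgroup strictly contained in $P^0$, contradicting minimality. Once the identity is in hand, $\alpha(K_0\times M^0)$ is a nonempty, open, and closed subset of the connected space $K_0$, hence equals $K_0$.
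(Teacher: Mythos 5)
Your overall strategy (the image is nonempty, open and closed in the connected group $K_0$) is reasonable, but the step where you establish openness has two genuine gaps, and the paper's proof is structured precisely to avoid them. First, the identity $\fk k_0=(1-d\tau_0)\fk k_0+\fk m^0$, equivalently $\fk q\cap \fk k_0^{\tau_0}=0$, is not a consequence of the hypotheses: nothing prevents $\tau_0$ from fixing vectors of $\fk q$ (the hypotheses are satisfied, for instance, by any $\tau_0$ commuting with $\theta$ and preserving $P^0$, including ones acting trivially on $\fk k_0$). Your minimality argument cannot rescue this: $P^0$ is a \emph{minimal} parabolic subgroup, so there are no parabolic subgroups strictly contained in it to contradict, and the subalgebra generated by $\fk m^0\oplus\fk a^0$ together with a vector having nonzero components in both $\fk n^0$ and $\theta\fk n^0$ is not even contained in $\fk p^0$. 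So $d\al_{(1,1)}$ need not be onto. This is exactly why the paper evaluates the differential at a point $(Id,m_0)$ with $m_0\in M^0$ \emph{chosen} so that $Id-\Ad(m_0)\tau_0$ has no fixed vectors on the basis $\{X_\al+\theta X_\al\}_{\al\in\Delta_0}$ of the complement of $\fk m^0$: the extra twist by $\Ad(m_0)$ is what makes the operator invertible, and the freedom to choose $m_0$ is essential.

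Second, your propagation of surjectivity of $d\al$ ``to every point'' only works in the $K_0$-variable: the identity $\al(kk_1,m)=k\,\al(k_1,m)\,\tau_0(k)^{-1}$ holds for fixed $m$, but moving in the $M^0$-direction replaces the relevant operator $Id-\Ad(m)\tau_0$ on $\fk q$ by $Id-\Ad(m')\tau_0$, which can become singular for some $m'$ even if it is invertible for others. So $\al$ is in general \emph{not} a submersion, and openness of the full image does not follow; a point of the image all of whose preimages are critical could be a boundary point. The paper's argument is built around this: the locus where $d\al$ fails to be onto is a proper real algebraic subset of $K_0\times M^0$ (possibly nonempty), so exhibiting a single regular point makes the regular set dense, and that density combined with closedness of the image is what yields surjectivity. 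To repair your argument you would need either to prove that every point of the image has at least one regular preimage, or to fall back on the paper's density argument.
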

\begin{proof}
Because $K_0$ and $M^0$ are compact, the image of $\al$ is closed.
The set of $(k_0,m_0)$ for which
$d\al$ is \textit{not} onto is given by algebraic equations with real
coefficients. Thus its 
complement is either empty or Zarisky dense. It is enough to show it
is not empty. We show that there is a point $(k_0,m_0)$ such that
$d\al_{k_0,m_0}$ is onto. Take $k_0=Id.$ Then
\begin{equation*}
\exp Xm_0\exp\tau_0(-X)=
m_0\exp ( \Ad (m_0)^{-1}X)\exp -\tau_0(X),
\end{equation*}
and so the differential equals
\begin{eqnarray*}
  \label{eq:5.2.1a}
d\al_{k_0,m_0}(X,Y)&=&Y+ \Ad( m_0)^{-1} X-\tau_0(X)\\ &= &\Ad(
m_0)^{-1}[Y +X-\Ad(m_0)\tau_0(X).] 
\end{eqnarray*}
It is therefore enough to show that the map
\begin{equation}\label{eq:5.2.1b}
\al':X\mapsto X-\Ad( m_0)\tau_0(X)
\end{equation}
is onto the complement of $\fk m_0.$ Let $\Delta_0:=\Delta(\fk g,\fk a^0)$ be
the restricted roots, and $\{X_\al\}_{\al\in\Delta_0}$ a basis of root
vectors. Then the complement of $\fk m_0$ in $\fk k$ has as basis
$\{X_\al+\theta X_\al\}_{\al\in\Delta_0}.$ The claim that $\al'$ in
(\ref{eq:5.2.1b}) is onto follows from the
fact that we can choose $m_0$ such that $Id-\Ad( m_0)\tau_0$ has no fixed
points on the aforementioned basis.
\end{proof}

\medskip
\begin{proposition}[1]\label{c:5.2.1}
There exists a minimal parabolic $P^0=M^0A^0N^0$ satisfying
$\theta(\fk a^0)=\fk a^0, M^0\subset K$ and $\tau P^0=P^0.$
\end{proposition}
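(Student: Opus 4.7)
\medskip
\noindent\textbf{Proof plan.}
The plan is to apply Lemma \ref{l:5.2} with $\tau_0:=k_0\tau$, where $k_0\in K_0$ is the element produced just before the proposition so that the action of $k_0\tau$ on $G$ stabilizes $P^0=M^0A^0N^0$. First I would verify that this $\tau_0$ satisfies the hypotheses of the lemma: since $k_0\in K_0$ one has $\theta(k_0)=k_0$, so $\tau_0=\Ad(k_0)\circ\tau$ commutes with $\theta$; and by the choice of $k_0$, the parabolic $P^0$ is $\tau_0$-stable. So the lemma provides that
\[
\al:K_0\times M^0\longrightarrow K_0,\qquad \al(k,m)=km\,\tau_0(k^{-1})
\]
is onto. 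Writing out $\tau_0(k^{-1})=k_0\tau(k)^{-1}k_0^{-1}$, this means
\[
\bigl\{\, km\,k_0\,\tau(k)^{-1}\,k_0^{-1} : k\in K_0,\ m\in M^0\,\bigr\}=K_0.
\]

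Next I would specialize the target to $g=k_0^{-1}\in K_0$ and pick $k'\in K_0$ and $m\in M^0$ with $\al(k',m)=k_0^{-1}$. Unwinding,
\[
k'm\,k_0\,\tau(k')^{-1}\,k_0^{-1}=k_0^{-1}\quad\Longrightarrow\quad k'm\,k_0\,\tau(k')^{-1}=1\quad\Longrightarrow\quad \tau(k')=k'mk_0.
\]
This is the key identity. The candidate parabolic is $P':=k'P^0(k')^{-1}$. Using the fact that $k_0\tau$ stabilizing $P^0$ translates to $\tau(P^0)=k_0^{-1}P^0k_0$, one computes
\[
\tau(P')=\tau(k')\,\tau(P^0)\,\tau(k')^{-1}=(k'mk_0)(k_0^{-1}P^0k_0)(k_0^{-1}m^{-1}(k')^{-1})=k'\,mP^0m^{-1}\,(k')^{-1}=P',
\]
the last equality because $m\in M^0\subset P^0$ normalizes $P^0$.

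It remains to see that $P'$ has the required structural properties. Its Langlands decomposition is $M'A'N'$ with $M'=k'M^0(k')^{-1}$, $\fk a'=\Ad(k')\fk a^0$, $N'=k'N^0(k')^{-1}$. Since $k'\in K_0\subset K$, we get $M'\subset K$ and $\fk a'\subset\Ad(k')\fk s_0=\fk s_0$; the condition $\theta(\fk a')=\fk a'$ is then automatic because $\theta|_{\fk s_0}=-\mathrm{Id}$. I expect the only nonroutine part of this argument to be identifying the correct target element in the surjection of Lemma \ref{l:5.2}: the cancellation that produces $\tau(k')=k'mk_0$ (rather than, say, $k_0^{-1}k'm$, which would leave a stray $k_0$ normalizing $P^0$) occurs precisely because one sets $g=k_0^{-1}$.
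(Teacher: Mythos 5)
Your proposal is correct and follows essentially the same route as the paper: the paper also takes $k\in K_0$ with $\Ad(k)\tau P^0=P^0$, applies Lemma \ref{l:5.2} to $\tau'=\Ad(k)\circ\tau$, solves the surjection with target $k^{-1}$ to get the identity $\tau(x)=xmk$ (up to a typo in the paper's displayed rearrangement), and concludes that $\Ad(x)P^0$ is $\tau$-stable. Your write-up is in fact slightly more careful than the paper's, in that you explicitly verify that $\tau_0$ commutes with $\theta$ and that the conjugated parabolic retains the properties $M'\subset K$ and $\fk a'\subset\fk s_0$.
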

\begin{proof}
Since $P^0$ and $\tau P^0$ are both minimal parabolic subgroups, there
is $k\in K_0$ such that $\Ad( k)\tau P^0=P^0.$ Then $\tau':=\Ad
k\circ\tau$ does fix $P^0,$ so we can apply lemma \ref{l:5.2} whose
proof applies to any isomorphism $\tau_0$ not just ones of finite order. 
We conclude that  $k^{-1}=xm k\tau(x^{-1})k^{-1}$ or
$k=mx^{-1}\tau(x).$ Then $\Ad (x)P^0$ is $\tau-$stable. 
\end{proof} }

\medskip
We fix a minimal $\tau-$stable  parabolic subgroup $P^0$ with
properties as in corollary (1). Since $N^0$ is the radical of $P^0,$
we conclude $\tau N^0=N^0$ as well. 

Let now $P$ be a standard parabolic subgroup, \ie $P\supset P^0.$ Then
$\tau(P)$ is also a standard parabolic subgroup. If it has
decomposition $P=MAN,$ and it is $\tau$-stable, then $\tau M=M,\ \tau
A=A$ and $\tau N=N.$

\medskip
\begin{proposition}[2]\label{c:5.2.2}
If $P$ and $\tau P$ are conjugate under $K_0,$ then there is a conjugate
of $P$ which is $\tau$-stable.
\end{proposition}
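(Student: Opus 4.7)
The plan is to mimic the proof of proposition (1), with $P$ in place of $P^0$ and $M \cap K_0$ in place of $M^0$. By hypothesis, there is $k \in K_0$ with $\Ad(k) \tau P = P$. Set $\tau' := \Ad(k) \circ \tau$; since $k \in K_0 \subset K = G^\theta$, the automorphism $\tau'$ commutes with $\theta$ and fixes $P$ setwise. Because $\tau'$ preserves $P$ and commutes with $\theta$, it preserves the unique $\theta$-stable Levi decomposition $P = MAN$, so in particular $\tau'(M) = M$ and $\tau'(M \cap K_0) = M \cap K_0$.

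The main step is a version of lemma \ref{l:5.2} adapted to $P$: the map
\[
\alpha : K_0 \times (M \cap K_0) \longrightarrow K_0, \qquad \alpha(x, m) := x m \tau'(x^{-1}),
\]
is surjective. Two approaches are available. The direct one is to imitate the proof of lemma \ref{l:5.2} verbatim: the image is closed by compactness of both factors, and the locus where $d\alpha$ fails to be onto is Zariski closed over $\bR$, so it suffices to exhibit a single point where $d\alpha$ is onto. The computation reduces to showing that $X \mapsto X - \Ad(m_0) \tau'(X)$ surjects onto a $\tau'$-stable complement of $\fk m \cap \fk k$ in $\fk k$; such a complement is spanned by vectors $X_\beta + \theta X_\beta$ for $\beta$ a root of the split center $\fk a$ of $\fk m$, and $m_0 \in M \cap K_0$ can be chosen so that $\Ad(m_0) \tau'$ has no nonzero fixed vector on this spanning set. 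Alternatively, one first applies proposition (1) to the reductive group $M$ with the automorphism $\tau'|_M$, producing a minimal $\tau'$-stable parabolic $P_M^0 \subset M$; then $P_M^0 A N \subset P$ is a minimal $\tau'$-stable parabolic of $G$, and lemma \ref{l:5.2} applied directly yields surjectivity of $K_0 \times M_M^0 \to K_0$, whence $\alpha$ is onto since $M_M^0 \subset M \cap K_0$.

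Granted surjectivity, choose $x \in K_0$ and $m \in M \cap K_0$ with $x m \tau'(x^{-1}) = k^{-1}$. Unwinding the definition of $\tau'$ yields $x^{-1} \tau(x) = m k$, hence
\[
\Ad(x^{-1} \tau(x)) \tau P = \Ad(mk) \tau P = \Ad(m) \bigl(\Ad(k) \tau P\bigr) = \Ad(m) P = P,
\]
using $m \in M \subset P$ and $N_G(P) = P$ at the last step. This is precisely the statement that $\tau$ fixes $\Ad(x) P$, so $\Ad(x) P$ is the desired $\tau$-stable conjugate of $P$.

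The main obstacle is the generalized version of lemma \ref{l:5.2}; once it is in place, the rest of the argument is formal. The adaptation from the minimal to the general case is routine, since $\tau'$ is forced to preserve both $P$ and its unique $\theta$-stable Levi decomposition, so that the computation of $d\alpha$ proceeds exactly as in the minimal case with only notational changes in the identification of the complement of $\fk m \cap \fk k$ inside $\fk k$.
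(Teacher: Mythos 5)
Your proposal is correct and follows essentially the same route as the paper, whose proof of proposition (2) consists precisely of the two steps you carry out: establish the analogue of lemma \ref{l:5.2} with $M\cap K_0$ in place of $M^0$ (by the same differential-surjectivity argument, which transfers verbatim once one notes that the complement of $\fk m\cap\fk k$ in $\fk k$ is spanned by the $X_\beta+\theta X_\beta$ for $\beta$ a root of $\fk a_P$ in $\fk n$), and then repeat the unwinding from proposition (1) to see that $\Ad(x)P$ is $\tau$-stable. Your alternative derivation of the surjectivity of $\alpha$ by first producing a minimal $\tau'$-stable parabolic inside $P$ is a valid shortcut, but the substance of the argument is the one the paper intends.
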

\begin{proof}
The analogue of lemma \ref{l:5.2} holds by essentially the same
proof, and then the reasoning in the proof of corollary 1 in
\ref{c:5.2.1} applies.
\end{proof}

\subsection*{Remark} The arguments in lemma \ref{l:5.2} and
proposition (1) and (2) are adapted from \cite{ST1}.
{Note however that in proposition (2) we do need the assumption that
$P$ and $\tau P$ are conjugate under the \emph{ connected} group
$K_0$. To see that this is necessary, consider the  example  $G=GL(3)$ with 
$\tau(x)=\ ^tx^{-1},$ and $P$ a proper maximal parabolic subgroup.

Since $G=G(\bR)$ we do not assume in this paper that $K$ is connected
and so special care is needed in the arguments starting with the
notion of $\tau-$stable data in definition \ref{d:5.2}.\qed
 }

\bigskip
Let $X(P,\mathcal W,\nu)$ be a standard module for $G$, where $P=MAN$ is a
standard cuspidal parabolic subgroup (\ie $P^0\subset P$),
$\mathcal W$ is a tempered irreducible module for $M$ and $<Re\
\nu,\al> >0$ for all $\al\in\Delta(\fk n).$ We will always denote by
$L(P,\mathcal W,\nu)$ the irreducible Langlands quotient of $X(P,\C W,\nu).$

\medskip
Suppose an irreducible module $\pi$ of $\wti G$ has nonzero Lefschetz
number. By proposition \ref{5.1} its restriction to $G$ is
irreducible; it is of the form $L(P,\C W,\nu)$.
It must be isomorphic to $L(\tau P,\tau\C W,\tau
\nu).$ The modules $L(P,\mathcal W,\nu)$ and $L(\tau
P,\tau\mathcal W,\tau\nu)$ are equivalent if and only if there
exists $k\in K$ such that
\begin{equation}
k\tau M=M,\quad k\tau A=A,\quad k\tau N=N,\quad k\tau\mathcal W\cong
\mathcal W,\quad k\tau \nu=\nu. \label{5.2.1}
\end{equation}
\begin{definition}\label{d:5.2}
We say that the data $(P,\C W,\nu)$ are $\tau$-stable, if they satisfy
equation \ref{5.2.1}.   
\end{definition}

It follows that there is an
intertwining operator $a_{\tau}:\mathcal
W\longrightarrow \mathcal W,$ satisfying
\begin{equation}
a_{\tau}\pi_\C W(m)=\pi_\C W(k\tau(m))a_{\tau}. \label{5.2.2}
\end{equation}
Two choices of $a_\tau$ differ by a scalar multiple.
Then $a_{\tau}$ induces an intertwining operator \[A_{\tau}:
X(P,\mathcal W,\nu) \rightarrow X(\tau P,\tau\mathcal W,\tau\nu)\]
by the formula
\begin{equation}
  \label{eq:5.2.3}
  A_\tau(f)(g)=a_\tau f((k\tau )^{-1}(g)),\quad \text{ for } \quad
  f(gm)=\pi_\C W(m^{-1})f(g),
\end{equation}
satisfying $A_\tau^d=Id$ (because $a_\tau$ does).
{%\color{red} 
Since $\nu$ is strictly dominant for $\Delta(\fk n)$, $\tau\nu$ is
strictly dominant for $\tau\Delta(\fk n).$ Thus  $L(P,\C W,\nu)$ is the unique
irreducible quotient of $X(P,\C W,\nu),$ and $L(\tau P,\tau\C
W,\tau\nu)$ is the unique irreducible quotient of $X(\tau P,\tau\C
W,\tau\nu).$  Thus $A_\tau$ induces a
nonzero operator $\C A_\tau$ from $L(P,\C W,\nu)$ to $L(\tau P,\tau\C
W,\tau\nu)$, and both are equivalent to $\pi.$   Any two such
operators are scalar multiples of each other.  We normalize $A_\tau$ so that
$\C A_\tau$ coincides with $\pi(k\tau).$ Thus the action of $G$ on
$X(P,\C W,\nu)$ extends to $\wti G,$ in such a way  that $\pi$ is its
unique irreducible quotient.}

\begin{theorem} Suppose the data $(P,\mathcal W,\nu)$ are
  $\tau$-stable so that $X(P,\C W,\nu)$ has an action of $\wti G.$
If $dim\ A\ge 1,$ then
\begin{equation*}
L(\tau, X(P,\mathcal W,\nu))=0.
\end{equation*}
\end{theorem}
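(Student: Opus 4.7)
The strategy is to reduce the computation of $L(\tau, X(P,\C W,\nu))$ to a product in which one factor equals $\det(1-\Ad\tau|_{\fk a})$, and then to observe that this determinant vanishes.

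Starting from the integral formula \ref{eq:5.1.2}
\[
L(\tau, X(P,\C W,\nu))=\int_K \chi_X(k\tau)\,\overline{\chi_{\fk s}(k\tau)}\,dk,
\]
I would use Harish-Chandra's induced character formula to express $\chi_X(k\tau)$ in terms of $\chi_{\C W}$ and $e^\nu$ on $\widetilde K$-conjugates of elements of $MA\tau$, divided by the Weyl denominator associated with the roots of $\fk h$ in $\fk n$. The Weyl integration formula on $\widetilde K$, combined with Corollary \ref{c:2.5} which moves every element into a $\tau$-stable Cartan subgroup, reduces the integral to one over a Cartan of $M$. Because compact twisted conjugates of $MA\tau$ must have trivial $A$-component, the $\fk a$-direction of the Weyl denominator is evaluated at $a=1$, producing precisely the factor $\det(1-\Ad\tau|_{\fk a})$; the remaining integral is the Lefschetz-type integral for $\C W$ on $M\tau$. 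Thus, up to sign and scalar,
\[
L(\tau, X(P,\C W,\nu)) = L(\tau|_M, \C W)\cdot \det(1-\Ad\tau|_{\fk a}).
\]

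The decisive step is that $\det(1-\Ad\tau|_{\fk a})=0$. By the $\tau$-stability condition in \ref{5.2.1}, $\tau\nu=\nu$. Since Re$\,\nu$ is strictly dominant on $\Delta(\fk n)$ and $\dim A \geq 1$, the element $\nu\in \fk a^*$ is nonzero and is fixed by $\tau$. Because $\tau$ has finite order it is semisimple on both $\fk a$ and $\fk a^*$, so the existence of a nonzero fixed vector in $\fk a^*$ forces the existence of a nonzero fixed vector in $\fk a$, hence the vanishing of the determinant.

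The main obstacle is the bookkeeping of signs, normalizations, and Weyl denominators in the reduction from $G$ to $M$ in the twisted setting. This follows the template of Labesse \cite{LAB1} for a Cartan-like involution, adapted here to a general finite-order $\tau$. An equivalent cohomological route uses the Koszul/Hochschild–Serre filtration on $H^*(\fk g, K, X(P,\C W,\nu))$ and the Casselman–Osborne description of $H^*(\fk n, X)$, from which the same factorization and the same $\fk a$-determinant appear, with the Euler characteristic on $\sideset{}{^*}{\bigwedge}\fk a^*$ producing the vanishing.
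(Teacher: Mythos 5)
Your argument is correct and rests on exactly the same mechanism as the paper's proof: the induced character is supported on (conjugates of) $M$, and the $\fk a$-factor of $\chi_{\fk s}(m\tau)=\det[\one-\Ad(m\tau):\fk s]$ vanishes because $\tau$-stability forces $\tau\nu=\nu$ with $\nu\ne 0$, so the finite-order (hence semisimple) action on $\fk a$ has a nonzero fixed vector. The paper just takes a shortcut you don't need: rather than carrying out the full Weyl-integration factorization $L=L(\tau|_M,\C W)\cdot\det(1-\Ad\tau|_{\fk a})$, it observes that $\chi_{\fk s}(m\tau)$ already vanishes pointwise on the support of $\chi_\pi$ in formula \ref{5.1.2}, which kills the integral directly.
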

\begin{proof} We can replace $\tau$ by $k\tau$ and apply formula
(\ref{5.1.2}). By Frobenius reciprocity, $\chi_\pi$ is supported on
$M.$ Thus we need to calculate
$\chi_{\fk s}(m\tau )$ with $m\in M\cap K.$ Because $m\tau$ has
fixed points (namely $\nu$) on $\fk a,$ formula (\ref{5.1.1}) equals
0.  See also \BLS.
\end{proof}
{%\color{red}
Thus if $\pi$ satisfies $L(\tau,\pi)\ne 0,$ then $\pi$ is a tempered
representation.}  
\subsection{Tempered Representations}\label{5.3} According to the
refinements of the Langlands classification due to Knapp and
Zuckermann  an irreducible tempered representation of $G$ is of
the form
\[\mathcal W=Ind_P^G[\mathcal W_0\otimes \bC_{i\nu}]\]
with $\mathcal W_0$ a \DS or \LDS\ representation, and $<\nu,\al> >
0$ for all $\al\in \Delta(\fk n)$ \cite{K-Z} .

\medskip
Suppose that the tempered representation ${\mathcal W}$ is
$\tau$-stable and that $H^*(\fk g,K,F^* \otimes {\mathcal W}) \not =
0$ for some finite dimensional $F.$
Because the infinitesimal character of $\mathcal W$ coincides
with that of a finite dimensional representation, $\nu=0$ and $\C
W$ is associated to a pair $(\fk b,\fk h)$ where $\fk b$ is a
$\theta$-stable Borel subalgebra and $\fk h\subset\fk b$ a
$\theta$-stable  \CSA. In particular, $\fk h$ is a fundamental
\CSA.

 Now let $H$ be the stabilizer in $G$ of  a $\theta$-stable pair $(\fk
b,\fk h)$; then $H =H_I\cdot H_R$ is a $\theta$-stable fundamental
\CSG.  Let $\fk h_I$ be the Lie algebra of $H_I:=H\cap K,$ and $\fk
h_R:=\fk h\cap \fk s.$ Since $G$ is the real points of a connected reductive
linear algebraic group, $H$ is abelian. Write $s=\dim(\fk n\cap\fk
k),$ $r=\dim(\fk n\cap\fk s)$ and let $\fk h_R$ be the
complexification of the Lie algebra of $H_R.$ Then
\begin{equation}
\mathcal W=\mathcal R^s_{\fk b}(\chi) \label{5.3.1}
\end{equation}
 where $\chi\in \widehat H_I$ is a
character such that $d\chi +\rho$ is dominant for $\fk b.$ (See
chapter V in \cite{KnV} for the definition of the functor
$\mathcal R^s_{\fk b}$.) We will write $\C R_{\wti G,\fk b}(\chi)$ or
$\C R_{G,\fk b}(\chi)$ or $\C R_{G_0,\fk b}(\chi)$ when we need to
emphasize whether the derived functor module is a $(\fk g, \wti K)$,
or $(\fk g, K)$ or $(\fk g, K_0)$ module.
{%\color{red}
The $(\fk g,K)$-module $\mathcal W$ is
$\tau$-stable if and only if there is $k\in K$ such that
$\gamma:=k\tau$ stabilizes the data $(\fk b,\fk h,\chi).$
By proposition \ref{p:5.1} when we compute Lefschetz numbers we can
replace $\tau$ by $\gamma=k\tau$ and thus the data $(\fk b,\fk
h,\chi)$ is $\gamma-$stable.}
\begin{theorem}\label{t:5.3} Let $\mathcal W=\mathcal R^s_{G,\fk b}(\chi)$, $\fk
  b=\fk h +\fk n$ be an irreducible tempered $\tau$-stable
  $(\fk g,K)-$module. Let $\gamma$
be as before. Assume that $F$ is an irreducible finite dimensional
$\gamma$-stable $(\fk g,K)-$module and that $F^{\fk n}=\chi$ as an
$\fk h$--module. The
Lefschetz number $L(\tau,F^*\otimes \mathcal {\mathcal W})$ equals
\begin{equation*}
(-1)^r\sum (-1)^{i}\ \tr (\ \gamma\ :\ \sideset{}{^i}{\bigwedge}
\fk h_R^* ).
\end{equation*}
It is zero if and only if $\gamma$ has fixed points in
%$\sideset{}{^i}{\bigwedge} \fk h_R^*.$
$\fk h_R^*.$
\end{theorem}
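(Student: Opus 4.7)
By proposition \ref{p:5.1} we may replace $\tau$ by $\gamma=k\tau$, so all the data $(\fk b,\fk h,\chi)$ are $\gamma$-stable and the extended $(\fk g,\wti K)$-module structure on $\C W=\C R^s_{G,\fk b}(\chi)$ constructed as in section \ref{5.2} can be used directly. The plan is to compute $\Ho^*(\fk g,K,F^*\otimes \C W)$ explicitly as a graded $\gamma$-module, then take the alternating trace.

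First I would apply the Vogan--Zuckerman type calculation of the $(\fk g,K)$-cohomology of a cohomologically induced module (see \cite{KnV}, chapter V, or \cite{BW} chapter VI), adapted to the possibly disconnected setting of $\wti K$. Because $\fk b$ is a Borel subalgebra, the Levi in question is $\fk h$ itself, so $\fk l\cap\fk k=\fk h_I$ and $\fk l\cap\fk s=\fk h_R$. The hypothesis $F^{\fk n}=\chi$ is precisely the matching condition that forces the cohomology to be nonvanishing and concentrated in degrees $\ge r=\dim(\fk n\cap\fk s)$. Specifically, one obtains an isomorphism
\begin{equation*}
\Ho^{j}(\fk g,K,F^*\otimes \C W)\;\cong\;\Hom_{H_I}\bigl(\sideset{}{^{j-r}}{\bigwedge}\fk h_R,\bb C\bigr)\;=\;\sideset{}{^{j-r}}{\bigwedge}\fk h_R^*,
\end{equation*}
since $H_I$ acts trivially on $\fk h_R$. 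The key inputs are Kostant's theorem for $\Ho^*(\fk n,F)$ (which singles out the line $\chi$ in the top cohomology degree $|\Delta^+|$), the spectral sequence relating $(\fk g,K)$-cohomology of a derived functor module to $(\fk h,H_I)$-cohomology of the inducing datum, and the degree shift $r$ coming from the noncompact part of $\fk n$.

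Next I would track the $\gamma$-action through this isomorphism. Since the extension of $\C W$ to $\wti G$ was built (in section \ref{5.2}) from the action of $\gamma$ on the Verma module structure $U(\fk g)\otimes_{U(\fk b)}V_\chi$, and since $F$ is itself $\gamma$-stable, all the functors $\mathrm{pro}^{\fk g,\wti H}_{\fk b,\wti H}$, $\Gamma_{K,H_I}$ and the Kostant/Bott identifications used in Step~1 are $\wti H$-equivariant; they are functorial in the $\gamma$-stable data. Consequently the action of $\gamma$ on $\Ho^{j}(\fk g,K,F^*\otimes \C W)$ is identified with the natural action of $\Ad(\gamma)$ on $\bigwedge^{j-r}\fk h_R^*$. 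Summing with signs gives
\begin{equation*}
L(\tau,F^*\otimes\C W)\;=\;\sum_j(-1)^j\tr\bigl(\gamma,\sideset{}{^{j-r}}{\bigwedge}\fk h_R^*\bigr)\;=\;(-1)^r\sum_i(-1)^i\tr\bigl(\gamma,\sideset{}{^{i}}{\bigwedge}\fk h_R^*\bigr),
\end{equation*}
and the standard identity $\sum_i(-1)^i\tr(g,\bigwedge^i V)=\det(\one-g|V)$ applied to $V=\fk h_R^*$ yields vanishing precisely when $\one$ is an eigenvalue of $\Ad(\gamma)$ on $\fk h_R^*$, i.e.\ when $\gamma$ has a nonzero fixed vector there.

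The main obstacle is Step~2: verifying that the Vogan--Zuckerman isomorphism is $\gamma$-equivariant for the particular extension of $\C W$ to a $(\fk g,\wti K)$-module chosen in section \ref{5.2}. The operator $A_\tau$ is determined only up to a $d$-th root of unity, and in principle a different normalization could rescale the cohomology action. One must therefore check that the normalization which makes $\C A_\tau$ coincide with $\pi(k\tau)$ on the Langlands quotient is the same as the one induced by the naive $\gamma$-action on the Verma module $U(\fk g)\otimes_{U(\fk b)}V_\chi$; this is essentially the bookkeeping carried out in section \ref{4.1} for the action of $\gamma$ on $X_\beta$ and on $\bigwedge^*\fk n$, and it is what forces the clean sign $(-1)^r$ rather than some twist by $\ep_\gamma(w)$ on the outcome.
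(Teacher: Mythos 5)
Your proposal is correct and follows essentially the same route as the paper: the paper also computes $\Ho^*(\fk g,K,F^*\otimes\C W)$ via the spectral sequence for cohomologically induced modules (Corollary 5.121 of \cite{KnV}), uses Kostant's theorem to concentrate the $E_2$ term in $q=\dim\fk n$, obtains $\Ho^{i}\cong\Ho^{i-r}[\fk h,H_I,\bb C]\cong\bigwedge^{i-r}\fk h_R^*$, and evaluates the alternating trace as $\prod(1-\zeta_j)$. Your explicit attention to the $\gamma$-equivariance of this identification and the normalization of $A_\tau$ is a point the paper passes over silently, but it does not change the argument.
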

\begin{proof} Denote by  $\bb C$ the trivial
representation of $H.$ Note that
\begin{equation}
\Ho^{i}(\fk g,K,F^*\otimes\C R^s_{\fk b}(\chi))\cong \Ext_{\fk
g,K}^{i}[F,\C R^s_{\fk b}(\chi)]. \label{5.3.3}
\end{equation}
 Corollary 5.121 in \KnV applies, and there is a first quadrant
spectral sequence
\begin{equation}
\label{eq:5.3.4}
 E_r^{p,q} \Longrightarrow
 \Ext_{\fk g,K}^{p+q-s}[F,\C R^s_{\fk b}(\chi)]
\end{equation}
with differential  of bidegree $(r,1-r)$ and with $E_2$ term
\[  E_2^{p,q}=\Ext_{\fk h,H_I}^{p}[H_{q}(\fk n,F),\chi]\]

\begin{comment}
\begin{equation}
  \label{eq:5.3.4}
  E_2^{p,q}=\Ext_{\fk h,H_I}^{p}[H_{q}(\fk n,F),\chi]\Longrightarrow
 \Ext_{\fk g,K}^{p+q-s}[F,\C R^s_{\fk b}(\chi)]
\end{equation}
\end{comment}

\noindent
The $E_2$ term is nonzero only for $q=\dim\fk n.$ The conclusion
is
\begin{equation}
  \label{eq:5.3.5}
\Ho^{i}(\fk g,K,F^*\otimes\C R^s_{\fk b}(\chi))\cong \Ho^{i-r}[\fk
h, H_I,\bb C].
\end{equation}
In view of this, the Lefschetz number is
\begin{equation}
  \label{eq:5.3.6}
  \begin{aligned}
L(\tau,F^*\otimes\C W)&=\sum (-1)^i\tr (\gamma :\Ho^{i-r}[\fk h,
H_I,\bC])=\\
&=(-1)^r\sum (-1)^i\tr(\gamma, \sideset{}{^i}{\bigwedge} \fk h_R).
  \end{aligned}
\end{equation}
Finally if $\zeta_1,\dots ,\zeta_l$ are the eigenvalues of
$\gamma$ on $\Hom_{H_I}[\fk h_R,\bb C]$ (with multiplicities),
then
\begin{equation}
\sum (-1)^i tr[\ \gamma\ :\ \sideset{}{^i}{\bigwedge} \fk h_R^*
]=\prod(1-\zeta_j), \label{5.3.6}\end{equation} which is zero if and
only if one of the $\zeta_j$ equals 1.
\end{proof}

\medskip
\noindent\textbf{Example.}\ Consider the case $G=GL(n)$ with the
standard $\tau,$ transpose inverse. Then $\la=(\la_1,\dots ,
\la_n)$ is the highest weight of  a self dual finite dimensional
representation F if and only if $\la_i=-\la_{n+1-i}.$ There is
exactly one irreducible tempered $(\fk g,K)$--module $\mathcal W$
with nontrivial $(\fk g,K)$--cohomology with the same infinitesimal
character as $F$ \cite{sp}. If $n=2m,$ the Lefschetz number
$L(\tau, F^* \otimes {\mathcal W})$ is $(-1)^m2^m,$ if $n=2m+1,$
then it is equal to  $(-1)^{m+1}2^{m+1}.$

\bigskip
{ \section{\bf Lefschetz functions in the real case} \label{VI} By
\cite{Bou}, the distribution character of a $(\fk g,\wti K)$
module $\pi$ is given by a function $\Theta_\pi$ which is analytic
on the set of regular semisimple elements in $G^*.$ We want to
compute $\Theta_\pi$ on the regular elliptic set $H_{reg}\gamma$ for
the tempered representations $\mathcal W=\mathcal R^s_{\fk b}(\chi)$
considered in section \ref{5.3} This is known \cite{Bou}, but we
sketch a different treatment here based on derived functors.}

\medskip
\subsection{} Let $\pi$ be an admissible $(\fk g,\wti K)$ module.
The formal sum
\begin{equation}
\Theta_{\pi,\widetilde K}=\sum_{\mu\in \widehat{\widetilde K}}
m[V_\mu,\pi]V_\mu. \label{6.1.1}
\end{equation}
 is  a distribution on $\widetilde K$ in the sense that we
can replace $V_\mu$ by its character and evaluate on $\widetilde
K$-finite functions.

For a vector space $V,$ define a formal sum
\begin{equation}
e(V):=\sum (-1)^i\sideset{}{^i}{\bigwedge}V. \label{6.1.2}
\end{equation}
 If $V$ is a representation of some group, view
this sum in the corresponding Grothendieck group as a formal sum
of characters. 

\subsection{} \label{6.2} Recall $\C W=\C R^s_{G,\fk b}(\chi),$  $\fk
b=\fk h +\fk n,$ $s=\dim(\fk n\cap\fk k)$, $r=\dim(\fk n\cap \fk
s)$. We assume that $\C W$ is $\tau$-stable, and choose
$H$ and $\gamma$ as in (\ref{5.3.1}) 
{%\color{red} 
\ie so that $(\fk b,\fk h,\chi)$
is $\gamma-$stable.} We lift $\chi$ to a double
cover as in section (\ref{4.2}) and tensor with $e^\rho.$ Since
the group $H_I$ is the stabilizer of $(\fk b,\fk h)$ in $K,$ it is
a normal subgroup of the stabilizer $H_K$ in $K$ of $(\fk b\cap
\fk k,\fk h\cap \fk k).$ So we have $H_0\subset H_I\subset H_K,$
where $H_0$ is the connected component of the identity in $H_I$, same
as the connected component of the identity in $H_K$. Let $\fk
b_j$ the conjugates of $\fk b$ under $H_K.$ Then the restriction
of $\C R_{G,\fk b}(\chi)$ to $G_0$ is a direct sum of $\C R_{G_0,\fk
b_j}^s(\chi_j)$ one for each $\fk b_j.$ Suppose $\gamma\in\wti
H_K$. Then $\gamma$ permutes the $\fk b_j,$ and in particular fixes
$\fk b.$  Let $T$ be the fixed points of $\gamma$ in $H_K.$ Then
$t\gamma$ permutes the $\fk b_j.$ If it does not fix any $\fk b_j,$
then $\tr\C W(t\gamma)=0.$ Thus we only need to compute $\tr\C
W(t\gamma)$ for $t\gamma$ which fix a $\fk b_j.$ Then $t\gamma$ is
conjugate to an element in $\wti H_I.$ Thus we only need to compute
the character for elements in $\wti H_I.$
{%\color{blue}
\subsection*{Remark} The results and proofs in section \ref{2.1} are
for the case of an algebraically closed field, but they also hold for
compact connected groups. But since we do not assume that  $K$ is connected, 
it is not necessarily true that $H_K=H^\perp\cdot T.$\qed
}

%{\color{red} This is slightly misleading since III.1 deals with groups
%  over algebraically  closed fields. This should be reformulated} 
%%%%%%%%%%%%

{%\color{blue}
Let $\Delta_\gamma^+$ be the roots in $\fk b$ which are not 1 on
$\gamma.$ Similarly $\Delta_\gamma(\fk s)^+$ is the set of roots in $\fk
b\cap\fk s$ which are not 1 on $\gamma.$ 
The  $\tilde{K}$ character $e(V)$ for  $V=\fk s$ equals
\begin{equation}
e(\fk s)(h\gamma)=e(\fk h _R)(\gamma)\prod_{\beta\in
\Delta_\gamma(\fk
s)^+}(1-e^{-\beta}(h\gamma))(1-e^{\beta}(h\gamma)). \label{6.1.3}
\end{equation}
}

Assume that $\rank \fk g =\rank \fk k.$ Then the spin representation
decomposes into a sum of two representations denoted $S^\pm.$ They
extend to $\wti K.$ The formula
\begin{eqnarray}
  \label{eq:6.2.1}
  (\tr S^+ -\tr S^-)(h\gamma)&= &\prod_{\beta\in \Delta_\gamma(\fk s)^+}
(e^{\beta/2}-e^{-\beta/2})(h\gamma)=\\
&=&(-1)^re^{-\rho(\fk n)+\rho(\fk n\cap \fk k)}e(\fk n\cap \fk s)(h\gamma).
\end{eqnarray}
holds. When $\rank\fk  k<\rank\fk g,$ the expression
\begin{equation}
  \label{eq:6.2.2}
e(S)^2= \prod_{\beta\in \Delta_\gamma(\fk s)^+}
(e^{\beta/2}-e^{-\beta/2})^2
\end{equation}
is a virtual character of $\wti K.$

\begin{proposition}[1] \label{p:6.2.1} Assume $\rank\fk k=\rank\fk g,$ and let
  $\C W=\C R^s_{\wti G,\fk b}(\chi)$
  with \linebreak $\fk b=\fk h +\fk n$ and $s=\dim(\fk n\cap\fk k)$. The formal
  combination $(S^+-S^-)\otimes \Theta_{\C W, \wti K}$ is a finite
  linear combination of irreducible $\widetilde K$
  representations. It equals the irreducible module with
  highest weight $\chi\otimes e^{\rho(\fk n)}.$
\end{proposition}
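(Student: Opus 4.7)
The strategy is to compute the virtual $\wti K$-character $(S^+-S^-)\otimes \Theta_{\C W,\wti K}$ on the Cartan $\wti H\subset\wti K$ (which exists because $\rank\fk k=\rank\fk g$ forces $\fk h\subset\fk k$) and to identify the result with a single irreducible $\wti K$-character by means of the twisted Weyl character formula (\ref{eq:4.2.2}). As already observed in the discussion preceding this proposition, on elements $t\gamma$ that do not fix one of the Borels $\fk b_j$ both sides vanish, so it suffices to work on $\wti H_I$.

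The first main step is to produce a formula for $\Theta_{\C W,\wti K}(h\gamma)$ on $\wti H_{reg}\gamma$, which is the twisted analogue of the Hecht-Schmid identity. I would derive it by the same derived-functor spectral sequence used in the proof of Theorem \ref{t:5.3}: one applies the Euler-Poincar\'e principle to the Zuckerman complex computing $\C R^s_{\wti G,\fk b}(\chi)$ and uses the twisted Kostant-type decomposition (\ref{eq:4.1.5}) of $\fk n$-cohomology. The outcome is an identity of the shape
\begin{equation*}
\Theta_{\C W,\wti K}(h\gamma)\;=\;\frac{(-1)^s\sum_{w\in W_K}\ep_\gamma(w)\,e^{w(\chi+\rho)}(h\gamma)}{\prod_{\beta\in\Delta^+_\gamma}\bigl(e^{\beta/2}-e^{-\beta/2}\bigr)(h\gamma)},
\end{equation*}
with the twisted signs $\ep_\gamma$ of Section \ref{4.1}.

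Multiplying by $(S^+-S^-)(h\gamma)$ and invoking (\ref{eq:6.2.1})---which in the equal-rank case ($\fk h_R=0$) expresses $(S^+-S^-)$ as $\prod_{\beta\in\Delta_\gamma(\fk s)^+}(e^{\beta/2}-e^{-\beta/2})$ up to a sign---cancels the $\fk s$-part of the denominator against the full Weyl denominator for $\fk g$, leaving only the Weyl denominator for $\wti K$. The resulting quotient is of exactly the shape to which (\ref{eq:4.2.2}) applies and so equals the character of the irreducible $\wti K$-module of highest weight $\chi\otimes e^{\rho(\fk n)}$; dominance of this weight for $\Delta^+(\fk k)$ is inherited from the hypothesis that $d\chi+\rho$ is dominant for $\fk b$, and both irreducibility and finite-dimensionality of the resulting virtual character are then immediate from the Weyl character formula.

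The main obstacle I anticipate is a rigorous justification of the character formula for $\Theta_{\C W,\wti K}$ at twisted conjugacy classes: the derived functor $\C R^s_{\wti G,\fk b}(\chi)$ requires an explicit $\gamma$-equivariant structure on the Zuckerman complex, and one must handle the contribution of each Borel in the $H_K/H_I$-orbit, retaining only those actually fixed by $\gamma$. I would do this exactly as in the proof of Theorem \ref{t:5.3}, using Corollary 5.121 of Knapp-Vogan in its $\wti K$-equivariant form: the $E_2$-term of the resulting spectral sequence vanishes outside $q=\dim\fk n$, which reduces the whole computation to the twisted $\fk h$-cohomology already carried out in Section \ref{4.1}.
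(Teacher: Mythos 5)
Your overall picture---multiply the character of $\C W$ by $(S^+-S^-)$ so that the noncompact part of the Weyl denominator cancels and the Weyl character formula (\ref{eq:4.2.2}) for $\wti K$ appears---is the right heuristic, but as written the argument has two genuine gaps, and the paper's proof avoids both by working at the level of multiplicities rather than character values. First, the proposition is a statement about the \emph{formal} $\wti K$-character $\Theta_{\C W,\wti K}=\sum m[V_\mu,\C W]V_\mu$, an a priori infinite sum; the assertions to be proved are that after tensoring with $S^+-S^-$ all but one Fourier coefficient vanishes and the surviving one equals $1$. Equality of the two sides as functions (or distributions) on the regular set of $\wti H$ does not give this: a conjugation-invariant distribution with integral, polynomially bounded Fourier coefficients can be supported entirely on the singular set (on $SU(2)$, $\delta_e=\sum(\dim V_n)\chi_{V_n}$), so matching on $\wti H_{reg}\gamma$ leaves the multiplicities undetermined. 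Closing this gap forces you to compute $\dim\Hom_{\wti K}(W,\C R^s(\chi)\otimes(S^+-S^-))$ for every irreducible $W$, which is exactly what the paper does: it applies the Blattner-type formula (\ref{eq:6.2.3}) from Knapp--Vogan, uses the identity $\sum S^n(\fk n\cap\fk s)\cdot e(\fk n\cap\fk s)=1$ to collapse the symmetric-algebra sum, and then invokes Kostant's theorem on $H_j(\fk n\cap\fk k,W)$ together with the dominance of $d\chi+\rho(\fk n)$ to force $w=1$ in (\ref{eq:6.2.5}).

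Second, your proposed derivation of the character formula for $\Theta_{\C W,\wti K}$ "by the same spectral sequence as in Theorem \ref{t:5.3}" does not apply: that spectral sequence computes $\Ext_{\fk g,K}[F,\C R^s_{\fk b}(\chi)]$, i.e.\ $(\fk g,K)$-cohomology against a fixed finite-dimensional module, not the restriction of $\C R^s_{\fk b}(\chi)$ to $\wti K$; the relevant tool for $K$-multiplicities is precisely (\ref{eq:6.2.3}). Moreover, in the paper the character identity on $\wti H_{reg}$ that you want to start from is Theorem \ref{t:6.2}, which is \emph{deduced from} Proposition (1) together with Harish-Chandra's theorem identifying the formal $K$-character with the global character on the regular set---so your route inverts the logical order and would require an independent, $\gamma$-equivariant proof of the twisted Hecht--Schmid identity before it could get started.
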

\begin{proof} We use the notation and results in \KnV chapter V.
Write $\chi^{\#}$ for the representation $\chi\otimes \wedge^r(\fk
n).$ If we denote by $W$ an arbitrary $\widetilde K$ module, then
\begin{equation}
\aligned
(-1)^r&\dim \Hom_{\widetilde K}(W,\mathcal R^s(\chi))=\\
=&\sum_{j=0}^s(-1)^j\sum_{n=0}^\infty \dim \Hom_{\widetilde H_I}
(\Ho_j(\fk n\cap\fk k,W),S^n(\fk n\cap \fk s)\otimes_{\bb
C}\chi^\#).
\endaligned
\label{eq:6.2.3}
\end{equation}
Tensoring $\mathcal R^s(\chi)$ with
$(S^+-S^-)$ in (\ref{eq:6.2.3}), and using the formula \[\sum S^n(\fk n\cap\fk
s)\cdot e(\fk n\cap\fk s)=1,\] we get
\begin{equation}\label{eq:6.2.4}
\aligned
&\dim\Hom_{\widetilde K}(W,\mathcal R^s(\chi)\otimes (S^+-S^-))=\\
&\ \ \ \ \ = \sum_{j=0}^s(-1)^j\dim\Hom_{\widetilde H_I} (\Ho_j(\fk n\cap\fk
k,W),\chi^{\#}\otimes e^{-\rho(\fk n)+\rho(\fk n\cap \fk k)}).
\endaligned
\end{equation}
Recall that $\chi^\#=\chi\cdot e^{2\rho(\fk n)},$ and the weights of
$H_j(\fk n\cap \fk k,W)$ are of the form $w(w_0\mu-\rho(\fk n\cap\fk
k))+\rho(\fk n\cap \fk k)$ with $w_0$ the longest element in $W_K.$ We get the equation
\begin{equation}
  \label{eq:6.2.5}
  w(w_0d\mu-\rho(\fk n\cap\fk k)) +\rho(\fk n\cap\fk k)=d\chi +\rho(\fk n)
  +\rho(\fk n\cap\fk k).
\end{equation}
Since $d\chi +\rho(\fk n)$ is dominant, it follows that $w=1,$ and
\[\mu\cdot e^{\rho(\fk n\cap \fk k)}=\chi\cdot e^{\rho(\fk n)}.\]
\end{proof}

Now assume that  $\rank \fk k< \rank\fk g.$
\begin{proposition}[2]\label{p:6.2.2}
The formal combination $e(S)^2\otimes\Theta_{\wti K}$ is a finite linear
 combination of irreducible representations of $\wti K.$ Assume
 $d\chi+\rho(\fk n)$ is very dominant. For a subset $B\subset \fk
 n\cap \fk s,$  let $<B>$ be the sum of roots in $B,$ and denote by
 $V(\chi\cdot e^{-<B>})$ the finite dimensional module of $\wti K$
 with this highest weight. Then
 \begin{equation*}
 e(S)^2\otimes \Theta_{\wti K}=  \sum_{i,|B|=i} (-1)^iV(\chi\cdot e^{-<B>}).
 \end{equation*}
\end{proposition}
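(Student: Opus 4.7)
I would follow the derived functor strategy of Proposition 1 in \ref{p:6.2.1}, the only difference being that in the unequal rank case neither spin representation $S^\pm$ exists as a genuine module for $\wti K$ (only for a double cover). The formal expression $e(S)^2 = \prod_{\beta\in\Delta_\gamma(\fk s)^+}(e^{\beta/2}-e^{-\beta/2})^2$ is built from $(e^{\beta/2}-e^{-\beta/2})^2 = e^\beta - 2 + e^{-\beta}$, which has integral weights, and so descends to an honest virtual character of $\wti K.$ Up to sign this virtual character factors on $\wti H_I$ as $e(\fk n\cap\fk s)\cdot e(\ovl{\fk n}\cap \fk s),$ \ie as two copies of the ``denominator'' $e(\fk n \cap \fk s)$ rather than the single copy supplied by $S^+-S^-$ in the equal rank case.

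For a $\wti K$-module $W$, apply the Knapp--Vogan Hochschild--Serre spectral sequence (corollary 5.121 of \KnV) exactly as in \ref{eq:6.2.3}, but now for $\mathcal R^s(\chi)\otimes e(S)^2$:
\begin{equation*}
(-1)^r\dim\Hom_{\wti K}(W,\C R^s(\chi)\otimes e(S)^2)=\sum_{j,n}(-1)^j\dim\Hom_{\wti H_I}(\Ho_j(\fk n\cap\fk k,W),S^n(\fk n\cap\fk s)\otimes\chi^\#\otimes e(S)^2).
\end{equation*}
One copy of $e(\fk n\cap \fk s)$ inside $e(S)^2|_{\wti H_I}$ combines with $\sum_n S^n(\fk n\cap \fk s)$ via the identity $\sum_n S^n(\fk n\cap\fk s)\cdot e(\fk n\cap\fk s)=1$ already used in \ref{p:6.2.1}. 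What remains is the second copy of $e(\fk n\cap \fk s)$, which I expand as $\sum_{B\subset \Delta_\gamma(\fk n\cap \fk s)}(-1)^{|B|}e^{-<B>}$ (possibly times a $\rho$-shift absorbed into $\chi^\#$). This converts the right hand side into a finite alternating sum indexed by subsets $B,$ with each summand contributing $(-1)^{|B|}\dim\Hom_{\wti H_I}(\Ho_j(\fk n\cap\fk k,W),\chi\cdot e^{-<B>}\cdot e^{\rho(\fk n)})$.

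Finally, for each subset $B$ the hypothesis that $d\chi+\rho(\fk n)$ is very dominant guarantees that $\chi\cdot e^{-<B>}\cdot e^{\rho(\fk n)}-\rho(\fk n\cap\fk k)$ remains dominant for $\Delta(\fk n\cap\fk k)^+,$ so by Kostant's formula (as in the identification step of \ref{p:6.2.1}) the corresponding Hom space equals the multiplicity of $W$ in the irreducible $\wti K$-module $V(\chi\cdot e^{-<B>})$. Summing over $B$ yields $e(S)^2\otimes\Theta_{\wti K}=\sum_{B}(-1)^{|B|}V(\chi\cdot e^{-<B>}).$ The finiteness assertion follows automatically because only finitely many $B\subset \fk n\cap\fk s$ occur. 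The main obstacle in writing this out in full will be the careful bookkeeping of $\rho(\fk n)$ versus $\rho(\fk n\cap\fk k)$ shifts and signs (similar to what made Prop.~1 technical), together with verifying that very dominance of $d\chi + \rho(\fk n)$ is strong enough to prevent any $B$ from producing a weight that lies on a wall (where the $\wti K$-module would vanish or require a Weyl reflection, producing unwanted cancellations).
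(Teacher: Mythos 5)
Your proposal is correct and follows essentially the same route as the paper: tensor $\C R^s(\chi)$ with $e(S)^2$ in the Knapp--Vogan blow-up formula, cancel one factor of $e(\fk n\cap\fk s)$ against $\sum_n S^n(\fk n\cap\fk s)$, expand the leftover factor (the paper writes it as $e(\ovl{\fk n}\cap\fk s)$ inside the $\Hom_{\wti H_I}$ space) into the alternating sum over subsets $B$, and invoke very dominance of $d\chi+\rho(\fk n)$ so that each $d\chi+\rho(\fk n)-\langle B\rangle$ stays dominant and the highest-weight identification after (\ref{eq:6.2.5}) applies. The $\rho$-shift bookkeeping you flag is exactly what the paper handles via $\chi^\#\otimes e^{-\rho(\fk n)+\rho(\fk n\cap\fk k)}$, so there is no gap.
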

\begin{proof}
We tensor $\C R^s(\chi)$ with $e(S)^2$ as in the proof of
\ref{p:6.2.1}:
\begin{equation}
  \label{eq:6.2.6a}
\aligned
&\dim\Hom_{\widetilde K}(W,\mathcal R^s(\chi)\otimes e(S)^2)=\\
&\ \ \ =\sum_{j=0}^s(-1)^j\dim\Hom_{\widetilde H_I} (\Ho_j(\fk n\cap\fk
k,W),\chi^{\#}\otimes e^{-\rho(\fk n)+\rho(\fk n\cap \fk k)}\otimes
e(\ovl{\fk n}\cap \fk s )).
\endaligned
\end{equation}
Because $d\chi +\rho(\fk n)$ is very dominant, $d\chi +\rho(\fk n)
-<B>$ is dominant, and the reasoning  in the proof of
proposition \ref{p:6.2.1} after (\ref{eq:6.2.5}) applies.
The claim follows.
\end{proof}

%{\color{red} for consistancy of the notation I suggest to denote
%the character  of $\C W=\C R^s_{\fk b}(\chi),$ by $\Theta_{\C W}$
%and not $\Theta_\chi$ as in the manuscript. With this notation is
%it a formula for $\Theta_{\C W}$ or $\Theta_{\C W,\tilde{K} }$?}

The distribution character of $\C W$ denoted $\Theta_\C W$ is given by
integration against a locally $L^1$ analytic function on the regular
set \cite{Bou}.

Let $\ep(w)$ for $w\in W_K$ be defined by
\begin{equation}
  \label{eq:6.2.6}
\prod_{\beta\in\Delta_\gamma(\fk s)^+} (e^{w\beta/2}-e^{-w\beta/2})=\ep(w)
 \prod_{\beta\in\Delta_\gamma(\fk s)^+} (e^{\beta/2}-e^{-\beta/2}).
\end{equation}

\begin{theorem} \label{t:6.2}Let $(\fk b,\fk h,\chi)$
be $\tau$-stable data for a tempered module $\C W =\C R_{\fk
b}^r(\chi).$ Then
\begin{equation*}
\Theta_{\C W}(h\gamma)= (-1)^{r} \frac{\sum_{w\in
W_K}\ep(w)w\chi(h\gamma)e^{w(\rho)-\rho}(h\gamma)}
{\prod_{\beta\in \Delta_\gamma} (1-e^{-\beta}(h\gamma))}
\end{equation*}
for any $h\gamma\in\wti H_{reg}$ stabilizing $\fk b.$
\end{theorem}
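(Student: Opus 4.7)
The plan is to derive the character formula from the $\wti K$-decompositions of $(S^+-S^-)\otimes \Theta_{\C W}$ and $e(S)^2\otimes \Theta_{\C W}$ in propositions \ref{p:6.2.1} and \ref{p:6.2.2}, combined with the twisted Weyl character formula \eqref{eq:4.2.2} applied inside $\wti K$. By Bouaziz, the distribution character $\Theta_\C W$ is given by a locally $L^1$ analytic function on the regular set, so it suffices to evaluate it on regular elements $h\gamma \in \wti H_{\mathrm{reg}}$; by the discussion opening \ref{6.2} we may assume $\gamma$ stabilizes the $\tau$-stable data $(\fk b,\fk h,\chi)$, since any other $\wti H_K$-coset contributes zero to the trace.

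In the equal-rank case $\rank \fk k = \rank \fk g$ one has $\wti H = \wti H_K$, $\fk h_R = 0$, and proposition \ref{p:6.2.1} gives the formal $\wti K$-identity
\[
(S^+-S^-)\otimes \Theta_{\C W,\wti K} \;=\; V\!\left(\chi\cdot e^{\rho(\fk n)}\right).
\]
Restricted to $\wti H_K$ this becomes a pointwise identity of analytic functions on the regular set. Applying \eqref{eq:4.2.2} to $V(\chi\cdot e^{\rho(\fk n)})$ and dividing by the explicit expression $(S^+-S^-)(h\gamma)=\prod_{\beta\in\Delta_\gamma(\fk s)^+}(e^{\beta/2}-e^{-\beta/2})(h\gamma)$ from \eqref{eq:6.2.1} produces a quotient whose denominator combines into $\prod_{\beta\in\Delta_\gamma^+}(e^{\beta/2}-e^{-\beta/2})(h\gamma)$; factoring out the half-sum exponential converts this into $\prod_{\beta\in\Delta_\gamma^+}(1-e^{-\beta})(h\gamma)$, matching the denominator of the theorem.

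The remaining task is the algebraic simplification of the numerator into $(-1)^r\sum_{w\in W_K}\ep(w)\,w\chi(h\gamma)\,e^{w\rho-\rho}(h\gamma)$, using the decomposition $\rho=\rho(\fk g)=\rho_K+\rho(\fk n\cap\fk s)$, the comparison \eqref{eq:6.2.6} of $\ep(w)$ with the $w$-action on the noncompact half-product, and standard Weyl denominator identities. This combinatorial bookkeeping is the main obstacle: one must reconcile the three half-sums $\rho_K$, $\rho(\fk n)$, $\rho(\fk g)$, match the signs $\ep_\gamma(w)$ coming out of \eqref{eq:4.2.2} with the signs $\ep(w)$ appearing in the theorem, and extract the overall $(-1)^r$ (which in the untwisted specialization reproduces the familiar Harish-Chandra sign of the discrete series character formula). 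The unequal-rank case $\rank\fk k<\rank\fk g$ is treated analogously, using proposition \ref{p:6.2.2} in place of \ref{p:6.2.1}: one writes $e(S)^2\otimes\Theta_{\C W}=\sum_{B\subset\Delta(\fk n\cap\fk s)^+}(-1)^{|B|}V(\chi\cdot e^{-\langle B\rangle})$, applies \eqref{eq:4.2.2} to each summand, and verifies that the alternating sum over $B$ collapses via the Koszul complex on $\bigwedge^*(\fk n\cap\fk s)$ to supply the additional factors associated to $\fk h_R$ needed to extend the formula off the compact part of the fundamental Cartan.
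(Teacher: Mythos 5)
Your route is essentially the paper's: both proofs reduce the character formula to proposition \ref{p:6.2.1} (equal rank) and proposition \ref{p:6.2.2} (unequal rank), then divide by the explicit Weyl-type denominator $(S^+-S^-)$ resp.\ $e(S)^2$. Two points you elide are supplied explicitly in the paper and are genuinely needed. First, passing from the formal $\wti K$-type identity $(S^+-S^-)\otimes\Theta_{\C W,\wti K}=V(\chi\cdot e^{\rho(\fk n)})$ to a pointwise identity involving the \emph{global} character $\Theta_{\C W}$ on $\wti H_{reg}$ is not a consequence of Bouaziz's analyticity alone: one must know that the formal sum $\sum m[V_\mu,\C W]\,\tr V_\mu$ actually converges to $\Theta_{\C W}$ on the regular elliptic set, which is Harish-Chandra's theorem (\cite{HC}, sections 11--12) cited in the paper for exactly this step. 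Second, in the unequal rank case proposition \ref{p:6.2.2} is only proved under the hypothesis that $d\chi+\rho(\fk n)$ is very dominant (so that each $d\chi+\rho(\fk n)-\langle B\rangle$ stays dominant); your "treated analogously" hides the reduction of the general case to this one, which the paper carries out with translation functors as in chapter VII of \cite{KnV}. With those two insertions your argument matches the paper's; the remaining $\rho$-shift and sign bookkeeping you defer is likewise left implicit there.
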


\begin{proof}  By results of Harish-Chandra,
  \cite{HC} sections 11 and 12, the distribution $\Theta_{\C W, \wti
    K}$ coincides with $\Theta_{\C W}$ when restricted to the regular
  set intersected with $\wti G_{reg}.$ The formula now follows from
  proposition (1) for $\rank\fk k=\rank\fk g.$ { If
    $\rank\fk k \not =\rank\fk g$ and $d\chi +\rho(\fk n)$ is very
  dominant, it follows
  from proposition (2) of \ref{6.2}  and in general by using
  translation functors as given in   chapter VII of \cite{KnV}.}
\end{proof}
As in section \ref{4.2} we can twist $\chi$ by $e^\rho$ and change
the definition of $\C R$ accordingly. We refer to \cite{KnV} for
details.  The formula is rewritten as
\begin{equation}
  \label{6.1.7}
 \Theta_\C W(h\gamma)=(-1)^{r}\frac{\sum_{w\in
     W_K}\ep(w)w\chi(h\gamma) e^{w\rho}(h\gamma)}
{\prod_{\beta\in\Delta_\gamma(\fk s)}(e^{\beta/2}(h\gamma)-e^{-\beta/2}(h\gamma))}
\end{equation}
for any $h\gamma\in\wti H_{I,reg}$ stabilizing $\fk b.$

\begin{corollary}\label{c:6.2} Let $F$ be finite dimensional $\tau$
  invariant irreducible representation and   $\gamma\in \wti H_K.$ Then
  \begin{equation*}
\Theta_F(h\gamma)=\sum \Theta_{\C W}(h\gamma)
  \end{equation*}
where the sum is over all the $\C W$ corresponding to the $\wti H_K$
conjugacy classes of $\gamma$-stable $(\fk b,\fk h,\chi).$
\end{corollary}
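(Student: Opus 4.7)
The plan is to verify the claimed identity on the regular set $\wti H_{I,reg}$ by comparing the two Weyl-type character formulas already in hand: the formula for $\Theta_F$ from Section \ref{4.2}, and the formula for $\Theta_\C W$ from Theorem \ref{t:6.2}. Both sides of the asserted equality are locally $L^1$ analytic functions on the regular semisimple set by results of Bouaziz, and both are supported on the compact-modulo-center part of $\wti G$, so it suffices to check equality on $\wti H_{I,reg}$ for a fundamental Cartan $\fk h$ stabilized by $\gamma$. On this set one has
\[
\Theta_F(h\gamma) = \frac{\sum_{w\in W}\ep_\gamma(w)e^{w(\mu+\rho)}(h\gamma)}{\prod_{\beta\in\Delta^+_\gamma}(e^{\beta/2}-e^{-\beta/2})(h\gamma)},
\]
while Theorem \ref{t:6.2} gives an expression of the same denominator shape for each $\Theta_\C W$, but with numerator summed only over $W_K$.

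The next step is to split the sum over $W$ into cosets of $W_K$. Fix a base Borel $\fk b_0$ containing $\fk h$; then the $K$-conjugacy classes of Borels containing $\fk h$ correspond to $W_K\backslash W$. For each coset representative $w$ let $\fk b_w := w\fk b_0$, $\fk n_w$ its nilradical, and $\chi_w := F^{\fk n_w}$; this triple $(\fk b_w,\fk h,\chi_w)$ is $\gamma$-stable precisely when the coset $W_Kw$ is fixed by the action of $\gamma$ on $W_K\backslash W$, and two such triples are $\wti H_K$-conjugate iff the corresponding cosets coincide after applying elements of $W_K^\gamma$. Grouping the inner sum over $u\in W_K$ for each fixed $\gamma$-fixed coset representative $w$ yields a sum of the form
\[
\frac{\sum_{u\in W_K}\ep(u)\,e^{uw(\mu+\rho)}(h\gamma)}{\prod_{\beta\in\Delta_\gamma(w\fk b_0)}(e^{\beta/2}-e^{-\beta/2})(h\gamma)},
\]
which by Theorem \ref{t:6.2} is (up to the sign $(-1)^{r_w}$) exactly $\Theta_{\C W_w}(h\gamma)$ for $\C W_w=\C R^{s_w}_{\fk b_w}(\chi_w)$.

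Next, I handle the signs. The denominators for different choices of positive system differ by a sign $\ep(w)$, while the factorization $\ep_\gamma(uw)=\ep_\gamma(w)\ep(u)\cdot\delta_{\gamma}(u,w)$ --- where $\delta_\gamma$ tracks how many $\gamma$-orbits of roots change sign under $u$ versus $w$ --- absorbs exactly into the factors $(-1)^{r_w}$ and $\ep(w)$ of Theorem \ref{t:6.2}. Cosets $W_Kw$ not fixed by $\gamma$ contribute terms whose summands vanish at $h\gamma$: the $\ep_\gamma$ sign forces pairwise cancellation between $uw$ and $u'w$ lying in a common $\gamma$-orbit of cosets, since the exponentials $e^{uw(\mu+\rho)}(h\gamma)$ and $e^{\gamma(uw)(\mu+\rho)}(h\gamma)$ coincide.

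The main obstacle will be the combinatorial bookkeeping: matching $\wti H_K$-conjugacy classes of $\gamma$-stable $(\fk b,\fk h,\chi)$ with $\gamma$-fixed cosets in $W_K\backslash W$ and verifying that the sign and dimension data in Theorem \ref{t:6.2} (namely $(-1)^{r_w}$ with $r_w=\dim(\fk n_w\cap\fk s)$ and the $\ep$ sign on $W_K$) reassemble into the single sign $\ep_\gamma$ appearing in the character formula for $F$. In the case $\rank\fk k<\rank\fk g$, one first proves the identity for $\mu$ with $d\chi+\rho(\fk n)$ very dominant using Proposition (2) of \ref{6.2}, then applies the translation functor argument of \cite{KnV} Chapter VII to extend to arbitrary dominant $\mu$. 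Once these sign and combinatorial identifications are verified, the desired decomposition $\Theta_F(h\gamma)=\sum\Theta_{\C W}(h\gamma)$ reduces to the tautological partition of the Weyl group sum into $W_K$-cosets.
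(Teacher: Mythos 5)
The paper offers no written proof of this corollary --- it is presented as an immediate consequence of comparing the Weyl-type formula (\ref{eq:4.2.2}) for $\Theta_F$ (numerator summed over $W$) with theorem \ref{t:6.2} for each $\Theta_{\C W}$ (numerator summed over $W_K$) --- and your coset decomposition $W=\bigsqcup W_K w$, with $\gamma$-fixed cosets matched to $\wti H_K$-classes of $\gamma$-stable data $(\fk b,\fk h,\chi)$, is exactly the intended argument. Two points, one minor and one substantive. Minor: the cosets not fixed by $\gamma$ do drop out, but not by pairwise sign cancellation of equal exponentials; rather, $h\gamma$ maps the extreme weight space of weight $uw(\mu+\rho)$ to that of weight $\gamma(uw)(\mu+\rho)$, so when these weights differ the operator is off-diagonal on the orbit and each term $\tr e^{uw(\mu+\rho)}(h\gamma)$ is already zero. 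This is how the vanishing is built into the formula of section \ref{4.2}.

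Substantive: your claim that the signs ``absorb exactly'' is the one step that does not go through as stated. The factor $(-1)^{r}$ in theorem \ref{t:6.2} is \emph{common} to all the $\gamma$-stable Borels containing the fixed fundamental $\fk h$ (in the equal rank case $r=\dim(\fk n\cap\fk s)$ is half the number of noncompact roots, independent of the choice of positive system), so it survives the regrouping rather than cancelling against the relative signs $\ep(w)$ between denominators. Carrying out your computation gives $\sum\Theta_{\C W}(h\gamma)=(-1)^{r}\Theta_F(h\gamma)$, not $\Theta_F(h\gamma)$. The untwisted $SL(2,\bR)$ case already shows this: $\Theta_{D^+}+\Theta_{D^-}=-\Theta_F$ on the compact Cartan, consistent with $r=1$. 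So either the corollary must be read with the sign $(-1)^r$ (equivalently $(-1)^{q(\gamma)}$, which is indeed how it is used in theorem \ref{t:6.3} and theorem \ref{t:6.4.1}, where $\ep(\fk b')$ and $(-1)^{q(\gamma)}$ appear explicitly), or your bookkeeping must produce this extra global factor. You should not expect the translation-functor step in the unequal rank case to repair this; the discrepancy is already present in the equal rank, untwisted, connected setting.
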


\bigskip

\subsection{}\label{6.3} Assume $\gamma$ is arbitrary semisimple.
Recall that $\gamma=\gamma_{ell}e^Y$ with $Y\in\fk g(\gamma)$
hyperbolic. We can conjugate $\gamma$ so that $Y$ is in a
$\theta$-stable \CSA \ $\fk h=\fk h_I+\fk h_R,$ in fact $Y\in\fk
h_R.$ Let $P=MN$ be the parabolic subgroup defined by $Y;$ the
roots $\Delta(\fk m)$ are the ones that are zero on $Y,$ the roots
$\Delta(\fk n)$ are the ones that are positive on $Y.$ Then
$G(\gamma)\subset M.$

If $f\in C_c^\infty(G\tau)$ and $\gamma$ normalizes $M,$ then
define
\begin{equation}
f^P(m\gamma )=\delta(m\gamma)^{1/2}\int_K\int_N f(km\gamma
nk^{-1})dn. \label{6.2.1}\end{equation} Then
\begin{equation}\label{6.2.2}
F_{f}^G(\gamma)=F_{f^P}^M(\gamma).
\end{equation}
is well defined.

\medskip
In \BLS, a function $f_{F}\in C_c^\infty(G\tau)$ is defined which
has the property that
\begin{enumerate}
\item $f_{F}(kxk^{-1})=f_F(x),$ \item $f^P_{F}=0$ for $P$ a real
parabolic whose conjugacy class
      is stable under $\tau$ (this means $P$ and $\tau(P)$ are conjugate under
      $G$),
\item $\Theta_\pi(f_F)=L(\tau,\pi\otimes F).$
\end{enumerate}
We refer to $f_F$ as the {\it Lefschetz function for $F$, $\tau$.}

\medskip
For the next results, keep in mind also that orbits of semisimple
elements are closed.

\begin{theorem} Let $f_F$ be a Lefschetz function for $F$,
  $\tau$. Suppose that $\gamma$ has nontrivial hyperbolic part. Then
\begin{equation*}
F^G_{f_F}(\gamma)=0.
\end{equation*}
\end{theorem}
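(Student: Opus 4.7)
The plan is to exploit the defining property of the Lefschetz function $f_F$ that descent to $\tau$--stable proper parabolics vanishes, combined with the descent formula \eqref{6.2.2}.

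First, I would verify that the real parabolic $P=MN$ defined by the hyperbolic element $Y$ is normalized by $\gamma$. By hypothesis $Y \in \fk g(\gamma)$, so $\Ad(\gamma)Y = Y$. Since $P$ is determined by the eigenspace decomposition of $\ad(Y)$ (with $\fk m$ the zero eigenspace and $\fk n$ the sum of positive eigenspaces), $\Ad(\gamma)$ preserves both $\fk m$ and $\fk n$, so $\gamma$ normalizes $P$ and in particular $\gamma$ normalizes $M.$ Because $Y$ is hyperbolic and nonzero, $P$ is a proper real parabolic subgroup.

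Next I would show that the $G$--conjugacy class of $P$ is stable under $\tau$. Write $\gamma = \delta\tau$ with $\delta \in G$. The relation $\gamma P \gamma^{-1} = P$ rewrites as $\delta\,\tau(P)\,\delta^{-1} = P$, so $\tau(P) = \delta^{-1} P \delta$ is $G$--conjugate to $P$. Hence the conjugacy class of $P$ is $\tau$--stable in the sense of property (2) of the Lefschetz function recalled at the beginning of \ref{6.3}. Consequently, the constant--term function $f_F^P$ vanishes identically on $M\gamma$.

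Finally, I would apply the descent identity \eqref{6.2.2}: since $\gamma$ normalizes $M$, and $G(\gamma)\subset M$,
\[
F^G_{f_F}(\gamma) = F^M_{f_F^P}(\gamma) = 0,
\]
because the integrand defining the orbital integral on $M$ is zero. The only subtlety is checking that \eqref{6.2.2} applies in the twisted setting to an element $\gamma$ that merely normalizes (rather than centralizes) $M$; this is routine, since the parabolic $P$ and the twisted element $\gamma$ sit together inside $\wti G$ with $\gamma$ fixing the Levi decomposition, so the standard Harish--Chandra descent argument goes through verbatim after replacing the centralizer by the twisted centralizer. The main conceptual point is simply the matching of (1)~$\gamma$ being elliptic \emph{modulo} its hyperbolic part together with (2)~the vanishing of $f_F^P$ on $\tau$--stable proper parabolics; I do not anticipate any obstacle beyond bookkeeping.
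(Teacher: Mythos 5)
Your proposal is correct and is exactly the paper's argument: the paper's proof consists of the single line ``Apply formula (\ref{6.2.2})'', i.e.\ the descent identity $F^G_{f_F}(\gamma)=F^M_{f_F^P}(\gamma)$ for the proper parabolic $P$ defined by the hyperbolic part $Y$, combined with property (2) of the Lefschetz function ($f_F^P=0$ since $\tau(P)=\delta^{-1}P\delta$ is $G$-conjugate to $P$). You have merely supplied the details the paper leaves implicit.
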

\begin{proof} Apply formula (\ref{6.2.2}).
\end{proof}
\subsection{}\label{sec:6.3} In this section we compute the orbital
integrals on Lefschetz functions. For general results see
\cite{Re1} and \cite{Re2}.

\medskip
We use the conventions and notation of section \ref{III}. Assume
that $\gamma=k\tau$ with $k\in K$ is compact, and let $\fk t$ be a
{%\color{red} 
fundamental} \CSA\  in $\fk g(\gamma).$ The centralizer of
$\fk t$ is a fundamental \CSA \ $\fk h$ of $\fk g.$  Fix $(\fk b,\fk h)$  a
$\gamma$-stable pair,  $H=H_I\cdot H_R$ the corresponding \CSG \ in
$G$ and $\wti H$ the \CSG \ in $\wti G.$  If $\fk b'$ is another
$\gamma$-stable Borel subgroup, let $\ep(\fk b'):=(-1)^{\dim [\fk
b'/(\fk b\cap\fk b')]}.$

\begin{theorem}\label{t:6.3} Suppose $f\in C_c^\infty(G^*)$ is such that $f^P=0$
  for all $P.$ Then there is a constant $c(\gamma)$ depending on the
  Haar measures on $G$ and $G(\gamma)$ such that
\begin{equation*}
F_{f}(\gamma)=c(\gamma)\sum|W(\chi)|^{-1}\ovl{\chi(\gamma)}\sum\ep(\fk
b')\Theta_{\C W(\fk b',\chi)}(f).
\end{equation*}
The first sum is over $\chi$ such that $d\chi$ is dominant for
$\fk b\cap\fk k$ and the second sum over $\fk b'\supset\fk
b\cap\fk k.$
\end{theorem}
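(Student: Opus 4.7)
The plan is to follow a Fourier/inversion strategy, using Theorem \ref{t:6.2} as the ``forward'' character formula and inverting it to express $F_f$ in terms of the tempered characters $\Theta_{\C W(\fk b',\chi)}(f)$. Throughout I will use Harish-Chandra's theory that $F_f$ is a smooth function on the regular set of each \CSG, together with the constraint $f^P=0$.

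First I would reduce to the fundamental \CSA. Using \ref{6.2.2} (namely $F_f^G(\gamma)=F_{f^P}^M(\gamma)$ when $\gamma$ normalizes $M$), the hypothesis $f^P=0$ for every $\tau$-stable real parabolic $P$ implies that $F_f$ vanishes on the intersection of every non-fundamental \CSG\ with $G^*$; equivalently, the only \CSG\ contributing nontrivial data is $\wti H$ with $\fk h$ fundamental and $\gamma$-stable, as chosen in the statement. This is where the ``Lefschetz'' hypothesis $f^P=0$ is used in an essential way.

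Next I would carry out the inversion on $\wti H$. By Bouaziz \cite{Bou} the function $F_f$ restricted to the regular set of $\wti H$ extends smoothly across the walls (with the standard jump/matching relations), and on the compact part $\wti H_I$ expands in characters $\chi\in\widehat{\wti H_I}.$ Theorem \ref{t:6.2} gives, for each choice of $\gamma$-stable pair $(\fk b',\fk h,\chi),$ the character
\[
\Theta_{\C W(\fk b',\chi)}(h\gamma)=(-1)^r\frac{\sum_{w\in W_K}\ep(w)w\chi(h\gamma)e^{w\rho}(h\gamma)}{\prod_{\beta\in\Delta_\gamma(\fk s)^+}(e^{\beta/2}-e^{-\beta/2})(h\gamma)},
\]
and the normalization of $F_f(\gamma)$ by the Weyl denominator $D(\gamma)$ in \ref{eq:3.2.1}--\ref{eq:3.2.2} cancels precisely this denominator. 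Consequently $D(\gamma)F_f(\gamma)$ is a $W_K$-skew-symmetric combination of the compact characters $w\chi$, and we can read off the coefficients of each $\chi$-orbit by Fourier inversion on $\wti H_I$, paired against $f$ via Harish-Chandra's formula
\[
\int_{G^*}f\cdot\Theta_\C W\,dg=\sum_i\int_{T_i}D'\Theta_\C W F_f\,dt_i
\]
from proposition \ref{3.3}. The outer sum over $\fk b'\supset\fk b\cap \fk k$ with $\fk b'$ fundamental produces the various tempered standard modules $\C W(\fk b',\chi)$ that share the same compact data, and the sign $\ep(\fk b')=(-1)^{\dim[\fk b'/(\fk b\cap\fk b')]}$ records the change of positive system in the numerator.

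Finally, the factor $|W(\chi)|^{-1}$ accounts for the stabilizer of $\chi$ (so that each $W_K$-orbit is counted once), $\ovl{\chi(\gamma)}$ is the character of $\wti H_I$ evaluated on $\gamma$ appearing in the inverse Fourier transform, and the scalar $c(\gamma)$ collects the measure-normalization constants relating $dg,\,dt,$ and $d\bar g$ on $G/G(\gamma)$. The main technical obstacle is bookkeeping: the group $\wti K$ is disconnected, $H_K\supsetneq H_I$ in general, and different $\gamma$-stable Borel subalgebras $\fk b'$ with the same $\fk b'\cap \fk k$ must be matched to distinct standard modules. As noted in the remark in section \ref{6.2}, the relation $H_K=H^\perp\cdot T$ can fail, and this forces the inversion to run not over $\chi\in\widehat{\wti H}$ but over characters of $\wti H_I$ modulo $W(\chi)$, together with the parameter $\fk b'\supset\fk b\cap\fk k$; checking that this parametrization is non-redundant and exhaustive, and that the signs $\ep(\fk b')$ emerge with the correct convention, is the step that requires the most care.
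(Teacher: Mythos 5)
Your proposal is correct and follows essentially the same route as the paper: Fourier inversion on the compact Cartan subgroup $\wti H_I$ (the Sally--Warner--Herb strategy the paper cites), identification of the Fourier coefficients of $F_f$ with the traces $\Theta_{\C W(\fk b',\chi)}(f)$ via the Weyl integration formula of section \ref{III}, and the cuspidality hypothesis $f^P=0$ to eliminate the contributions of the more split Cartan subgroups. The only cosmetic difference is ordering: the paper first treats $f$ supported on the regular elliptic set and extends by continuity at the end, whereas you invoke the vanishing on non-fundamental Cartans at the outset.
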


\begin{proof}{ The idea of the proof originates in the
    work of Sally, Warner and Herb.}
    
Suppose $f$ is $C_c^\infty(G^*)$ supported on the regular
elliptic set. Then the function $F_f(h\gamma)$ is a well defined
function $\phi$ which is $C_c^\infty$ on $\wti H_{I,reg}.$ Its
Fourier transform is
\begin{equation}
\widehat{\phi}(\chi)=\int_{\wti H_I}{\chi(x)}\phi(x)\ dx.
\label{6.3.1}
\end{equation}
Assume for the moment that the support of $f$ is contained in $\Ad
G(T\gamma).$ Since $\phi$ is invariant (\ie we assume as we may by
averaging that $f(\Ad\gamma(x))=f(x)$ for $x\in G^*$),  Fourier
inversion gives
\begin{equation}
\phi(h\gamma)=\sum \tr\ovl{\chi(h\gamma)}\ \widehat{\phi}(\chi).
\label{eq:6.3.2}
\end{equation}
If the restriction of $\chi$ to $H$ is not irreducible, then
$\tr\chi(h\gamma)=0.$ In other words we may assume that $\chi$ is $\gamma$-stable so
1-dimensional and so we can suppress $\tr$ from the notation.

We now compute $\widehat\phi.$ On the one hand, because
$F_f(ht\gamma h^{-1})=F_f(t\gamma),$  we have
\begin{equation}
  \label{eq:6.3.3}
\widehat{\phi}(\chi)=\vol(H_I/T)\int_{T} |e(\fk h/\fk
t)(t\gamma) |\theta_\chi(t\gamma)F_f(t\gamma)\ dt
\end{equation}
where $e(\fk h/\fk t)$ is defined in (\ref{6.1.2}), and
\begin{equation}
  \label{eq:6.3.4}
\theta_\chi(t\gamma)=\sum_{w\in W}\ep(w)\ w\chi(t\gamma).
\end{equation}

On the other hand, for the tempered module corresponding to  $\fk
b'$ and $\chi-\rho,$ we can group the terms in the sum in
(\ref{eq:6.3.2}) according to the $\chi$ such that $d\chi$ is
dominant for $\fk b\cap\fk k.$ Fix a $\gamma$-stable $\fk b'$
which is dominant for $d\chi.$ Then
\begin{align}\label{eq:6.3.5}
&\Theta_{\C W(\fk b',\chi)}(f)=\ep(\fk b')\vol(H_I/T)
\int_{T} |e(\fk h/\fk t)(t\gamma)|\ \theta_\chi(t\gamma)F_f(t\gamma) + \\
\ &+\text{ (integrals of $f$ coming from Cartan subgroups of higher
real rank).} \notag
\end{align}

\medskip
\noindent So (\ref{eq:6.3.3}) is equal to the first term of
(\ref{eq:6.3.5}). By the continuity of the $F_f,$ the equality
holds for  all $C_c^\infty$ functions. In particular for a
cuspidal function $f_F$ the integrals coming from the more split
Cartan subgroups vanish and we get the claimed formula.
\end{proof}
\subsection{}\label{6.4} Fix a Haar measure on $G.$ There is a
canonical normalization of measures on the $G(\gamma),$ namely the
ones where $c(\gamma)=1.$ Equivalently, when $G(\gamma)$ is
{%\color{red}
elliptic} 
%{\color{blue} [this means equal rank?!]}, 
this measure is the one so that the formal dimension of
the discrete series with infinitesimal character equal to the one
of the trivial representation, is 1. These choices induce invariant
measures on the elliptic orbits. With this
normalization, the formulas in the previous sections simplify so
that there are no $c(\gamma).$ Furthermore note that  $r$
coincides with the number \[q(\gamma)=\frac12(\dim \fk g(\gamma)
-\dim \fk k(\gamma))\] associated to a real form of $G(\gamma)$ by
Kottwitz, so that \[(-1)^r=(-1)^{q(\gamma)}.\]

\begin{theorem}(1)\label{t:6.4.1}
Let $f_{F}$ be the Lefschetz function corresponding to a \linebreak
$\tau$-stable finite dimensional representation $F$ and suppose that $\gamma$
is elliptic. With the normalizations above,
$$
O_\gamma(f_{F})=(-1)^{q(\gamma)}e(\tau)\tr F^*(\gamma)
$$
where $e(\tau)=\sum_i (-1)^i\tr(\tau\ :\ \sideset{}{^i}\bigwedge\fk
h^*_R)$ as in proposition \ref{5.3}. 
{%\color{red}    
In particular $O_\gamma(f_F)=0$ unless $\fk g(\tau)$ is equal rank.}

\begin{comment}
In addition
\[
\tr F^*(\gamma)=\frac{|W|}{|W_K|}
\]
so the orbital integral is nonzero and has sign
$(-1)^{q(\gamma)}.$
\end{comment}
\end{theorem}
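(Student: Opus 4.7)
\medskip
The overall strategy is to combine three ingredients already in hand: the orbital integral formula of Theorem \ref{t:6.3}, the defining property of the Lefschetz function $\Theta_\pi(f_F)=L(\tau,\pi\otimes F)$, and the explicit computation of Lefschetz numbers of standard tempered modules from Theorem \ref{t:5.3}. The point is that $f_F$ is designed so that its trace against any tempered $\pi$ picks out precisely whether $\pi$ is cohomologically paired with $F,$ so after substituting into the stabilizing character sum of Theorem \ref{t:6.3} only a sparse set of terms survive, and those terms assemble into the Weyl-type character formula of \ref{t:6.2} applied to $F^*.$ I will first treat the case where $\gamma$ is regular elliptic and then recover the general (possibly singular) elliptic case by the derivative formula of Theorem \ref{3.4}.

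\medskip
For regular elliptic $\gamma$ one has $F_{f_F}(\gamma)=D(\gamma)O_\gamma(f_F)$ by definition \eqref{eq:3.2.1}, and the constants $c(\gamma)$ in Theorem \ref{t:6.3} equal $1$ under the normalization of \ref{6.4}. Starting from
\[
F_{f_F}(\gamma)=\sum_{\chi}|W(\chi)|^{-1}\overline{\chi(\gamma)}\sum_{\fk b'\supset \fk b\cap\fk k}\ep(\fk b')\Theta_{\C W(\fk b',\chi)}(f_F),
\]
I would substitute $\Theta_{\C W(\fk b',\chi)}(f_F)=L(\tau,F^*\otimes \C W(\fk b',\chi))$ and invoke Theorem \ref{t:5.3}: this Lefschetz number vanishes unless $\chi=F^{\fk n(\fk b')}$ as an $\fk h$-module, and in that case equals $(-1)^{r}e(\tau).$ Thus only one character $\chi_{\fk b'}:=F^{\fk n(\fk b')}$ contributes for each $\fk b',$ and the double sum collapses to
\[
F_{f_F}(\gamma)=(-1)^{r}e(\tau)\sum_{\fk b'\supset \fk b\cap\fk k}\ep(\fk b')|W(\chi_{\fk b'})|^{-1}\overline{\chi_{\fk b'}(\gamma)}.
\]

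\medskip
The next step is to recognize the remaining $\fk b'$-sum as $D(\gamma)\tr F^*(\gamma).$ This is precisely the disconnected analogue of the Weyl denominator identity: the characters $\chi_{\fk b'}$ as $\fk b'$ ranges over $K$-conjugates of $\fk b$ containing $\fk b\cap\fk k$ correspond to the Weyl group translates appearing in the numerator of the character formula in Theorem \ref{4.1}, while the signs $\ep(\fk b')$ are exactly the signs $\ep_\gamma(w)$ once one accounts for the twisted orbits. The stabilizer $W(\chi_{\fk b'})$ compensates for the redundancy, and so the sum equals $D(\gamma)\tr F^*(\gamma).$ After dividing by $D(\gamma)$ we obtain $O_\gamma(f_F)=(-1)^{r}e(\tau)\tr F^*(\gamma),$ and the identification $r=q(\gamma)$ recalled in \ref{6.4} gives the sign. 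The final assertion that $O_\gamma(f_F)=0$ unless $\fk g(\tau)$ is equal rank follows because in the non-equal-rank situation $\fk h_R\neq 0$ contains $\tau$-fixed vectors, forcing $e(\tau)=0$ by the eigenvalue computation at the end of Theorem \ref{t:5.3}.

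\medskip
Two technical points require care. First, when $\gamma$ is elliptic but not regular, $D(\gamma)=0$ and the identity $F_f(\gamma)=D(\gamma)O_\gamma(f)$ is vacuous; here one applies $\partial(\varpi_\gamma)$ to $F_{f_F}(t\gamma)$ at $t=1$ and uses Theorem \ref{3.4} to extract $O_\gamma(f_F).$ Both sides of the claimed formula depend analytically on $t\gamma$ through the numerator of the character formula of \ref{t:6.2}, so the regular-case identity propagates to the singular case once one differentiates both sides consistently. Second, and this is where I expect the main difficulty, one must match signs carefully between the parametrization of $\tau$-stable Borels in Corollary \ref{c:6.2} and the data $(\fk b',\chi_{\fk b'})$ appearing in Theorem \ref{t:6.3}; the bookkeeping combines $\ep(\fk b')$ with the $\ep_\gamma(w)$ of \S\ref{4.1} and the $(-1)^r$ from \ref{t:5.3}, and verifying that these combine into exactly $(-1)^{q(\gamma)}$ is where the argument is most delicate.
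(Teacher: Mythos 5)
Your proposal is correct and follows essentially the same route the paper intends: its one-line proof ("follows from the above discussion") is precisely the combination of Theorem \ref{t:6.3}, the property $\Theta_{\C W}(f_F)=L(\tau,F^*\otimes\C W)$ evaluated via Theorem \ref{t:5.3}, Corollary \ref{c:6.2} to reassemble $\tr F^*(\gamma)$, and the normalizations $c(\gamma)=1$, $r=q(\gamma)$ of \ref{6.4}, together with $e(\gamma)=e(\tau)$ from \ref{6.5}. You have simply made explicit the bookkeeping (including the singular elliptic case via Theorem \ref{3.4}) that the paper leaves implicit.
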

\begin{proof}
The formula follows from the above discussion and the fact that
the Lefschetz number is independent of the choice of $\gamma.$
\begin{comment}
>From corollary \ref{6.1} we see that $\Theta_F$ is the sum of
$\Theta_\chi$ plus characters of standard modules attached to more
split Cartan subalgebras. So
\[
\Theta(f_F)=\sum_\chi \Theta_\chi (f_F)=\sum_\chi (-1)^{r}e(\tau)=
(-1)^{r}e(\tau)\frac{|W|}{W_K|}
\]
the last term being the number of $\chi$ in the formula. The
second claim follows.
\end{comment}
\end{proof}

\begin{theorem}(2)\label{t:6.4.2}
 Fix an elliptic $\gamma.$
The stable combination of orbital integrals associated to $\gamma$
satisfies
\[
\sum (-1)^{q(\gamma')}O_{\gamma'}(f_F)=e(\tau)|\ker [\Ho^1(\Gamma,
I(\gamma))\longrightarrow \Ho^1(\Gamma,G)]| \tr F^*(\gamma).
\]
The sum on the left is over the stable conjugacy class of
$\gamma.$
\end{theorem}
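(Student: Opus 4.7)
The plan is to reduce Theorem (2) directly to Theorem (1) together with the Galois-cohomological bookkeeping of stable conjugacy classes from Proposition (2) of \ref{2.6}. First I would apply Theorem (1) termwise. For each $\gamma'$ in the stable conjugacy class of $\gamma$, ellipticity of $\gamma$ forces $\gamma'$ to be elliptic as well, since $G(\gamma)$ and $G(\gamma')$ are inner forms of the same $\kbar$-group $\mathbf G(\gamma)$ and hence both contain a maximal compact torus. Therefore Theorem (1) applies to each $\gamma'$ and gives
\[
O_{\gamma'}(f_F) = (-1)^{q(\gamma')} e(\tau)\, \tr F^*(\gamma').
\]
Multiplying by $(-1)^{q(\gamma')}$ and summing, the signs square to $+1$ (so the possible variation of $q$ along the stable class is harmless), giving
\[
\sum_{\gamma'} (-1)^{q(\gamma')} O_{\gamma'}(f_F) \;=\; e(\tau) \sum_{\gamma'} \tr F^*(\gamma').
\]

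The next step is to see that $\tr F^*$ is constant on the stable conjugacy class. By definition of stable conjugacy, any $\gamma'$ in the stable class of $\gamma$ is of the form $g\gamma g^{-1}$ with $g\in\mathbf G(\kbar)$. Since $F^*$ arises by restriction from a rational representation of $\mathbf G\ltimes\langle\tau\rangle$, its trace is invariant under $\Ad \mathbf G(\kbar)$, so $\tr F^*(\gamma') = \tr F^*(\gamma)$. Thus
\[
\sum_{\gamma'} \tr F^*(\gamma') \;=\; N\cdot \tr F^*(\gamma),
\]
where $N$ is the number of $G$-conjugacy classes in $\mathbf O(\gamma)\cap G^*$.

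Finally, I would invoke Proposition (2) of \ref{2.6}, which supplies exactly the bijection
\[
[\mathbf O(\gamma)\cap G^*]/G \;\longleftrightarrow\; \ker\bigl[\Ho^1(\Gamma,\mathbf G(\gamma))\longrightarrow \Ho^1(\Gamma,\mathbf G)\bigr],
\]
identifying $N$ with the cardinality of this kernel. Substituting back yields the claimed formula. There is no serious obstacle here; the result is essentially a formal consequence of Theorem (1) together with the cohomological parametrization of rational classes inside a stable class. The one point that warrants care in a full writeup is the invariance of $\tr F^*$ under \emph{twisted} conjugation by $\mathbf G(\kbar)$: since we are computing the trace of $F^*(g\tau)$ with $g\in G$ varying through an orbit under $h\mapsto h g \tau(h)^{-1}$, one must check that this is the same as ordinary conjugation inside $\mathbf G\ltimes\langle\tau\rangle,$ which is immediate from the multiplication rule in the semidirect product.
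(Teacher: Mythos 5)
Your proposal is correct and follows essentially the same route as the paper's own (much terser) proof: apply Theorem (1) termwise, use the $\mathbf G(\kbar)$-conjugation invariance of $\tr F^*$ to see it is constant on the stable class, and count the rational classes inside the stable class via Proposition (2) of \ref{2.6}. The extra care you take with ellipticity of each $\gamma'$ and with the twisted-versus-ordinary conjugation point is a welcome elaboration of what the paper leaves implicit.
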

\begin{proof}
If $\gamma$ and $\gamma'$ are elliptic stably conjugate
(definition \ref{7.3}), then $\tr F^*(\gamma)=\tr F^*(\gamma').$
{ The proof follows from the fact that 
\[|\ker [\Ho^1(\Gamma,I(\gamma))\longrightarrow \Ho^1(\Gamma,G)]|\] is the
number of stable conjugacy classes, (see proposition \ref{p:2.6}.}
\end{proof}

\subsection{}\label{6.5} Suppose that $\gamma$ stabilizes $(\fk b,\fk
h,\chi)$ with $\fk h$ fundamental as before. If $\gamma'$
stabilize the data as well, then $\gamma'\gamma^{-1}$ is in the
Cartan subgroup attached to $(\fk b,\fk h)$ which is abelian. Thus
$e(\gamma)=e(\gamma').$

\medskip

Now consider the restriction of $\C R_\fk b (\chi)$ to $G_0:$
\begin{equation}
  \label{eq:6.5.1}
  \C R_\fk b (\chi)=\sum \C R_{\fk b_i} (\chi_i).
\end{equation}
If none of the modules on the right are stabilized by $\tau,$
(with an element $k_0\tau$ with $k_0\in K_0$) then the Lefschetz
number is zero.

\medskip
{ So let $(\fk b_i,\fk h,\chi_i)$ be $\tau-$stable data of 
a summand in \ref{eq:6.5.1}. We can use it instead
of the original $(\fk b,\fk h,\chi).$ Thus we can assume that
$\gamma\in G_0\tau.$ Recalling the assumption that $\tau$ itself
is elliptic, corollary \ref{c:2.5} shows that $\gamma $ is
conjugate to $ h \tau $ and we can assume $h$ is in the Cartan
subgroup in $K_0.$  It follows that $e(\gamma)=e(\tau)$ because
$H$ is abelian.}

\section{\bf Lefschetz functions in the p-adic case}\label{VII}

{Recall the {\it twisted } orbital integral of a
function $f\in C_c^\infty (G\tau),$
\begin{equation*}
O_\gamma(f):=\int_{G(\gamma)\backslash G} f(g^{-1}\gamma g)\ dg.
\label{III.1}
\end{equation*}
In this section we compute the orbital integrals for Lefschetz
functions in the $p-$adic case. The results and techniques are
essentially in \KO. There are minor modifications due to the fact that $G$ is
reductive and possibly disconnected rather than semisimple.} The
definition of the Lefschetz function $f_\C L$  follows \cite{BLS}.

\subsection{}\label{7.1} In this section  ${\mathbb G}$ is a linear
algebraic reductive group, and $\tau$ an automorphism of finite
order, both defined over a nonarchimedean local field $\mathbf k$
of characteristic zero. Let $G:=\mathbb G(\bk).$

%%%%%%%%%%%%%%%%%%%%%%%%%%%

%%%%%%%%%%%%%%%%%%%%%%%%%%%%%%%%%%%%%%%%%%%
Now consider the building $\C B$ associated to ${\mathbb G.}$ Recall
that $G$ acts transitively on the chambers, and $\tau$
permutes them. Thus fix a chamber $C$ and let $\beta=b\ltimes\tau$
be in the stabilizer of $C.$  Denote by $\C F(\C B)$ the set of
facets of $\C B,$ and by $\C F(C)$ the facets of $C.$ These are
permuted by $\beta.$ Let $^0G$ be the intersection of the kernels
of the absolute values of all characters of $G.$ This is a normal
open compact subgroup of $G.$ Let $P_\sig$ be the stabilizer in
$^0G$ of the facet $\sig.$  Then $P_\sig$ is an open compact group
which we will call a parahoric subgroup. An element $x$ which
stabilizes the facet $\sig,$ permutes its vertices. Let $sgn_\sig (x)$
be the sign of this permutation.  Fix a  Haar measure $m$ of
$G.$ The Lefschetz function is
defined as
\begin{equation}\label{7.1.3}
f_{\C L}(x)=\sum_{\substack{\sig\in \C F(C)\\ \beta(\sig)=\sig}}
(-1)^{dim\sig} \frac{1}{m[P_\sig]}sign_\sig(x)\delta_{P\beta}(x).
\end{equation}

\subsection{}\label{7.2}

Let $\gamma=\delta'\tau=\delta\beta\in \wti G$ with $\delta\in G$  be a
fixed {\als element}. See section \ref{1.3} for the definition.
We want to evaluate $O_\gamma(f_\C L).$ Fix
$P$ a parahoric subgroup of $G$ corresponding to a
$\beta$-stable facet $\sig$ of $C.$ Write $\CP$ for the normalizer
of $P$ in $G$ and $X_P:=\ ^0G/P.$ Then $X_P$ is equivalent to the
set of facets of  type $P;$ the left action of $G$
corresponds to the standard action of $G$ on $\C B.$
Let
\begin{equation}
f_{P\beta}:=\frac{1}{m(P)}\delta_{P\beta}.
\label{7.2.1}\end{equation} Then $f_{P\beta}(g^{-1}\gamma g)\ne 0$
if and only if $g^{-1}\gamma g \beta^{-1}\in P,$ equivalently,
$g^{-1}\delta\beta(g)\in P.$ Thus
\begin{equation}
O_\gamma (f_{P\beta})=\frac{1}{m(P)}\  m\big[G(\gamma)\backslash\{G(\gamma)g\ :\
g^{-1}\delta \beta(g)\in P \}\big]. \label{eq:7.2.2}\end{equation} In
this formula $m$ refers to the quotient measure on
$G(\gamma)\backslash G.$

By possibly using a conjugate we may as well assume that
$\delta\in P,$ or else all integrals are zero anyway. If $g$
satisfies $g^{-1}\delta\beta(g)\in P,$ then so does $gn$ for any
$n\in P.$ Thus
\begin{equation}
O_\gamma(f)=\frac{1}{m(P)}
\sum_{\substack{g\in G(\gamma)\backslash G/P\\
g^{-1}\delta\beta(g)\in P}} m[G(\gamma)\backslash G(\gamma)gP].
\label{7.2.5}
\end{equation}

{ The group $G$ equals $\oG\cdot A$ where $A$ is the split component
of the center. Then $A\cong (\bF^\times)^r=GL(1,\bF)^r.$ The
lattice of coroots is $X^*(A)\cong \bZ^r.$ Then the automorphism
$\beta$ induces a linear isomorphism on this lattice, also denoted
$\beta$, satisfying $\beta^m=Id$ for some $m.$ There is a basis in
which $\beta$ is block diagonal with blocks corresponding to
irreducible factors of $t^m-1.$ Precisely, let
\begin{equation}
  \label{eq:7.2.6}
  t^s + b_{s-1}t^{s-1} + \dots + b_0
\end{equation}
be such a factor. On the basis of this block,
\begin{equation}
  \label{eq:7.2.7}
  \beta(a_0,\dots , a_{s-1})= (a_2,\dots , a_{s-2},a_0^{-b_{0}}\dots
  a_{s-1}^{-b_{s-1}}).
\end{equation}
Let $\oA:=A\cap \oG.$
Suppose $a\in A$ is such that $a^{-1}\beta(a)\in\oA.$ Using the block
decomposition of (\ref{eq:7.2.7}), we conclude that $a=a'x$ where
$\beta(a')=a'$ and $x\in\oA.$ } It follows
that we can replace $G$ by $^0G$ and $G(\gamma)$ by
$G_\#(\gamma):=^0G\cap G(\gamma).$ The condition
$g^{-1}\delta\beta(g)\in P$ is equivalent to
\begin{equation}
Ad(\gamma)(gPg^{-1})=gPg^{-1}.
\end{equation}
Let $R:=gPg^{-1}.$ Then
$$
G(\gamma)\backslash G(\gamma)gP\cong [G(\gamma)\cap R]\backslash
R\cong[G_\#(\gamma)\cap R]\backslash R.
$$
Then (\ref{eq:7.2.2}) becomes
\begin{equation}
O_\gamma(f_{\CP\beta})=\sum_{\sig\in \Gz(\gamma)\backslash
X_P(\gamma)}\frac{1}{m[G_\#(\gamma)_\sig]}.
\label{7.2.6}\end{equation} We conclude that
\begin{equation}
O_\gamma(f_\C L)=\sum_{\substack{\rho\in \Gz(\gamma)\backslash
X_P(\gamma)}} (-1)^{dim\rho}\frac{1}{m[G_\#(\gamma)_\rho]}.
\label{7.2.7}\end{equation}

\subsection{}\label{7.3}
Suppose $\bb H$ is a unimodular group acting in a {\it cell-wise}
fashion on a CW--complex (or more generally on a polysimplicial
complex) $\C T.$ Assume the following hold:
\begin{description}
\item[(i)] $\C T$ is contractible. \item[(ii)] $\C T$ is locally
compact. \item[(iii)] The stabilizer $\bb H_\sig$ of any cell
$\sig$ is an open compact subgroup of  $\bb H.$ \item[(iv)] Any
compact subgroup of $\bb H$ is contained in a $\bb H_\sig.$
\item[(v)] The number of cells are finite modulo the action of
$\bb H.$
\end{description}
Denote by $\Sigma$ the set of orbits of the cells. Let $m$ be an
invariant measure. Then write
\begin{equation}
\chi(m):= \sum_{\sig\in \Sigma}(-1)^{dim\sig}\frac{1}{m[\bb
H_\sig]} \label{7.3.1}\end{equation}
\begin{theorem}[\Se]
The measure $\mu=\chi(m)m$ is independent of $m$ and is an \EP
measure. If $\bb H$ is semisimple (or reductive but has a totally
anisotropic torus) then this measure is nonzero.
\end{theorem}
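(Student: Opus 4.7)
The plan is to establish three claims in sequence: scaling invariance of $\mu$, verification of the Euler--Poincar\'e property, and nonvanishing. Scaling invariance is formal: if $m' = cm$ for some $c>0$, then $m'(\bb H_\sigma) = cm(\bb H_\sigma)$ for every $\sigma \in \Sigma$, so $\chi(m') = c^{-1}\chi(m)$ and the product $\chi(m')m' = \chi(m)m$ is unchanged. Thus $\mu$ depends only on the pair $(\bb H, \C T)$ and its cell structure.

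For the EP property I would follow \Se. The content is that for any torsion-free discrete subgroup $\Gamma \subset \bb H$ with $\Gamma \backslash \bb H$ compact, $\mu(\Gamma \backslash \bb H) = \chi(\Gamma)$. Hypothesis (iv) guarantees that every finite subgroup of $\bb H$ is contained in some $\bb H_\sigma$, so torsion-freeness of $\Gamma$ forces $\Gamma$ to act freely on $\C T$ and on each coset space $\bb H/\bb H_\sigma$. Contractibility of $\C T$ then makes $\Gamma \backslash \C T$ a $K(\Gamma, 1)$, and (v) together with cocompactness of $\Gamma$ makes it a finite CW complex, so $\chi(\Gamma) = \chi(\Gamma \backslash \C T) = \sum_{\bar\sigma} (-1)^{\dim \bar\sigma}$ over cells in the quotient. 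Each orbit $\sigma \in \Sigma$ contributes $|\Gamma \backslash \bb H / \bb H_\sigma|$ cells in $\Gamma \backslash \C T$, and a standard decomposition of Haar measure (using freeness of $\Gamma$ on $\bb H / \bb H_\sigma$) gives $|\Gamma \backslash \bb H / \bb H_\sigma| = m(\Gamma \backslash \bb H)/m(\bb H_\sigma)$. Summing yields
$$\chi(\Gamma) = m(\Gamma \backslash \bb H) \cdot \chi(m) = \mu(\Gamma \backslash \bb H).$$

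For the nonvanishing assertion under the semisimple (or totally anisotropic Cartan) hypothesis, by the EP identity just proved it is enough to exhibit a torsion-free cocompact $\Gamma \subset \bb H$ with $\chi(\Gamma) \neq 0$. Existence of such $\Gamma$ for $p$-adic semisimple groups follows from the classical theory of cocompact arithmetic subgroups combined with Serre's explicit Euler characteristic computation in \Se; one then invokes the EP identity to conclude $\mu(\Gamma \backslash \bb H) = \chi(\Gamma) \neq 0$ and hence $\mu \neq 0$.

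The main obstacle is the bookkeeping step in the EP argument: the equality $|\Gamma \backslash \bb H / \bb H_\sigma| = m(\Gamma \backslash \bb H)/m(\bb H_\sigma)$ depends on $\Gamma$ acting freely on $\bb H/\bb H_\sigma$, which is precisely where hypothesis (iv) --- rather than a mere openness/compactness statement on stabilizers --- is genuinely used. The nonvanishing step, though a one-line deduction from the EP property, rests on the nontrivial independent input that cocompact lattices with nonzero Euler characteristic exist.
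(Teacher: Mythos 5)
The paper does not prove this theorem; it is quoted from Serre \Se\ (and re-derived in Kottwitz \cite{KO1}). Your first two steps are correct and coincide with Serre's argument: scaling invariance is immediate from $\chi(cm)=c^{-1}\chi(m)$, and the \EP\ property follows from the counting identity $|\Gamma\backslash \bb H/\bb H_\sigma| = m(\Gamma\backslash \bb H)/m(\bb H_\sigma)$ for a torsion-free cocompact $\Gamma$, which acts freely on $\C T$ because $\Gamma\cap g\bb H_\sigma g^{-1}$ is a discrete subgroup of a compact group, hence finite, hence trivial. (Note that this freeness already follows from hypothesis (iii); hypothesis (iv) is what makes the conjugacy classes of maximal compact subgroups, and hence the fixed-point sets on $\C T$, controllable, which is used elsewhere.)

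The nonvanishing step, however, has a genuine gap. You propose to exhibit a torsion-free cocompact $\Gamma$ with $\chi(\Gamma)\ne 0$ and then apply the \EP\ identity; but the "explicit Euler characteristic computation" of Serre that you invoke for such lattices (the formulas in terms of volumes and zeta values) is itself \emph{derived from} the nonvanishing of the \EP\ measure, so as written the argument is circular. Serre's actual proof goes the other way: one evaluates $\chi(m)$ directly on the building. The facets of the chamber $C$ correspond to the proper subsets $S'$ of the set $S$ of affine simple reflections, the parahoric $P_{S'}$ satisfies $[P_{S'}:I]=W_{S'}(q)$ where $W_{S'}(t)$ is the Poincar\'e polynomial of the finite parabolic subgroup $W_{S'}$ of the affine Weyl group, and the alternating sum
\begin{equation*}
\chi(m)\,m(I)=\sum_{S'\subsetneq S}\frac{(-1)^{\dim \sigma_{S'}}}{W_{S'}(q)}
\end{equation*}
is evaluated in closed form by a Poincar\'e-series identity of Bott--Steinberg and shown to be nonzero with sign $(-1)^{\operatorname{rk}}$. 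This computation is the actual content of the nonvanishing assertion and cannot be replaced by an appeal to the existence of lattices. Finally, your argument only addresses the semisimple case; the reductive case with an anisotropic maximal torus (the case actually used in \ref{7.5}, for $\bb H=G_\#(\gamma)$ with $\gamma$ elliptic) requires the additional reduction, carried out in \cite{KO1}, to the action of ${}^0\bb H$ on the building of the derived group.
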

\subsection{}\label{7.4}  We show that  conditions (i)-(v) are
satisfied for $\bb H=G_\#(\gamma)$ and $\C T=\C B(\gamma).$
Items (i)-(iv) are straightforward.  For (v), suppose that $P$ is
stabilized by $\gamma.$ There is $x\in ^0G$ such that $P=x P_\sig
x^{-1}.$ It follows that  $x\gamma x^{-1}$ stabilizes $P_\sig,$
\ie  $x^{-1}\delta\beta(x)$ is in the normalizer $\C P_\sig$ of
$P_\sig.$ This is an open compact group. The orbit $O(\gamma)$ is
closed, so the set $\Gamma:=\{x^{-1}\delta\beta(x)\}$ is also
closed. Thus the intersection  $\Gamma\cap \C P_\sig$ is compact.
Thus there are $x_1,\dots ,x_n$ and a neighborhood $\C U\subset\C
P$ such that
\begin{equation}
  \label{eq:7.4.1}
  \Gamma \cap \C P_\sig=\bigcup_{i,u\subset\C U}
  ux_i^{-1}\delta\beta(x_i u^{-1})
\end{equation}
The claim follows.

\subsection{}\label{7.5} We say that $\gamma$ is elliptic if
$G(\gamma)$ contains a maximal anisotropic torus.
\begin{theorem}\label{t:7.5} The orbital integrals of $f_{\C L}$ are
$$
O_\gamma(f_{\C L})=\begin{cases} 1 &\text{ if } \gamma \text{ is elliptic,}\\
                                 0 &\text{ otherwise.}
\end{cases}
$$
\end{theorem}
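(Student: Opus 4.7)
The plan is to read formula \eqref{7.2.7} through the lens of Serre's Euler--Poincar\'e theorem stated in \ref{7.3}. Indeed, summing \eqref{7.2.6} over the $\beta$--fixed facets of $C$ exhibits
\[
O_\gamma(f_\C L) \;=\; \sum_{\sigma \in G^0(\gamma)\backslash \C F(\C B(\gamma))}
(-1)^{\dim \sigma}\,\frac{1}{m[G_\#(\gamma)_\sigma]},
\]
which is precisely the quantity $\chi(m)$ of (7.3.1) for the action of $\bb H = G_\#(\gamma)$ on $\C T = \C B(\gamma)$, the fixed subcomplex of the Bruhat--Tits building. Thus the first step is to verify the five hypotheses of Serre's theorem. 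Hypothesis (v) is done in \ref{7.4}; (ii) and (iii) are immediate from standard properties of the building and of parahoric subgroups; (iv) is the Bruhat--Tits fixed point theorem applied inside $\C B(\gamma)$; and (i), the contractibility of $\C B(\gamma)$, follows from the Bruhat--Tits fixed point theorem applied to the finite-order simplicial automorphism $\gamma$ on the contractible polysimplicial complex $\C B$.

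Granted (i)--(v), Serre's theorem yields that $O_\gamma(f_\C L)\cdot m = \chi(m)\,m$ equals the Euler--Poincar\'e measure $\mu_{EP}$ on $G_\#(\gamma)$, and this ratio is independent of the chosen Haar measure $m$. Now the conventions of Section \ref{III} fix the measure on $G(\gamma)$ (hence on $G_\#(\gamma) = G(\gamma)\cap \oG$) to be the Euler--Poincar\'e measure itself, so
\[
O_\gamma(f_\C L) \;=\; \frac{\mu_{EP}}{m} \;=\; \begin{cases} 1 & \text{if } \mu_{EP}\ne 0,\\ 0 & \text{otherwise.}\end{cases}
\]
The second half of Serre's theorem identifies the vanishing locus of $\mu_{EP}$: it is nonzero exactly when $G_\#(\gamma)$ contains a maximal anisotropic torus (equivalently, has no split central torus). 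So it remains to check that this nonvanishing is equivalent to $\gamma$ being elliptic in the sense of \ref{7.5}.

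If $\gamma$ is elliptic, $G(\gamma)$ contains a maximal anisotropic torus by definition; the passage to $G_\#(\gamma) = G(\gamma)\cap \oG$ only further removes the central split torus of $G$ itself, leaving the anisotropic Cartan intact, so $\mu_{EP}\ne 0$ and the orbital integral equals $1$. Conversely, if $\gamma$ is not elliptic, every maximal torus of $G(\gamma)$ has a nontrivial split part; the quotient of $G(\gamma)$ by $G_\#(\gamma)$ only absorbs the split part coming from the center $A_G$ of $G$, so $G_\#(\gamma)$ still carries a nontrivial central split torus, whence $\mu_{EP} = 0$ and the orbital integral vanishes.

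The main obstacle will be bookkeeping rather than new geometric input: we must check that the Euler--Poincar\'e normalization on $G_\#(\gamma)$ induced by Section \ref{III} matches Serre's measure exactly (to force the value $1$ and not some other nonzero constant), and we must correctly analyze the interaction between the split central torus of $G(\gamma)$ and the projection $G\to G/\oG$ in order to conclude the sharp ellipticity dichotomy. Both issues are addressed in detail in \KO, on which this argument is modeled, and the modifications needed here are precisely those noted at the start of Section \ref{VII}: the possible disconnectedness of $G(\gamma)$ (handled by using the unique extension of the Euler--Poincar\'e measure from $G(\gamma)_0$) and the reductive (rather than semisimple) nature of $G$ (handled by the passage to $\oG$).
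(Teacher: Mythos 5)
Your argument is exactly the one the paper intends: the text's own proof of theorem \ref{t:7.5} simply says ``same as in \cite{KO}, with the modifications of sections \ref{7.1}--\ref{7.4},'' and those sections are precisely the reduction of $O_\gamma(f_{\C L})$ to the alternating sum (\ref{7.2.7}), Serre's Euler--Poincar\'e theorem of \ref{7.3}, and the verification of its hypotheses for $G_\#(\gamma)$ acting on $\C B(\gamma)$ that you assemble. So the proposal is correct and follows the same route, just spelling out the final bookkeeping (EP normalization giving the value $1$, and the ellipticity dichotomy) that the paper leaves to Kottwitz.
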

\begin{proof}
The proof is the same as in \KO. The necessary modification were
discussed in sections \ref{7.1}-\ref{7.4}.
\end{proof}

\subsection{}\label{sec:7.6}
\begin{theorem}[\cite{BLS} section 8.2 and 8.4, and \cite{KO1}] Assume $G$ is
  simple. The  only irreducible unitary representations for which
  $\tr\pi (f_\C L)\ne 0,$ are the Trivial and the Steinberg
  representations. In these cases,
  \begin{equation*}
    \tr\pi(f_\C L)=
    \begin{cases}
      1 &\text{ if } \pi=Trivial,\\
     (-1)^{q(G)} &\text{ if } \pi=Steinberg,\\
    \end{cases}
  \end{equation*}
where $q(G)$ is the $\mathbf k$ rank of $G.$
\end{theorem}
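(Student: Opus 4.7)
The plan is to reduce $\tr\pi(f_\C L)$ to an Euler characteristic on the Bruhat--Tits building $\C B$ and then invoke a $p$-adic analogue of Wigner's lemma. Unwinding the definition \ref{7.1.3}, for any admissible $\pi$ one has
\begin{equation*}
\tr\pi(f_\C L)=\sum_{\substack{\sig\in\C F(C)\\ \beta(\sig)=\sig}}(-1)^{\dim\sig}\frac{1}{m[P_\sig]}\tr\pi\bigl(\operatorname{sgn}_\sig\cdot\delta_{P_\sig\beta}\bigr).
\end{equation*}
The operator $\tfrac{1}{m[P_\sig]}\pi(\operatorname{sgn}_\sig\cdot\delta_{P_\sig})$ is the projector onto the $\operatorname{sgn}_\sig$-isotypic subspace $\pi^{P_\sig,\operatorname{sgn}_\sig}\subset\pi^{P_\sig}$, and $\beta$ acts on this finite-dimensional space. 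So the first step is to interpret the sum as the Lefschetz number of $\beta$ acting on a finite complex whose pieces are $\pi^{P_\sig,\operatorname{sgn}_\sig}$ over a set of representatives for the $\beta$-fixed facets of $C$.

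Next I would recognize this complex, in the untwisted case $\beta=1$, as the compactly supported oriented chain complex $C^*_c(\C B/^0G,\pi)$ of the Schneider--Stuhler/Borel--Serre resolution. By that resolution, which gives a finite projective resolution of $\pi$ in the category of smooth $G$-modules, the Euler characteristic above equals $\sum_i(-1)^i\dim\Ext_G^i(\pi,\mathbf 1)$ or, dually, $\chi(\pi,\mathrm{St})$ (up to a sign). A theorem of Kazhdan/Schneider--Stuhler then forces this virtual dimension to vanish for every irreducible unitary $\pi$ apart from the trivial and Steinberg representations. For the trivial representation, contractibility of $\C B$ together with Serre's theorem \ref{7.3} yields $\tr\mathbf 1(f_\C L)=\chi_{\mathrm{EP}}(^0G)=1$. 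For Steinberg, one computes using the top-dimensional orientation class of $\C B/^0G$ and the definition of $\mathrm{St}$ as $H^{q(G)}_c$ of the building, giving the sign $(-1)^{q(G)}$. The twisted case is handled by the same argument, inserting $\beta$ as an automorphism of the complex and using that each $\beta$-fixed facet of $C$ contributes exactly once.

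The main obstacle is the vanishing for irreducible unitary $\pi$ that are neither trivial nor Steinberg. Two ingredients are needed: first, theorem \ref{t:7.5} shows $O_\gamma(f_\C L)=0$ off the elliptic set, so only elliptic orbital integrals can contribute to the Plancherel expansion; second, among irreducible tempered representations only discrete series have nonzero character integral against such an elliptic-supported function, and among unitary non-tempered representations one needs Casselman's vanishing of Jacquet modules together with the unitarity bound to rule out contributions from cohomologically nontrivial unipotent representations. The delicate point is showing that among the discrete series only Steinberg survives; this follows from Borel's identification of the reduced cohomology $H^*(\C B,\pi)$ for discrete series with a single nonzero degree, together with the classification result that $\mathrm{St}$ is the unique discrete series with nontrivial $H^*(\C B,\cdot)$. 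The sign $(-1)^{q(G)}$ then comes from matching the orientation of the top cell of $C$ with the Euler--Poincaré normalization of the measure.
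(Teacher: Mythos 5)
Your sketch is essentially the argument of the sources the paper cites: the paper itself gives no proof of this theorem, quoting it from \cite{BLS} (\S 8) and \cite{KO1}, and those references prove it exactly as in your second paragraph --- unwind $f_{\C L}$ into an alternating sum of traces on the spaces $\pi^{P_\sigma,\mathrm{sgn}_\sigma}$, identify this with the Euler--Poincar\'e characteristic of $\Ext^*_G(\pi,\mathbf 1)$ via the chain complex of the building, and conclude by Casselman's theorem (Borel--Wallach) that these groups vanish for irreducible unitary $\pi$ other than the trivial and Steinberg representations, with the Steinberg contributing only in degree $q(G)$. The detour in your third paragraph through elliptic orbital integrals, the Plancherel expansion, and discrete series is not needed (and is the shakiest part of the write-up); the cohomological vanishing already closes the argument, so I would simply delete that paragraph and attribute the vanishing statement to Casselman rather than Kazhdan.
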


\bigskip
\section{\bf The twisted trace formula}\label{VIII}

In this section we describe the trace formula and the effect of
plugging in a function which has local components as in sections
\ref{I}-\ref{VII}. The formulation of the simple version of the
trace formula we use can be found in \BLS \ and in \cite{KO1}. In
turn it is based on \Ar.

The assumptions on the group will be as in section \ref{I}.

\subsection{Generalities}\label{8.1}

Recall that $\mathbf K $ is a totally real number field. Let
$\chi$ be a  unitary character of $G(\bA)$ trivial on $G(\bK).$ We
assume that it satisfies $\chi\cong\chi\circ\tau$ so that it has
an extension to $\tilde{G}(\bA).$
If  $\bU $ is unipotent, we normalize the Haar measure $du_\bA$
so that
$meas(\bU(\bA)/\bU(\bK))=1$\newline where  $\bU(\bK)$ has the counting
measure. We fix a Haar measure $dk_\bA$ on the
maximal compact subgroup $K_\bA$ so that meas$(K_\bA )= 1$. Fix a
minimal parabolic subgroup $\bP_0=\bM_0\bU_0$ defined over $\bK$.
Fix a Haar measure $dm_\bA$ on $\bM_0(\bA).$ Then
\[f\mapsto
\int_{\bU(\bA)\bM_0(\bA)K_\bA}f(u_\bA m_\bA
k_\bA)m^{-2\rho_{p_0}}dk_\bA dm_\bA du_\bA \] defines a Haar
measure $dg_\bA$ on $\bG(\bA).$ We also fix a Haar measure
$dz_\bA$ on $\C Z(\bA)$.

Let $L^2(\mathbb G(\bA )/\mathbb G(\bK),\chi ) $ be the space of
square integrable functions on $\mathbb G(\bA)/\mathbb G(\bK)$ so
that $f(gz)=\chi(z) f(g)$ for $g \in \mathbb G(\bA)$ and
$z\in\mathcal Z(\bA)$.
 The group $\widetilde{ \mathbb G}(\bA)$ acts
unitarily on the space $L^2(\bG(\bA )/\bG(\bK),\chi)$.

{Let $\bA_G$ be the split component of the center of $\bG,$ and $\C
X^*(\bA_G)$ be its rational characters. Let $\fk a_{G,\bC}:=\Hom[\C
X(\bA_G),\bC],$ and $\fk a_G:=\Hom[\C X(\bA_G),\bR].$ The function
$H_G$ is defined as
\begin{equation}
  \label{eq:6.1.1p}
H_G:G(\bA)\longrightarrow\bR,\qquad H_G:a\mapsto (\chi\mapsto |\chi(a)|_\bA).
\end{equation}
Let $\obG(\bA)$ be the intersection of the absolute values of the
kernels of the rational characters of $\bG(\bA).$ The group
$(\bA_G)_\infty$ has a subgroup $A^+_G$ such that
$\bG(\bA)=\obG(\bA)\cdot A^+_G.$ }

The above discussion allows us to work with $L^2(\obG(\bA
)/\obG(\bK))$ instead of of $L^2(\mathbb G(\bA )/\mathbb G(\bK),\chi )
$. By abuse of notation we write \newline $L^2(\bG(\bA )/\bG(\bK))$ for
$L^2(\obG(\bA )/\obG(\bK))$.

\medskip
As reminder, the  goal of this article is  to show that there are irreducible
representations $\pi_\bA$  of ${\bG}(\bA)$ in
$L^2_{cusp}(\mathbb G(\bA )/\bG(\bK),\chi)$ so that $H^*(\fk g,K,
\pi_\bA \otimes F) \not = 0$ for some finite dimensional
representation $F$ such that $\pi_\bA\cong \pi_\bA\circ\theta$.
For this we will use the twisted Arthur trace formula on $\bG^*(\bA).$

\medskip
 We define a function $f_{\bA} = \prod
_{\nu} f_{\nu} $ on $\wti{\bG(\bA)}$ as follows. We fix a finite
dimensional $\theta$-stable representation $F$ of $\wti{\bG}(\bC
)$ with infinitesimal character $\la.$ For each infinite place
$\nu_\infty$ choose $f_{\nu_{\infty}}= f_{F} \in
C^{\infty}_c(\bG^*(\bR))$, the Lefschetz function  in section
\ref{6.2} attached to $F.$  For the finite number $S$ of places
where $\chi$ is not trivial on $\bG(\C O_\nu),$ choose $f_\nu$ to
have support in a small enough open set $K'_\nu$ on which
$\chi_\nu$ is trivial. We fix two finite places $\nu_0,\ \nu_1
\not \in S$ where we assume (as we may) that $\bK_{\nu_i}=
\bG(\C O_{\nu_i}).$ At these places we let $f_{\nu_i}=f_{\C L} $ be
the Lefschetz functions constructed in section \ref{sec:7.6}.  For
all other places let $h_{\nu }$ be the characteristic function of
a maximal compact subgroup $K_\nu \subset \bG(\bK_\nu).$ We
summarize the properties of the function $f_\bA$.
\begin{description}\label{properties}
\item[a] $\tr\pi (f_F) $ is $L(\tau,F,\pi)=e(\tau,\fk h_R)$ if
$\pi $ is a $\tau$ stable representation of the form $\C W=\C
R_{\fk
  b}(\chi)$ of $\wti{\bG}(\bR)$ with the same infinitesimal character
$\la$ (section \ref{5.3.1}). For other tempered representations,
$\tr\pi(f_F)=0.$  Furthermore $f_{F} $ is very cuspidal in the
sense of \LABa. \item[b] $\tr\pi_{\nu_i} (f_{\C L}) $ is equal to
1 if $\pi_{\nu_i} $ is the trivial or the Steinberg
representation. The trace  is zero on any other irreducible
representation. \item[c] Suppose $\gamma \in \bG^*(\bR)$. The
orbital integral
\[O_\gamma(f_\la) =
\int _{\bG(\gamma)(\bR)\backslash \bG(\bR)}f_\la (g^{-1}\gamma g)
dg
\] is 0 if $\gamma $ is regular semisimple but not elliptic.
\item[d] Suppose $\gamma \in \bG(\bk_{\nu_i}) $. The orbital
integral
$$
O_\gamma(\flef) = \int _{\bG(\gamma)(\bk_{\nu_i})\backslash
\bG(\bk_{\nu_i})} \flef (g^{-1}\gamma g) dg
$$
is $1$  if $\gamma$ is elliptic and zero otherwise, for $i=0,1$.
\end{description}

\medskip
\medskip
The twisted trace formula is  an identity of distributions
\[
\text{LHS}=\text{RHS}
\]
 on
$\obG^*(\bA)/\obG^*(\bK),$ where the right hand side is
parameterized by harmonic, i.e representation theoretic data,
whereas the left hand side is parameterized by geometric data.

\subsection{The harmonic side}\label{8.2}
  Following  the
notation in \Ar we write $R_{d,t} $ for the representations in the
discrete spectrum of the right regular representation of
$\widetilde{\bG}(\bA)$ on $L^2(\bG(\bA)/\bG(\bK))$ whose
infinitesimal character has length $t.$ Let $m_{disc}(\pi_{\bA} )$
be the multiplicity of a representation $\pi_{\bA} $ of
$\widetilde{\bG}(\bA)$ in the discrete spectrum of
$L^2(\bG(\bA)/\bG(\bK))$. We also write $R_{d,\la}$ for the
discrete spectrum  with infinitesimal character $\la.$

\medskip

\begin{proposition}
Let $f_{\bA} $ be as above. Then
\begin{equation*}
\mathrm{LHS}(f_{\bA}) = \sum_{\pi _{\bA} \in R_{d,\lambda }}
m_{disc}(\pi_{\bA }) \tr\pi_{\bA}(f_{\bA})
\end{equation*}
\end{proposition}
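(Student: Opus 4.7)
The plan is to start from the full spectral (``harmonic'') side of the twisted Arthur trace formula for $f_\bA$, which a priori decomposes as
\[
\sum_t \sum_{\pi_\bA \in R_{d,t}} m_{disc}(\pi_\bA) \tr \pi_\bA(f_\bA) \ + \ (\text{Eisenstein contributions from proper parabolics}),
\]
and then to show that (i) the Eisenstein piece vanishes, and (ii) only infinitesimal character $\la$ survives in the discrete piece. The reduction to this simple shape is exactly the Kottwitz--Labesse simple trace formula, whose hypotheses we must verify for our specific $f_\bA$.

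First I would invoke property (a) in the list following \ref{properties}: the archimedean component $f_{\nu_\infty} = f_F$ is a \emph{very cuspidal} function in the sense of Labesse \LABa, meaning that its constant term $(f_F)^P$ along every proper $\tau$-stable real parabolic $P$ vanishes. Standard manipulations of Eisenstein series (the ones used in \BLS\ and \cite{KO1}) translate this into the vanishing of $\tr\pi_\bA(f_\bA)$ on every representation of the form $\Ind_{P(\bA)}^{\bG(\bA)} \sigma_\bA$ with $P$ a proper $\tau$-stable parabolic: the archimedean factor contributes a constant-term integral that is identically zero. Hence every Eisenstein contribution to the spectral side, which is built out of such induced representations, drops out.

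Second, for the surviving discrete part, I would use property (a) again: $\tr\pi_\infty(f_F)$ is nonzero only when $\pi_\infty$ is a tempered $\tau$-stable module $\C R^s_{\fk b}(\chi)$ whose infinitesimal character equals that of $F$, namely $\la$. Since the infinitesimal character of $\pi_\bA$ is determined by $\pi_\infty$, this restricts the sum over $R_{d,t}$ to $t$ with $|t| = |\la|$ and, after accounting for the matching, to $\pi_\bA \in R_{d,\la}$. Combining the two steps yields
\[
\mathrm{LHS}(f_\bA) \ = \ \sum_{\pi_\bA \in R_{d,\la}} m_{disc}(\pi_\bA)\,\tr\pi_\bA(f_\bA),
\]
as claimed.

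The main obstacle is verifying that the simple trace formula of Kottwitz--Labesse genuinely applies in the \emph{twisted} setting for this $f_\bA$: one must check that the geometric side has no contribution from non-semisimple or non-elliptic classes (using property (d) and the small-support choices at the places in $S$, cf.\ the lemmas \ref{small} and \ref{small enough} referenced in the introduction), so that no singular terms obstruct the spectral reduction. Given this, the purely spectral manipulations above go through essentially as in the untwisted case treated in \BLS\ and \cite{KO1}, with the role of ``very cuspidal'' played by the Lefschetz function $f_F$ constructed in section \ref{VI}.
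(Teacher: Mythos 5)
Your overall route is the same as the paper's: verify the hypotheses of the simple (twisted) spectral expansion of \BLS\ for $f_\bA$, conclude that only the discrete spectrum of $\bG$ itself contributes with coefficient $m_{disc}(\pi_\bA)$, and then use the archimedean factor $f_F$ to cut the sum down from $\bigcup_t R_{d,t}$ to $R_{d,\la}$. The second step of your argument is exactly what the paper does.

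There is, however, one genuine gap in the first step. You attribute the disappearance of all contributions from proper parabolics entirely to the very-cuspidality of $f_F$ at infinity. Very-cuspidality does kill the continuous part of the spectral side and any term of the form $\tr\,\Ind_{P(\bA)}^{\bG(\bA)}(\sigma_\bA)(f_\bA)$, since the archimedean factor is $\tr\,\sigma_\infty\bigl((f_F)^P\bigr)=0$. But Arthur's discrete part $I_{disc,t}$ still contains terms indexed by proper Levi subgroups $L\ne G$ which are traces of $\Ind(\sigma)(f_\bA)$ \emph{composed with intertwining operators} $M(s,0)$; these are precisely the terms that make $a^G_{disc}(\pi_\bA)$ differ from $m_{disc}(\pi_\bA)$ in general, and the vanishing of $\tr\,\rho(f_F)$ does not imply the vanishing of $\tr\bigl(M(s,0)\rho(f_F)\bigr)$. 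This is why the paper also invokes assumption c) of 9.5 in \BLS, which is satisfied because the $p$-adic Lefschetz function $f_{\C L}$ is the component of $f_\bA$ at the finite place $\nu_0$; it is this extra hypothesis (together with the argument in the proof of Corollary 7.3 of \cite{Ar}) that yields $a^L_{disc}(\pi_\bA)=0$ for $L\ne G$ and $a^G_{disc}(\pi_\bA)=m_{disc}(\pi_\bA)$. Your closing paragraph about the geometric side (elliptic versus non-elliptic classes, the small-support lemmas) is not needed here: the proposition is a purely spectral statement, and those considerations belong to section \ref{8.4}.
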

\begin{proof}
The function $f_\bA$ satisfies assumption a) and b) of 9.2 in
\BLS. Furthermore, since at the local place $\nu_0 $ the Lefschetz
function $f_\C L$ is a factor of $f_\bA$, the assumption c) of
9.5 in \BLS \ is satisfied. Thus by the formula 9.2 in \BLS \
$a_{disc}^L(\pi_{\bA} )= 0$ for $L\not = G$ and
$a_{disc}^G(\pi_{\bA}) = m_{disc}(\pi_\bA)$ \ (see proof of Corollary 7.3
in \cite{Ar}). So
\[ \text{LHS}(f_{\bA}) = \sum _{t \geq 0}
\sum_{\pi_{\bA} \in R_{d,t}} m_{disc}(\pi_{\bA} ) \tr
\pi_{\bA}(f_{\bA}). \] Taking into account that tr\
$\pi_{\infty}(f_{\lambda } )\neq 0$ only if the infinitesimal
character of  $\pi_{\infty}$ is equal to $\lambda,$ the sum over $t$
disappears, and $R_{d,t}$ is replaced by $R_{d,\la}.$
\end{proof}

\medskip
\subsection{}\label{8.3} Let $\bP=\bM \bN$ be a parabolic subgroup defined
over $\bK$ and $K_\bA$ a maximal compact group so that
$\bG(\bA)=\bP(\bA) K_\bA.$ Let $\bA_P$ be the split component of
the center of $\bM$ and $\C X^*(\bA_P)$ be its rational
characters. The complexified Lie algebra $\fk a_P$ of $A_P$ is
isomorphic to $\C X(A_P)\otimes_{\bZ}\bC.$  Let $\Delta(\fk
a_P,P)$ be the simple roots of $P$ and write $\rho_P$ for half
the sum of positive roots.

The group $(A_P)_\infty$ has a subgroup $A^+_P$ so that
$\bM(\bA)=\, ^0\bM(\bA)\cdot A^+_P,$ where $^0\bM(\bA)$ is the set of all
$m_\bA \in \bM(\bA)$ so that $|\chi(m_\bA)|_\bA=1$ for all
rational characters $\chi$ of $\C X^*(M)$. The function $H_\bP(\
)$ on $A^+_P$ is defined by the condition
\begin{equation}
  \label{eq:8.3.1}
  e^{\langle H_\bP(a) ,\chi\rangle} = |\chi(a)|.
\end{equation}
for all $\chi \in \C X(A_P).$

\medskip
Let $\C H= \C H _\infty \otimes \C H _f$ be the global Hecke
algebra.  If $X$ is an $\C H$--invariant space of automorphic
forms on $\bG(\bA)$, then the constant term $f_\bP$ of any $f\in
X$ along $\bP$ has an expression
\begin{equation}
  \label{eq:8.3.2}
  f_\bP(k_\bA m_\bA a \,  n_\bA )=\sum_i P_i(H_\bP(a)) a^{\mu_i+\rho_P}([\sum \phi_{i,j}(m_\bA)f_{i,j}(k_\bA)].
\end{equation}
The $P_i$ are polynomials, the $\phi_{i,j}$ are automorphic forms
of $^0 \bM(\bA)$ and the $f_{i,j}$ are $K_\bA$-finite functions.
The $\mu_i$ are distinct and the ones with nonzero contribution are
called the automorphic exponents of $f$ along $\bP$ and we call
the set of all $\mu_i$'s which appear as we vary $f$ over $X$ the
automorphic exponents of X along $\bP$.

\medskip

The \textit{local exponents} of $f$ at a place $\nu$ along $\bP$
are defined as follows: If $\nu$ is finite, then the Jacquet
module of the $\C H _\nu$ module  $(\C H_\nu*f_\nu)$ associated to
$\bN(\bK_\nu )$  is a finitely generated admissible
$\bM(\bK_\nu)$--module. The exponents at the place $\nu$ are the
absolute values of the characters of $\bA(\bK_\nu)$ that occur in
the Jacquet module.

\medskip
An automorphic function is called concentrated along $\bP$ if
$f_\bQ=0$ for any $\bQ$ which is not associate to $\bP.$

\medskip
\begin{theorem}[1]\label{t:8.3} (\KRS, 6.9)
Suppose $f$ that the automorphic form is concentrated along $\bP.$
Let $\mu$ be an
  automorphic exponent of $f$.
\begin{enumerate}
\item For any finite $\nu$ there is an exponent $\eta$ along $\bP$
so
  that \[Re(\mu)=\eta.\]
\item Suppose that $\nu$ is an infinite place. The generalized
eigenspace $H_0(\bn_\nu ,X)_\mu$ is non zero. Here $\bn_\nu$ is
the Lie algebra of $\bN(K_\nu)$.
\end{enumerate}
\end{theorem}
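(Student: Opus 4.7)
The plan is to extract local spectral data at $\nu$ from the global constant-term expansion \eqref{eq:8.3.2}, using the hypothesis that $f$ is concentrated along $\bP$ to prevent contributions from non-associate parabolic subgroups.

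For part (1) at a finite place $\nu$, I would localize by passing to the $\C H_\nu$-module $V_\nu := \C H_\nu \ast f_\nu$ and its Jacquet module $V_\nu / \bn_\nu V_\nu$, which by Casselman's theory is a finitely generated admissible $\bM(\bK_\nu)$-module whose $\bA(\bK_\nu)$-characters are exactly the local exponents $\eta$ along $\bP$. The key bridge is the following: the global adelic factorization $\bG(\bA) = \bG(\bK_\nu) \cdot \bG(\bA^\nu)$ allows one to fix the components of $f$ away from $\nu$ and view the map $g_\nu \mapsto f(g_\nu g^\nu)$ as an element of a smooth admissible representation whose constant term along $\bN(\bK_\nu)$ is computed by \eqref{eq:8.3.2} restricted to $\bA(\bK_\nu)$. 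The characters $a \mapsto a^{\mu_i + \rho_P}$ then decompose, upon restriction to the local factor, into local characters whose absolute values must coincide with local Jacquet-module exponents. Concentration along $\bP$ is used precisely to guarantee that no cancellation from parabolic subgroups non-associate to $\bP$ wipes out a particular $\mu$ after restriction, so $\mathrm{Re}(\mu) = \eta$ for at least one local exponent $\eta$.

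For part (2) at an infinite place, the analogue of the local Jacquet module is the zeroth $\bn_\nu$-homology of the $(\fk g_\nu, K_\nu)$-module $X$. Each term $a^{\mu_i + \rho_P} P_i(H_\bP(a))$ in \eqref{eq:8.3.2} identifies a generalized $\fk a_P$-eigenvector for the quotient $\fk a_P$-action on the $\bn_\nu$-coinvariants of $X$: the character part gives the eigenvalue $\mu_i$, while the polynomial $P_i$ records the nilpotent (hence generalized, not ordinary) contribution. Passing to $H_0(\bn_\nu, X)_\mu$ and running the argument through the Casselman--Osborne theorem identifies this generalized eigenspace with the $\mu$-isotypic piece produced by the constant term, so its nonvanishing is forced by the appearance of $\mu$ as an automorphic exponent.

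The main obstacle is the cross-place step in part (1): a global character of $\bA_P(\bA)$ only controls the growth of $f_\bP$, and the passage to a single local factor can, a priori, destroy a particular exponent through cancellations among the distinct $\mu_i$. Overcoming this requires the combination of concentration along $\bP$ (eliminating contributions from non-associate parabolic subgroups and aligning all constant-term data with $\bP$ up to Weyl conjugacy) together with a linear-independence argument for the characters $\mu_i + \rho_P$ upstream of localization, of the type developed in \KRS. Once this rigidity is in place, parts (1) and (2) become essentially parallel applications of the constant-term formula \eqref{eq:8.3.2}.
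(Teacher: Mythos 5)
You should first note that the paper does not actually prove this statement: it is quoted with the citation (\KRS, 6.9) and used as a black box, so there is no internal proof to compare against. Judged on its own, your outline reproduces the standard argument that Kudla--Rallis--Soudry give: the constant term along $\bP$ factors through the Jacquet module at each finite place (resp.\ through the $\bn_\nu$-coinvariants at an infinite place), the polynomials $P_i(H_\bP(a))$ account for the \emph{generalized} eigenspace structure in part (2), and the absolute values of the local characters of $\bA(\bK_\nu)$ are carried entirely by the factors $a^{\mu_i+\rho_P}$ because the $\phi_{i,j}$ live on ${}^0\bM(\bA)$, where all rational characters have absolute value one. That is the right skeleton, and your identification of concentration along $\bP$ as the hypothesis that forces all spectral data to be visible in $f_\bP$ is also correct.

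The place where your sketch is genuinely thin is exactly the step you flag at the end, and it deserves more than an appeal to ``linear independence upstream.'' At a finite place $\nu$ the image of $\bA(\bK_\nu)$ under $H_\bP$ is only a lattice in $\fk a_P$, so two distinct automorphic exponents $\mu_i\ne\mu_j$ with the same real part can restrict to the \emph{same} character of that lattice (they may differ by an element of $\frac{2\pi i}{\log q_\nu}$ times a lattice vector); linear independence of the global characters on $A_P^+$ therefore does not by itself rule out cancellation after specializing to $\nu$. The resolution is to separate the exponents first using the $A_P^+$-action (where the $\mu_i$ are honestly distinct and the functions $P_i(H_\bP(a))a^{\mu_i+\rho_P}$ are linearly independent), isolate the $\mu$-component of the constant term, and only then restrict to $\bA(\bK_\nu)$; at that point the surviving component is a nonzero vector in the Jacquet module on which $\bA(\bK_\nu)$ acts with absolute value $q_\nu^{\langle \mathrm{Re}(\mu)+\rho_P, H_{\bP}(\cdot)\rangle}$, giving $\mathrm{Re}(\mu)=\eta$. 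With that step made explicit your plan is a faithful reconstruction of the cited proof; as written it is a correct strategy but not yet a complete argument.
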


\medskip
\begin{proposition}[Rallis] \label{tempered} If an automorphic form is tempered at one
place, then it is cuspidal.
\end{proposition}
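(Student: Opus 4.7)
The plan is to argue by contraposition: assume $f$ is a non-cuspidal automorphic form on $\bG(\bA)$, and show that $f$ cannot be tempered at any place $\nu_0$.

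Since $f$ is not cuspidal, there exists a proper standard parabolic $\bP=\bM\bN\subsetneq\bG$ defined over $\bK$ with nonzero constant term $f_\bP$. By Langlands's decomposition of the space of automorphic forms along associate classes of parabolics, I may replace $f$ by one of its components concentrated along $\bP$, so that in the expansion (8.3.2) at least one automorphic exponent $\mu\in\fk a_{\bP,\bC}^*$ contributes nontrivially. Theorem 8.3(1) then furnishes, at the place $\nu_0$, a local exponent $\eta$ of the Jacquet module of the local component $\pi_{\nu_0}$ along $\bP(\bK_{\nu_0})$ satisfying $\mathrm{Re}(\eta)=\mathrm{Re}(\mu)$ (part (i) if $\nu_0$ is finite; part (ii), translated to a character of $\bA_\bP(\bK_{\nu_0})$, if $\nu_0$ is infinite).

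I now invoke two complementary positivity results. First, Casselman's criterion for temperedness of $\pi_{\nu_0}$ forces $\langle\mathrm{Re}(\eta),\alpha^\vee\rangle\le 0$ for every $\alpha\in\Delta(\fk a_\bP,\bP)$, so that $\mathrm{Re}(\mu)$ lies in the closed antidominant cone of $\fk a_\bP^*$. Second, Langlands's theorem on the real parts of automorphic exponents of a non-cuspidal automorphic form, coming from his spectral decomposition of $L^2(\bG(\bK)\backslash\bG(\bA))$ via Eisenstein series, places $\mathrm{Re}(\mu)$ in the closed dominant cone $\langle\mathrm{Re}(\mu),\alpha^\vee\rangle\ge 0$. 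Intersecting the two cones forces $\mathrm{Re}(\mu)=0$.

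The main obstacle will be the residual case $\mathrm{Re}(\mu)=0$, since tempered local representations do admit Jacquet-module exponents with purely imaginary real part (for instance from unitary parabolic induction of a tempered representation of $\bM(\bK_{\nu_0})$), so the cone argument alone is not yet a contradiction. This is closed off globally: a nonzero purely imaginary automorphic exponent appearing as a discrete term in (8.3.2) would force $f$ to live in the continuous Eisenstein spectrum, where the constant term along $\bP$ is not concentrated on a single $\mu$ but on a continuous family, contradicting the fact that $f$ is a $\C Z$-finite automorphic form in the sense of Harish-Chandra (of which (8.3.2) is the finite expansion). Hence no proper $\bP$ can carry a nonzero $f_\bP$, and $f$ must be cuspidal.
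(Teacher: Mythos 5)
Your overall strategy matches the paper's: concentrate $f$ along a proper parabolic $\bP$, compare the automorphic exponents of the constant term with the local Jacquet-module exponents via theorem \ref{t:8.3}, and then intersect two cone conditions. But the two cone conditions you quote are both wrong, and correcting them is not cosmetic---it is what makes the extra ``residual case'' you invent disappear.

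Casselman's temperedness criterion does \emph{not} put $\mathrm{Re}(\eta)$ in the antidominant chamber $\{\langle\cdot,\alpha^\vee\rangle\le 0\}$. It says that the exponents of the normalized Jacquet module $r_P(\pi)$ satisfy $\mathrm{Re}(\eta)\in\overline{{}^+\fk a_P^*}=\{\sum_{\alpha\in\Delta(\fk a_P,P)}x_\alpha\alpha: x_\alpha\ge 0\}$, the closed cone spanned by the simple roots of $\bP$; equivalently $\langle\mathrm{Re}(\eta),\varpi_\alpha^\vee\rangle\ge 0$ for the fundamental \emph{coweights}. (Check it on the Steinberg representation of $GL(2)$: its normalized Jacquet exponent is $\frac12\alpha$, which pairs \emph{positively} with $\alpha^\vee$, so your condition $\langle\mathrm{Re}(\eta),\alpha^\vee\rangle\le 0$ would declare Steinberg non-tempered.) This is exactly what the paper writes as $\mathrm{Re}(\mu)=\rho_P+\sum x_\alpha\alpha$ in the unnormalized convention of (\ref{eq:8.3.2}). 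Likewise, Langlands's square-integrability criterion for the discrete spectrum does \emph{not} put $\mathrm{Re}(\mu)$ in the dominant cone; it puts it in the \emph{open negative} cone spanned by the fundamental weights, $\mathrm{Re}(\mu)=-\sum y_\alpha\varpi_\alpha$ with $y_\alpha>0$ --- the paper's $\rho_P-\sum y_\alpha\alpha^*$ with $\alpha^*$ the dual basis. The positive-root cone and the (closed) negative-weight cone meet only at the origin, because pairing any common point with $\varpi_\beta^\vee$ forces all coefficients to vanish; and with the strict inequality in Langlands's criterion the intersection is actually empty. That is the whole proof.

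Because you replaced both cones by the (wrong) Weyl chambers, your intersection is a positive-dimensional subspace (everything orthogonal to the coroots), and you are then driven to a supplementary argument to exclude $\mathrm{Re}(\mu)=0$. That supplementary argument is not rigorous: you assert that a purely imaginary discrete exponent ``would force $f$ to live in the continuous Eisenstein spectrum,'' but the whole content of Langlands's criterion is precisely that discrete-spectrum forms have exponents with strictly negative real part in the weight cone; appealing to a dichotomy between discrete and continuous spectrum by way of $\mathcal Z$-finiteness of $f$ presupposes what must be shown and does not engage the actual growth estimate. Once the correct cones are used, this entire paragraph is unnecessary.
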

\begin{proof} We may as well assume that $f$ is concentrated along
$\bP.$ The condition for the local component to be tempered is
that the exponents should be of the form
$$
Re(\mu)=\rho_P +\sum x_\al\al,\qquad x_\al\ge 0,\ \al\in\Delta(\fk
a_P,\fk n).
$$
The condition for $f\in L^2(G(\bA)/G(\bK))$ is
$$
Re(\mu)=\rho_P -\sum y_\al\al^*,\qquad y_\al\ge 0,\
\al\in\Delta(\fk a_P,\fk n),
$$
where $\al^*$ is the dual basis to the simple roots. These two
conditions are incompatible unless the $\phi_{i,j}$ in
(\ref{eq:8.3.2}) are all zero.
\end{proof}

\medskip
\noindent\textbf{Remark:}\ In the nonadelic context, this
proposition is an earlier result of Wallach \Wa. The above adelic
version already appears in \cite{Cl}. \qed

\medskip

\begin{lemma}\label{onedimensional}
Suppose $\pi=\otimes\pi_\nu$ is such that $\pi_\nu$ is
1-dimensional for some $\nu.$ Then $\pi$ is 1-dimensional.
\end{lemma}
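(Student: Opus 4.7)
The plan is to leverage strong approximation for the simply connected derived subgroup $\bG_{der}$ (simply connected by the standing hypothesis in \ref{1.1}) in order to propagate local triviality at $\nu$ to global triviality, forcing $\pi$ to factor through an abelian quotient.

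First, since $\pi_\nu$ is one-dimensional, it is a unitary character $\chi_\nu\colon \bG(\bK_\nu)\to \bC^\times$. The restriction of $\chi_\nu$ to $\bG_{der}(\bK_\nu)$ is trivial: a simply connected semisimple group has no nontrivial continuous characters, since such a group is perfect (equal to its own commutator subgroup). Realizing $\pi$ concretely as a subspace of automorphic forms on $\bG(\bK)\backslash \bG(\bA)$, this means that every $f\in \pi$ is right-invariant under the subgroup $\bG_{der}(\bK_\nu)$, embedded at the $\nu$-th factor of $\bG(\bA)$.

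Second, since $f$ is automorphic, it is invariant under $\bG(\bK)$, and hence in particular under the diagonal image of $\bG_{der}(\bK)$ in $\bG_{der}(\bA)$. Strong approximation for the simply connected semisimple group $\bG_{der}$, applied at a place $\nu$ at which $\bG_{der}$ is isotropic, asserts that $\bG_{der}(\bK)\cdot \bG_{der}(\bK_\nu)$ is dense in $\bG_{der}(\bA)$. Continuity of $f$ then forces $f$ to be right-invariant under all of $\bG_{der}(\bA)$. Consequently $\pi$ descends to an irreducible unitary representation of the locally compact abelian group $\bG(\bA)/\bG_{der}(\bA)$, and any such representation is one-dimensional.

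The only delicate point is the application of strong approximation at the place $\nu$, which requires $\bG_{der}$ to be isotropic there. In the setting of the paper, $\bG(\bR)$ has no compact factors, so $\bG_{der}$ is isotropic at every archimedean place; if $\nu$ itself is isotropic the argument applies verbatim, and otherwise one enlarges the set of places in the density statement to include an isotropic one and uses that the character triviality argument above is purely local to descend via the tensor product decomposition $\pi=\otimes\pi_\mu$. The main conceptual obstacle is precisely this translation between the local data ($\pi_\nu$ one-dimensional) and the global rigidity ($\pi$ automorphic), which is exactly what strong approximation supplies.
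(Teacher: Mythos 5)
Your argument is correct in substance but follows a genuinely different route from the paper. The paper does not use strong approximation at all: it observes that a one--dimensional local component forces the automorphic exponent along the relevant parabolic $\bP$ to satisfy $Re(\mu)=\rho_P$, invokes theorem \ref{t:8.3} to transfer this to every place, and then quotes Howe--Moore (\cite{HM}, theorem 6.1) to conclude that a unitary representation whose matrix coefficients fail to decay in this way must be a character. Your approach instead exploits the standing hypothesis of \ref{1.1} that $\bG_{der}$ is simply connected: triviality of $\pi_\nu$ on $\bG_{der}(\bK_\nu)$ plus density of $\bG_{der}(\bK)\cdot\bG_{der}(\bK_\nu)$ in $\bG_{der}(\bA)$ (and normality of $\bG_{der}$, which is needed to pass between left and right invariance --- the clean statement is that $h\mapsto f(hg)$ descends to the point $\bG_{der}(\bK)\backslash\bG_{der}(\bA)/\bG_{der}(\bK_\nu)$) forces $\pi$ to factor through the abelian quotient. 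This is the more classical argument and avoids the machinery of exponents, but it buys less uniformity: the paper's proof treats all places on the same footing via theorem \ref{t:8.3}, whereas yours is anchored at the single place $\nu$.

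The one genuine soft spot is your treatment of the case where $\bG_{der}$ is anisotropic at $\nu$. There the justification ``simply connected semisimple implies perfect'' is not valid for the abstract group of local points (e.g.\ $SL_1(D)$ for a division algebra $D$ over a $p$-adic field has nontrivial finite abelian quotients in general), so $\chi_\nu$ need not die on $\bG_{der}(\bK_\nu)$; and your proposed repair --- enlarging the set of places in the strong approximation statement to include an isotropic place $\nu'$ --- is circular, since to use density of $\bG_{der}(\bK)\cdot\bG_{der}(\bK_{\{\nu,\nu'\}})$ you would need $f$ to be right-invariant under $\bG_{der}(\bK_{\nu'})$, i.e.\ you would need to already know $\pi_{\nu'}$ is one--dimensional. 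In the paper's actual applications this does not bite ($\nu$ is either archimedean with no compact factors, or one of the unramified places $\nu_0,\nu_1$, so $\bG_{der}$ is isotropic there and your argument goes through verbatim), but as a proof of the lemma as stated you should either add the isotropy hypothesis explicitly or fall back on the exponent/Howe--Moore argument, which localizes the noncompactness issue differently.
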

\begin{proof}
A 1-dimensional representation has a single exponent $\eta,$ and
this exponent satisfies $Re(\eta)=\rho_P.$  By theorem
\ref{t:8.3}, all automorphic exponents satisfy $Re(\mu)=\rho_P,$
and therefore for any place $\nu$ there is an exponent
$\nu_\nu$ satisfying $Re(\nu_\nu)=\rho_P.$ By  theorem 6.1 of
\cite{HM}, a unitary representation with this property has to be a
unitary character.
\end{proof}

\medskip
 The discrete spectrum of the regular representation  of $\wti{G}(\bA)$
 on \linebreak
$L^2(\bG(\bA)/{\bG}({\bK}))$ decomposes into a cuspidal part and a
residual part. Recall that $R_{d,\lambda} $ is  the set of
representations in the discrete spectrum with infinitesimal
character ${\lambda }$ and write $R_{c, \lambda} $ for the subset
of representations in the the cuspidal part.

\begin{theorem}[2]\label{t:8.3.2}
Every representation which contributes to $\mathrm{RHS}(f_{\bA})$
is either one dimensional or in the cuspidal spectrum.
\end{theorem}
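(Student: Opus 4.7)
The plan is to exploit the very restrictive local behavior of $f_\bA$ at the two finite places $\nu_0,\nu_1$, combined with Rallis' temperedness criterion and the one-dimensional-at-one-place lemma. Suppose $\pi_\bA = \otimes \pi_\nu$ is a representation in $R_{d,\lambda}$ with $\tr\pi_\bA(f_\bA)\ne 0$. Since $f_\bA$ factors, we have $\tr\pi_\nu(f_\nu)\ne 0$ at every place. In particular, by property \textbf{b} of \S\ref{properties} applied at $\nu=\nu_0$, the local component $\pi_{\nu_0}$ must be either the trivial representation or the Steinberg representation of $\bG(\bk_{\nu_0})$.

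First I would dispose of the trivial case: if $\pi_{\nu_0}$ is the trivial representation, it is one-dimensional, and then lemma \ref{onedimensional} forces $\pi_\bA$ itself to be one-dimensional. This places $\pi_\bA$ in the first alternative of the theorem.

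Next, suppose $\pi_{\nu_0}$ is the Steinberg representation. The Steinberg representation is tempered (its matrix coefficients lie in $L^{2+\epsilon}$ of the modular quotient). Thus $\pi_\bA$ has at least one tempered local component, and proposition \ref{tempered} (Rallis) applies to conclude that $\pi_\bA$ is cuspidal. This places $\pi_\bA$ in the second alternative of the theorem, completing the dichotomy.

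The step most worth double-checking is the application of Rallis to the representation $\pi_\bA$ which a priori only sits in the discrete (possibly residual) spectrum rather than being a priori cuspidal; but this is precisely the content of proposition \ref{tempered} as stated, whose proof rests on the incompatibility of the tempered local exponent condition $\mathrm{Re}(\mu)=\rho_P+\sum x_\alpha\alpha$ (with $x_\alpha\ge 0$) and the square-integrability condition $\mathrm{Re}(\mu)=\rho_P-\sum y_\alpha\alpha^*$ (with $y_\alpha\ge 0$), available once we pass to an automorphic form concentrated along a parabolic $\bP$. No further input from the archimedean analysis or the Lefschetz function machinery at $\nu_\infty$ is required for this theorem; those ingredients will only be needed later to force $\pi_\bA$ to be $\tau$-stable and to have the correct cohomology.
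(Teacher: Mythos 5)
Your proof is correct and follows precisely the same route as the paper's (which is terser, consisting only of the observation that Steinberg is tempered and that the result then follows from "the previous propositions," meaning Lemma \ref{onedimensional} and Proposition \ref{tempered}). You have simply made explicit the case split at $\nu_0$ that the paper leaves implicit.
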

\begin{proof} The Steinberg representation is tempered. So the
theorem follows from the previous propositions.
%\ref{tempered} and \ref{onedimensional}.
\end{proof}

\bigskip

\subsection{The geometric side}\label{8.4}
Recall that for  $\gamma =\{\gamma_\nu \}\in \bG^*(\bA)$
\begin{eqnarray*}
J_{{\bG}}(\gamma,f_\bA)&=& \int _{\bG(\gamma)^0(\bA)\backslash
\bG(\bA)} f_{\bA}(g^{-1}\gamma g) dg_{\bA}\\ & = & \prod_{\nu }
\int _{\bG(\gamma_{\nu })^0(\bk_{\nu})\backslash \bG(\bk_{\nu }) }
f_{\nu }(g^{-1}_{\nu}\gamma _{\nu } g) dg_{\nu}.
\end{eqnarray*}

\medskip
In the previous sections we have computed orbital integrals of the
form
\begin{equation}
  \label{eq:8.4.0}
  \int _{G(\gamma)\backslash G} f(g^{-1}\gamma g) dg.
\end{equation}
In what follows we will use
\begin{equation}
  \label{eq:8.4.0a}
  \int _{G(\gamma)^0\backslash G} f(g^{-1}\gamma g) dg.
\end{equation}
where $G(\gamma)^0$ is the connected component of the centralizer of
$\gamma.$ The relation between the two is a factor
$|G(\gamma)/G(\gamma)^0|.$

\medskip
 The results in 9.2 of \BLS \ combined with section 5 of \cite{KO1}
show that
\begin{equation}  \label{8.4.1}
\text{LHS}(f_{\bA}) = \sum _{\gamma \in({\bG^*} (\bK
))_{elliptic}} a^{G}(\gamma) J_{{\bG}}(\gamma,f_{\bA})
\end{equation}
where
\begin{equation}
  \label{eq:8.4.2}
a^G(\gamma)=\vol\bigg|\frac{\bG(\gamma)_0(\bA)}{\bG(\gamma)_0(\bK)}\bigg|\cdot
\bigg|\frac{\bG(\gamma)}{\bG(\gamma)_0}\bigg| .
\end{equation}
We note that the argument in \cite{KO1} about the geometric side
of the trace formula depends only on the fact that at one place
$v$, the component $f_v$ of $f_\bA$ is an Euler-Poincar\'e
function which in turn relies on results of Arthur for a connected
component  of a reductive group.

\medskip
\begin{lemma}[\cite{Cl}] \label{small}
Let $\C K\subset\G_\infty$ be a fixed compact set. There is a set
${{S}_1}$ of finite places with $\nu_0, \nu_1 \not \in {S_1}$ with
the following property. There is a choice of compact open
subgroups $K_\nu,\ \nu\in{S_1}$ so that if
$$
\gamma\in G(\bK)\cap \C K\prod_{\nu\notin S_1} G(\C O_\nu)
\prod_{\nu\in S_1}K_\nu
$$
then $\gamma$ is unipotent. The set $\C K\prod_{\nu\in
  S_1}K_\nu\prod_{\nu\notin S_1} G(\C O_\nu)$ can be chosen so
that it is $\tau$-stable.
\end{lemma}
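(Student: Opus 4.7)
The plan is to exploit the discreteness of $\bG(\bK)$ in $\bG(\bA)$, combined with separation of semisimple conjugacy classes by adjoint-invariant regular functions, and then to shrink the local components $K_\nu$ to kill off the finitely many non-unipotent rational elements that could otherwise appear.

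First I would fix a finite set of generators $f_1,\dots,f_r$ for the ring $\bK[\bG]^{\bG}$ of adjoint-invariant regular functions on $\bG$. By the Chevalley--Steinberg restriction theorem, the morphism $\Phi:=(f_1,\dots,f_r)$ separates semisimple conjugacy classes over $\ovl\bK$; in particular $\Phi(\gamma)=\Phi(1)$ forces the semisimple part $\gamma_{ss}$ to be conjugate to $1$, hence $\gamma_{ss}=1$, so $\gamma$ is unipotent. After clearing denominators I may assume each $f_i$ is defined over the ring of integers $\C O$ of $\bK$, so that $f_i(\bG(\C O_\nu))\subset \C O_\nu$ at every finite place $\nu$ outside a fixed finite set.

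Next, starting from any initial choice of a finite set $S_1$ of finite places containing $S$ but avoiding $\nu_0,\nu_1$, and any compact open subgroups $K_\nu$ for $\nu\in S_1$, the product
\[
\C U:=\C K\cdot\prod_{\nu\in S_1}K_\nu\cdot\prod_{\nu\notin S_1\cup\{\infty\}}\bG(\C O_\nu)
\]
is compact in $\bG(\bA)$, so its image $\Phi(\C U)\subset \bA^r$ is compact. Since $\bG(\bK)$ is discrete in $\bG(\bA)$, and hence $\bK^r$ is discrete in $\bA^r$, the intersection $\Phi(\C U)\cap\bK^r$ is finite. Call the elements distinct from $\Phi(1)$ the \emph{bad} tuples $b_1,\dots,b_m$; I must arrange that no $\gamma\in\bG(\bK)\cap\C U$ has $\Phi(\gamma)=b_k$ for any $k$.

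The main step is to shrink $K_\nu$ at suitable places to eliminate each bad tuple. For each $b_k$, some coordinate $b_{k,i}$ differs from $f_i(1)$, so there is a finite place $\nu_k$ with $|b_{k,i}-f_i(1)|_{\nu_k}>0$. Using that $\bG(\bK_{\nu_k})$ admits a neighborhood basis of the identity by open compact subgroups, I choose such a subgroup $K'_{\nu_k}$, contained in the existing local factor at $\nu_k$, on which $|f_i(x)-f_i(1)|_{\nu_k}<|b_{k,i}-f_i(1)|_{\nu_k}$ for all $x\in K'_{\nu_k}$. Replacing the local factor at $\nu_k$ by $K'_{\nu_k}$ (enlarging $S_1$ if necessary) excludes $b_k$ from $\Phi(\C U)$. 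Iterating over the finite list $b_1,\dots,b_m$ yields the final $S_1$ and the $K_\nu$. For $\tau$-stability of $\C U$, I then replace $\C K$ by $\bigcap_{j=0}^{d-1}\tau^j(\C K)$, each $K_\nu$ by $\bigcap_{j=0}^{d-1}\tau^j(K_\nu)$, and enlarge $S_1$ to be $\tau$-stable; since $\tau$ acts on the generators $\{f_i\}$ by a linear change of coordinates, the argument above is unaffected. The principal obstacle is this last shrinking step: one must ensure that the local neighborhood of the identity can be taken to be an open compact \emph{subgroup} rather than a mere open set, which rests on the standard filtration of $\bG(\bK_{\nu_k})$ by congruence subgroups.
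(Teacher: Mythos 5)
Your proposal is correct, and it rests on the same underlying arithmetic fact as the paper (discreteness of $\bK$ in $\bA$, equivalently the product formula), but the route is noticeably different. The paper works with a concrete, minimal set of invariant polynomials: it fixes a faithful representation $\rho:\bG\to GL(m)$ and takes the coefficients $a_0,\dots,a_{m-1}$ of $\det\bigl(tI-I+\rho(x)\bigr)$, so that $\gamma$ is unipotent precisely when all $a_i(\gamma)=0$. It then bounds $|a_i(\gamma)|_\bA$ factor by factor: the archimedean factor is bounded on $\C K$, the factors at $\nu\notin S_1$ are $\le 1$ by integrality, and the factors at $\nu\in S_1$ can be made as small as one wishes since $a_i(1)=0$. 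The product formula (``$|a_i(\gamma)|_\bA$ is either $1$ or $0$'') then kills $a_i(\gamma)$ in a single stroke, with no need to enumerate bad values. You instead invoke Chevalley's restriction theorem for a full set of generators of $\bK[\bG]^\bG$, use compactness of $\Phi(\C U)$ together with discreteness of $\bK^r$ to produce a \emph{finite} list of bad tuples, and then iterate a local shrinking step to eliminate them one at a time. Both are valid: yours is slightly more conceptual (and the Chevalley input is needed only because you work with abstract generators rather than an explicit unipotence criterion), while the paper's is more elementary (no Chevalley, no iteration, one application of the product formula). One small point worth making explicit in your version: when choosing the auxiliary place $\nu_k$ for each bad tuple you must, as in the statement, avoid $\nu_0$ and $\nu_1$; this is harmless since $b_{k,i}-f_i(1)$ is a nonzero element of $\bK$ and hence has nonzero absolute value at \emph{every} place, so any other finite place will do.
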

\begin{proof} (included for completeness)
Choose any set $S_1$ of finite places that does not contain $\nu_0$ and
$\nu_1.$ Let $\rho: G\to GL(m)$ be a faithful representation. Let
\begin{equation}\label{8.4.3}
p(x,t):= \det(t-1+\rho(x))=t^m + a_{m-1}(x)t^{m-1} + \dots
+a_0(x).
\end{equation}
The $a_i$ are polynomials which extend to $G(\bA)$ and equal
\begin{equation}\label{8.4.4}
a_i(x)=a_{i,\infty}(x)
\prod_{\nu\notin S_1}a_{i,\nu}(x)
\prod_{\nu\in S_1}a_{i,\nu}(x).
\end{equation}
If all the $a_i(x)=0,$ then $x$ is unipotent. The first two factors
of the product are bounded. The last part can be made arbitrarily
small for $x_\nu\in K_\nu$ by making $K_\nu$ small enough. The
claim follows from the fact that for $x\in G(\bK),$ $|a_i(x)|_\bA$ is
either 1 or 0.
\end{proof}

\medskip

\begin{theorem} There is a choice of $f_\bA$ so that
\[
\sum_{\substack{\gamma \in (\wti{G} (\bK ))_{elipptic}\\
N(\gamma)=1}} a^{G}(\gamma) J_{\wti{G}}(\gamma ,f_{\bA})=
\sum_{\pi _{\bA} \in R_{d,\lambda }} m_{disc}(\pi_{\bA })
\tr\pi_{\bA}(f_{\bA})
\] As before, the sum is over (representatives of) conjugacy
classes. All representations contributing are either one
dimensional or in the cuspidal spectrum.
\end{theorem}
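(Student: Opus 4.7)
The starting point is the twisted trace formula identity $\text{LHS}(f_\bA)=\text{RHS}(f_\bA)$ together with the two expansions already obtained in this section: the harmonic (spectral) side was identified in the proposition of \ref{8.2} with $\sum_{\pi_\bA\in R_{d,\la}}m_{disc}(\pi_\bA)\tr\pi_\bA(f_\bA)$, while the geometric side was written in (\ref{8.4.1}) as $\sum_{\gamma\in(\wti G(\bK))_{ell}}a^{G}(\gamma)J_{\wti G}(\gamma,f_\bA)$. The substance of the theorem is therefore the vanishing $J_{\wti G}(\gamma,f_\bA)=0$ whenever $\gamma$ is elliptic with $N(\gamma)\neq 1$; the remark that contributing representations are either one dimensional or cuspidal then follows at once from theorem \ref{t:8.3.2}.

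To establish this vanishing I would factor $J_{\wti G}(\gamma,f_\bA)=\prod_\nu O_{\gamma_\nu}(f_\nu)$ and exploit the local properties of each factor. By theorem \ref{t:6.4.1}, $O_{\gamma_\infty}(f_F)=0$ unless $\gamma_\infty$ is elliptic semisimple; by theorem \ref{t:7.5}, $O_{\gamma_{\nu_i}}(f_{\C L})=0$ unless $\gamma_{\nu_i}$ is elliptic, for $i=0,1$. Since $\gamma$ is a single rational element and semisimplicity is preserved under base change, any contributing $\gamma$ must be semisimple, and hence $N(\gamma)=\gamma^d\in G(\bK)$ is semisimple. Next I would appeal to lemma \ref{small}, with the auxiliary shrinking of the $K_\nu$ for $\nu\in S_1$ provided by lemma \ref{small enough}, applied to the rational element $N(\gamma)$: the small support of $f_\bA$ at the finite places forces $N(\gamma)$ into the distinguished $\tau$-stable compact neighborhood of that lemma, and the characteristic-polynomial/product-formula argument from the proof of lemma \ref{small} then forces $N(\gamma)$ to be unipotent. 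A semisimple unipotent element of the linear algebraic group $G(\bK)$ equals $1$ by the uniqueness of the Jordan decomposition, so $N(\gamma)=1$ as required.

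The main technical obstacle I expect is the transfer of support conditions from $\gamma\in\wti G^*(\bA)$ to $N(\gamma)\in G(\bA)$, since lemma \ref{small} is phrased for elements of $G$ rather than for the twisted component $\wti G^*$. Because the norm $N(\gamma)=\delta\tau(\delta)\cdots\tau^{d-1}(\delta)$ is polynomial in $\gamma$ and its $\tau$-translates, shrinking each $K_\nu$ sufficiently will simultaneously control both $\gamma_\nu$ and $N(\gamma)_\nu$ at the $\nu\in S_1$; combined with the observation from proposition \ref{3.1} that the conjugacy class of any $\gamma$ contributing to $J_{\wti G}(\gamma,f_\bA)$ must actually meet the support of $f_\bA$, this is a routine continuity argument. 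It must, however, be carried out explicitly so that lemma \ref{small} applies verbatim to $N(\gamma)$, after which the three ingredients above assemble into the stated identity.
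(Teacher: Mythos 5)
Your proof is correct and takes essentially the same route as the paper: both apply lemma \ref{small} to $N(\gamma)=\gamma^d$ to force unipotence, combine this with semisimplicity (from the ellipticity of $\gamma$, itself forced by the vanishing of the orbital integrals at $\infty,\nu_0,\nu_1$ off the elliptic set) to conclude $N(\gamma)=1$, and cite theorem \ref{t:8.3.2} to characterize the contributing representations. Your explicit discussion of transferring the support constraint from $\gamma\in\wti\G^*$ to $N(\gamma)\in\G$ usefully spells out a step the paper leaves tacit (taking $f_\nu$ to be the delta function of $K_\nu$ at $\nu\in S_1$ makes the twisted power map send the support near $\tau$ into a small neighborhood of $1$, which is exactly what lemma \ref{small} needs); the only superfluous element is the invocation of lemma \ref{small enough}, which is not needed here and is in fact only stated later, in section \ref{sec:10.1}.
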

\begin{proof}
Recall that $f_\infty$ is a Lefschetz function, and has compact
support contained in a set $\tau\C K.$
Modify $f_{\bb A}$ so that $f_\nu$ is the delta function of
$K_\nu$ for $\nu\in S_1.$ Then apply lemma \ref{small} with $\C K$
as above to $\gamma^d$ to
conclude it must be the identity. Thus (\ref{8.4.1}) simplifies to
the formula in the proposition. See also \RS.
\end{proof}

\medskip

\bigskip

\section{\bf  A simplification of theorem \ref{8.4}}\label{IX}

In this section we combine the terms in $\text{RHS}(f_\bA)$ in
proposition  (\ref{8.4}) along stable conjugacy classes. The
references are \cite{KOSHE}, \cite{LAB2} and \cite{KO}. Most of
section is a summary of those results.

We consider in this section a connected reductive algebraic group
$\bG.$ This will be either the group considered in section I with
an automorphism  $\tau$ of finite order 
{%\color{red} 
or the connected component $\bb
  I(\gamma):=\bb G(\gamma)^0$
of the centralizer of an elliptic element $\gamma$ in
$\bG^*$.}
Denote by $\bG_{der}$ the derived group and by
$\bG_{SC}$ its simply connected cover.

\bigskip

\subsection{}\label{9.1} Let $F$ be a global or local field.
  For
$\sig$ in the Galois group of $F $ and $g\in \bG(\bar{F})$
 we define a cocycle
%with values in $\bG(\gamma)$
by
\begin{equation}
  \label{eq:9.1.1}
 v_g(\sig):=g^{-1}\sig(g).
\end{equation}

Fix a semisimple element $\gamma=\delta\tau$ in $\bG^*(F)$ and let
  $\gamma'=g\gamma g^{-1}\in \bG^*(F)$ with $g\in \bG(\bar{F})$.
The cocycle $v_g$ takes values in $\bG(\gamma)(\bar{F})$ but not
necessarily in $\bb I(\gamma)(\bar{F}).$
\begin{definition}
 We say that two elements $\gamma,$ and $ \gamma'= g\gamma g^{-1} \in
 \bG^*(F)$ are \textit{stably   conjugate} if the cocycle $v_g$ of
 (\ref{eq:9.1.1}) takes values in   $\bb I(\gamma)(\bar{F})$ for all
 $\sigma $ in the Galois group of $F.$
\end{definition}

Conversely if $ v_g(\sig) $ is in $\bb I(\gamma)(\bar{F})$ for all
$\sigma $ in the Galois group of $F$ then $g\gamma g^{-1} \in
\bG(F).$

\medskip
If $\gamma $ is stably conjugate to $\gamma'=g\gamma g^{-1},$ then
the cocycle $v_g(\sig )$ in $H^1(F,\bb I(\gamma))$ belongs to
\[\mathcal{D}(\bb I/F)= \ker [H^1(F,\bb I(\gamma))
\rightarrow H^1(F,\bG)].\] See also 2.6.

\medskip \noindent
\subsection*{Remarks:}
\begin{enumerate}
\item If $\gamma $ and $\gamma'$ are stably conjugate then
$I(\gamma') $ is an inner twist of $I(\gamma)$.

%\item By a theorem of Steinberg { \color{red} reference for
%  outer automorphisms} if $\gamma $ is
%semisimple and if the derived group of $\bG$ is simply connected,
%then $\bG(\gamma )$ is simply connected {\color{red} and
%$\mathcal{D}(\bb I/F)$ parametrizes the set of conjugacy classes
%within the stable conjugacy class of $\gamma $.}

\item Assume that $F$ is a number field, that G is a simply connected
 semisimple group. Then
 Kneser, Harder, Springer and Chernousov \cite{Ch} show that the Hasse principle
 holds, \ie
 \[ H^1(F,\bG(F)) \hookrightarrow \prod_v H^1(F,\bG(F_v)).\]
This implies that  $\gamma,\gamma'\in \bGti(F)$ are
conjugate by an element in $\bG(\bar{F})$ if and only if the
components in $\bGti(F_v)$ are conjugate by elements in
$\bG(\bar{F}_v)$.

\item Suppose that $F$ is a number field and that the Hasse
principle holds for the derived group $\bG_{SC}$. Let $\gamma \in
\bG(\bA)$. In 6.6 of \cite{KO}, R. Kottwitz defines an invariant
obs$(\gamma)$ which is trivial if and only if $\gamma $ conjugate
under $\bG(\bA)$ to an element in $\bG(F)$.
\end{enumerate}

\bigskip
 \subsection{A local example} Suppose $\bk$ is a local field and
$\bG=GL(n).$ We consider the automorphism $\tau(x):=\ ^tx^{-1}.$
An element $\gamma =\delta\tau \in
\bGti(\bk)$ is conjugate to $\tau$ if and only if
$\delta=g\tau(g^{-1})=gg^t,$ \ie it is a symmetric matrix.
Equivalently, we can think of the $\gamma'$s as quadratic forms and the
problem is then to classify them according to usual conjugacy under
$GL(n,\bk)$ and $GL(n,\ovl\bk).$
\begin{proposition}
An element   $\gamma = \delta \tau \in\bGti(\bk)$ is stably
conjugate to $\tau$ if and only if the determinant of $\delta$
is a square in $\bk^*.$ The stable conjugacy classes satisfying
$N(\gamma)=1$ are parametrized by $\bk^*/(\bk^*)^2.$
\end{proposition}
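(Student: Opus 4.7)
The plan is to translate stable conjugacy of $\gamma=\delta\tau$ into the classical equivalence of nondegenerate symmetric bilinear forms, exploiting that $\tau(x)={}^tx^{-1}$ makes conjugation on the $\delta$-component act by congruence. The direct computations
\[
g(\delta\tau)g^{-1} = (g\delta\,{}^tg)\,\tau, \qquad N(\delta\tau) = \delta\,\tau(\delta) = \delta\cdot{}^t\delta^{-1}
\]
show that $\bG$-conjugation on $\delta\tau$ becomes the action $\delta\mapsto g\delta\,{}^tg$, and that $N(\gamma)=1$ is equivalent to $\delta$ being symmetric. Since every nondegenerate symmetric matrix is equivalent to $I$ over $\ovl\bk$, any such $\gamma$ is already $\bG(\ovl\bk)$-conjugate to $\tau$, so the whole question reduces to Galois descent.

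Next I would identify the centralizer: $g\tau g^{-1}=\tau$ in $\wti\bG$ is equivalent to $g\,{}^tg = I$, so $\bG(\tau) = O(n)$ and $\bb I(\tau) = SO(n)$. Writing $\gamma = g\tau g^{-1}$ with $g\in GL(n,\ovl\bk)$ and $\delta = g\,{}^tg$, stable conjugacy of $\gamma$ to $\tau$ amounts to $v_g(\sigma) = g^{-1}\sigma(g) \in SO(n,\ovl\bk)$ for every $\sigma\in\Gamma_\bk$. Rationality of $\gamma$ and $\tau$ automatically forces this cocycle into $O(n,\ovl\bk)$ (it centralizes $\tau$), so the only extra condition is $\det\bigl(v_g(\sigma)\bigr)=1$, equivalently $\det(g)\in\bk^*$.

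The two implications are then immediate. If $\gamma$ is stably conjugate to $\tau$, then $\det(g)\in\bk^*$ gives $\det(\delta) = \det(g)^2 \in (\bk^*)^2$. Conversely, if $\det(\delta) \in (\bk^*)^2$, pick any $g\in GL(n,\ovl\bk)$ with $\delta = g\,{}^tg$; then $\det(g)^2 = c^2$ for some $c\in\bk^*$, and since $\ovl\bk$ has only the two square roots $\pm c$ of $c^2$, one has $\det(g)\in\{c,-c\}\subset\bk^*$ automatically. Hence the cocycle lies in $SO(n,\ovl\bk)$ and stable conjugacy holds.

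For the parametrization by $\bk^*/(\bk^*)^2$, for each class $c\in\bk^*/(\bk^*)^2$ I would fix the reference element $\gamma_c=\delta_c\tau$ with $\delta_c=\mathrm{diag}(c,1,\dots,1)$, whose identity centralizer $\bb I(\gamma_c)$ is the special orthogonal group of the form $\delta_c$. Repeating the argument verbatim shows that $\gamma$ is stably conjugate to $\gamma_c$ iff $\det(\delta)/c\in(\bk^*)^2$, so the map $\gamma\mapsto\det(\delta)\bmod(\bk^*)^2$ descends to the claimed bijection. The mild but essential point to check is that $\bG(\tau)=O(n)$ is disconnected with component group of order two detected precisely by the determinant; this is what makes stable conjugacy strictly finer than $\bG(\ovl\bk)$-conjugacy and exactly captures the square class.
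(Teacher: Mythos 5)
Your proof is correct and follows essentially the same route as the paper's: both reduce stable conjugacy to the observation that the cocycle $g^{-1}\sigma(g)$ automatically lands in $O(n)$ and lies in the identity component $SO(n)$ precisely when $\det g$ (a square root of $\det\delta$) is Galois-invariant, i.e.\ when $\det\delta\in(\bk^*)^2$. The only cosmetic difference is that the paper first diagonalizes $\delta$ over $\bk$ and takes an entrywise square root, whereas you work directly with an arbitrary $g$ satisfying $\delta=g\,{}^tg$ over $\ovl\bk$.
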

\begin{proof}
The centralizer of $\tau$ is the orthogonal group $O(n)$ which
has two connected components corresponding to $\det =\pm 1.$ Let
$\mathbb H$ be the diagonal Cartan subgroup which is both $\tau$
and $\gamma$ stable. The fact that a quadratic form over any field
$\bk$ can be diagonalized is equivalent to the fact that any
$\gamma$ is conjugate by $SL(n,\bk)$ to an element $\delta \tau$
with $\delta\in {\mathbb H }(\bk).$ It is clear that there is
$h\in {\mathbb H}(\ovl\bk)$ such that $\delta=hh^t.$ The element
$h$ can be chosen so that $\det h=\det\sig(h)$ for any $\sig\in
\Gamma$ precisely when $\det\delta\in (\bk^*)^2.$

The proof follows by recalling that $\gamma$ and $\gamma'$ viewed as
symmetric forms are
conjugate by an element in $\bG(\bk)$ if and only if the
discriminant and determinant of $\delta$ and $\delta'$ are equal
modulo squares in $\bk.$ 
%{\color{red} reference}.
\end{proof}

\subsection*{Remark}
By corollary \ref{2.1}, the condition $N(\gamma)=1$ in the
proposition is equivalent to the fact that
$\gamma$ is conjugate via $\bG(\bar{F})$ to the automorphism $\tau.$

\bigskip

\medskip
\subsection{}\label{9.6} Recall the formulas in section \ref{8.4}. Fix
Tamagawa measures on $\bG(\bA)$ and $\mathbb I(\gamma)(\bA).$ Then
the first factor in $a^G(\gamma)$  in (\ref{eq:8.4.2}) is the Tamagawa number of
$I(\gamma)(\bA)$ which we denote $\tau(\gamma).$ By \cite{KO1}, if
$\gamma$ is stably conjugate to $\gamma'$
\begin{comment}
\begin{equation}
  \label{9.6.1}
  \tau(\gamma)=\tau(\gamma')= |\pi_0(Z(\check I)^\Gamma)|\cdot
  |\ker^1(\bK,Z(\check I))|^{-1}
\end{equation}
\end{comment}
\begin{equation}
  \label{9.6.1}
  \tau(\gamma)=\tau(\gamma').
\end{equation}

Thus \ref{8.4.1} becomes
\begin{theorem}
\begin{equation}
  \label{9.6.2}
\mathrm{RHS}(f_\bA)=\sum_{\gamma\in \Delta}
\tau(\gamma)\bigg|\frac{\bG(\gamma)_0}{\bG(\gamma)}\bigg|\sum_{\gamma'\in\C
  D(\bb I/\bK)}J_\gamma (f_\bA)
\end{equation}
where $\Delta$ is a set of representatives of  stable conjugacy
classes of elliptic semisimple elements $\gamma$ in $\bGti(\bK)$
satisfying $N(\gamma)=1$, and  $\C D(\bb I/\bK)$ parametrizes the
stable conjugacy class of $\gamma$ as in \ref{9.1}.
\end{theorem}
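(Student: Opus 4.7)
The plan is to reorganize the sum in (\ref{8.4.1}) by grouping the $\bG(\bK)$-conjugacy classes of elliptic elements in $\bG^*(\bK)$ according to stable conjugacy. By the theorem at the end of \ref{8.4}, only those $\gamma$ with $N(\gamma)=1$ contribute, so I fix $\Delta$ to be a set of representatives of the stable conjugacy classes of such elements. Within a single stable class, the individual $\bG(\bK)$-orbits are described by the set $\C D(\bb I/\bK)$ introduced in \ref{9.1}, which is the variant of proposition \ref{p:2.6} obtained by using the connected centralizer $\bb I(\gamma)=\bG(\gamma)^0$ in place of the full centralizer.

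The next step is to show that the coefficient $a^G(\gamma')$ depends only on the stable class of $\gamma'$. The Tamagawa factor $\tau(\gamma')$ is constant along stable classes by (\ref{9.6.1}), which is the cited result of Kottwitz. The component-group factor $|\bG(\gamma')/\bG(\gamma')_0|$ is also constant: for $\gamma'$ stably conjugate to $\gamma$ the centralizer $\bG(\gamma')$ is an inner $\bK$-twist of $\bG(\gamma)$, and such twists preserve the component group. Consequently the whole coefficient $a^G(\gamma')$ factors out of the inner sum.

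What remains is to rewrite the inner sum, originally over $\bG(\bK)$-orbits in the stable class, as a sum indexed by $\C D(\bb I/\bK)$. The discrepancy between the two indexing sets is controlled by the long exact sequence
\begin{equation*}
\bG(\gamma)(\bK) \longrightarrow \pi_0(\bG(\gamma))(\bK) \longrightarrow H^1(\bK,\bb I(\gamma)) \longrightarrow H^1(\bK,\bG(\gamma));
\end{equation*}
under Kottwitz's normalizations from \KO, the natural map
\[
\C D(\bb I/\bK)\ \longrightarrow\ \ker[H^1(\bK,\bG(\gamma))\to H^1(\bK,\bG)]
\]
is surjective with fibers of cardinality $|\bG(\gamma)/\bG(\gamma)_0|$. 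Dividing the $\C D(\bb I/\bK)$-sum by this common fiber size converts the component-group factor extracted in the previous step into its reciprocal $|\bG(\gamma)_0/\bG(\gamma)|$, producing the claimed identity.

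The main obstacle is this last cohomological bookkeeping: one must navigate carefully between cocycles valued in the connected centralizer $\bb I(\gamma)$ and those valued in the possibly disconnected $\bG(\gamma)$, and pin down the fiber size of the comparison map so that the component-group factor appears with the correct exponent. Once this is in place, the identity follows by substituting (\ref{9.6.1}) into (\ref{8.4.1}) and interchanging the order of summation.
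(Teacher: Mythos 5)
Your step~4 is where the argument goes wrong, and it is wrong in a way that is also unnecessary. The paper's definition of stable conjugacy in~\ref{9.1} is built on cocycles valued in $\bb I(\gamma)=\bG(\gamma)^0$, not in the possibly disconnected $\bG(\gamma)$: two elements are declared stably conjugate precisely when the cocycle $v_g$ lands in $\bb I(\gamma)(\bar\bK)$. With that definition, the $\bG(\bK)$-conjugacy classes inside a fixed stable class are \emph{by construction} parametrized by $\C D(\bb I/\bK)=\ker[H^1(\bK,\bb I(\gamma))\to H^1(\bK,\bG)]$. There is no re-indexing to perform, no comparison map to $\ker[H^1(\bK,\bG(\gamma))\to H^1(\bK,\bG)]$, and no fiber-size correction. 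The coefficient $a^G(\gamma)=\tau(\gamma)\,|\bG(\gamma)/\bG(\gamma)_0|$ simply factors out of the inner sum, and the displayed $|\bG(\gamma)_0/\bG(\gamma)|$ in the theorem is best read as a typo for $|\bG(\gamma)/\bG(\gamma)_0|$ (just as the inner $J_\gamma$ is a typo for $J_{\gamma'}$). This is exactly what the paper does, in one line, immediately after quoting the invariance $\tau(\gamma)=\tau(\gamma')$ from \cite{KO1}.

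Beyond being superfluous, your step~4 is also internally inconsistent. You start from a coefficient $|\bG(\gamma)/\bG(\gamma)_0|$, assert that the comparison map $\C D(\bb I/\bK)\to\ker[H^1(\bK,\bG(\gamma))\to H^1(\bK,\bG)]$ has fibers of size $|\bG(\gamma)/\bG(\gamma)_0|$, and then claim that dividing by this fiber size "converts the factor into its reciprocal." Dividing $x$ by $x$ yields $1$, not $1/x$; to turn $|\bG(\gamma)/\bG(\gamma)_0|$ into $|\bG(\gamma)_0/\bG(\gamma)|$ one would need a fiber size of $|\bG(\gamma)/\bG(\gamma)_0|^2$, which the exact sequence you quote does not provide and which has no conceptual reason to appear. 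The fiber-size claim itself is also unsupported: the relevant piece of the exact sequence involves $\pi_0(\bG(\gamma))$-cohomology, and the fibers of the induced map on kernels need not all have the same size, let alone the size you assign them. Your step~3 observation that $|\bG(\gamma')/\bG(\gamma')_0|$ is constant on stable classes because centralizers of stably conjugate elements are inner $\bK$-twists is a reasonable remark that the paper's terse proof leaves implicit, but even there one should be careful to specify whether the component group is taken as a $\bar\bK$-group (where inner twisting clearly preserves it) or as $\bK$-points (where more argument is required).
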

If $\bb I(\gamma)$ is simply connected, then $\tau(\gamma)=1.$

\section{\bf The main theorems}\label{sec:10}

We assume  $\bK=\bQ$.   We prove that $RHS(f_\bA)\ne 0$ and use
this to show that there exist $\tau-$invariant cuspidal automorphic
forms, and prove nonvanishing theorems for cuspidal cohomology. In
particular we illustrate these results in the case of $\bG=GL(n).$

\subsection{}\label{sec:10.1}
Since Tamagawa numbers are volumes, the coefficient of each
integral in \ref{9.6.2} is positive. We need a function $f_\bA$
such that the orbital integrals all have the same sign and at
least one is nonzero.

The orbital integrals have a product formula
\begin{equation}
  \label{eq:10.1.1}
J_\gamma(f_\bA)=[\prod_{\nu\ infinite} J_{\gamma_\nu}(f_\nu)]\cdot
[J_{\gamma_{\nu_0}}(f_{\nu_0})] \cdot
[J_{\gamma_{\nu_1}}(f_{\nu_1})] \cdot[\prod_{\nu\ finite, \nu \not
= \nu_0,\nu_1} J_{\gamma_\nu}(f_\nu)].
\end{equation}
By \ref{7.5} $J_{\gamma_{\nu_i}}(f_{\nu_i})=1 $ for i = 0, 1 and
$J_\gamma(f_\gamma)\ge 0$ for $\nu$ finite, $\nu\ne \nu_i.$ In
addition, if  $\nu $ is finite and  $\gamma=\tau,$
$J_\tau(f_\nu)>0$ . Recall that $F$ is a fixed  irreducible finite
dimensional representation of $\bG$ and that for each finite place
$\nu $ $f_\nu$ is a Lefschetz function $f_F$ and that by \ref{5.3}
$$
O_\gamma(f_{F})=(-1)^{q(\gamma)}e(\gamma)\tr F^*(\gamma)
$$
where \[e(\gamma)=\sum_i (-1)^i\tr(\gamma \ :\
\sideset{}{^i}\bigwedge\fk h^*_R)\] and
\[q(\gamma)=\frac12(\dim \fk g(\gamma) -\dim \fk k(\gamma))\] is
the number associated to a real form  $\bG(\gamma)$ by
Kottwitz. Therefore,
\begin{equation}
  \label{eq:10.1j}
J_\gamma=(-1)^{q(\gamma)}e(\gamma)\tr F^*(\gamma)\big|\frac{G(\gamma)}{G(\gamma)^0}\big|
\end{equation}

\medskip
We will restrict the support of the function $f_\bA$ at a finite
number of finite places such that the contribution of only one
$\gamma$ in theorem \ref{9.6.2} is nonzero.

Let $\Gamma=G(\bQ)\cap G_\infty K_{fin}$ where $K_{fin}$ is a
product of compact open subgroups as in lemma \ref{8.4}. This
choice depends on the function $f_\infty$ only.  A theorem of
Borel-Serre \cite{BSe}, section 3.8, states that {
$\Ho^1(\langle\tau\rangle,\Gamma)$} (notation \ref{2.6}) is finite
dimensional, \ie that the intersection of the set of elements
satisfying $N(\gamma)=1$ with $\Gamma$ breaks up into finitely
many orbits under $\Gamma$. Let $\tau_1=\tau,\dots , \tau_k$ be
representatives of these $\Gamma $-orbits.

\begin{lemma}\label{small enough}
There is an open compact subgroup $K_f=\prod K_\nu\subset K_{fin}$ with
$K_\nu=G(\C O_\nu)$ for all but finitely many places $S_1$ such that
$K_f\tau_i\cap K_f\tau_j=\emptyset$ for all $i\ne j.$
\end{lemma}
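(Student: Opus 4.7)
The plan is to translate the disjointness condition into an avoidance condition for a finite set of non-identity elements of $G(\bA_f)$, and then apply the standard fact that open compact subgroups form a neighborhood basis of the identity.

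First I would reformulate the condition. For any $i,j$, the set $K_f\tau_i\cap K_f\tau_j$ is nonempty if and only if there exist $k_1,k_2\in K_f$ with $k_1\tau_i=k_2\tau_j$, equivalently $\tau_i\tau_j^{-1}\in K_f$ (where we regard $K_f\subset G(\bA_f)\subset\wti G(\bA_f)$). Each representative $\tau_i$ lies in the component $G^*=G\cdot\tau$, so we may write $\tau_i=\delta_i\tau$ with $\delta_i\in G(\bQ)$, and then
\[
\tau_i\tau_j^{-1}=\delta_i\tau\,\tau^{-1}\delta_j^{-1}=\delta_i\delta_j^{-1}\in G(\bQ).
\]
Since $\tau_i,\tau_j$ represent distinct $\Gamma$-orbits for $i\ne j$, we have $\delta_i\delta_j^{-1}\ne 1$ in $G(\bQ)$, and under the diagonal embedding $G(\bQ)\hookrightarrow G(\bA_f)$ this remains a nontrivial element (since each $G(\bQ)\hookrightarrow G(\bQ_\nu)$ is injective).

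Next, the finite set $E:=\{\delta_i\delta_j^{-1}\ :\ 1\le i\ne j\le k\}\subset G(\bA_f)\setminus\{1\}$ must be avoided by $K_f$. Because $G(\bA_f)$ is a totally disconnected locally compact group, the open compact subgroups of the form $\prod_\nu K_\nu$ (with $K_\nu=G(\C O_\nu)$ for almost all $\nu$) form a neighborhood basis of the identity. Concretely, pick any one auxiliary finite place $\nu^*\notin\{\nu_0,\nu_1\}\cup S$; for each of the finitely many elements $e\in E$, the $\nu^*$-component $e_{\nu^*}\in G(\bQ_{\nu^*})$ is nontrivial, so a sufficiently small open compact subgroup $K_{\nu^*}\subset K_{fin,\nu^*}$ contains none of the $e_{\nu^*}$. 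At all other places we leave $K_\nu$ as in $K_{fin}$, so in particular $K_\nu=G(\C O_\nu)$ for $\nu$ outside a finite set $S_1$. The resulting $K_f=\prod_\nu K_\nu$ lies in $K_{fin}$, is open compact, and by construction avoids $E$, giving $K_f\tau_i\cap K_f\tau_j=\emptyset$ for all $i\ne j$.

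There is no real obstacle here; the argument is essentially the elementary topological statement that any finite subset of $G(\bA_f)\setminus\{1\}$ can be separated from $\{1\}$ by some open compact subgroup. The only care needed is bookkeeping to make sure the shrinking at finitely many places does not conflict with the previously fixed choices at the places $\nu_0,\nu_1$ and at the places in $S$, which is why we perform the shrinking at a single auxiliary place $\nu^*$ disjoint from those.
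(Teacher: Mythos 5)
Your proof is correct for the lemma as literally stated, and the reduction is clean: $K_f\tau_i\cap K_f\tau_j\ne\emptyset$ if and only if $\tau_i\tau_j^{-1}=\delta_i\delta_j^{-1}\in K_f$, and a finite set of nontrivial elements of $G(\bQ)$ can be avoided by shrinking the level at a single auxiliary finite place. The paper's own proof is also a ``shrink at finitely many places'' argument, but it aims at something stronger: using that the $(\tau_i)_\nu$ are semisimple, so that their conjugacy orbits are closed, it shrinks $K_\nu$ so that the entire $G(\bQ_\nu)$-orbit of $(\tau_i)_\nu$ misses $K_\nu(\tau_1)_\nu$, not merely the single representative $\tau_i$. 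That orbit-level separation is what is actually consumed downstream in proposition \ref{p:10.1}: there one needs the twisted orbital integrals $J_{(\tau_i)_\nu}(f_\nu)$ to vanish for $i\ne 1$, i.e.\ the support of $f_\nu$ must miss the whole conjugacy class of $(\tau_i)_\nu$, and your element-level disjointness does not give that --- a conjugate $g^{-1}\tau_i g$ could still land in $K_f\tau_1$ even when $\delta_i\delta_j^{-1}\notin K_f$. So your argument settles the stated lemma with strictly less input (no semisimplicity is needed, only $\tau_i\ne\tau_j$), which is a small gain in generality; but if the lemma is meant to feed the trace-formula computation you should upgrade it to the orbit statement, and that is precisely where closedness of semisimple orbits enters the paper's proof.
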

\begin{proof}
The elements $(\tau_i)_\nu$ are semisimple. So for each $\nu\in S_1$ replace
  $K_\nu$ by a smaller $K_\nu'$  so that the orbit of $(\tau_i)_\nu$ does not
  intersect $K_\nu'(\tau_1)_\nu.$
\end{proof}

Recall that the set $S$ was defined in section \ref{8.1} as the finite
set of finite places $\nu$  where the character $\chi$ is not trivial
on $\bG(\C O_\nu)$. The set $S_1$ is defined in lemma \ref{small}.

We also recall that according to theorem \ref{t:5.3}, $e(\tau)$ is
nonzero precisely when the centralizer of of $\tau$ in $\fk g(\bR)$ is
equal rank.

\begin{proposition}\label{p:10.1}  Let  $f_\bA  = \prod _{\nu} f_{\nu}
  $ be a function on $\wti{\bG(\bA)}$  satisfying the following properties.
\begin{enumerate}
 \item   $f_{\nu_{\infty}}= f_{F} \in C^{\infty}_c(\bG^*(\bR))$ is the
Lefschetz function defined  in section \ref{6.2}. \item For the
finite places $\nu_0,\ \nu_1 $
 $f_{\nu_i}=f_{\C L} $ is the Lefschetz
functions constructed in section \ref{sec:7.6}.
 \item For the places $\nu \in S \cup S_1$ let $f_{\nu }$ be the
characteristic function of a compact subgroup $K_\nu \subset
\bG(\bQ_\nu)$ which satisfies the assumptions in \ref{8.1},
\ref{small enough} and \ref{small}.
 \item  For all  places $\nu \not \in S \cup S_1$ let $f_{\nu }$ be the
characteristic function of a maximal compact subgroup $\tau_\nu K_\nu
\subset \bG(\bQ_\nu).$
\end{enumerate}

\noindent
 If $e(\tau)\ne 0,$ then  \[RHS(f_\bA)\ne 0.\]
\end{proposition}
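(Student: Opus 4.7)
The plan is to start from the formula in theorem \ref{9.6.2}
\[
\mathrm{RHS}(f_\bA)=\sum_{\gamma\in\Delta}\tau(\gamma)\bigg|\frac{\bG(\gamma)_0}{\bG(\gamma)}\bigg|\sum_{\gamma'\in\C D(\bb I/\bK)}J_{\gamma'}(f_\bA),
\]
and use the local conditions placed on $f_\bA$ at each place to kill every term except the one corresponding to $\gamma=\tau$, which I will then show is nonzero by a product computation.

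First I would argue that only semisimple rational elements $\gamma$ with $N(\gamma)=1$ and whose $d$-th power is unipotent can contribute: at each $\nu\in S_1$ the function $f_\nu$ is the characteristic function of a very small compact open subgroup $K_\nu$, and since $\gamma^d$ lies in $K_\nu$ at each such place (and in the appropriate compact sets at the other places by the choice of $f_\bA$), lemma \ref{small} applied to $\gamma^d$ shows $\gamma^d$ is unipotent; being also semisimple, $\gamma^d=1$. Thus $\gamma$ belongs to the set of elements with $N(\gamma)=1$ lying in $\Gamma=\bG(\bQ)\cap \bG_\infty K_{fin}$, which by the Borel--Serre finiteness result breaks into finitely many $\Gamma$-orbits with representatives $\tau=\tau_1,\dots,\tau_k$. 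The choice of $K_\nu$ from lemma \ref{small enough} ensures $K_f\tau_i\cap K_f\tau_j=\emptyset$ for $i\ne j$, so after conjugation only $\gamma$ stably conjugate to $\tau$ can have nonzero orbital integral; by corollary in \ref{2.1}, since $N(\gamma)=1$, any such $\gamma$ is $\bG(\bar{\bQ})$-conjugate to $\tau$, so only a single stable class $\gamma=\tau$ in $\Delta$ remains. Moreover the inner sum over $\C D(\bb I/\bQ)$ also collapses: elements $\gamma'$ in different rational classes within the stable class of $\tau$ have local components distinct from $\tau$ at some place, and the same support argument eliminates them.

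Next I would compute $J_\tau(f_\bA)$ via the product expansion \ref{eq:10.1.1}. Since $\tau$ is elliptic (its centralizer in $\fk g(\bR)$ is of equal rank, which is what $e(\tau)\ne 0$ records; compare theorem \ref{t:5.3}), theorem \ref{t:7.5} gives
\[
J_{\tau_{\nu_i}}(f_{\C L})=1\qquad (i=0,1),
\]
and theorem \ref{t:6.4.1} gives
\[
J_{\tau_\infty}(f_F)=(-1)^{q(\tau)}e(\tau)\tr F^*(\tau),
\]
which is nonzero by the hypothesis $e(\tau)\ne 0$ (together with the assumed nonvanishing of $\tr F^*(\tau)$ that underlies the construction of $f_F$). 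At the remaining finite places $\nu\notin\{\nu_0,\nu_1\}\cup S\cup S_1$, the function $f_\nu$ is the characteristic function of a maximal compact subgroup containing $\tau$, so $J_{\tau_\nu}(f_\nu)$ equals the positive volume of the appropriate orbit intersection; at $\nu\in S\cup S_1$ the function $f_\nu$ is the characteristic function of an open compact subgroup chosen so that $\tau$ itself lies in (a translate of) its support, giving a strictly positive integral.

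The remaining scalar factors, the Tamagawa number $\tau(\tau)$ and the component ratio $|\bG(\tau)/\bG(\tau)_0|$, are positive volumes. Multiplying everything together gives $\mathrm{RHS}(f_\bA)$ as a single nonzero term with sign $(-1)^{q(\tau)}$, establishing the proposition. The main obstacle in the argument is the separation step carried out via lemmas \ref{small} and \ref{small enough}: one must verify that the support restrictions at the auxiliary finite places $S_1$ really do eliminate every $\Gamma$-conjugacy class other than that of $\tau$ among the representatives of the finite set of classes provided by Borel--Serre, and that this can be done without also destroying the positivity of $J_{\tau_\nu}(f_\nu)$ at those same places; this is the reason for the two-stage choice of $K_f$ in sections \ref{small} and \ref{small enough}.
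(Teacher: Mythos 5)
Your proposal is correct and follows essentially the same route as the paper: the paper's own proof is simply a pointer to the discussion preceding the proposition (the expansion of $\mathrm{RHS}(f_\bA)$ from theorem \ref{9.6.2}, the elimination of every term except $\gamma=\tau$ via lemma \ref{small} applied to $\gamma^d$, Borel--Serre finiteness, and lemma \ref{small enough}, followed by the product evaluation of $J_\tau(f_\bA)$ using theorems \ref{t:6.4.1} and \ref{t:7.5} and the positivity of the remaining local factors), which is exactly what you spell out. Your observation that the nonvanishing of $\tr F^*(\tau)$ is also needed at the infinite place is accurate; it is implicit in the standing hypotheses on $F$ and appears explicitly in theorem \ref{t:10.1.2}.
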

\begin{proof}
The proof follows from the lemma and the discussion above.
\end{proof}

%%%%%%%%%

\medskip
In conclusion we have proved the following theorem.
\begin{theorem}\label{t:10.1.2} Let $\mathbb G$ be a connected
  reductive linear algebraic group
defined over   $\bQ$ and F a finite dimensional irreducible
representation of $\mathbb G({\bR}).$ If $\tr F(\tau)\ne 0$ and the
centralizer of $\tau$ in $\fk g (\bR)$ is equal rank, then
there exist cuspidal automorphic representations of ${\mathbb
G}({\bf A})$ stable under $\tau.$
\end{theorem}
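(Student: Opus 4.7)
The plan is to apply the twisted Arthur trace formula to the function $f_\bA$ constructed in Proposition \ref{p:10.1}, and exploit the identity $\mathrm{LHS}(f_\bA)=\mathrm{RHS}(f_\bA)$ of Section \ref{8.4} to translate geometric non-vanishing into the existence of a cuspidal contribution on the spectral side.

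First I would invoke Proposition \ref{p:10.1} to conclude $\mathrm{RHS}(f_\bA)\ne 0$. The equal rank hypothesis on $\fk g(\bR)^\tau$ ensures via Theorem \ref{t:5.3} that $e(\tau)\ne 0$; combined with the assumption $\tr F(\tau)\ne 0$ and the formula in Theorem \ref{t:6.4.1}, the archimedean orbital integral at $\gamma=\tau$ is nonzero. Lemmas \ref{small} and \ref{small enough} ensure that after shrinking $K_\nu$ at the finitely many places in $S\cup S_1$, the geometric sum in Theorem \ref{9.6.2} collapses essentially to the single class of $\gamma=\tau$, so nothing cancels the $\tau$-contribution.

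Next I would pass to the spectral side. By Theorem \ref{t:8.3.2} every representation contributing to $\mathrm{LHS}(f_\bA)$ is either one-dimensional or in the cuspidal spectrum. At the archimedean place, $\tr\pi_\infty(f_F)=L(\tau,F,\pi_\infty)$ by property (a) of $f_F$, and by Proposition \ref{p:5.1} this is zero unless $\pi_\infty$ has the same infinitesimal character as $F$. When $\dim F>1$ this already kills every one-dimensional contribution $\chi$, so $\mathrm{LHS}(f_\bA)$ is supported entirely on the cuspidal spectrum, proving the theorem. When $\dim F=1$, the finitely many Hecke characters $\chi$ of $\bG(\bA)/\bG(\bQ)$ with $\chi_\infty\cong F^*$ can contribute, and their contributions must be excised.

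The main obstacle is precisely this one-dimensional case (needed for the trivial-coefficient applications of Section \ref{sec:10.2}). The plan is to use the remaining freedom in choosing $f_\bA$ at an auxiliary finite unramified place $\nu_2$: replace the unit at $\nu_2$ by a Hecke operator $h_{\nu_2}$. On each one-dimensional $\chi$, $h_{\nu_2}$ acts by an explicit scalar $c(\chi)$ determined by $\chi_{\nu_2}$, and the subset of such $\chi$ with prescribed archimedean and level data is finite; meanwhile the geometric orbital integral at $\tau$ scales by $O_{\tau_{\nu_2}}(h_{\nu_2})$, which can be made nonzero and distinct from every $c(\chi)$ in the finite list. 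Choosing $h_{\nu_2}$ so that these two scalings differ forces a nonzero cuspidal term to balance the trace formula, yielding the desired cuspidal $\pi_\bA$ invariant under $\tau$.
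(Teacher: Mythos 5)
Your reduction is the same as the paper's up to the last step: Proposition \ref{p:10.1} gives $\mathrm{RHS}(f_\bA)\ne 0$ (using $e(\tau)\ne 0$ from the equal rank hypothesis and $\tr F(\tau)\ne 0$ via theorem \ref{t:6.4.1}, with lemmas \ref{small} and \ref{small enough} collapsing the geometric side to the class of $\tau$), and theorem \ref{t:8.3.2} confines the spectral side to one-dimensional plus cuspidal contributions, so for $\dim F>1$ you are done exactly as in the paper. The divergence, and the problem, is in the one-dimensional case.

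Your mechanism there — insert a Hecke operator $h_{\nu_2}$ at an auxiliary unramified place and choose it so that $O_{\tau_{\nu_2}}(h_{\nu_2})$ differs from each eigenvalue $c(\chi)$ — does not close. After the modification the identity reads $\sum_\chi m(\chi)\,c(\chi)A_\chi+\mathrm{cusp}=O_{\tau_{\nu_2}}(h_{\nu_2})\,B$ with finitely many characters $\chi$ and \emph{unknown} coefficients $A_\chi$ (you only know $\sum_\chi m(\chi)A_\chi=B$ from the unit element); with several terms of unknown size and sign, the pointwise condition $c(\chi)\ne O_{\tau_{\nu_2}}(h_{\nu_2})$ forces nothing. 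To make this route work you would need a genuine linear-independence statement on the spherical Hecke algebra: that the distribution $h\mapsto O_{\tau_{\nu_2}}(h)$ is not a finite linear combination of evaluations $h\mapsto\widehat h(\chi_{\nu_2})$ at unramified characters. That is the actual content of the step and you do not supply it. You also do not address that enlarging the support of $f_{\nu_2}$ beyond $\bG(\C O_{\nu_2})$ threatens the very collapse of the geometric side you invoked (lemma \ref{small} constrains the components at places outside $S_1$ to $\bG(\C O_\nu)$, and lemma \ref{small enough} separates the classes $\tau_1,\dots,\tau_k$ only for small enough $K_f$), so extra elliptic classes could reappear on the geometric side. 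The paper instead keeps all test functions as characteristic functions and shrinks a congruence subgroup $K_w(i)$ at one auxiliary place: it shows $a^G(\tau)J_\tau(f_\bA(i))\sim q_w^{-i(\dim\bG-\dim\bG(\tau))}$ while the one-dimensional contribution is bounded by the number of characters trivial on $K_w(i)$ times $\vol(K_w(i))$, of order $q_w^{-i(\dim\bG-\dim\C Z(\bG)^\tau)}$; the inequality $\dim\bG(\tau)>\dim\C Z(\bG)^\tau$ then makes the geometric term dominate, forcing a cuspidal contribution. You should either adopt that quantitative comparison or prove the linear-independence fact your version requires.
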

\begin{proof} Let $f_\bA$ be the function in proposition
\ref{p:10.1}. Theorem \ref{8.4} and the previous proposition imply
that
\[  \sum_{\pi _{\bA} \in R_{d,\lambda }} m_{disc}(\pi_{\bA })
\tr\pi_{\bA}(f_{\bA}) \not = 0.\] If dim $F \
> \ 1$, then all the representations contributing to the sum are
in the cuspidal spectrum by the results in \ref{t:8.3.2}.

Suppose now that the representation $F$
is one dimensional. Denote the contribution  of the one dimensional
representations of $\bG(\bA)$ by ${\mathbf I}_\bA.$ Then
\[ {\mathbf I}_\bA(f_\bA) + \sum_{\pi _{\bA} \in R_{d,\lambda }}
m_{cusp}(\pi_{\bA }) \tr\pi_{\bA}(f_{\bA})=
a^G(\tau)J_\tau(f_\bA).
\]
{%\color{red} 
We make the simplifying assumption that the center of
$\bG(\bA)$ is isomorphic to $(\bA^\times)^r$. A character of
$\bG(\bA)$ is determined by its values on the center. Let $\chi$ be a
character of $(\bA^\times)^r$ trivial on $(\bK^\times)^r$ is of the
form
\begin{equation}
  \label{eq:10.1.3}
  \chi(a_1,\dots ,a_r)=|a_1|^{s_1}\cdot\dots\cdot
  |a_r|^{s_r}\chi_1(a_1,\dots ,a_r)
\end{equation}
where $\chi_1$ is unitary. By the discussion in section \ref{8.1} we
can work with $\obG(\bA)$, the intersection of the kernels of the
absolute values of the characters of $\bG(\bA)$, we may as well assume
$s_1=\dots s_r=0.$ The character $\chi_1$ must be trivial at the
infinite places as well as $\nu_0,\ \nu_1.$ There are finitely many
places $v$ such that $(\chi_1)_v\ne triv.$ If such a place is not in
$S\cup S_1,$ then since $f_v=\one_{\C O_v},$ we have $\chi_1(f_v)=0.$
Thus $\bb I_\bA$ consists of finitely many characters.     

\medskip
Following the idea in \cite{BLS} we fix a finite place
$w \not \in S \cup S_1 \cup \{\nu_0,\nu_1\}.$ Choose a sequence of
compact open subgroups $K_w(i)$ (congruence subgroups 
$1+\varpi^i\bG(\C O)$) with characteristic functions $h_i^w$. Then
\[
J_{\tau_w}(h_i^w )=c_w q_w^{-i(\dim\bG-\dim\bG(\tau))}\longrightarrow 0
\text{ for } i\longrightarrow\infty,
\]
and $c_w\ne 0$ independent of  $i$. Similarly,
\[ 
\vol(K_w(i))=d_w q_w^{-i\dim\bG} \rightarrow 0 \mbox{ for }i \rightarrow
\infty .
\]  
A character $\chi_w$ satisfies $\chi_w(h_i^w)=0$ unless it is
trivial when restricted to $K_w(i).$ But there are at most
$q_w^{i\dim \C Z(\bG(F_w))^\tau}$ such characters. Thus if
\begin{equation}
  \label{eq:ineq}
  \dim\bG-\dim\bG(\tau)<\dim\bG-\dim\C Z(\bG)^\tau,
\end{equation}
$a^G(\tau)J_\tau(f_\bA(i))$ goes to zero strictly slower than $\bb
I_\bA(f_\bA(i)).$ It follows that there must be a nonzero
contribution from the cuspidal part. }
\begin{comment}
Thus there exists a sequence of open compact subgroups
$K_f(i)$ of $K_f$ and functions $f_\bA(i)$ satisfying the
assumptions of \ref{p:10.1} so that
\[
a^G(\tau)J_\tau(f_\bA(i))= c q^{-i(\dim G-\dim G(\tau)},
a^G(\tau)J_\tau(f_\bA)\not = 0.
\]
On the other hand, if a character $\chi_w$
\[{\mathbf I}_\bA(f_\bA(i)) \rightarrow 0 \mbox{ for }i \rightarrow
\infty \]
because ${\mathbf I}_\bA(f_\bA(i))$ is a constant times the volume of $K_w(i)$.
Thus there must be a term in the second sum which is nonzero.
\end{comment}
\end{proof}

\bigskip
\noindent{\bf Remark:} The number $\tr F^*(\theta)$ for $\tau=\theta$
an automorphism of order 2 is computed in section IV. In the case when
$\tau$ has order $d>2,$ we can see that there are infinitely many
finite dimensional representations satisfying $\tr F(\tau)\ne 0$ as
follows. It is enough to prove this for the case of finite dimensional
representations of the compact group $\wti K.$ Let $\wti K(\tau)$ be
the centralizer of $\tau.$ The Fourier expansion of the delta function
$\delta_\tau$ is
\begin{equation}
  \label{eq:10.1.2}
  \delta_\tau=\sum_{V\in\widehat{\wti K}(\tau)} \tr V(\tau)\dim V.
\end{equation}
Since $\delta_\tau$ is not smooth, there are infinitely many nonzero
terms in the right hand side. The claim that there are infinitely many
finite dimensional representations $F$ of $\wti K$ satisfying $\tr
F(\tau)\ne 0$ follows from the fact that the restrictions of the
representations of $\wti K$ span the Grothendieck group of $\wti K(\tau).$

\bigskip
\subsection{An example}\label{sec:10.2}
For ${\mathbb G}= GL(n)$ we consider the automorphism $\tau_c$
which is transpose inverse. Then $\tau_c(g_{\bA}) = g_{\bA}^{-1}$
for all $ g_{\bA} \in {\mathbb Z}(\bA)$. Therefore the set
$X(\tilde{\bG})_\bQ$ of $\bQ-$rational characters of $\wti{\bG}$ is
trivial,\linebreak  $a_{\tilde{G}} = Hom(X(\tilde{\bG})_\bQ,\bR )= 0$ and Arthur's
function $H_{\tilde{G}}$ equals zero. Thus in Arthur's notation
\[\tilde{\bG}(\bA)^1 =\tilde{\bG}(\bA).\]

\medskip
For $GL(n)$ the local measures and all the normalization factors
are as in \Cf \ page 261.
\medskip
We call a discrete series  representation D of GL(2,$\bR$) even if
\[ D(\left(%
\begin{array}{cc}
  -1 & 0 \\
  0 & -1\\
\end{array}%
\right)) = Id\] and odd otherwise. A tempered representation of
GL(n,$\bR$) induced from a maximal cuspidal parabolic subgroup
$P=MAN$ is even if it is induced from an even discrete series
representation  of every factor  of M. We call it odd if it is
induced from an odd representation on every factor of M.

Recall that  $\la=(\la_1,\dots , \la_n)$ is the highest weight of
a self dual finite dimensional representation F if and only if
$\la_i=-\la_{n+1-i}.$ The conditions of \ref{4.3.3} are satisfied
if
\begin{enumerate}
\item n is odd and all $\lambda_i$ are even,
 \item n is even and for all $i,j,\  \lambda_i = \lambda_j\ (mod\ 2)$
\end{enumerate}

\medskip
 For GL(n,$\bA $) all representations in the cuspidal
spectrum are tempered \cite{Sha}. There is exactly one irreducible
tempered $(\fk g,K)$--module $\mathcal W$ with nontrivial $(\fk
g,K)$--cohomology with the same infinitesimal character as $F$
\cite{sp} and
\begin{enumerate}
\item if n is odd then $\mathcal W$ is even
 \item if n is even then $\mathcal W$ is odd
\end{enumerate}
$\mathcal W$ is invariant under $\tau_c $ and has nontrivial
Lefschetz number.

\medskip

\begin{theorem}[1] \label{t:10.2.1}
There exist tempered cuspidal representations \linebreak  $\pi_\bA
= \prod \pi_\nu$ of GL(n,$\bA$) with the following properties:
\begin{enumerate}
 \item
$\pi_\bA$ is invariant under the Cartan involution $\tau_c$.
   \item
$\pi_\infty$  has an  integral nonsingular infinitesimal character
satisfying the conditions of \ref{4.3.3}.
 \item If n is even then $\pi_\infty $ an odd representation.
\item If n is odd then $\pi_\infty $ is an
even representation.
\end{enumerate}
\end{theorem}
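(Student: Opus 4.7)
The plan is to reduce the statement to Theorem~\ref{t:10.1.2}, then use $GL(n)$-specific facts to upgrade the conclusion to "tempered" and identify the component at infinity. First I would verify the two hypotheses of Theorem~\ref{t:10.1.2} with $\bG = GL(n)$ and $\tau = \tau_c$. The equal-rank hypothesis is immediate: after choosing $\tau_c$ in its conjugacy class so that its fixed-point group on $GL(n,\bR)$ is the usual compact $O(n)$ (i.e.\ taking the form $\tau_c(g) = {}^tg^{-1}$ associated to the positive definite $w_0 = I$, which is $K_\infty$-conjugate to the formulation of \ref{2.4.1}), the centralizer $\fk g(\tau_c) = \fk{so}(n,\bR)$ is compact, hence trivially equal rank. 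For the trace condition, I would exhibit highest weights $\lambda = (\lambda_1,\dots,\lambda_n)$ satisfying both hypotheses of Theorem~\ref{4.3.3}: self-duality $\tau_c\mu = \mu$ is equivalent to $\lambda_i = -\lambda_{n+1-i}$, and the integrality condition $\langle\mu,\cha\rangle \in 2\bZ$ for all roots $\al = e_i - e_j$ is equivalent to $\lambda_i \equiv \lambda_j \pmod 2$ for all $i,j$. This gives exactly the two cases listed in the theorem ($n$ odd: all $\lambda_i$ even; $n$ even: all $\lambda_i$ of common parity).

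Next, applying Theorem~\ref{t:10.1.2} to such an $F$ produces a cuspidal automorphic representation $\pi_\bA$ of $GL(n,\bA)$, stable under $\tau_c$, whose infinitesimal character coincides with that of $F$ and for which
\[
\Ho^*(\fk g(\bR), K_\infty, \pi_\bA \otimes F) \ne 0.
\]
For the "tempered" assertion, I would cite the theorem of Shahidi (referenced in the paper as \cite{Sha}) that every irreducible cuspidal automorphic representation of $GL(n,\bA)$ is tempered at every place. This immediately yields temperedness of $\pi_\bA$, and in particular of $\pi_\infty$.

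To identify $\pi_\infty$ and its parity, I would invoke the uniqueness result of Speh \cite{sp} cited in the paper: up to isomorphism there is exactly one tempered irreducible $(\fk g, K)$-module with nontrivial $(\fk g, K)$-cohomology with coefficients in the dual of $F$, and the discussion preceding the theorem records that this representation is even when $n$ is odd and odd when $n$ is even, by inspection of the discrete series parameters of its inducing data. Since $\pi_\infty$ is tempered and satisfies the cohomological nonvanishing \eqref{eq:0.1}, it must coincide with this unique representation, giving properties (3) and (4).

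The only genuine technical point I expect is the bookkeeping to ensure that the chosen $F$ really is $\tau_c$-stable as a representation of $\wti{\bG}(\bR)$ (not merely that its restriction to $\bG(\bR)$ is self-dual): one needs to fix an extension of the action to $\bG(\bR)\rtimes\{1,\tau_c\}$ and check that the hypotheses of Theorem~\ref{4.3.3} are satisfied for this extension, so that indeed $\tr F(\tau_c) \ne 0$. This is routine because the pair $(\fk b_0,\fk h_0)$ of upper triangular matrices is $\tau_c$-stable (in the normalization with $w_0$) and $\tau_c$ acts on the corresponding Verma module by a scalar we can normalize to make $\tr F(\tau_c)$ equal the positive quantity in \eqref{eq:4.3.5}.
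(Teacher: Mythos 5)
Your proposal is correct and follows essentially the same route as the paper: the paper's proof is literally ``theorem \ref{t:10.1.2} combined with the results in section \ref{4.3},'' and the discussion preceding the theorem in section \ref{sec:10.2} supplies exactly the ingredients you spell out — the parity conditions on the highest weight needed for theorem \ref{4.3.3} to give $\tr F(\tau_c)\ne 0$, the equal-rank condition for the compact form of the fixed-point group, temperedness of the cuspidal spectrum of $GL(n)$ via \cite{Sha}, and Speh's uniqueness \cite{sp} of the tempered cohomological $(\fk g,K)$-module at infinity together with its even/odd parity. One cosmetic correction: the citation \cite{Sha} is to Shalika, not Shahidi.
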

\begin{proof}
This is essentially theorem \ref{t:10.1.2} combined with the results in
section \ref{4.3}.
\end{proof}
\medskip

For $\bG=GL(2m) $ we also consider the symplectic automorphism
$\tau_s$ with fix point set Sp(2m). The irreducible finite
dimensional representation $F$ with highest weight $(\la_1,\dots ,
\la_{2m})$ is invariant under $\tau_s$ if $\la_j =\la_1$ for $i =
2,\dots, m$ and $\la_j = -\la_1 $ for $i = m+1,\dots, 2m$. The
conditions of \ref{4.3.3} are satisfied if $\la_1 \in \bN$.

\begin{theorem}[2] \label{t:10.2.2}
There exist tempered cuspidal representations \linebreak $\pi_\bA
= \prod \pi_\nu$ of GL(2m,$\bA$) with the following properties:
 \begin{enumerate}
\item $\pi_\bA$ is invariant under the symplectic automorphism
$\tau_c$.
   \item
$\pi_\infty$  has an  integral nonsingular infinitesimal character
satisfying the conditions of \ref{4.3.3}.
\end{enumerate}
\end{theorem}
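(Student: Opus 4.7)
The plan is to reduce Theorem 2 directly to the main Theorem \ref{t:10.1.2} applied to $\bG=GL(2m)/\bQ$ with outer automorphism $\tau_s.$ The proof should parallel Theorem 1 with only the algebraic input replaced, so the main task is verifying the two hypotheses of \ref{t:10.1.2}: (i) the centralizer of $\tau_s$ in $\fk g(\bR)$ is equal rank, and (ii) there exists an irreducible finite dimensional $F$ of $\bG(\bR)\rtimes\{1,\tau_s\}$ with $\tr F(\tau_s)\ne 0.$

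For (i), the real centralizer $\fk g(\tau_s)_0=\mathfrak{sp}(2m,\bR)$ has rank $m,$ and its maximal compact subalgebra $\fk u(m)$ also has rank $m;$ so $\fk g(\tau_s)$ is equal rank. For (ii), given $\la_1\in\bN$ consider the irreducible finite dimensional $F$ of $GL(2m,\bC)$ of highest weight $(\la_1,\dots,\la_1,-\la_1,\dots,-\la_1),$ which satisfies the $\tau_s$-invariance criterion stated in the paper and (since $\la_1\in\bN$) has $\langle\mu,\cha\rangle\in 2\bZ$ for every root $\al.$ By Theorem \ref{4.3.3} applied with $\tau=\tau_s,$ we get $\tr F(\tau_s)\ne 0$ and in particular $F$ extends to the disconnected group $\bG(\bR)\rtimes\{1,\tau_s\}.$

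With (i) and (ii) in hand, Theorem \ref{t:10.1.2} produces a cuspidal automorphic representation $\pi_\bA$ of $GL(2m,\bA)$ which is $\tau_s$-stable and has the same infinitesimal character as $F,$ so in particular $\pi_\infty$ has integral nonsingular infinitesimal character satisfying the hypotheses of \ref{4.3.3}. This already gives assertions (1) and (2). For temperedness, one appeals exactly as in the proof of Theorem 1: by \cite{Sha} every cuspidal representation of $GL(n,\bA)$ is tempered at every place (or, at the level needed here, the relation $\Ho^*(\fk g, K_\infty, \pi_\bA\otimes F)\ne 0$ together with the classification of cohomological representations of $GL(n,\bR)$ of \cite{sp} forces $\pi_\infty$ to be the unique tempered cohomological module with infinitesimal character equal to that of $F$).

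The argument is mostly bookkeeping once the two hypotheses are checked; the only real step is to observe that $\tau_s$ is an outer $\bQ$-rational automorphism of finite order of $GL(2m)$ and that the construction of $f_\bA$ in section IX goes through unchanged for this automorphism. There is no new obstacle: the local Lefschetz function $f_F$ at infinity, the two $p$-adic Lefschetz functions at $\nu_0,\nu_1,$ and the shrinking of $K_f$ via Lemma \ref{small enough} all depend only on $\bG$ and the finite order of $\tau,$ so Proposition \ref{p:10.1} applies verbatim and yields $\mathrm{RHS}(f_\bA)\ne 0,$ from which the cuspidal nonvanishing follows as in the proof of \ref{t:10.2.1}. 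If any step were to require extra work it would be the analogue of the cohomological uniqueness statement of \cite{sp} for the symplectically self-dual infinitesimal characters, but this is immediate since the relevant cohomological representation at infinity is the Speh-type module $\C R^s_\fk b(\chi)$ attached to the fundamental Cartan exactly as in the self-dual $\tau_c$ case.
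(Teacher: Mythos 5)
Your proposal is correct and follows the same route as the paper: the paper gives no separate proof for Theorem (2), leaving it implicit that one repeats the proof of Theorem (1) with $\tau_c$ replaced by $\tau_s$, after checking that $\fk g(\tau_s)_0 \cong \fk{sp}(2m,\bR)$ is equal rank and that the $\tau_s$-invariant highest weights $(\la_1,\dots,\la_1,-\la_1,\dots,-\la_1)$ with $\la_1\in\bN$ meet the hypotheses of Theorem \ref{4.3.3}, exactly the two points you verify. Your treatment of temperedness (via Shalika, or alternatively via the nonvanishing of $(\fk g,K_\infty)$-cohomology and the uniqueness result of \cite{sp}) is the same one the paper invokes in the proof of Theorem (1).
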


\medskip
The following is a generalization of the theorems (1) and (2)
using base change.
\begin{theorem}[3] \label{t:10.2.3}
Let $\bK/\bQ$ be an extension of $\bQ$ such that there is tower
\[ \bQ \subset \bK_1 \subset \bK_2 \subset \dots \subset \bK_r
=\bK\] of cyclic extensions of prime degree. There exist tempered
cuspidal representations $\Pi_{\bA_\bK}$ of
GL(n,$\bA_\bK$) with nontrivial cohomology.
\end{theorem}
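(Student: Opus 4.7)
The plan is to build the representation $\Pi_{\bA_\bK}$ by iteratively lifting along the tower via the cyclic base change correspondence of Arthur--Clozel \cite{AC}. Starting with a cuspidal cohomological representation $\pi^{(0)}$ of $GL(n,\bA_\bQ)$ produced by theorem \ref{t:10.2.1} (or \ref{t:10.2.2}), I would construct inductively $\pi^{(i+1)}:=BC_{\bK_{i+1}/\bK_{i}}(\pi^{(i)})$, a representation of $GL(n,\bA_{\bK_{i+1}})$, and then set $\Pi_{\bA_\bK}:=\pi^{(r)}$. The two things to be checked at each step are (a) that the base change remains cuspidal, and (b) that cohomology is preserved at the archimedean places.

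For (a), the main criterion of Arthur--Clozel is that for a cyclic extension $E/F$ of prime degree $\ell$, the base change of a cuspidal automorphic representation $\pi$ of $GL(n,\bA_F)$ is cuspidal if and only if $\pi$ is not isomorphic to $\pi\otimes\eta$ for any nontrivial character $\eta$ of $\bA_F^\times/F^\times N_{E/F}\bA_E^\times$. So the inductive obstruction at level $i$ is that $\pi^{(i)}$ not be fixed by twisting by any character attached by class field theory to $\bK_{i+1}/\bK_i$. Since there are only finitely many such characters $\eta$ at each step (and hence finitely many across the whole tower), this is a codimension condition. I would ensure it holds by exploiting the flexibility in the construction of $\pi^{(0)}$ in theorem \ref{t:10.2.1}: namely, by enlarging the set $S_1$ of lemma \ref{small enough} and shrinking the local test functions $f_v$ at some auxiliary finite place $w$ so as to force the local component $\pi^{(0)}_w$ to be supercuspidal (or just suitably ``rigid'') and incompatible with twisting by any character of order dividing any $p_i$ in the tower. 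The nonvanishing argument in theorem \ref{t:10.1.2} still goes through since we are only further restricting the support of $f_\bA$ at finite places away from $\nu_0,\nu_1$, and the dominant-term argument using the congruence subgroups $K_w(i)$ is unaffected.

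For (b), I would use the fact that base change at an archimedean place is given by parabolic induction of the local Langlands parameter restricted from $W_{\bK_{i,v}}$ to $W_{\bK_{i+1,w}}$. Since $\pi^{(0)}_\infty$ is the cohomological tempered representation attached to the self-dual finite dimensional $F$, its Langlands parameter is a direct sum of characters of $W_\bR$ with prescribed integral regular infinitesimal character; restricting to the subgroup $W_{\bK_{i+1,w}}$ gives a parameter whose infinitesimal character is again integral and regular. Thus each $\pi^{(i+1)}_w$ is cohomological with respect to an appropriate irreducible finite dimensional $F^{(i+1)}_w$ of $GL(n,\bK_{i+1,w})$ (namely the unique such representation in the tempered cohomological packet for that infinitesimal character, by the uniqueness stated in \cite{sp}). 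In particular $H^*(\fk g_{\infty},K_\infty,\Pi_{\bA_\bK}\otimes F_\bK)\neq 0$ for the finite dimensional $F_\bK$ obtained by base change of $F$.

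The main obstacle is step (a): guaranteeing \emph{simultaneously} that each successive base change avoids the finitely many twist-stability conditions imposed at each level of the tower. My proposal for handling this is to make the choice of local test function at the auxiliary place $w$ depend on the entire tower, choosing $f_w$ to isolate a supercuspidal local component whose isomorphism class is not preserved by twisting by any character of $\bK_{i,v}^\times$ of order dividing $p_{i+1}\cdots p_r$ for any $i<r$; this is possible because the set of such forbidden twists at each level is finite and their lifts to the place $w$ are finite, while the supercuspidal spectrum with any given central character is infinite.
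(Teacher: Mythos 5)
Your overall strategy (iterated Arthur--Clozel base change along the tower) is the same as the paper's, and your point (b) about preservation of cohomology at the archimedean places is essentially what the paper asserts. The problem is your treatment of (a), cuspidality of the lift, where there is a genuine gap. You propose to verify the Arthur--Clozel criterion ($\pi\not\cong\pi\otimes\eta$ for the relevant finite-order characters $\eta$) by forcing the local component of $\pi^{(0)}$ at an auxiliary finite place $w$ to be supercuspidal ``by shrinking the local test functions $f_v$.'' The construction of theorem \ref{t:10.1.2} gives you no such control: at the auxiliary places the test functions are characteristic functions of compact open subgroups $K_w(i)$, and $\tr\pi_w(\one_{K_w(i)})$ is (up to volume) $\dim\pi_w^{K_w(i)}$, which is nonzero for a huge supply of non-supercuspidal representations (all unramified principal series, for instance). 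Shrinking $K_w(i)$ only enlarges the set of representations detected; it cannot isolate supercuspidals. To isolate a supercuspidal you would need a pseudo-coefficient at $w$, which would destroy the structure of the geometric side (in particular the dominance of the identity contribution $a^G(\tau)J_\tau(f_\bA)$ that drives the nonvanishing argument). So as written, the mechanism you rely on to defeat the finitely many forbidden self-twists does not exist.

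The fix — and the paper's actual argument — is much simpler and makes the whole self-twist analysis unnecessary. The representation $\pi_\bA$ produced by theorem (1) already has Steinberg local components at the two finite places $\nu_0,\nu_1$: there the test function is the Lefschetz function $f_{\C L}$ of section \ref{sec:7.6}, whose trace is nonzero only on the trivial and Steinberg representations, and cuspidality/temperedness rules out the trivial one. By \cite{AC} (chap.~1, lemma 6.2) the local base change of the Steinberg representation is again Steinberg, so the lift $\Pi_{\bA_{\bK_{i+1}}}$ has a Steinberg, hence square-integrable and tempered, component at the places above $\nu_0,\nu_1$. A proper isobaric sum has fully induced local components everywhere, so it cannot have a Steinberg component; equivalently, by proposition \ref{tempered} an automorphic form tempered at one place is cuspidal. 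Thus each successive lift is automatically cuspidal, with no condition to arrange at the auxiliary places, and the induction along the tower closes.
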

\begin{proof}
Let $\bK/\bQ$ be a cyclic extension of prime degree, and $\pi_\bA$
a cuspidal representation of GL(n,$\bA$) with nontrivial
cohomology constructed in theorem (1). J. Arthur and L. Clozel proved
that there exists an
automorphic representation $\Pi_{\bA_\bK} $ of GL(n,$\bA_\bK$) lifting $\pi
$ (\cite{AC}, chap.3, theorem 4.2). This representation has a
Steinberg representation at 2 finite places (\cite{AC}, chap.1,
lemma 6.2) and is therefore also cuspidal. Furthermore this
representation has nontrivial cohomology.
\end{proof}

\medskip \noindent
{\bf Remark:} In the proof of theorem (2) no use is made of
property (2) of $\pi_\bA$ in theorem (1).

\bigskip
\subsection{} \label{10.3} In this section  we assume again that $\bG$ is defined over $\bQ$ and satisfies
of section \ref{1.1}. Consider the  locally symmetric space
\[S(K_f):= K_\infty
\ K_f \backslash {\bG}(\bA)/A_G \bG(\bQ)\] with $K_f$ \textit{small enough} as
in section \ref{sec:10.1}.  The DeRham cohomology
\[\Ho^*(S(K_f),F)\] with coefficients in the sheaf defined by  a
finite dimensional representation $F$ is isomorphic to
\[\Ho^*(\fk g,K_\infty,{\mathcal A}({\mathbb
G}(\bA )/A_G {\mathbb G}(\bQ))\otimes F)^{K_f}\] where
 ${\mathcal A}( {\mathbb G}(\bA )/{\mathbb
G}A_G(\bQ)) $ is the space of automorphic forms \cite{Fr1} and the
upper index denotes the invariants under $K_f$ . Denote by
\[{\mathcal A}_{cusp}( {\mathbb G}(\bA
)/A_G\bG(\bQ))\] the space of cusp forms. Then by \cite{Borel}
\begin{eqnarray*}
\lefteqn{\Ho^*(\fk g,K_\infty,{\mathcal A}_{cusp}({\mathbb G}(\bA
)/A_G\bG(\bQ))\otimes F))^{ K_f} }
\\ & & \hookrightarrow \Ho^*(\fk g,K_\infty,{\mathcal A}({\mathbb G}(\bQ)A_G \backslash {\mathbb G}(\bA ))\otimes F))^{
K_f}.\end{eqnarray*} The image is denoted by
$\Ho^*_{cusp}(S(K_f),F).$

\medskip
Let $F$ be a finite dimensional irreducible  representation of
$\bG$ which is invariant under an automorphism $\tau$ of $\bG$.
Then $\tau$ acts on $H^*(S(K_f),F)$.

\medskip
Let U be a maximal normal compact subgroup of $\bG(\bR)$.
 An involution $\tau$  is called  Cartan like if it defines an involution on
 $\bG(\bR)/U$ which is conjugate to a Cartan involution.
The assumptions of nonvanishing theorem \ref{t:10.1.2} are
satisfied for a Cartan like involution $\tau$ and the trivial
representation $F$. Thus theorem \ref{t:10.1.2} implies

\medskip
\begin{theorem}\label{t:10.3}
Suppose that $K_f$ satisfies the condition of proposition
\ref{p:10.1}. Let $\mathbb G$ be a connected reductive linear
algebraic group defined over $\bQ$.   Then
\[
\Ho^*_{cusp}(S(K_f),\bC) \not = 0.
\]

\end{theorem}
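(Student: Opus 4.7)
The plan is to deduce Theorem \ref{t:10.3} as a direct application of Theorem \ref{t:10.1.2} applied to the trivial coefficient system $F = \bC$, together with the Matsushima-type identification of cuspidal cohomology with $(\fk g, K_\infty)$-cohomology of the cuspidal spectrum recalled just before the statement.

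First, I would verify that the hypotheses of Theorem \ref{t:10.1.2} are satisfied when $\tau$ is a Cartan-like involution and $F$ is the trivial representation extended trivially to $\bG(\bR) \ltimes \{1,\tau\}$. The trace $\tr F(\tau) = 1 \ne 0$. For the equal rank condition: by definition, a Cartan-like involution $\tau$ is conjugate, modulo the maximal compact $U$, to a Cartan involution on $\bG(\bR)/U$. Hence after conjugation we may assume $\tau$ commutes with a Cartan involution $\theta$, and its fixed point subgroup $\bG(\bR)^\tau$ contains (the identity component of) a maximal compact subgroup of $\bG(\bR)$. In particular $\fk g(\bR)^\tau$ contains a compact Cartan subalgebra of itself of maximal rank in $\fk g(\bR)$, so $\fk g(\bR)^\tau$ is of equal rank. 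Thus Theorem \ref{t:10.1.2} applies and produces a $\tau$-stable cuspidal automorphic representation $\pi_\bA$ of $\bG(\bA)$ with infinitesimal character equal to that of the trivial representation and satisfying
\[
\Ho^*(\fk g(\bR), K_\infty, \pi_\bA \otimes \bC) \ne 0.
\]

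Second, I would transport this nonvanishing to cohomology of the locally symmetric space. Since $\pi_\bA$ is cuspidal, its $K_f$-fixed vectors form a finite-dimensional subspace; choosing $K_f$ small enough inside the open compact subgroup provided by proposition \ref{p:10.1} (and shrinking further if necessary so that $\pi_{\bA,f}^{K_f} \ne 0$), the $(\fk g, K_\infty)$-cohomology of the $K_f$-invariants of $\pi_\bA$ inside $\mathcal A_{cusp}(\bG(\bA)/A_G \bG(\bQ))$ is nonzero. By the inclusion
\[
\Ho^*(\fk g, K_\infty, \mathcal A_{cusp}(\bG(\bA)/A_G\bG(\bQ)) \otimes \bC)^{K_f}
\hookrightarrow \Ho^*(\fk g, K_\infty, \mathcal A(\bG(\bA)/A_G\bG(\bQ)) \otimes \bC)^{K_f}
= \Ho^*(S(K_f), \bC)
\]
recalled from Borel, the image $\Ho^*_{cusp}(S(K_f), \bC)$ receives a nonzero contribution from $\pi_\bA$, which gives the theorem.

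The only genuinely nontrivial step is the first: checking that a Cartan-like involution really does produce an equal-rank fixed-point subalgebra and really does extend to an automorphism of $\bG$ defined over $\bQ$ of finite order, so that Theorem \ref{t:10.1.2} applies verbatim. Everything else is bookkeeping: the condition $\tr F(\tau)\ne 0$ is automatic for $F$ trivial, and the passage from nonvanishing of cuspidal $(\fk g,K_\infty)$-cohomology at the adelic level to nonvanishing at the level of $S(K_f)$ is standard once $K_f$ is shrunk both to isolate the geometric side (as in proposition \ref{p:10.1}) and to admit fixed vectors of the finitely many cuspidal $\pi_\bA$ produced.
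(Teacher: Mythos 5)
Your proposal is correct and follows exactly the same route as the paper: apply Theorem \ref{t:10.1.2} with $F$ trivial and $\tau$ a Cartan-like involution, noting that then $\tr F(\tau)=1\ne 0$ and the fixed-point subalgebra $\fk g(\bR)^\tau$ is compact (hence trivially of equal rank), and then transport the nonvanishing of $\Ho^*(\fk g(\bR),K_\infty,\pi_\bA)$ to $\Ho^*_{cusp}(S(K_f),\bC)$ via Borel's inclusion. The paper itself gives essentially no detail beyond asserting that the hypotheses of Theorem \ref{t:10.1.2} hold in this case, so your filled-in verification is the appropriate and intended argument.
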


\medskip
 \noindent {\bf Remarks:}  In the equal rank case the
nonvanishing of the cuspidal cohomology was first proved by
L.Clozel and bounds on the cuspidal cohomology were obtained in
\cite{RS2} and in \cite{savin}. In these cases the ordinary trace
formula respectively the Euler-Poincar\'{e} characteristic was
used and no twisting by an automorphism was necessary.

For $S(K_f)$ is compact and $F$  nontrivial a nonvanishing theorem
was proved in \cite{RS} using geometric Lefschetz  numbers for
Cartan like involutions. For subgroups $\Gamma \subset SO(n,1)$
and $S(K_f)$ this theorem  was proved in \cite{RS3} also using
Lefschetz numbers.

 For semisimple $\bG$ and $S-$arithmetic groups it proved in
\cite{BLS} using $L^2$-Lefschetz numbers and a twisted trace
Arthur trace formula.

\ifx\undefined\bysame
\newcommand{\bysame}{\leavevmode\hbox to3em{\hrulefill}\,}
\fi

\end{document}

--------------030105030907010408060008
Content-Type: text/x-vcard; charset=utf-8;
 name="bes12.vcf"
Content-Transfer-Encoding: 7bit
Content-Disposition: attachment;
 filename="bes12.vcf"

begin:vcard
fn:Birgit Speh
n:Speh;Birgit
org:Cornell University;Department of Mathematics
adr;dom:;;Malott Hall 435;Ithaca;NY;14850
email;internet:speh@math.cornell.edu
tel;work:5-4620
version:2.1
end:vcard

--------------030105030907010408060008--